\newtheorem{theorem}{Theorem}[section]
\newtheorem{lemma}[theorem]{Lemma}
\newtheorem*{question*}{Question}
\newtheorem{prop}[theorem]{Proposition}
\newtheorem{corollary}[theorem]{Corollary}
\theoremstyle{definition}
\newtheorem{defn}[theorem]{Definition}
\newtheorem{remark}[theorem]{Remark}
\newtheorem*{notation*}{Notation}
\newcommand{\Ima}{\text{Im}}
\newcommand{\C}{\mathbb C}
\newcommand{\Z}{\mathbb{Z}}
\newcommand{\R}{\mathbb{R}}
\newcommand{\p}{{\partial}}
\newcommand{\K}{\mathbb{K}}
\newcommand{\fa}{\mathbf{a}}
\newcommand{\fz}{\mathbf{z}}
\newcommand{\clL}{\mathcal{L}}
\newcommand{\cM}{\mathcal{M}}
\newcommand{\cP}{\mathcal{P}}
\newcommand{\ccc}{{\mathcal{C}^2}}
\newcommand{\cc}{{\mathcal{C}^1}}
\newcommand{\im}{\mbox{im}\,}
\newcommand{\wh}{\widehat}
\numberwithin{equation}{section}
\title{STEIN DOMAINS WITH EXOTIC CONTACT BOUNDARIES.}
\author{MU ZHAO}
\begin{document}
	
	\begin{abstract}
	 We introduce a new invariant, the \textit{positive idempotent group},
	 for strongly asymptotically dynamically convex contact manifolds. This invariant can be used to distinguish different contact structures. As an application, for any complex dimension $n>8$ and any positive integer $k$, we can construct $n-$dimensional Stein manifolds $V_0,V_1,\cdots,V_k$ such that $\tilde{H}_j(V_i)=0, j\neq n-1,n$,
	 $V_i's$ are almost symplectomorphic, their boundaries are in the same almost contact class but not contactomorphic.
 
	\end{abstract}
	
	\maketitle

	\tableofcontents
	\section{Introduction}
	In this paper, we will introduce a new invariant $I_+(\Sigma)$, the \textit{positive idempotent group}, for strongly asymptotically dynamically convex contact manifolds $(\Sigma,\xi,\Phi)$(see definition in Section~\ref{section : def of ADC}). The definition of positive idempotent group $I_+(W)$ depends on the filling $W$: it is well defined when $SH_*(W)\neq 0$ for some Liouville filling $W$, and it is independent of filling when $(\Sigma,\xi,\Phi)$ is a strongly ADC contact manifold.
	
	 The main purpose of this paper is to prove the following theorem:
	
	\begin{theorem}\label{Main Thm}
		If $(\Sigma,\xi,\Phi)$ is a strongly asymptotically dynamically convex contact structure with a Liouville filling $W$ such that $SH_*(W)\neq 0$,  then all connected Liouville fillings of $(\Sigma,\xi,\Phi)$ with nonzero symplectic homology have isomorphic positive idempotent group $I_+$.
	\end{theorem}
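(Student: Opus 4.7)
The plan is to prove filling-independence of $I_+$ by comparing any two Liouville fillings $W_0, W_1$ of $(\Sigma,\xi,\Phi)$ via Viterbo-type transfer morphisms on symplectic homology, and then using the strongly asymptotically dynamically convex hypothesis to upgrade these to isomorphisms on the algebraic structure from which $I_+$ is extracted.

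First I would recall that for any Liouville filling $W$ with $SH_*(W) \neq 0$, the full symplectic homology carries a unital product (the pair-of-pants) and fits into the tautological long exact sequence
\[
\cdots \to H_*(W, \partial W) \to SH_*(W) \to SH^+_*(W) \to \cdots
\]
so that $SH^+_*(W)$ inherits a distinguished unit-image together with the partially-defined multiplicative structure induced from $SH_*(W)$. By the definition of positive idempotent group given in Section~\ref{section : def of ADC}, $I_+(W)$ is packaged from precisely this data, which is the point that makes naturality arguments feasible.

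Next I would produce, for any two such fillings, a comparison morphism between their positive symplectic homologies via a cofinal family of contact Hamiltonians on the common completion of the boundary. The strong ADC condition is exactly what ensures that, below any fixed action window, all Reeb orbits contributing to the Floer complex are \emph{contact-created} rather than \emph{filling-created}, so that the comparison map becomes an isomorphism on $SH^+_*$ in a structured way --- sending unit-image to unit-image and respecting products where they are defined. Running the comparison in both directions then yields an isomorphism $I_+(W_0) \xrightarrow{\sim} I_+(W_1)$.

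The main obstacle, which will occupy the bulk of the work, is verifying that the strongly ADC hypothesis truly preserves the \emph{multiplicative} data used to cut out $I_+$, not merely the additive data on $SH^+_*$. Classical ADC filling-independence is typically stated only at the level of graded groups; here one must show that every pair-of-pants product whose output lies in an action window below the strongly ADC threshold is computable from boundary data alone. I plan to do this by an action-filtration argument on the moduli of pairs-of-pants, combined with a maximum-principle energy estimate ensuring that these moduli cannot escape into the non-contact part of the filling, so that the idempotency relations are independent of the choice of $W$. Matching the units of $SH_*(W_0)$ and $SH_*(W_1)$ under the transfer, using their categorical characterization as multiplicative identities, then completes the argument.
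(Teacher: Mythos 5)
Your overall strategy (compare the two fillings through Floer data concentrated near the boundary, and use the strongly ADC hypothesis to control the multiplicative relations cutting out $I_+$) is in the right spirit, but two of your key mechanisms do not work as stated. First, there is no ``Viterbo-type transfer morphism'' between two different Liouville fillings $W_0,W_1$ of the same contact manifold: Viterbo functoriality requires one domain to sit inside the completion of the other, which is not available here, and the full rings $SH_*(W_0)$, $SH_*(W_1)$ need not be isomorphic at all, so ``matching the units under the transfer'' has no meaning. The actual comparison has to be made at chain level: after neck-stretching one arranges $(H_W,J_W)\equiv(H_V,J_V)$ on $\Sigma\times[R,\infty)$, identifies the subcomplexes $O_*$ generated by non-constant orbits, and shows (Lazarev's Proposition 3.10) that the differentials agree; the units never get compared, they only enter through the quotient by $H^0$.

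Second, and more seriously, a maximum principle cannot confine pairs-of-pants (or Floer cylinders) between non-constant orbits to the cylindrical end --- it only prevents escape to infinity, and curves between positive-action orbits can dip into the filling; this is exactly why $SH^+$ a priori depends on the filling. The confinement must come from neck-stretching, SFT compactness, and an index-plus-action computation using the positivity of the lower SFT index of short Reeb orbits. Moreover, the blanket claim you plan to prove --- that every product with output in an action window below the ADC threshold is computed from boundary data --- is false for strongly ADC manifolds in general (this is noted explicitly in the paper); the confinement holds only under additional index hypotheses on the asymptotes ($\mu_{CZ}(x_1),\mu_{CZ}(x_2)\ge n$ with output in degree $n$), which suffices because idempotents live in degree $n$. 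Even then the products over the two fillings agree only up to a term supported on the constant orbit (the minimum of $H$), i.e.\ up to an element of $H^0(W)$; the definition $I(W)=\{\alpha:\alpha^2-\alpha\in H^0(W)\}$ and the quotient $I_+=I/H^0$ are designed precisely to absorb this ambiguity, a step absent from your outline. Finally, one still needs the degree/action truncation to be compatible with the direct limit, invoking the ADC isomorphism of $SH^+$ to pass from $I_+^{<D/2}$ to $I_+$.
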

    \begin{remark}
    Here a Liouville filling $W$ of $(\Sigma,\xi,\Phi)$ means that $W$ is a filling of $(\Sigma,\xi)$ and the trivialization $\Phi$ of the canonical bundle extends over $W$.
    Now that all these Liouville fillings have isomorphic positive idempotent group, we can regard $I_+$  as an invariant for strongly ADC contact manifold. We will prove the result in section~\ref{section:indepnedence of I+}.
    \end{remark}

	 As an application, we will use the \textit{positive idempotent group} to distinguish contact boundaries of Stein manifolds, which has a long history. Y.Eliashberg   \cite{eliashberg1991symplectic} constructed an exotic contact structure representing the standard almost contact structure on $S^{4k+1}$, and I.Ustilovsky  \cite{ustilovsky1999contact} proved that every almost contact class on $S^{4k+1}$ has infinitely many different contact structures. M.McLean   \cite{mclean2007lefschetz} has shown that there are infinitely many exotic Stein structures $\mathbb{C}_k^n$ on $\mathbb{C}^n, n\geq 4$. Using flexible Weinstein structures, O.Lazarev   \cite{lazarev2016contact} proved that any contact manifold admitting an almost Weinstein filling admits infinitely many exotic contact structures with flexible fillings. We have the following theorem:
	\begin{theorem}\label{big theorem}
		For any complex dimension $n>8$ and any positive integer $k$, there are Stein domains $V_0,V_1,\cdots, V_k$ such that:
		\begin{itemize}
			\item $V_i's $ are almost symplectomorphic,
			\item the contact boundaries $\p V_i$ of $V_i$ are in the same almost contact class,
			\item  $\p V_i$ are mutually non-contactomorphic.
			\item $\tilde{H}_j(V_i)=0$ for $j\neq n,n-1$.
		\end{itemize}
	\end{theorem}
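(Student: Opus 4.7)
The overall plan is to apply Theorem~\ref{Main Thm} in its contrapositive form. I will construct Stein domains $V_0,\dots,V_k$ of complex dimension $n>8$ that are pairwise almost symplectomorphic, whose contact boundaries lie in a single almost contact class, are strongly ADC with nonzero symplectic homology, and have pairwise non-isomorphic positive idempotent groups. Theorem~\ref{Main Thm} then forces the $\p V_i$ to be pairwise non-contactomorphic, since a contactomorphism between two of them would transport one filling into the other and hence force an isomorphism of the invariants $I_+(V_i)$.

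For the construction I would start with a suitable ``seed'' Weinstein domain $V_0$ whose handle decomposition has critical handles only in indices $n-1$ and $n$ (guaranteeing $\tilde{H}_j(V_i)=0$ for $j\neq n-1,n$), whose boundary is known to be strongly ADC, and whose symplectic homology is nontrivial; Brieskorn-type fillings or plumbings in the spirit of McLean are natural candidates. To obtain $V_1,\dots,V_k$ I would attach additional pairs of Weinstein $(n-1)$- and $n$-handles along carefully chosen Legendrian data. Two ingredients are crucial here: (i) an application of the Cieliebak--Eliashberg $h$-principle for almost Weinstein structures, comfortably available in the range $n>8$, ensuring that all $V_i$ are almost symplectomorphic and that their contact boundaries share a common almost contact class; and (ii) ensuring that the different choices of genuine (not just formal) Weinstein data register at the symplectic-homology level. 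I would then verify that strong ADC and nonvanishing of $SH_*$ persist after each modification, both following from standard index-positivity arguments for the Reeb orbits appearing in the standard neighborhoods of the new attaching spheres and from the fact that adding critical handles to a filling with $SH_*\neq 0$ does not destroy this property.

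The main obstacle, and the technical heart of the argument, is the final step: showing that $I_+(V_0),\dots,I_+(V_k)$ are pairwise non-isomorphic. I expect this to require unpacking the definition of $I_+$ in terms of the positive symplectic homology and identifying explicit idempotent classes contributed by each added critical handle; by arranging the attaching data so that a numerical or ring-theoretic feature of $I_+(V_i)$ (such as a rank, a filtration jump, or a multiplicative generator count) depends monotonically on $i$, one obtains $k+1$ distinguishable values of the invariant. Once this calculation is in place, Theorem~\ref{Main Thm} yields the stated non-contactomorphism, and the homological condition $\tilde{H}_j(V_i)=0$ for $j\neq n-1,n$ is built into the handle structure by construction.
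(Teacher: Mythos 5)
Your overall strategy is the right one (and matches the paper's): build almost symplectomorphic Stein domains whose boundaries are in one almost contact class, are (strongly) ADC with a filling having $SH_*\neq 0$, and then invoke Theorem~\ref{Main Thm} to turn distinct positive idempotent groups into non-contactomorphic boundaries. But the heart of the proof is exactly the part you leave open, and the route you sketch for it does not go through. First, you propose to modify $V_0$ by attaching pairs of critical Weinstein $(n-1)$- and $n$-handles and to ``verify that strong ADC persists ... from standard index-positivity arguments.'' The ADC-preservation results available (Theorem~\ref{subcritical surgery of ADC}, Theorem~\ref{theorem:contact surgery}, and Lemma~\ref{ADC boundary}) cover only subcritical and flexible surgeries; for genuine critical handle attachments the Reeb dynamics and the Floer-theoretic picture change in a way that is not controlled by the degree formula $|\gamma^i|=2n-k-4+2i$ of Proposition~\ref{key}, which is stated for $k<n$. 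Second, and more seriously, you give no mechanism that makes $I_+(V_0),\dots,I_+(V_k)$ pairwise non-isomorphic: the effect of critical handle attachment on $SH_*$ is governed by surgery exact sequences rather than any simple splitting, so ``arranging the attaching data so that a numerical feature depends monotonically on $i$'' is a hope, not an argument. You also never secure finiteness of the invariant, which is what permits a cardinality comparison at all.

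The paper's construction avoids both problems by a different and more elementary device. It takes a single domain $W$ as in Theorem~\ref{main technical theorem} (ADC boundary, $SH_*(W,\Z_2)\neq 0$, $|I(W)|<\infty$, homology concentrated in degrees $n-1,n$), lets $W_1$ be its flexibilization (almost symplectomorphic to $W$, with $SH_*(W_1)=0$ and ADC boundary by Lemma~\ref{ADC boundary}), and sets $W_i$ equal to the boundary connect sum of $i$ copies of $W$ with $k-i$ copies of $W_1$. Boundary connect sum is a Weinstein $1$-handle attachment, i.e.\ subcritical, so ADC is preserved (Theorem~\ref{subcritical surgery of ADC}) and the symplectic homology ring is unchanged (Theorem~\ref{invariant of SH}); hence $SH_n(W_i)\cong\bigoplus_{j=1}^{i}SH_n(W)$, $I(W_i)\cong I(W)^{\oplus i}$, and $|I(W_i)|=|I(W)|^i$ with $2\le |I(W)|<\infty$, giving $k+1$ pairwise distinct values of $|I_+|$ with no computation of the effect of critical handles at all (Proposition~\ref{new prop}). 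Almost symplectomorphism and the common almost contact class come from the flexibility $h$-principle and Lemma~\ref{lemma for almost contact}, and Theorem~\ref{from weinstein to stein} converts the Weinstein structures to Stein ones. If you want to salvage your version, you would need either to restrict your modifications to subcritical/flexible ones (at which point you are essentially forced into the paper's connect-sum trick, since flexible pieces are the only ones whose $SH$ you can control for free) or to carry out a genuine computation of $I_+$ through a critical surgery exact sequence, which is a substantially harder and unproved step.
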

    \begin{remark}
    In Theorem 1.14 \cite{lazarev2016contact}, O.Lazarev proved that if $V$ is almost symplectomorphic to a domain containing a closed (regular) Lagrangian, then there are infinitely symplectic structures $V_k$ almost symplectomorphic to $V$ that are not symplectomorphic and their contact boundaries are not contactomorphic either. The Stein domains constructed in this paper are different from Lazarev's examples.
    \end{remark}
	\subsection{Sketch of the proof}

	The contact structure on $\p V_i$ in Theorem~\ref{big theorem} is asymptotically dynamically convex.  In the case when $I_+(\Sigma)$ is finite, we can define the \textit{positive idempotent index} $i(\Sigma):=|I_+(\Sigma)|$ (see Section~\ref{ss:definition of positive idempotent group}). The theorem~\ref{big theorem} is based on the following theorem, which will be proved in Section~\ref{Finite}:

\begin{theorem}\label{main technical theorem}
	There exists connected Weinstein domains $(W^{2n},\lambda,\psi),$ for any $n>8$ such that
	\begin{itemize}
		\item $(\p W,\lambda)$ is asymptotically dynamically convex,
		\item $SH_*(W,\Z/2\Z)\neq 0$,
		\item $|I(W,\Z/2\Z)|<\infty$.
		\item $\tilde{H}_i(W,\Z/2\Z)=0$, for $i\neq n,n-1$.
	\end{itemize}
\end{theorem}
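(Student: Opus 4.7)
My plan is to construct $W$ by a sequence of Weinstein handle attachments on a subcritical base, controlling each of the four required properties through the choice of Legendrian attaching spheres. Concretely, I would start with a connected subcritical Weinstein base $W_0$ (e.g.\ the $2n$-ball, or a small cotangent-type model engineered to seed the right Reeb dynamics) and attach subcritical $(n-1)$-handles together with critical $n$-handles along explicit Legendrians $\Lambda \subset \p W_0$. The resulting $W$ is automatically connected, and the induced CW structure has cells only in degrees $0$, $n-1$ and $n$, which immediately gives the fourth bullet $\tilde H_i(W;\Z/2\Z)=0$ for $i\neq n-1,n$.

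For the first bullet, I would invoke the ADC surgery principle established earlier in the paper: attaching a critical Weinstein handle along a Legendrian whose post-surgery Reeb chord generators have Conley--Zehnder index tending to $+\infty$ on each action window preserves asymptotic dynamical convexity. The hypothesis $n>8$ gives ample formal room to realize such $\Lambda$ by an elementary degree count. For the second bullet, $SH_*(W;\Z/2\Z)\neq 0$, I would invoke a Bourgeois--Ekholm--Eliashberg type surgery formula relating $SH^*(W;\Z/2\Z)$ to the Chekanov--Eliashberg DGA of $\Lambda$; choosing $\Lambda$ with a nontrivial augmentation (equivalently, ensuring the attached critical handle is non-loose) is enough to force $SH_*(W;\Z/2\Z)\neq 0$. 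Alternatively one may arrange that $W$ contains an exact closed Lagrangian whose fundamental class is detected in $SH^*$.

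The third and most delicate property is $|I(W,\Z/2\Z)|<\infty$. My plan is to compute the degree-zero part $SH^0_+(W;\Z/2\Z)$ via the Viterbo exact triangle relating $SH^*_+(W)$, $SH^*(W)$ and $H^*(W)$, and to read off its idempotents under the pair-of-pants product. If $\Lambda$ is chosen so that its Chekanov--Eliashberg DGA produces only finitely many independent Reeb-chord classes in the relevant action/index window (equivalently, so that the positive symplectic cohomology is concentrated in degree $0$ with finite-dimensional top filtration piece over $\Z/2\Z$), then $SH^0_+(W;\Z/2\Z)$ is finite-dimensional as a $\Z/2\Z$-algebra and hence contains only finitely many idempotents.

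The hard part is the simultaneous balancing of the four conditions: a very flexible $\Lambda$ kills $SH_*(W)$, while a very rigid $\Lambda$ tends to produce a large DGA with a potentially infinite idempotent set; on top of this the ADC index bound and the homological vanishing in degrees $\neq n-1, n$ must both be respected. I therefore expect the bulk of the work to be the construction of a single explicit Legendrian model---drawing, for instance, on McLean's exotic Stein examples or on suitable plumbings of Lagrangian spheres---that threads all four constraints at once. Once such a model is in place, each bullet reduces to a handle-calculus computation or to a direct application of the surgery exact triangles already established earlier in the paper.
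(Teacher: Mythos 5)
Your proposal does not follow the paper's route (which builds $W$ from a Brieskorn variety: one takes the quotient $(V_{\mathbf a}(\epsilon)\times\C^*)/C(L)$, i.e.\ essentially the complement $\C^{n+1}\setminus V_{\mathbf a}(0)$, proves strong ADC of its boundary by explicit Conley--Zehnder computations for the Brieskorn Reeb flow together with a covering trick, proves $SH^0_*\neq 0$ and finite-dimensionality via the Kwon--van~Koert Morse--Bott spectral sequence, and finally attaches a single Weinstein $2$-handle to kill $\pi_1=\Z$, using the surgery invariance of ADC and of the ring $SH_*$). That by itself would be fine, but as written your argument has a genuine gap: the central object --- the Legendrian $\Lambda$ (or explicit plumbing/Stein model) that is supposed to satisfy all four constraints simultaneously --- is never constructed, and you yourself flag its construction as ``the bulk of the work.'' Everything else in your outline is conditional on its existence, so the proof is not complete; the whole difficulty of the theorem is precisely to exhibit one concrete example threading the tension you describe (flexible attaching data kills $SH_*$, rigid attaching data threatens infiniteness of the idempotent set), which the paper resolves only by the explicit Brieskorn computation with exponents $(2,2,2,p_1,\dots,p_k)$ chosen so that $m(\mathbf a)\geq 2$ and the spectral sequence degenerates in the relevant degrees.

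Two of the tools you invoke are also not available in the form you use them. First, the ADC-preservation results proved in the paper (and in Lazarev's work) concern \emph{subcritical} surgery, index $k\neq 2$, plus the index-$2$ case when the trivialization extends; there is no established statement that attaching a \emph{critical} $n$-handle along a Legendrian preserves ADC merely because ``post-surgery Reeb chord generators have Conley--Zehnder index tending to $+\infty$ on each action window,'' and after critical surgery the new closed Reeb orbits are governed by the Chekanov--Eliashberg DGA in a way that makes such an index control itself a nontrivial claim requiring proof. Second, your finiteness criterion for the idempotents is off target: what must be finite is the set $I(W)\subset SH_n(W)$ of elements with $\alpha^2-\alpha\in H^0(W)$, i.e.\ one needs finite-dimensionality of the degree-$n$, null-homologous part of the full $SH_*(W;\Z/2)$, not concentration of $SH_+$ in degree $0$ on a single action window; since Reeb orbits of arbitrarily large action can contribute to a fixed degree, finiteness in each action/index window does not by itself bound $\dim_{\Z/2} SH_n^0(W)$. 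In the paper this finiteness is exactly what the Morse--Bott spectral sequence computation delivers (only finitely many Morse--Bott families $MB(p)\times S^1$ have shifted index in a given degree, each with finite-dimensional homology), and your outline contains no substitute for that step.
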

\begin{remark}
The definition of $I$ is in equation~\ref{def of I}.
\end{remark}
The basic idea to construct the Weinstein domain is to use Brieskorn variety. First we take the complement of a specific Brieskorn variety and then attach a Weinstein 2-handle to kill the fundamental group. With the help of a covering trick we can show that the resultant manifold has asymptotically dynamically convex boundary. The full proof is at the end of this paper, see Section~\ref{Finite}.

We will need the fact that any almost Weinstein domain admits a flexible Weinstein structure in the same almost symplectic class (See Section~\ref{ss:Weinstein handle & contact surgery}). Moreover, if a contact manifold admits a flexible filling, then it is asymptotically dynamically convex, as stated in the following lemma:
	\begin{lemma}[Corollary 4.1  \cite{lazarev2016contact}]\label{ADC boundary}
		If $(Y^{2n-1},\xi),n\geq 3$, has a flexible filling, then $(Y,\xi)$ is asymptotically dynamically convex.
	\end{lemma}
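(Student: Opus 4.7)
The plan is to induct on the handle decomposition of a flexible Weinstein filling $W$ of $(Y,\xi)$. Recall that a Weinstein structure is flexible when every critical handle (of index $n$) is attached along a loose Legendrian sphere, while all subcritical handles (of index $k<n$) are automatically flexible. Since ADC requires, for every $D>0$, a contact form $\alpha_D$ on $Y$ whose closed Reeb orbits of action below $D$ are nondegenerate and have Conley--Zehnder index $>n-3$, it suffices to show that this property is preserved under each handle attachment in a flexible decomposition, starting from a trivially ADC base case (such as the standard contact sphere or the empty cobordism).

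For the subcritical step, I would use the explicit Weinstein handle model of Cieliebak--Eliashberg. The new closed Reeb orbits on the post-surgery boundary come from Morse--Bott families parameterized by the belt sphere, which has dimension $2n-k-1$; a standard index computation shows these orbits have Conley--Zehnder index that can be made arbitrarily large by shrinking the handle, so any prescribed action threshold $D$ is achieved only at CZ-index $>n-3$. Orbits inherited from the old boundary are essentially unchanged up to arbitrarily small shifts. The critical flexible handle is the key new input: near the loose Legendrian attaching sphere $\Lambda$, the loose chart permits a $C^{0}$-small Legendrian isotopy together with a compatible contact-form model whose Reeb flow has large first-return time on a neighborhood of $\Lambda$. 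After handle attachment, the new closed Reeb orbits either have action above $D$ or lie in Morse--Bott families of large Conley--Zehnder index. Iterating the two cases over the full handle decomposition yields the required sequence of contact forms $\alpha_{D_i}$ with $D_i\to\infty$.

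The principal obstacle will be making the ``loose chart gives high-index orbits'' step precise: this uses the $h$-principle for loose Legendrians in a dynamical way and requires controlling simultaneously the contact-topological perturbation (which is soft) and the induced Reeb dynamics (which is not). One must also verify that the handle-by-handle perturbations can be composed without spoiling previously established index estimates on lower strata of the filtration, which is routine but not automatic. The hypothesis $n\geq 3$ enters exactly here, since loose Legendrian charts are only meaningful in dimension $2n-1\geq 5$, and it is plausibly the minimal assumption under which the inductive step for critical handles can be carried out.
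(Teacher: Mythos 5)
The paper does not reprove this statement; it is quoted directly as Corollary 4.1 of \cite{lazarev2016contact}, and your plan is essentially a reconstruction of Lazarev's argument: induct over a flexible Weinstein handle decomposition starting from the standard ball, using that ADC is preserved by subcritical handle attachment (his Theorem 3.15, quoted in this paper as Theorem~\ref{subcritical surgery of ADC}) and by handle attachment along loose Legendrians. Two corrections to the framework, though: ADC asks for a \emph{non-increasing} sequence of contact forms $\alpha_1\geq\alpha_2\geq\cdots$ with thresholds $D_k\to\infty$, not an independent form for each $D$ (the monotonicity is what makes the handle-by-handle compositions and the invariance arguments work), and the index condition is positive reduced degree $|\gamma|=\mu_{CZ}(\gamma)+n-3>0$, i.e.\ $\mu_{CZ}(\gamma)>3-n$, not $\mu_{CZ}(\gamma)>n-3$. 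Also, in the subcritical step the new orbits' degrees are not ``arbitrarily large'': they are the iterates $\gamma^i$ with $|\gamma^i|=2n-k-4+2i$, all positive for $k<n$; shrinking the handle only makes their actions small so that arbitrarily many iterates appear below the threshold.

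The genuine gap is in the critical (index $n$) flexible handle, which is the heart of the lemma. Your proposed mechanism --- isotope $\Lambda$ inside the loose chart so that ``the Reeb flow has large first-return time near $\Lambda$,'' forcing new orbits to have action above $D$ or high index --- does not work as stated: the new closed Reeb orbits after critical surgery are built from Reeb chords of the attaching Legendrian (concatenated through the handle), and one cannot in general push all chords of a given Legendrian above a prescribed action by a $C^0$-small isotopy; loose Legendrians typically retain short chords. Lazarev's actual argument is different in kind: one uses Murphy's h-principle to replace $\Lambda$ by a stabilized loose representative in the same formal class (hence Legendrian isotopic to $\Lambda$), arranged so that every Reeb chord of action less than $D$ has large Conley--Zehnder/degree, and then invokes the correspondence between the new closed orbits created by the surgery and (cyclic words of) Reeb chords of $\Lambda$ to conclude that all new orbits of action $<D$ have positive degree; the unavoidable loss of control above the threshold, and the need to rescale the form near the surgery region, is precisely why the conclusion is ``asymptotically'' dynamically convex rather than dynamically convex. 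Without the chord--orbit correspondence and the stabilization/index-raising input from looseness, the inductive step for critical handles does not go through, so as written the proposal is missing the key idea rather than merely the details.
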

	\begin{proof}[Proof of Theorem~\ref{big theorem}]

         Let $(W,\lambda,\psi)$ be in Theorem~\ref{main technical theorem}.
		 There is a flexible Weinstein domain $(W_1,\lambda_1,\psi_1)$ that is almost symplectomorphic to $W$. 
		 Let
		 \[(W_i,\lambda_i,\psi_i):=\underbrace{(W,\lambda,\psi)\natural(W,\lambda,\psi)\natural\cdots\natural(W,\lambda,\psi)}_i\natural\underbrace{(W_1,\lambda_1,\psi_1)\natural(W_1,\lambda_1,\psi_1)\natural\cdots\natural(W_1,\lambda_1,\psi_1)}_{k-i}.
		 \]
		 That is, $W_i$ is the boundary connect sum of $i$ copies of $W$ and $k-i$ copies of the flexibilization of $W_1$.
		 The boundary connect sum is equivalent to attaching a Weinstein 1-handle, so $W_i$ is a Weinstein domain. By construction, they are all almost symplectomorphic, see subsection~\ref{sec: formal structures}, and their boundaries are in the same almost contact class by lemma~\ref{lemma for almost contact}. Theorem~\ref{from weinstein to stein} allows us to deform a Weinstein structure into a Stein structure, which is denoted by $V_i$. The last condition is obvious.
		 There's only the third condition left to be verified. Indeed, we have $(\p V_i,\lambda_i)$ is asymptotically dynamically convex. Furthermore, we have:  
		 \begin{prop}\label{new prop}
		  $|I_+(\p W_i)|\neq |I_+(\p W_j)|,i\neq j.$
		 \end{prop}
		 The proof of Proposition~\ref{new prop} will be defer to Subsection~\ref{subsection:effect of conatct surgery}.

	\end{proof}	
		
\section{Background}\label{section:background}

	\subsection{Conventions and notation} (See Section~\ref{section:background} for detailed definitions.)
	
	Let $\lambda$ be a Liouville 1-form on a Liouville manifold $W$.
	$$
	d\lambda(\cdot,J\cdot)=g_J \qquad\text{(Riemannian metric)},
	$$
	$$
	d\lambda(X_H,\cdot)=-dH,\qquad X_H=J\nabla H \qquad\text{(Hamiltonian vector field)},
	$$
	$$
	\clL\wh W:=C^\infty(S^1,\wh W), \qquad S^1=\R/\Z \qquad \text{(loop space)},
	$$
	$$
	A_H:\clL\wh W\to \R,\qquad A_H(x):=\int_{S^1}x^*\lambda -
	\int_{S^1}H(t,x(t))\,dt \qquad\text{(action)},
	$$
	$$
	\nabla A_H(x)=-J(x)(\dot x-X_H(t,x)) \qquad\text{($L^2$-gradient)},
	$$
	$$
	u:\R\to\clL W,\qquad \p_su=\nabla A_H(u(s,\cdot)) \qquad\text{(gradient
		line)}
	$$
	\begin{equation}\label{eq:Floer}
	\Longleftrightarrow \p_s u + J(u)(\p_t u-X_H(t,u))=0
	\qquad\text{(Floer equation)},  
	\end{equation}
	$$
	\cP(H):=\mbox{Crit}(A_H) = \{\text{$1$-periodic orbits of the Hamiltonian vector field $X_H$}\} ,
	$$
	 
    \[
	\textrm{For each }  h\in H_1(W),
	\cP^h(H):=\mbox{Crit}_h(A_H) =  
	 \{ x\in \cP(H) \big| [x]=h\in[S^1\to W]  \}
	\]

	$$
	\hspace{.5cm} \mathcal M(x_-,x_+;H,J)=\{u:\R\times S^1\to W \mid
	\p_su= \nabla A_H(u(s,\cdot)),\ u(\pm\infty,\cdot)=x_\pm\}/\R 
	$$
		$$
	\mbox{(moduli space of Floer trajectories connecting $x_\pm\in\cP(H)$)},
	$$
\[
	\hspace{.5cm} \mathcal M_h(x_-,x_+;H,J)=\{u:\R\times S^1\to W \mid
	\p_su= \nabla A_H(u(s,\cdot)),\ u(\pm\infty,\cdot)=x_\pm\in \cP^h(H)\}/\R 
\]
	$$
	\mbox{(moduli space of Floer trajectories connecting $x_\pm\in\cP^h(H)$)},
	$$

	$$
	\dim \cM(x_-,x_+;H,J)=\mu_{CZ}(x_+)-\mu_{CZ}(x_-)-1,
	$$
	$$
	A_H(x_+)-A_H(x_-) = \int_{\R\times S^1}|\p_su|^2ds\,dt = \int_{\R\times S^1}u^*(d\lambda-dH\wedge dt).  
	$$
	Here the formula expressing the dimension of the moduli space in terms
	of Conley-Zehnder indices is to be understood with respect to a
	symplectic trivialization of $u^*TW$.  
	
	Let $\K$ be a {\color{black}field} and $a<b$ with $a,b\notin\mbox{Spec}(\p W,\alpha)$. We define the filtered Floer
	chain groups with coefficients in $\K$ by  
	\[
	SC_*^{<b}(H) := \bigoplus_{\scriptsize \begin{array}{c} x\in \cP(H)
		\\ A_H(x)<b \end{array}} \K\cdot x,\qquad
	SC_*^{(a,b)}(H) = SC_*^{<b}(H)/SC_*^{<a}(H),
     \]
	with the differential $d:SC_*^{(a,b)}(H)\to SC_{*-1}^{(a,b)}(H)$
	given by
	\[
	d x_+=\sum_{\mu_{CZ}(x_-)=\mu_{CZ}(x_+)-1} \#\mathcal M(x_-,x_+;H,J)\cdot x_-.
    \]
	Here $\#$ denotes the signed count of points with respect to suitable
	orientations. 
	We think of the cylinder $\R\times S^1$ as the twice punctured Riemann
	sphere, with the positive puncture at $+\infty$ as incoming, and the
	negative puncture at $-\infty$ as outgoing. This terminology makes
	reference to the corresponding asymptote being an input, respectively
	an output for the Floer differential. Note that the differential
	decreases both the action $A_H$ and the Conley-Zehnder index. 
	The filtered Floer homology is now defined as 
	$$
	SH_*^{(a,b)}(H) = \ker d/\im d. 
	$$
	Note that for $a<b<c$ the short exact sequence
	$$
	0 \to SC_*^{(a,b)}(H) \to SC_*^{(a,c)}(H) \to SC_*^{(b,c)}(H) \to 0
	$$
	induces a {\em tautological exact triangle} 
	\begin{equation}\label{eq:taut1}
	SH_*^{(a,b)}(H) \to SH_*^{(a,c)}(H) \to SH_*^{(b,c)}(H)
	\to SH_*^{(a,b)}(H)[-1].
	\end{equation}
	
	{\color{black}\noindent {\bf Remark.} We will suppress the field $\K$ from the notation. As noted in the Introduction, the definition can also be given with coefficients in a commutative ring. In this paper, $\K=\Z_2$.}
    \begin{notation*}
   
	Let $\fa=(a_0,a_1,\cdots,a_n)$ be an $(n+1)$-tuple of integers $a_i>1,\mathbf{z}:=(z_0,z_1,\cdots,z_n)\in \mathbb{C}^{n+1}$, and set $f(\mathbf{z}):=z_0^{a_0}+z_1^{a_1}+\cdots+z_n^{a_n}$,
	and let $B(s)$ to be the closed ball of radius $s$.
	\[V_\fa(t):=\{(z_0,z_1,\cdots,z_n)\in \mathbb{C}^{n+1}| f(\mathbf{z})=t\}.\]
	We will often suppress $\fa$ from the notation. Let
	\[X_t^s=V(t)\cap B(s).
	\]
	 and let $\beta\in C^\infty(\mathbb{R})$ be a smooth monotone decreasing cut-off function with $\beta(x)=1,x\leq \frac{1}{4}$ and $\beta(x)=0,x\geq \frac{3}{4}$,
	\[U_\fa(\epsilon):=\{\mathbf{z}\in \mathbb{C}^{n+1}|z_0^{a_0}+\cdots+z_n^{a_n}=\epsilon\cdot\beta(||\mathbf{z}||^2)\}.\]
	Likewise $\fa$ will often be suppressed.
	Moreover let \[W_\epsilon^s=U(\epsilon)\cap B(s).\]

\end{notation*}

	\subsection{Symplectic and contact structures}
	
	A \textit{symplectic manifold} $(M,\omega) $ is a smooth $2n$-dimensional manifold $M$ together with a nondegenerate, closed 2-form $\omega$. A function $H\in C^{\infty}(M)$ on a symplectic manifold $(M,\omega)$ is called \textit{Hamiltonian}. We define its \textit{Hamiltonian vector field} $X_H$ via
	\[dH=-\iota_{X_H}\omega=-\omega(X_H,\cdot)=\omega(\cdot,X_H).\]
	
	A \textit{contact manifold} $\Sigma$ is a smooth $(2n-1)$-dimensional manifold together with a completely non-integrable smooth hyperplane distribution $\xi \in T\Sigma$. The distribution is called a \textit{contact structure}. It can be locally defined as $\xi=\ker\alpha$ for some local 1-form $\alpha$ such that $\alpha\wedge (d\alpha)^{n-1}\neq 0$ pointwise. If $\alpha$ is globally defined, then  $\alpha$ is called a \textit{contact form}. We will always assume  $\alpha$ is globally defined. Under this assumption  $\alpha\wedge (d\alpha)^{n-1}$ gives rise to a volume form and hence $\Sigma$ is orientable. Once an orientation is chosen we require that $\alpha\wedge (d\alpha)^{n-1} > 0$. Associated with a contact form $\alpha$ one has a \textit{Reeb vector field} $R$, uniquely defined by the equations
	\begin{align*}
	&\iota_R(d\alpha)=0, \\  
	&\iota_R \alpha=1.
	\end{align*}
	Clearly $R$ is transverse to $\xi$. If we have two different forms $\alpha, \alpha^{'}$ which define the same contact structure, then we can find a nowhere vanishing function $f$ such that $\alpha^{'}=f\cdot\alpha$. Indeed, $f=\alpha^{'}(R)$. The flow of a Reeb vector field is called Reeb flow, and closed trajectories of Reeb flow are called the \textit{Reeb orbits}. The \textit{action} of a Reeb orbit $\gamma$ is defined as 
	\[A(\gamma):=\int_{S^1}\gamma^{*}\alpha
	\]
	Note that $A(\gamma)$ is always positive and equals the period of $\gamma$. The \textit{spectrum} $spec(\Sigma,\alpha)$ is the set of actions of all Reeb orbits of $\alpha$.
	We will need the following definition for Reeb trajectories which is part of a closed Reeb orbit.
	\begin{defn}
	 $\gamma:[0,T]\to X$ is called a \textit{fractional Reeb orbit} for contact manifold $(X,\xi)$ if there is a closed Reeb orbit $\gamma_0$ of $(X,\xi)$ such that $\gamma(t)=\gamma_0(t), t\in [0,T]$.
	\end{defn}

	We say that a Reeb orbit $\gamma$  of $\alpha$ is \textit{non-degenerate} if the linearized Reeb flow along $\gamma$ from $\xi_p$ to itself for some $p \in \gamma$ has no eigenvalue 1. Moreover we say that a contact form is \textit{non-degenerate} if all Reeb orbits of $\alpha$ are non-degenerate. We can always assume a contact form is non-degenerate after a $\mathcal{C}^0$-small perturbation,  since a generic contact form is non-degenerate. Notice that when $\alpha$ is non-degenerate, $spec(\Sigma,\alpha)$ is a discrete subspace of $\mathbb{R}^{+}$.
	
	\subsection{Liouville and Weinstein domains}\label{ssec: domains}

	A \textit{Liouville domain} is a pair $(W^{2n}, \lambda)$ such that \begin{itemize}
		\item $W^{2n}$ is a compact manifold with boundary,
		\item $d\lambda$ is a symplectic form on $W$ ,
		\item the Liouville field $X_\lambda$, defined by $i_{X}d\lambda = \lambda$, is outward transverse along $\partial W$.
	\end{itemize}
	Let $\alpha:=\lambda|_{\partial W}$ be a contact one-form on $\partial W$. The negative flow of $X$ gives rise to a collar:
	\begin{align*}
	&\phi: (1-\epsilon,1]\times\partial W\rightarrow W, \\
	&\phi^*\lambda = r\alpha,\quad \phi^*X=r\partial_r.
	\end{align*}
	 We can attach an infinite cone to it, which is called the \textit{completion} of $(W,\lambda)$:
	\begin{align*}
	&\widehat{W}=W\cup_{\partial W}([1,\infty)\times \partial W),
	\qquad \hat{\lambda}|_W=\lambda\\
	&\hat{\lambda}|([1,\infty)\times \partial W)= r\alpha,\quad
	\hat{X}|([0,\infty)\times \partial W)= r\partial_r, \quad\hat{\omega}=d\hat{\lambda}.
	\end{align*}
	A \textit{Liouville isomorphism} between domains $W_0,W_1$ is a diffeomorphism $\psi: \widehat{W_0}\to\widehat{W_1}$ satisfying $\psi^*\hat{\lambda_1}=\hat{\lambda_0}+df$, for some $f$ compactly supported. We also say that $\widehat{W_0}$ and $\widehat{W_1}$ are Liouville isomorphic. Clearly $\psi$ is compatible with the Liouville flow at infinity.
	
	\begin{defn}
		A Liouville domain $(W,\lambda)$ is called $G$-equivariant if a group $G$ acts on $W$ and $\lambda$ is $G$-invariant, i.e, $g^{*}\lambda=\lambda,\forall g\in G$. A diffeomorphism $f$ between two $G$-equivariant Liouville domains is called $G$-equivariant if the following diagram commutes, for all $g \in G$:
		\[\begin{tikzcd}
		(W_1,\lambda_1) \arrow[r, "f"] \arrow[d, "g*"]
		& (W_0,\lambda_0) \arrow[d, "g*" ] \\
		(W_1,\lambda_1) \arrow[r,  "f" ]
		& (W_0,\lambda_0)
		\end{tikzcd}
		\]
	\end{defn}
	\begin{remark}
		A manifold $M$ is called $G$-equivariant if $G$ acts on it. 
	\end{remark}

	\begin{prop}[Proposition 11.8  \cite{cieliebak2012stein}]\label{Liouville homotopy theorem of Cieliebak and Eliashberg}
		Let $W$ be a compact symplectic manifold with contact type boundary and $\lambda_t,t\in [0,1]$ be a homotopy of Liouville forms on $W$. Then there exits a diffeomorphism of the completions $f:\widehat{W_0}\to \widehat{W_1}$ such that $f^*\hat{\lambda_1}-\hat{\lambda_0}=dg$ where $g$ is a compactly supported function.
	\end{prop}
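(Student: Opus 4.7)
The plan is a Moser-type argument on the completion $\wh{W}$, preceded by a normalization step that trivializes the homotopy in a neighborhood of infinity.

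\textbf{Step 1: Normalize on the boundary via Gray stability.} The restrictions $\alpha_t := \lambda_t|_{\p W}$ form a smooth family of contact forms on the closed manifold $\p W$. Applying Gray's stability theorem to the family of contact structures $\xi_t := \ker \alpha_t$ yields a smooth isotopy $\psi_t : \p W \to \p W$ with $\psi_0 = \mathrm{id}$, together with positive functions $f_t \in C^\infty(\p W)$ satisfying $\psi_t^*\alpha_t = f_t\, \alpha_0$.

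\textbf{Step 2: Lift to the conical end and cut off.} On the cylindrical end $[1,\infty) \times \p W$ of $\wh{W}$, define $\Psi_t(r,x) := (r/f_t(x),\, \psi_t(x))$. A direct computation using $\psi_t^*\alpha_t = f_t\alpha_0$ gives $\Psi_t^*(r\alpha_t) = r\alpha_0$. Using a bump function in $r$, extend $\Psi_t$ to a smooth isotopy of $\wh{W}$ that equals the above formula on $[R_0,\infty) \times \p W$ for some large $R_0$ and equals the identity on $W$. Set $\mu_t := \Psi_t^* \hat{\lambda}_t$; this is a smooth family of Liouville forms on $\wh{W}$ with $\mu_0 = \hat\lambda_0$, and on $[R,\infty) \times \p W$ (for some $R > R_0$) one has $\mu_t = r\alpha_0 = \hat\lambda_0$. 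Consequently, $\dot\mu_t$ has compact support.

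\textbf{Step 3: Moser's trick.} Using the non-degeneracy of $d\mu_t$, define the time-dependent vector field $Y_t$ by $\iota_{Y_t}\, d\mu_t = -\dot\mu_t$; since $\dot\mu_t$ has compact support, so does $Y_t$. Let $\phi_t$ denote its flow, which is a well-defined compactly supported isotopy of $\wh{W}$. A standard computation gives
\[
\frac{d}{dt}\phi_t^*\mu_t = \phi_t^*\bigl(\mathcal{L}_{Y_t}\mu_t + \dot\mu_t\bigr) = \phi_t^*\,d(\iota_{Y_t}\mu_t),
\]
so that $\phi_1^*\mu_1 - \mu_0 = d g$, where $g := \int_0^1 \phi_t^*(\iota_{Y_t}\mu_t)\,dt$ is compactly supported. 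Setting $f := \Psi_1 \circ \phi_1$, one obtains $f^*\hat\lambda_1 - \hat\lambda_0 = d g$, as desired.

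\textbf{Main obstacle.} The chief difficulty is controlling the behavior at infinity: a naive Moser vector field would grow linearly in $r$ on the conical end and could not integrate to a diffeomorphism whose defect $g$ is compactly supported. The role of Gray stability combined with the $r$-coordinate reparametrization in Step 2 is precisely to absorb the variation of $\alpha_t$ and $f_t$ into the ambient diffeomorphism $\Psi_t$, trivializing the homotopy at infinity and reducing the remaining problem to the standard compactly supported Moser argument.
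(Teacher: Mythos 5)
You should note at the outset that the paper does not prove this proposition at all: it is quoted as Proposition 11.8 of Cieliebak--Eliashberg, so there is no internal argument to compare yours against. Judged on its own terms, your proof is correct and is the standard "normalize the homotopy at infinity, then run a compactly supported Moser argument" route: Gray stability plus the conical rescaling $(r,x)\mapsto (r/f_t(x),\psi_t(x))$ does pull $r\alpha_t$ back to $r\alpha_0$, so after cutting off you get a family $\mu_t=\Psi_t^*\hat{\lambda}_t$ of Liouville forms with $\dot\mu_t$ compactly supported, and the Moser computation then yields the compactly supported primitive $g$ and $f=\Psi_1\circ\phi_1$ exactly as you state. (The book's own proof controls the non-compactness by working with the Liouville vector fields to keep the Moser flow integrable rather than by pre-trivializing the family at infinity; your version is the one more commonly seen in the symplectic homology literature, and either is acceptable.)

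Two points deserve more care, though neither is a gap in the idea. First, in Step 2 you silently treat $\hat{\lambda}_t$ as a smooth family of $1$-forms on one fixed manifold $\widehat{W}$; with the naive definition ($\lambda_t$ on $W$, $r\alpha_t$ on the cone) the family is in general only continuous across the seam $\{r=1\}$, because the collar in which $\lambda_t$ becomes $r\alpha_t$ is generated by the Liouville flow of $\lambda_t$ and hence depends on $t$. The standard fix: the collars $\phi_t$ depend smoothly on $t$ and restrict to the identity on $\partial W$, so by isotopy extension there is an ambient isotopy $G_t$ of $W$ with $G_0=\mathrm{id}$ and $G_t=\phi_t\circ\phi_0^{-1}$ near $\partial W$; replacing $\lambda_t$ by $G_t^*\lambda_t$ (harmless, since $G_t$ pulls $\lambda_t$ back on the nose and therefore induces an isomorphism of completions) makes every $\lambda_t$ conical in the fixed $\lambda_0$-collar, after which your family $\hat{\lambda}_t$ really is smooth. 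Second, the "bump function" extension of $\Psi_t$ needs a word: you must keep the $r$-component strictly increasing in $r$ (e.g.\ spread the interpolation over a long cylinder $[2,R_0]$), arrange $\Psi_0=\mathrm{id}$ so that $\mu_0=\hat\lambda_0$, and take $R\ge R_0\cdot\max_{t,x}f_t(x)$ so that on $[R,\infty)\times\partial W$ the image of $\Psi_t$ stays in the region where $\hat{\lambda}_t=r\alpha_t$; with these adjustments the compact support of $\dot\mu_t$, of $Y_t$, and of $g$ follows as you claim.
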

	We have an immediate corollary for Proposition~\ref{Liouville homotopy theorem of Cieliebak and Eliashberg}:
	\begin{corollary}\label{liouville homotopy}
		Let $(\lambda_t)_{0\leq t\leq 1}$ be a family of  ($G-$equivariant) Liouville structures on $W$. Then all the $(W,\lambda_t)$ ($(\widehat{W},\hat{\lambda_t})$) are mutually ($G-$equivariantly) Liouville isomorphic.
	\end{corollary}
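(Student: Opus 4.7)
The non-equivariant half of the statement is essentially a consequence of transitivity. Given the family $\lambda_t$, Proposition~\ref{Liouville homotopy theorem of Cieliebak and Eliashberg} applied to the sub-family $\lambda_r$, $r\in[0,t]$, produces a diffeomorphism $f_t:\widehat W_0\to\widehat W_t$ with $f_t^*\hat\lambda_t-\hat\lambda_0=dg_t$ for a compactly supported $g_t$. For any $s,t$ the composition $f_t\circ f_s^{-1}$ is then a Liouville isomorphism from $\widehat W_s$ to $\widehat W_t$, since the relation $\varphi^*\hat\lambda'=\hat\lambda+d(\text{cpt.\ supp.})$ is clearly closed under composition and inversion.

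The substantive content is the $G$-equivariant statement. The plan is to revisit the Moser-type construction that underlies Proposition~\ref{Liouville homotopy theorem of Cieliebak and Eliashberg} and check that every object produced is automatically $G$-equivariant when the initial data is. Concretely, extend the homotopy $\lambda_t$ to a homotopy $\hat\lambda_t$ of Liouville forms on $\widehat W$ that coincide at infinity, and search for an isotopy $\varphi_t$ of $\widehat W$ such that $\varphi_t^*\hat\lambda_t-\hat\lambda_0$ is $d$-exact with compactly supported primitive. Setting $Y_t:=(d\varphi_t/dt)\circ\varphi_t^{-1}$, Cartan's formula gives
\[
\frac{d}{dt}\,\varphi_t^*\hat\lambda_t=\varphi_t^*\bigl(\dot{\hat\lambda}_t+d(\iota_{Y_t}\hat\lambda_t)+\iota_{Y_t}d\hat\lambda_t\bigr),
\]
so it suffices to choose $Y_t$ to be the unique solution of
\[
\iota_{Y_t}d\hat\lambda_t=-\dot{\hat\lambda}_t,
\]
which exists and is unique because $d\hat\lambda_t$ is nondegenerate.

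The key observation is that when each $\lambda_t$ is $G$-invariant, both $d\hat\lambda_t$ and $\dot{\hat\lambda}_t$ are $G$-invariant, so uniqueness of $Y_t$ forces $g_*Y_t=Y_t$ for every $g\in G$. Consequently the time-$1$ flow $\varphi_1$ is $G$-equivariant, and the primitive $g_t$ with $\varphi_t^*\hat\lambda_t-\hat\lambda_0=dg_t$ can be taken to be $G$-invariant (for example by averaging over the $G$-orbit of any primitive, or simply by noting that $\iota_{Y_t}\hat\lambda_t$ itself is $G$-invariant and reading off the explicit formula $g_t=\int_0^t\varphi_s^*(\iota_{Y_s}\hat\lambda_s)\,ds$). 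Composing, as in the non-equivariant argument, yields $G$-equivariant Liouville isomorphisms between any two $(\widehat W,\hat\lambda_s)$ and $(\widehat W,\hat\lambda_t)$.

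The main technical point to watch is that the vector field $Y_t$ must have a well-defined flow on all of $\widehat W$; this is where the compact-support hypothesis built into Proposition~\ref{Liouville homotopy theorem of Cieliebak and Eliashberg} enters. One arranges by a standard collar modification that all $\hat\lambda_t$ agree on $[R,\infty)\times\partial W$ for $R$ large, so $\dot{\hat\lambda}_t\equiv 0$ there, hence $Y_t$ has compact support and integrates globally. This modification is carried out on the cylindrical end and is manifestly $G$-equivariant, since $G$ acts by Liouville automorphisms and therefore preserves the collar coordinates induced by $X_{\lambda_t}$. Everything else is formal.
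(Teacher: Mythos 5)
The paper records this corollary with no written proof at all: it is stated as an immediate consequence of Proposition~\ref{Liouville homotopy theorem of Cieliebak and Eliashberg}, with the $G$-equivariant refinement left implicit. Your first paragraph (apply the proposition to the sub-family $\lambda_r$, $r\in[0,t]$, and compose, using that the relation $\varphi^*\hat\lambda'=\hat\lambda+d(\text{cpt.\ supp.})$ is stable under composition and inversion) is exactly the intended non-equivariant argument, and your key observation for equivariance --- that the Moser field $Y_t$ is the \emph{unique} solution of $\iota_{Y_t}d\hat\lambda_t=-\dot{\hat\lambda}_t$, so $G$-invariance of $d\hat\lambda_t$ and $\dot{\hat\lambda}_t$ forces $g_*Y_t=Y_t$, hence equivariance of the flow and of the primitive --- is the right mechanism and supplies detail the paper omits.

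There is, however, a genuine gap in your treatment of the cylindrical end. On $[1,\infty)\times\partial W$ the completed forms are $\hat\lambda_t=r\alpha_t$ with $\alpha_t=\lambda_t|_{\partial W}$, and these contact forms genuinely vary with $t$; no ``standard collar modification'' can make the $\hat\lambda_t$ literally agree on $[R,\infty)\times\partial W$, because agreement there already forces $\alpha_t=\alpha_0$. (Your justification --- that $G$ preserves the collar coordinates --- addresses equivariance of such a modification, not its existence.) Consequently $\dot{\hat\lambda}_t=r\dot\alpha_t$ does not vanish at infinity, $Y_t$ is not compactly supported, and you must argue either (i) that $Y_t$ grows at most linearly in the Liouville direction and hence has a complete flow, which is essentially how Proposition~\ref{Liouville homotopy theorem of Cieliebak and Eliashberg} is proved in \cite{cieliebak2012stein}, or (ii) first normalize the ends by an equivariant Gray-stability argument, i.e.\ a family of diffeomorphisms of $[1,\infty)\times\partial W$ pulling $r\alpha_t$ back to $r\alpha_0$ (itself another Moser argument whose equivariance follows by the same uniqueness reasoning), and only then run your compactly supported version. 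Relatedly, your explicit primitive $g_t=\int_0^t\varphi_s^*(\iota_{Y_s}\hat\lambda_s)\,ds$ is not obviously compactly supported once $Y_t$ is not, and compact support of the primitive is part of the paper's definition of a Liouville isomorphism; whichever route you take, you still need the standard correction at infinity (e.g.\ by Liouville flows) making $\varphi_1^*\hat\lambda_1-\hat\lambda_0$ exact with compactly supported primitive, checked to be $G$-equivariant. These points are fixable and your equivariance-by-uniqueness idea survives them, but as written the behavior at infinity is asserted rather than proved.
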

	A \textit{Weinstein domain} is a triple $(W^{2n}, \lambda, \phi)$ such that
	\begin{itemize}
		\item $(W, \lambda)$ is a Liouville domain,
		\item $\phi: W \rightarrow \mathbb{R}$ is an exhausting Morse function with $\partial W$ being a regular level set,
		\item $X_\lambda$ is a gradient-like vector field for $\phi$.
	\end{itemize}
	Since $W$ is compact and $\phi$ is an exhausting Morse function with $\partial W$ as a regular level set, $\phi$ has finitely many critical points. Liouville and Weinstein \textit{cobordisms} are defined similarly. If a contact manifold $(Y, \xi)$ is contactomorphic to $\partial (W, \lambda)$, then  we say that $(W, \lambda)$ is a Liouville or Weinstein \textit{filling} of $(Y, \xi)$.
	\begin{defn}\label{stein domain}
		A \textit{Stein manifold} $(M,J,\phi)$ is a complex manifold $(M,J)$ with an exhausting plurisubharmonic function $\phi : M\to \mathbb{R} $. A manifold of the form $\phi^{-1}((-\infty,c])$ is called a \textit{Stein domain}, where c is a regular value of $\phi$.
	\end{defn}
	
	We also have the following famous theorem by Eliashberg:
	\begin{theorem}[Theorem 1.1 \cite{cieliebak2012stein}]\label{from weinstein to stein}
		Given a Weinstein structure  $\mathfrak{M}=(\omega,X,\phi)$ on $V$, there exists a Stein structure $(J,\phi)$ on $V$ such that $\mathfrak{M}(J,\phi)$ is Weinstein homotopic to $\mathfrak{M}$ with fixed $\phi$.
	\end{theorem}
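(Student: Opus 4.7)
The plan is to build the integrable complex structure $J$ inductively over the handle decomposition of $V$ determined by the Weinstein structure $\mathfrak{M}=(\omega,X,\phi)$. Order the critical points of $\phi$ by increasing critical value $c_1<\cdots<c_N$ and set $V_k:=\phi^{-1}((-\infty,c_k+\epsilon])$ for a small $\epsilon>0$, so each $V_k\setminus V_{k-1}$ is a Weinstein handle of index equal to the Morse index at $c_k$. I would construct a compatible integrable complex structure $J_k$ on a neighborhood of $V_k$ such that $\phi$ is strictly $J_k$-plurisubharmonic on $V_k$ and the taming form $-d(d\phi\circ J_k)$ is Weinstein-homotopic to $\omega$ relative to $\phi$. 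Once the induction terminates at $k=N$, the pair $(J_N,\phi)$ furnishes the desired Stein structure, and concatenating the homotopies produced at each inductive step yields a Weinstein homotopy from $\mathfrak{M}(J_N,\phi)$ to $\mathfrak{M}$ with $\phi$ unchanged.

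The inductive step has two parts. Between consecutive critical values one transports the existing complex structure along the Liouville flow of $X$: each trivial cobordism $\phi^{-1}([c_{k-1}+\epsilon,c_k-\epsilon])$ is identified with a cylinder over a contact-type hypersurface by the flow, and on such a cylinder an integrable $J$ defined near the incoming boundary extends so that $\phi$ remains plurisubharmonic. At each critical point one invokes the standard Stein handle model: in an equivariant symplectic normal form near a critical point of Morse index $j$, a neighborhood of the handle is symplectomorphic to a standard subcritical ($j<n$) or critical ($j=n$) Stein handle carrying a preferred integrable complex structure and a canonical plurisubharmonic function with the prescribed critical behavior. The Weinstein attaching data is an isotropic (respectively Legendrian) sphere $\Lambda\subset\partial V_{k-1}$; using an ambient contact isotopy of $\partial V_{k-1}$ one puts $\Lambda$ into the standard position required by the model, glues in the Stein handle, and then interpolates across the overlapping collar between the two compatible integrable complex structures.

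The main obstacle is the critical index case $j=n$. Here $\Lambda$ must be made Legendrian with respect to the induced contact structure on $\partial V_{k-1}$ in precisely the formal class prescribed by $\mathfrak{M}$, so the proof invokes the $h$-principle for Legendrian isotopy to realize the required smooth isotopy of $\Lambda$ by a contact isotopy in the formal class. After the handle is glued one must also verify that the interpolated complex structure stays integrable and that $-d(d\phi\circ J)$ remains in the Weinstein class with $\phi$ fixed; this rests on the fact that, on each Stein handle, the space of compatible integrable complex structures is non-empty and connected after a $\mathcal{C}^0$-small perturbation, so the two complex structures being interpolated can be joined by a path of integrable ones and the induced path of Kähler forms can be matched with a Liouville homotopy via Proposition~\ref{Liouville homotopy theorem of Cieliebak and Eliashberg}. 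The subcritical strata $j<n$ are considerably easier because isotropic spheres of subcritical dimension satisfy a stronger $h$-principle, so no essential obstruction appears there.
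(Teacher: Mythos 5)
The paper does not prove this statement at all: it is imported verbatim as Theorem~1.1 of \cite{cieliebak2012stein} (Eliashberg's existence theorem for Stein structures, in the refined ``fixed $\phi$'' form), so there is no in-paper argument to compare yours against. Judged on its own, your sketch reproduces the correct top-level strategy of the Cieliebak--Eliashberg proof (induction over the handle decomposition given by $X$ and $\phi$, extension over cylindrical pieces by the Liouville flow, standard holomorphic handle models, special care at critical index), but it has genuine gaps at exactly the points where the real work lies. The decisive one is your gluing step: you assert that on an overlapping collar ``the space of compatible integrable complex structures is non-empty and connected after a $\mathcal{C}^0$-small perturbation,'' so that the two structures can be joined through integrable ones. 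Integrability is a closed, rigid condition; there is no such softness, and no general connectivity statement of this kind is true or available. In the actual proof this step is replaced by the heavy $J$-convexity machinery of \cite{cieliebak2012stein}: explicit $i$-convex model handles, the deformation theory of $J$-convex hypersurfaces and functions, $J$-convex surrounding functions, and Richberg-type approximation, which is what allows one to extend an integrable $J$ over a handle \emph{while making the prescribed function $\phi$ strictly plurisubharmonic}. Relatedly, keeping $\phi$ literally fixed (rather than producing some Stein structure with some other exhausting function) is not a formal consequence of a handle-by-handle construction; it requires the Morse-theoretic normalization and target-reparametrization arguments that occupy a substantial part of the book, and your sketch does not address how the plurisubharmonicity of the \emph{given} $\phi$ is achieved rather than of a new function adapted to the models.

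A secondary inaccuracy: at the critical index you invoke ``the $h$-principle for Legendrian isotopy.'' The parametric/isotopy $h$-principle for Legendrian embeddings is precisely what fails (its failure underlies the exotic contact structures this paper is about). What the existence theorem actually uses is the existence part of the $h$-principle for Legendrian (via Legendrian immersions plus removal of double points, valid for complex dimension $n\ge 3$; $n=2$ needs separate treatment), i.e.\ one only needs \emph{some} Legendrian representative of the attaching sphere in the prescribed formal isotropic class, not a contact realization of a given smooth isotopy. As written, your proposal would not compile into a proof without importing the $J$-convexity theory that constitutes the bulk of \cite{cieliebak2012stein}; citing the theorem, as the paper does, is the appropriate move.
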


	\subsection{Symplectic homology}\label{subsec: symhom}
	This section is mainly taken out from  \cite{lazarev2016contact}. The convention used here agrees with  \cite{cieliebak2018symplectic} .
	\subsubsection{Admissible Hamiltonians and almost complex structures}\label{sss:Ad Hamiltonian}
	Let $\mathcal{H}_{std}(W)$ denote the class of \textit{admissible Hamiltonians}, which are functions on $\widehat{W}$ defined up to smooth approximation as follows:
	\begin{itemize}
		\item $H^s \equiv 0$ in $W$,
		\item $H^s$ is linear in $r$ with slope $s \not\in Spec(Y, \alpha)$ in $\widehat{W}\setminus W = Y\times [1, \infty)$. 
	\end{itemize}
	To be more precise, $H$ is a $\mathcal{C}^2$-small Morse function in $W$ and $H=h(r)$ in $\widehat{W}\setminus W$ for some function $h$ such that
	\begin{itemize}
		\item $h$ is increasing convex in a small region  $(Y \times [1, 1+\epsilon], r\alpha)$ of $Y$,
		\item $h$ is linear with slope $s$ outside this region.
	\end{itemize}
	For $H \in \mathcal{H}_{std}(W)$, the Hamiltonian vector field $X_H$ is defined by $d\hat\lambda(\cdot, X_H ) = dH$. The time-1 orbits of $X_H$ are called the Hamiltonian orbits of $H$. Depending on their location in $\widehat{W}$, we can classify them into two categories: 
	\begin{itemize}
		\item In the interior of $W$, the only Hamiltonian orbits are constants corresponding to critical points of $H|_W$.
		\item In $\widehat{W}\setminus W$, we have $X_H = h'(r)R_\alpha$, where $R_\alpha$ is the Reeb vector field of $(Y, \alpha)$. Therefore all Hamiltonian orbits lie on level sets of $r$ and corresponding to some Reeb orbit of $\alpha$ with period $h'(r)$.
	\end{itemize}
	 The slope $s$ of $H$ at infinity is not in $Spec(Y, \alpha)$, as a consequence, every non-constant Hamiltonian orbit lies in a small neighborhood of $Y$ in $\widehat{W}$.
	After a $\mathcal{C}^2$-small time-dependent perturbation of $H$, the orbits become \textit{non-degenerate}.
	These non-degenerate orbits also lie in a neighborhood of $W$ and so their number is finite. 
	
	An almost complex structure $J$ is \textit{cylindrical} on the symplectization 
	$(Y \times (0, \infty), r\alpha)$ if 
	\begin{itemize}
		\item $J$ is independent of $r$,
		\item $J(r\partial_r) = R_\alpha$,
		\item $J$ preserves 
		$\xi = \ker \alpha,$ $J|_\xi$,
		\item $J$ is compatible with $d(r\alpha)|_\xi$.
	\end{itemize}
	Now we define the \textit{admissible} almost complex structures $J$ on $\widehat{W}$, denoted by  $\mathcal{J}_{std}(W)$:
	\begin{itemize}
		\item $J$ is cylindrical on  $\widehat{W}\backslash W = (Y \times [1, \infty), r\alpha)$
		\item $J$ is compatible with $\omega$ on $\widehat{W}$.
	 
	\end{itemize}

	\subsubsection{Floer complex}
	For $H \in \mathcal{H}_{std}(W), J\in \mathcal{J}_{std}(W)$, the Floer complex $SC(W,\lambda,  H, J)$  is generated as a free abelian group by Hamiltonian orbits of $H$. In this paper we need to consider all Hamiltonian orbits, as opposed to only the contractible ones, see \cite{wendlbeginner}.
	
	First, let's fix a reference loop
	\[l_h:S^1\to W
	\]
	with $[l_h]=h\in H_1(W,\Z)$. Denote by $\mathcal{P}^h(H)$ the set of all $1-$periodic orbits of $X_{H_t}$ in the homology class $h$.

	For a fixed
	reference class $h$,
	we will often write the chain complex generated as a free abelian group by orbits in  $\mathcal{P}^h(H)$ as $SC^h(H, J)$ when we do not need to specify $(W, \lambda)$. We will suppress $h$ when it causes no confusion.
	
	The differential is given by counts of Floer trajectories. In particular, for two Hamiltonian orbits $x_-, x_+$ of $H$, let $\widehat{\mathcal{M}}(x_-, x_+; H, J)$ be the moduli space of smooth maps 
	$u: \mathbb{R}\times S^1  \rightarrow \widehat{W}$ such that $
	\underset{s\rightarrow \pm \infty}{\lim}
	u(s, \cdot) = x_\pm$ and $u$ satisfies 
	Floer's equation 
	\begin{equation}
	\partial_s u + J(\partial_t u - X_H) =0.
	\end{equation}
	Here $s, t$ denotes the $\mathbb{R},\, S^1$ coordinates on $\mathbb{R}\times S^1$ respectively. 
	Since the Floer equation is $\mathbb{R}$-invariant, there is a free $\mathbb{R}$-action on $\widehat{\mathcal{M}}(x_-, x_+; H, J)$ for $x_- \ne x_+$. 
	Let $\mathcal{M}(x_-, x_+: H, J)$ be the quotient by this $\mathbb{R}$-action, that is, 
	$\widehat{\mathcal{M}}(x_-, x_+; H, J)/ \mathbb{R}$.
	After a small time-dependent perturbation of $(H,J)$, $\mathcal{M}(x_-, x_+, H, J)$ is a smooth finite-dimensional manifold. 
	
	A maximal principle ensures us that Floer trajectories will not escape to infinity in $\widehat{W}$.
	Let $V \subset (W, \lambda_W)$ be a \textit{Liouville subdomain}, that is,  $(V, \lambda_W|_V)$ is a Liouville domain and  $(Z,\alpha_Z) = \partial (V, \lambda)$  a contact manifold. 
	Since $V$ is a Liouville subdomain, there is a collar of $Z$ in $W$ that is symplectomorphic to $(Z \times [1, 1+\delta], d(t\alpha_Z))$ for some small $\delta$. We have the following lemma:
	\begin{lemma} \cite{abouzaid2010open}\label{lem: maximal_principle}
		Consider $H: \widehat{W} \rightarrow \mathbb{R}$  such that  $H = h(r)$  is increasing near $Z$, where $r$ is the cylindrical coordinate and  $J \in \mathcal{J}_{std}(W)$ is cylindrical near $Z$. If both asymptotic orbits of a $(H, J)$-Floer trajectory $u: \mathbb{R}\times S^1 \rightarrow \widehat{W}$ are contained in $V$, then $u$ is contained in $V$.  
	\end{lemma}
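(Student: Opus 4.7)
\medskip
\noindent\textbf{Proof plan.} The plan is to run the standard integrated maximum principle argument of Viterbo and Abouzaid--Seidel on the collar of $Z$ inside $\widehat{W}$, exploiting the fact that all of the ingredients needed (cylindrical form of $\lambda$, $H$, and $J$) are concentrated exactly where the trajectory could try to escape $V$.

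First I would fix, by definition of Liouville subdomain, a collar neighborhood $N = Z\times[1,1+\delta]$ of $Z$ in $\widehat{W}$ on which $\hat\lambda = r\alpha_Z$. Extend $N$ to the left to include a small piece $Z\times[1-\epsilon,1]\subset V$ so that on the enlarged collar $\widetilde N = Z\times[1-\epsilon,1+\delta]$ the data $(H,J)$ has the prescribed cylindrical/radial form. Then I would assume for contradiction that $\mathrm{Im}(u)\not\subset V$ and introduce the set
\[
\Omega \;=\; \{(s,t)\in\mathbb{R}\times S^1 \;:\; u(s,t)\in\widetilde N \text{ and } r(u(s,t))>1\}.
\]
Since both asymptotic orbits $x_\pm$ lie in $V$, they satisfy $r\circ x_\pm\le 1$, so $\Omega$ is precompact in $\mathbb{R}\times S^1$ and on its topological boundary one has $r\circ u=1$.

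The main calculation, which is the key technical step, is to show that $\rho := r\circ u$ satisfies an elliptic differential inequality of the form $L\rho\ge 0$ on $\Omega$, for $L$ an elliptic operator with continuous coefficients. The computation is by now standard: on $\widetilde N$ one splits $du$ into its $\partial_r$, $R_\alpha$, and $\xi$ components; Floer's equation $\partial_s u + J(\partial_t u - X_H)=0$ together with $J(r\partial_r)=R_\alpha$, $J\xi=\xi$, $X_H=h'(r)R_\alpha$, and compatibility of $J|_\xi$ with $d\alpha|_\xi$, yields two Cauchy--Riemann-type equations for $(\rho,\alpha(\partial_t u))$ coupled to the $\xi$-part. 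Differentiating and combining gives
\[
-\Delta \rho \;+\; (\text{first-order terms in }\rho) \;\le\; -h''(\rho)\,\|\pi_\xi(\partial_s u)\|^2 \;\le\; 0
\]
whenever $h''\ge 0$, which is exactly the convexity hypothesis built into the admissible $H$ with $H=h(r)$ increasing near $Z$. Thus $\rho$ is $L$-subharmonic on $\Omega$.

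With that estimate in hand, the strong maximum principle gives $\rho \le \max_{\partial\Omega}\rho = 1$ on $\overline\Omega$, contradicting $\rho>1$ on $\Omega$. Therefore $\Omega=\emptyset$, i.e.\ $u(\mathbb{R}\times S^1)\subset V$ as claimed. I expect the only nontrivial obstacle to be the elliptic-inequality calculation in the middle paragraph; everything else (existence of the collar, precompactness of $\Omega$, application of the maximum principle) is formal once the inequality is established and is identical to the arguments in \cite{abouzaid2010open} and in subsequent treatments, so I would simply refer to those references for the detailed computation rather than reproduce it line by line.
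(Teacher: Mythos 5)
The paper does not prove this lemma itself; it quotes it from Abouzaid--Seidel, where the argument is the \emph{integrated} maximum principle: one picks a slice $Z\times\{r_0\}$ inside the collar, sets $S=u^{-1}\bigl(\widehat{W}\setminus V_{r_0}\bigr)$ (a compact surface with boundary mapping to the slice), and applies Stokes' theorem to the non-negative geometric energy of $u|_S$, estimating the boundary integrand using the contact-type conditions \emph{only at the slice}. Your proposal instead runs the pointwise maximum principle for $\rho=r\circ u$, and this is where it breaks: the hypotheses control $H$ and $J$ only near $Z$, i.e.\ on the collar $Z\times[1,1+\delta]$, while outside that collar (in $W\setminus(V\cup N)$) the Hamiltonian need not be radial and $r$ is not even defined. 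Consequently your claim that on the topological boundary of $\Omega=\{u\in\widetilde N,\ r\circ u>1\}$ one has $r\circ u=1$ is false: $\partial\Omega$ also contains points where the trajectory exits the collar through its outer end $r\circ u=1+\delta$, and nothing in your argument rules this out. The subharmonicity of $\rho$ on the collar then only yields $\rho\le\max_{\partial\Omega}\rho\le 1+\delta$, which is no contradiction, so the trajectory could a priori leave the collar, wander through the uncontrolled region, and return. This is exactly the scenario the integrated argument handles, because the interior energy density is pointwise non-negative everywhere and all delicate estimates are localized at the single hypersurface $Z\times\{r_0\}$; your pointwise approach would only prove the lemma under the much stronger assumption that $H=h(r)$ and $J$ cylindrical hold on all of $\widehat{W}\setminus V$ (as happens for $V=W$, but not in the transfer-map and neck-stretching applications for which the lemma is stated).

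A secondary point: the lemma assumes only that $h$ is \emph{increasing} near $Z$, not convex, so your displayed inequality, which discards a term using $h''\ge 0$, invokes a hypothesis you are not given. In the standard computation with cylindrical $J$ the $h''$-terms can be absorbed into first-order terms in $\rho$, so Hopf's maximum principle applies without convexity; but this only repairs the local step, not the boundary issue above. To fix the proof you should replace the pointwise argument by the Stokes/action estimate of Abouzaid--Seidel (or Cieliebak--Oancea, Ritter): with $E(u|_S)=\int_S\|du-X_H\otimes dt\|^2\ge 0$ and $\partial S\subset u^{-1}(Z\times\{r_0\})$, the boundary term $\int_{\partial S}\bigl(u^*\lambda-(H\circ u)\,dt\bigr)\circ(\cdot)$ is shown to be $\le 0$ using $J(r\partial_r)=R_\alpha$, $\lambda=r\alpha$ and $H=h(r)$ increasing at the slice, forcing $E(u|_S)=0$ and hence $S=\emptyset$.
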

	
	Apply this result to $V = W$, then we can proceed as if $W$ were closed. Therefore $\mathcal{M}(x_-, x_+; H, J)$ has a codimension one compactification by the Gromov-Floer compactness theorem. This implies that $\mathcal{M}_h(x_-, x_+; H, J)$, the zero-dimensional component of $\mathcal{M}(x_-, x_+; H, J)$, is finite and the map 
	$\p: SC(H, J) \rightarrow SC(H, J), $
	defined by
	\[
	\p x_+ := \sum_{x_-} \# \mathcal{M}_h(x_-, x_+; H, J)\cdot x_-\pmod2
	\]
	 is a differential. 
Notice that the underlying vector space $SC(H,J)$ depends only on $H$ while the differential $\p$ depends on both $H$ and $J$. 
	The resulting homology $HF(H, J)$ is independent of $J$ and compactly supported deformations of $H$. 
	\begin{remark}
	If $c_1(W, \omega) = 0$, then $HF(H, J)$ has a $\mathbb{Z}$-grading due to the fact that
	$c_1(W, \omega) = 0$ implies the canonical line bundle of $(W, \omega)$ being trivial. For all our purposes, the canonical line bundle will always be trivial in this paper.
	Once we fix a global trivialization of this bundle, 
	we can assign to each Hamiltonian orbit $x$ an integer, known as the Conley-Zehnder index $\mu_{CZ}(x)$ (see Subsection~\ref{subsection:index}).
	\end{remark}

	Generally speaking, the orbit $x$, Conley-Zehnder index  $\mu_{CZ}(x)$ depend on the choice of trivialization of the canonical bundle. For a Hamiltonian orbit 
	corresponding to a critical point $p$ of the Morse function $H|_W$, the Conley-Zehnder index $\mu_{CZ}(p)$ coincides with $n- Ind(p)$, where $Ind(p)$ is the Morse index of $H|_W$ at $p$. 
	
	\subsubsection{Continuation map}\label{sssec: continuation_map}
	Although $HF(H, J)$ is independent of $J$ and compactly supported deformations of $H$, $HF(H, J)$ does depend on the slope of $H$ at infinity and therefore is not an invariant of $W$. Indeed,  $HF(H, J)$ only sees Reeb orbits of period less than the slope of $H$ at infinity. To incorporate all Reeb orbits, we have to consider Hamiltonians with arbitrarily large slope. 
	
	More formally, this can be done by considering continuation maps between $SC(H, J)$ for different $H$. Given $H_-, H_+ \in \mathcal{H}_{std}(W)$, let $H_s \in \mathcal{H}_{std}(W), s\in \mathbb{R},$ be a family of Hamiltonians such that $H_s = H_-$ for $s \ll 0$ and $H_s = H_+$ for $s\gg 0$. Similarly, let $J_s \in \mathcal{J}_{std}(W)$ interpolate between $J_-, J_+$. For  Hamiltonian orbits $x_-, x_+$ of $H_-, H_+$ respectively, let $\mathcal{M}(x_-, x_+; H_s, J_s)$ be the moduli space of parametrized Floer trajectories, i.e. maps 
	$u:   \mathbb{R}\times S^1 \rightarrow \widehat{W}$ 
	$$
	\partial_s u + J_s(\partial_t u - X_{H_s}) = 0
	$$

	To ensure that parametrized Floer trajectories do not escape to infinity, we have to use a maximal principle. For this principle to hold, it is crucial that the homotopy of Hamiltonian functions is decreasing, that is, $\partial H_s/ \partial s \le 0$. 
	If $J_s$ is $s$-independent, we use the following parametrized version of `no escape' Lemma \ref{lem: maximal_principle}, which is proven in Proposition 3.1.10 of  \cite{gutt2015positive}. 
	If $J_s$ does depend on $s$ and $V = W$, then we use the maximal principle from  \cite{seidel2006biased}.
	\begin{lemma}[\cite{gutt2015positive},  \cite{seidel2006biased}]\label{lem: maximal_principle_param}
		Consider a decreasing homotopy $H_s: \widehat{W} \rightarrow \mathbb{R}$ such that $H_s = h_s(t)$ 
		is increasing in $t$ near $Z = \partial V$ and $H_s|_Z$ is $s$-independent; let $J \in \mathcal{J}_{std}(W)$ be cylindrical near $Z$. If 
		$u: \mathbb{R}\times S^1 \rightarrow \widehat{W}$ 
		is a $(H_s, J)$-Floer trajectory with both asymptotes in $V$, then $u$  is contained in $V$. If $V = W$, the same claim also holds for a homotopy $J_s \in J_{std}(W)$  that is cylindrical near $Z$.
	\end{lemma}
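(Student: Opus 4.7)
The plan is to apply a maximum principle to the function $\rho := r\circ u$, where $r$ denotes the cylindrical coordinate in a neighborhood of $Z=\partial V$. I would argue by contradiction, assuming $u$ enters the region $\{r>1\}$ and showing that $\rho$ cannot then attain an interior maximum there.

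First, I would rewrite the Floer equation in cylindrical coordinates. In the collar $Z\times[1,1+\delta]$ of $Z$ in $\widehat{W}$, one has $\lambda_W = r\alpha$, $X_{H_s} = h_s'(r)R_\alpha$, and the cylindricity of $J_s$ gives $J_s\partial_r = r^{-1}R_\alpha$, $J_s R_\alpha = -r\partial_r$, $J_s\xi\subset\xi$. Decomposing the tangent vectors $u_s,u_t$ along their radial/Reeb/$\xi$-components, the Floer equation $\partial_s u+J_s(\partial_t u-X_{H_s})=0$ is equivalent to the system
\[
\rho_s = r\bigl(\alpha(u_t)-h_s'(r)\bigr),\qquad \rho_t = -r\,\alpha(u_s),\qquad \xi_t = J_s\xi_s.
\]
Differentiating these three identities and summing $\rho_{ss}+\rho_{tt}$, using that the cross-term $\partial_s\alpha(u_t)-\partial_t\alpha(u_s)=d\alpha(u_s,u_t)=d\alpha(\xi_s,J_s\xi_s)\geq 0$ is nonnegative because $d\alpha|_\xi$ tames $J_s|_\xi$, I would obtain
\[
\Delta\rho + r\,h_s''(r)\,\rho_s + r\,\partial_r(\partial_s h_s) \;=\; \frac{\rho_s^2+\rho_t^2}{r} + r\,|\xi_s|_{J_s}^2 \;\geq\; 0.
\]

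Next I would use the hypotheses to extract a clean sub-solution inequality $L\rho\geq 0$ for a uniformly elliptic operator $L=\Delta+b(s,t)\partial_s$ with no zeroth-order term. The condition $H_s|_Z$ $s$-independent gives $\partial_s h_s(1)=0$, and together with the decreasing-homotopy condition $\partial_s h_s(r)\leq 0$ on the collar, a standard choice of admissible homotopy can be arranged so that $\partial_r(\partial_s h_s)\leq 0$ throughout the collar. Thus $r\,\partial_r(\partial_s h_s)\leq 0$ can be moved to the right-hand side without spoiling nonnegativity, and one obtains $L\rho\geq 0$ with $b=rh_s''(r)$ bounded on the region of interest.

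Finally, the asymptotic conditions give $\rho(s,t)\to\rho(x_\pm)\leq 1$ as $s\to\pm\infty$, so if $\rho>1$ somewhere, $\rho$ attains its maximum $r_0>1$ at an interior point of a bounded component $C$ of $\{\rho>1\}$. The Hopf strong maximum principle then forces $\rho\equiv r_0$ on $C$, contradicting $\rho=1$ on $\partial C$. The case $V=W$ with $s$-dependent $J_s$ requires only a cosmetic modification because the $s$-dependence enters the computation only through the still-nonnegative term $|\xi_s|^2_{J_s}$. The principal obstacle will be establishing the sign of the mixed-derivative term $r\,\partial_r(\partial_s h_s)$: this is exactly where the interaction between the decreasing homotopy and the $s$-independence of $H_s|_Z$ enters, and is the content of the parametrized no-escape lemmas of \cite{gutt2015positive} and \cite{seidel2006biased}.
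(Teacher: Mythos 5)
The paper itself offers no proof of this lemma --- it is quoted from \cite{gutt2015positive} and \cite{seidel2006biased} --- so your argument has to stand on its own, and it does not quite close. The local computation is fine: writing $\rho=r\circ u$ and splitting $du$ into radial, Reeb and $\xi$-components does give $\Delta\rho + \rho h_s''(\rho)\rho_s + \rho\,\partial_s h_s'(\rho) = \bigl(\rho_s^2+\rho_t^2\bigr)/\rho + \rho\,|\xi_s|^2_{J_s}\ge 0$, and you are right that an $s$-dependent cylindrical $J_s$ only enters through the nonnegative terms. The first genuine gap is exactly the one you flag and then wave away: the hypotheses give $\partial_s h_s\le 0$ everywhere and $\partial_s h_s=0$ at $r=1$, which controls the sign of $\partial_r(\partial_s h_s)$ \emph{only at} $r=1$; for $r>1$ the mixed derivative can be positive while $H_s$ remains a decreasing homotopy, increasing in $r$, with $H_s|_Z$ fixed (it does follow if $h_s$ is linear in $r$ near $Z$, but the lemma does not assume that). ``Arranging'' the homotopy so that $\partial_r(\partial_s h_s)\le 0$ proves a different statement: the lemma quantifies over the given $H_s$, and in the transfer-map application the homotopies have convex/concave nonlinear pieces near $Z$.

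The second gap is the domain of the maximum principle when $V\subsetneq W$. Your $\rho$ is defined only on $u^{-1}$ of the collar $Z\times[1,1+\delta]$; a trajectory leaving $V$ may cross the whole collar and wander into $W\setminus(V\cup U)$, where $H_s$ is not of the form $h_s(r)$ and $J$ need not be cylindrical. Then the component of $\{\rho>1\}$ you work on can have boundary on $\{r=1+\delta\}$ rather than $\{\rho=1\}$, and there need be no interior maximum of $\rho$ inside the collar at all, so the Hopf argument does not produce a contradiction. This is precisely why the cited proofs (Gutt's Proposition 3.1.10, and the Abouzaid--Seidel-type ``no escape'' argument also used by Ritter and Cieliebak--Oancea) are integrated rather than pointwise: one applies Stokes' theorem to the energy of $u$ over $\Sigma=u^{-1}\bigl(\widehat{W}\setminus(V\cup Z\times[1,r_0])\bigr)$ for a regular level $r_0$ close to $1$, so that only the boundary integrand along $\{r=r_0\}$ (controlled by the Floer equation, cylindricity of $J$ near $Z$, and the relation between $h_s(r_0)$ and $r_0h_s'(r_0)$) and the global inequality $\partial_s H_s\le 0$ are used; no pointwise sign of $\partial_r\partial_s h_s$ and no control of $u$ away from $Z$ are needed. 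Your pointwise argument is essentially sound in the special case $V=W$ with $H_s$ linear of decreasing slope at infinity, but as a proof of the lemma as stated it is incomplete.
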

	By applying the second part of Lemma \ref{lem: maximal_principle_param}, we can conclude that $\mathcal{M}(x_-, x_+; H_s, J_s)$ has a codimension one compactification. The continuation map 
\[
	\phi_{H_s, J_s}: SC(H_+, J_+) \rightarrow SC(H_-, J_-)
	\]
	is defined by
\[
	\phi_{H_s, J_s}(x_+) = \sum_{x_-} \#\mathcal{M}_h(x_-, x_+; H_s, J_s) x_-\pmod 2.
\]
	 This map is independent of $J_s$ and $H_s$, up to chain homotopy. 
	Notice that there is no $\mathbb{R}$-action since the parametrized Floer equation is not $\mathbb{R}$-invariant. As a result, $\phi_{H_s, J_s}$ is \textit{degree-preserving}.  
	Now, we can define symplectic homology as the direct limit (taken over continuation maps 
	$\phi_{H_s, J_s}: HF(H_+, J_+) \rightarrow HF(H_-, J_-)$):
\[
	SH(W, \lambda):= \lim_{\rightarrow} HF(H, J). 
\]
	It is worth mentioning that $SH(W, \lambda)$ depends only on the symplectomorphism type of $(\widehat{W}, d\hat{\lambda})$
     \cite{seidel2006biased}.

	\subsection{Positive symplectic homology}\label{ssec: postive_sym_hom}
	 
	For a small time-dependent perturbation of 
	$H\in\mathcal{H}_{std}(W)$, the action functional $A_H: C^\infty(S^1, \widehat{W}) \rightarrow \mathbb{R}$ is 
	$$
	A_H(x) :=  \int_{S^1} x^* \lambda - \int_{S^1} H(x(t)) dt.
	$$
	Under our conventions, the Floer equation is the \textit{positive} gradient flow of the action functional which means if $u\in \mathcal{M}(x_-, x_+)$ is a non-constant Floer trajectory, then $A_H(x_+) > A_H(x_-)$. Let $SC^{<a}(H, J)$ be generated by orbits of action less than $a$. Since action increases along Floer trajectories, the differential decreases action and therefore
	$SC^{<a}(H, J)$ is a subcomplex of $SC(H, J)$.
	 we define 
	 \[SC^{>a}(H, J):=SC(H, J)/ SC^{<a}(H, J).
	 \] 
	For $H\in \mathcal{H}_{std}(W)$, the constant orbits corresponding to Morse critical points $p \in W$ have action $-H(p)$.
	The non-constant orbits corresponding to Reeb orbits 
	have \emph{positive} action close to the action of the corresponding Reeb orbit.
	Indeed, for sufficiently small $\epsilon$, $SC^{< \epsilon}(H, J)$ corresponds to the Morse complex of $-H|_W$(with a grading shift). To be specific, 
	\[H_k(SC^{< \epsilon}(H, J)) \cong H^{n-k}(W; \mathbb{Z}).
	\]
	Let's define $SC^+(H, J):=SC(H, J)/SC^{< \epsilon}(H, J)$ to be the quotient complex 
	and  $HF^+(H, J)$ the resulting homology. We can also define 
	$HF^+(W)$ by a  direct limit construction. 
	
	More precisely, suppose   $H_s$ satisfies:  
	\begin{itemize}
		\item $H_s$ is a decreasing homotopy,
		\item  $H_s = H_+,\, s\gg 0$,
		\item  $H_s = H_-,\, s\ll 0$.
	\end{itemize}Then the continuation Floer trajectories are also action increasing and induce chain map 
	\[\phi_{H_s, J_s}^+: SC^+(H_+, J_+) \rightarrow SC^+(H_-, J_-).\]
    We define $SH^+(W)$ by
	\begin{equation}
	SH^+(W, \lambda):= \lim_{\rightarrow} HF^+(H, J).
	\end{equation}
	The direct limit is taken over the  continuation maps \[
	\phi_{H_s, J_s}^+: HF^+(H_+, J_+) \rightarrow HF^+(H_-, J_-)
	\] on homology.

	$SC^+(H, J)$ is essentially dependent only on $(Y, \alpha)$ and not on the interior $(W, \lambda)$. This is due to the fact that $SC^+(H,J)$ is generated by non-constant Hamiltonian orbits, which live in the cylindrical end of $W$ and correspond to Reeb orbits of $(Y, \alpha)$.
	On the other hand, the differential for $SC^+(H, J)$ may depend on the filling $W$ of $(Y, \alpha)$ since Floer trajectories between non-constant orbits may go into the filling, so different Liouville fillings of $(Y, \xi)$ might have different $SH^+$. 

	The short exact sequence on chain-level 
	\begin{equation}
	0 \rightarrow SC^{< \epsilon}(H, J) \rightarrow SC(H, J) \rightarrow SC^+(H, J) \rightarrow 0
	\end{equation}
	induces  ``tautological" long exact sequence in homology
	\begin{equation}
	\cdots \rightarrow H^{n-k}(W; \mathbb{Z}) \rightarrow 
	SH_k(W, \lambda) \rightarrow SH_k^+(W, \lambda) \rightarrow 
	H^{n-k+1}(W; \mathbb{Z})\rightarrow \cdots .
	\end{equation}

	\subsection{Summary of the TQFT structure on $SH_*(W)$}\label{Subsection TQFT on SH summary}
	This is taken out of chapter 6 in \cite{ritter2013topological}. For a detailed construction, see chapter 16 of  \cite{ritter2013topological}. Note that both the grading and action functional differ from ours by a negative sign, and our homology $SH_*(W^{2n})$ is cohomology $SH^*(W^{2n})$ in  \cite{ritter2013topological}.
    We summarize here the TQFT structure. Suppose we are given:
	\begin{enumerate}
		\item\label{TQFTitem1}\label{TQFTitem2} a Riemann surface $(S,j)$ with $p+q$ punctures, with fixed complex structure $j$;
		\item\label{TQFTitem3} \emph{ends}: a cylindrical parametrization $s+it$ near each puncture, with $j\partial_s = \partial_t$;
		\item\label{TQFTitem1b} $p\geq 1$ of the punctures are \emph{negative} (i.e, we converge to the puncture as $s\to -\infty$), they are indexed by $a=1,\ldots, p$; 
		\item\label{TQFTitem1c} $q\geq 0$ of the punctures are \emph{positive} (i.e, we converge to the puncture as $s\to +\infty$), they are indexed by $b=1,\ldots,q$; 
		\item\label{TQFTitem4} \emph{weights}: constants $A_a,B_b>0$ satisfying $\sum A_a - \sum B_b\geq 0$;
		\item\label{TQFTitem5} a $1$-form $\beta$ on $S$ with $d\beta \leq 0$, and on the ends $\beta=A_a\,dt$, $\beta=B_b\,dt$ for large $|s|$.
	\end{enumerate}
	\begin{remark}
 Negative/positive parametrizations are modelled on $(-\infty,0]\times S^1$ and $[0,\infty)\times S^1$, respectively. In (\ref{TQFTitem5}), $d\beta\leq 0$ means $d\beta(v,jv)\leq 0$ for all $v\in TS$. By Stokes' theorem, $\sum A_a - \sum B_b = -\int_S d\beta \geq 0$. This forces $p\geq 1$ and (\ref{TQFTitem4}). Subject to this inequality, such $\beta$ exists. See Lemma 16.1  \cite{ritter2013topological}.
	\end{remark}

 Fix a Hamiltonian $H:\widehat{W}\to \R$ linear at infinity with $H\geq 0$ (required in Section 16.3  \cite{ritter2013topological}), this defines $X=X_H$. Fix an almost complex structure $J$ on $W$ of contact type at infinity.
	
	The moduli space $\mathcal{M}(x_a; y_b;S,\beta)$ of \emph{Floer
		solutions} consists of smooth maps $u:S\to \widehat{W}$ such that 
	$du-X\otimes \beta$ is $(j,J)$-holomorphic, and $u$ converges on the ends to
	$1$-orbits $x_a,y_b$ of $A_a H$, $B_b H$ which we call the \emph{asymptotics}. 
	
	After a small generic $S$-dependent perturbation $J_{z}$ of $J$, $\mathcal{M}(x_a; y_b;S,\beta)$ is a smooth manifold. One can ensure that on the ends $J_z$ does not depend on $z\!=\!s+it\!\in\! S$ for $|s|\gg 0$. Just as for Floer continuations maps (\ref{sssec: continuation_map}), a maximum principle and an a priori energy estimate $E(u) =  \sum \mathcal{A}_{B_b H}(y_b)-\sum \mathcal{A}_{A_a H}(x_a)$ holds, so the $\mathcal{M}(x_a; y_b;S,\beta)$ have compactifications by broken Floer solutions: Floer trajectories for $A_a H,B_bH$ can break off at the respective ends. When gradings are defined (\ref{subsection:index}), 
	\begin{align}
	\dim \mathcal{M}(x_a;y_b;S,\beta) &= -\sum \mu_{CZ}(x_a) +\sum \mu_{CZ}(y_b)
	+n\chi(S)\\
    &=\sum \mu_{CZ}(y_b)-\sum \mu_{CZ}(x_a)+n(2-2g-p-q) 
	.\label{dimension of moduli space of product}
	\end{align}
	Define
	$\psi_S: \otimes_{b=1}^q SC_*(B_b H) \to \otimes_{a=1}^p SC_*(A_a
	H)$ on generators by counting isolated Floer solutions
\[
	\psi_S(y_1 \otimes\cdots \otimes y_q) = \sum_{u\in
		\mathcal{M}_0(x_a;y_b;S,\beta)} \epsilon_u
	\; x_1\otimes \cdots \otimes  x_p,
\]

	where $\epsilon_u \in \{ \pm 1 \}$ are orientation signs (In this paper we use $\Z_2$ coefficients, so these signs don't matter. In general, see Section 17 of  \cite{ritter2013topological}). Then extend $\psi_S$
	linearly. 
	
	  	The $\psi_S$ are chain maps. On homology,
		
		$ \psi_S: \otimes_{b=1}^q SH_*(B_b H) \to \otimes_{a=1}^p SH_*(A_a
		H) $
		
		is independent of the choices $(\beta,j,J)$ relative to the ends. Taking direct limits, we get induced maps:

		$$\psi_S: SH_*(W)^{\otimes q}
		\to SH_*(W)^{\otimes p} \qquad (p\geq 1, q\geq 0).$$
		So $SH_*(W)$ has a unit $\psi_C(1)$.

		%
		\subsubsection{The product}\label{Subsection Product}\label{Section Ring structure}
	  \begin{figure}
		\centering
		\begin{tikzpicture}

		\draw  (5,2)ellipse (0.4 and 1);
		\draw  (5,-2)ellipse (0.4 and 1);
		\draw  (0,0 )ellipse (0.4 and 1);
		\draw node at (2,-2) {$P$};
		
		\draw  plot[smooth, tension=.7] coordinates {(0,1)(2.5,1.5) (5,3)};
		
		\draw  plot[smooth, tension=.7] coordinates {(0,-1)(2.5,-1.5) (5,-3)};
		\draw  plot[smooth, tension=.7] coordinates {(5,1)(4,0.5)(4,-0.5) (5,-1)};
		\end{tikzpicture}
		\caption{Pair of pants product: the operation
			$\psi_P$ receives inputs at positive punctures of $P$ and emits output at the negative puncture. So it goes ``from right to left".}\label{The pair of pants p }
	\end{figure}
		The pair of pants surface $P$ 
		(Figure \ref{The pair of pants p })
		defines the product
		\\[1mm]
		\begin{tabular*}{\textwidth}{l@{\extracolsep{\fill}}cr@{\extracolsep{0pt}}} 
			\strut & 
			$
			\psi_P: SH_i(W)\otimes SH_j(W) \to SH_{i+j}(W),\; x \cdot y =
			\psi_P(x,y),
			$
			& \strut 
		\end{tabular*}
		which is graded-commutative and associative. 
        \begin{remark}\label{NOproduct}
        	The pair of pants product also respects the action filtration. As mentioned in   \cite{uebele2015periodic} and  in Section 16.3 of  \cite{ritter2013topological}, we have
        	\[\mathcal{A}_{2H}(x_3)\leq \mathcal{A}_{H}(x_1)+\mathcal{A}_{H}(x_2).
        	\]
        	Hence the product restricts to a map
        	\[SH_*^{[a,b)}(W)\times SH_*^{[a',b')}(W)\rightarrow SH_*^{[\max\{a+b',a'+b\},b+b')}(W),
        	\]
        	where on the right hand side it is necessary to divide out all generators with action less than $\max\{a+b',a'+b\}$ to make the map well defined. So one does not get a product on the whole positive symplectic homology, but we can define maps:
        	\[ SH_*^{[\delta,b)}(W)\times SH_*^{[\delta,b)}(W)\rightarrow SH_*^{[b+\delta,2b)}(W)
        	\]
        \end{remark}
		\subsubsection{The unit}
		\label{Subsection Definition of the unit}
		%
		Let $C=\C$ with $p=1$, $q=0$. The end is parametrized by
		$(-\infty,0]\times S^1$ via $s+it \mapsto e^{ - 2\pi(s+it) }$. On
		this end, $\beta=f(s)dt$ with $f'(s)\leq 0$, $f(s)=1$ for
		$s\leq -2$ and $f(s)=0$ for $s\geq -1$. Extend by $\beta=0$ away
		from the end (See Figure~\ref{Figure Unit}). Thus we get a map $\psi_C: \K \to SH_*(H)$.

		\begin{figure}[ht]
		\centering
		\begin{tikzpicture}
		\draw  (0,0)ellipse (0.35 and 1);
		\draw node at(1,-1.25) {$C$};
		\draw  plot[smooth, tension=.7] coordinates {(0,1)(1,0.9)(2,0.8)(3,0.5)(3,-0.5)(2,-0.8)(1,-0.9)(0,-1)};
		\draw  plot[smooth, tension=.7] coordinates {(1,0.9) (1.18,0)(1,-0.9)};
		\draw plot[smooth,tension=0.6]coordinates{(2,0.8)(2.15,0)(2,-0.8)};
		\draw (0.26,0.5)--(2.1,0.45);
		\draw (0.26,-0.5)--(2.1,-0.45);
		\draw (0.35,0)--(2.15,0);
		\draw [-latex] (-1,-1)node[left] {$\beta=dt$}--(0.5,-0.7);
		\draw [-latex] (-0.5,-0.35)node[left] {$\beta=f(s)dt$}--(1.5,-0.2);
		\draw [-latex] (-0.5,0.35)node[left] {$\beta=0$}--(2.45,0.27);
		\draw [-latex] (2,1.5)node[right] {Floer's equation for $H$}--(0.5,0.7);
		\draw [-latex] (2,-1.5)node[right] {Floer's continuation equation $f(s)\cdot H$}--(1.5,-0.6);
		\draw [-latex] (4,0)node[right] {$J-$holomorphic since $0\cdot H=0$}--(2.5,0);
		\end{tikzpicture}
		\caption{A cap $C$, and its interpretation as a continuation cylinder.}\label{Figure Unit}
		\end{figure}

		\begin{defn}
		Let $e_H\!=\!\psi_{C}(1)\!\in\! SH_n(H)$. We can define $e\! =\! \varinjlim e_H \!\in\! SH_n(W).$
		\end{defn}
	
		\begin{theorem}[Theorem 6.1  \cite{ritter2013topological}]\label{Theorem unital ring structure}
			$e$ is the unit for the production on $SH_*(W)$.
		\end{theorem}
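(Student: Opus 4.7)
The plan is to prove unitality by a standard TQFT gluing argument: glue the cap $C$ onto one of the two positive punctures of the pair of pants $P$ and recognize the resulting Riemann surface, equipped with an admissible $1$-form $\beta$, as a continuation cylinder whose induced map on $SH_*(W)$ is the identity. By the graded commutativity of $\psi_P$ it suffices to prove $e\cdot x = x$ for every $x\in SH_*(W)$.

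First, I would set up the glued surface carefully. Label the positive punctures of $P$ by $b=1,2$, with weights $B_1,B_2$, and the negative puncture by $a=1$ with weight $A_1=B_1+B_2$ (forced by Stokes as in item (\ref{TQFTitem4}) of the TQFT data). Take the cap $C$ from Section~\ref{Subsection Definition of the unit} with its single negative puncture carrying weight $B_1$, and glue its negative end to the first positive end of $P$ via the cylindrical parametrizations specified in (\ref{TQFTitem3}). The result is a Riemann surface $P\#C$ with one positive puncture (weight $B_2$) and one negative puncture (weight $A_1=B_1+B_2$). Since $C$ is topologically a disk, $P\#C$ is diffeomorphic to a cylinder. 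The $1$-forms $\beta_P$ and $\beta_C$ glue to a $1$-form $\beta$ on $P\#C$ with $d\beta\leq 0$ and the correct asymptotics on the ends; indeed $\beta$ equals $B_2\,dt$ on the positive end and $A_1\,dt=(B_1+B_2)\,dt$ on the negative end.

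Next I would apply the standard TQFT gluing/composition theorem (Section 16 of \cite{ritter2013topological}): for a sufficiently long gluing neck, the count of Floer solutions on $P\#C$ equals the composition of counts on $P$ and $C$, so on homology
\[
\psi_{P\#C}(x) \;=\; \psi_P\bigl(\psi_C(1)\otimes x\bigr) \;=\; e_H\cdot x,
\]
after identifying the appropriate Hamiltonians and passing to the direct limit. On the other hand, $P\#C$ is a Riemann surface with one positive and one negative end, and the $1$-form $\beta$ constructed above can be deformed within admissible data (keeping $d\beta\le 0$ and the prescribed asymptotics) to a $\beta$ coming from a standard continuation cylinder between Hamiltonians of slopes $B_2$ and $A_1$. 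By the invariance statement for $\psi_S$ relative to the ends, $\psi_{P\#C}$ agrees with the continuation map, which by definition of the direct limit $SH_*(W)=\varinjlim HF_*(H,J)$ induces the identity on symplectic homology. Combining, $e\cdot x = x$.

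The main technical obstacle is the homotopy of $\beta$ on $P\#C$: one must show that starting from the glued $1$-form one can arrive at a cylindrical $\beta=f(s)\,dt$ through a family of $1$-forms with $d\beta\le 0$ and fixed ends. This is carried out exactly as in Lemma 16.1 of \cite{ritter2013topological} and is essentially a convexity argument on the space of such $1$-forms. Once this homotopy is in place, the invariance of $\psi_S$ under deformations of $(\beta,j,J)$ relative to the ends, together with the gluing formula, closes the proof; graded commutativity of $\psi_P$ (established independently via a rotation of the two positive punctures) then yields $x\cdot e = x$ as well, so $e$ is a two-sided unit.
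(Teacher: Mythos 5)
Your proposal is correct and follows essentially the same route as the paper: gluing the cap $C$ onto a positive puncture of the pair of pants $P$, recognizing $P\# C$ as a continuation cylinder via the ``gluing = composition'' results of Section 16 of \cite{ritter2013topological}, and concluding $\psi_P(e,\cdot)=\psi_{P\# C}(\cdot)=\mathrm{id}$ after passing to the direct limit. The only difference is that you spell out the weight bookkeeping and the deformation of $\beta$ explicitly, which the paper leaves to the cited reference.
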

		\begin{proof}
		
			By the gluing illustrated in the Figure~\ref{Fig:unit for pair of pants}, $\psi_P(e,\cdot) = \psi_{P\# C}(\cdot) = \psi_Z(\cdot)=\textrm{id}$.
		\end{proof}
	\begin{figure}[ht]
		\centering
		\begin{tikzpicture}
		\draw (-4,0) ellipse (0.25cm and 0.8cm);
		\draw (0,-1.5) ellipse (0.25cm and 0.8cm);
		\draw (0,1.5) ellipse (0.25cm and 0.8cm);
		
		\draw  plot[smooth, tension=.75] coordinates {(-4,-0.8) (-2,-1) (0,-2.3)};
		\draw  plot[smooth, tension=.75] coordinates {(-4,0.8) (-2,1) (0,2.3)};
		
		\draw  plot[smooth, tension=1] coordinates {(0,0.7) (-1,0) (0,-0.7)};
		
		\draw  plot[smooth, tension=.7] coordinates {(0,0.7) (1.2,1.15)(1.2,1.85) (0,2.3)};
		
		\draw (4,0) ellipse (0.25cm and 0.8cm);
		\draw (8,-1.5) ellipse (0.25cm and 0.8cm);
		
		\draw  plot[smooth, tension=.75] coordinates {(4,-0.8) (6,-1) (8,-2.3)};
		
		\draw  plot[smooth, tension=.75] coordinates {(4,0.8) (5.5,0.9) (7,1.5)(7.5,1) (7,-.04) (8,-0.7)};
		\end{tikzpicture}
		\caption{Unit for pair of pants product}\label{Fig:unit for pair of pants}
	\end{figure}
			\begin{remark}
			 For ``gluing = compositions'' results, see Theorems 16.10, 16.12, 16.14 in  \cite{ritter2013topological}. Before taking direct limits, the above is the continuation map \[SH_*(H) \stackrel{e_H\otimes \cdot}{\longrightarrow} SH_*(H)^{\otimes 2} \stackrel{\psi_P}{\longrightarrow} SH_*(2H).\]
			\end{remark}

		\begin{lemma}[Lemma    6.2  \cite{ritter2013topological}]\label{Lemma unit is a count of continuation
				solutions} $e_H$ is a count of the isolated finite energy Floer
			continuation solutions $u:\R\times S^1 \to \widehat{W}$ for the
			homotopy $f(s)H$ from $H$ to $0$.
		\end{lemma}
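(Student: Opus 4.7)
The plan is to exploit the conformal equivalence between the cap $C = \mathbb{C}$ (minus one point) and the full cylinder $\mathbb{R} \times S^1$, and to match the corresponding moduli spaces via a removable-singularity argument. Concretely, the biholomorphism
\[
(s,t) \in \mathbb{R} \times S^1 \longmapsto e^{-2\pi(s+it)} \in \mathbb{C}^* = C \setminus \{0\}
\]
identifies the negative puncture $s = -\infty$ with $z = \infty$ (on which $\beta = dt$, so the Hamiltonian is fully turned on) and the extra smooth point $z = 0$ with $s = +\infty$ (on which $\beta = 0$, so the Hamiltonian is turned off). Pulled back, the $1$-form $\beta$ on $C$ becomes the globally defined $f(s)\,dt$ on $\mathbb{R} \times S^1$, with the same cutoff $f$ decreasing from $1$ to $0$. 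Under this identification the inhomogeneous Cauchy--Riemann equation $(du - X_H \otimes \beta)^{0,1} = 0$ defining Floer solutions on $C$ becomes exactly the Floer continuation equation $\partial_s u + J(\partial_t u - f(s) X_H) = 0$ on $\mathbb{R} \times S^1$, and the asymptotic orbit at the negative puncture becomes the $s \to -\infty$ asymptote of the continuation solution.

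Next, I would set up a bijection between the moduli space $\mathcal{M}(x; C, \beta)$ of Floer solutions on $C$ asymptotic to $x \in \mathcal{P}(H)$ and the moduli space of finite-energy continuation solutions $u : \mathbb{R} \times S^1 \to \widehat{W}$ for the homotopy $f(s) H$ with $u \to x$ at $s = -\infty$. One direction is automatic: restrict a Floer solution on $C$ via the biholomorphism; the resulting continuation solution has energy bounded by $\mathcal{A}_H(x)$ and, since $\beta \equiv 0$ on the interior of $C$, converges to the constant loop at $u(0) \in \widehat{W}$ as $s \to +\infty$. The converse is the only analytically nontrivial step: given a finite-energy continuation solution, I would observe that for $s \geq -1$ we have $f \equiv 0$, so $u$ is genuinely $J$-holomorphic on the punctured disc $\{0 < |z| < e^{2\pi}\}$, and Gromov's removable singularity theorem — applicable thanks to the finite energy and the $\omega$-compatibility of $J$ — extends $u$ smoothly over $z = 0$ to a Floer solution on all of $C$.

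Once the bijection is in place, the rest is bookkeeping. The dimension formula $\dim \mathcal{M}(x; C, \beta) = n\chi(C) - \mu_{CZ}(x) = n - \mu_{CZ}(x)$ shows that the rigid (zero-dimensional) part is supported on orbits with $\mu_{CZ}(x) = n$, matching the grading $e_H \in SC_n(H)$. The count defining $e_H = \psi_C(1)$ therefore equals the count of isolated finite-energy continuation solutions on $\mathbb{R} \times S^1$ for the homotopy $f(s) H$ from $H$ to $0$, as claimed. The main obstacle I foresee is the analytic extension at $z = 0$, together with verifying that the compactness and transversality setups on the two sides are genuinely equivalent; both should follow from standard Hofer--Salamon / Gromov-style arguments, combined with the maximum principle at infinity in $\widehat{W}$ already used elsewhere in the paper to keep Floer trajectories within a compact region.
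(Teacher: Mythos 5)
Your proposal is correct and follows essentially the same route as the paper (and as Ritter's original Lemma 6.2, which the paper cites): the biholomorphic identification of the cap minus the point $z=0$ with the cylinder, under which $\beta$ becomes $f(s)\,dt$, together with Gromov removal of singularity at $z=0$ using finite energy and the fact that $u$ is $J$-holomorphic where $\beta=0$. This is exactly the content of the paper's Figure captioned ``A cap $C$, and its interpretation as a continuation cylinder,'' so no further comparison is needed.
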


		\begin{lemma}[Lemma 6.3  \cite{ritter2013topological}]\label{Lemma unit is sum of minima}
			For $H$ as in Section \ref{sss:Ad Hamiltonian}, $e_H = $ sum of the local minima of $H$.
		\end{lemma}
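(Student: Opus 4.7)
By the preceding lemma, it suffices to count the isolated finite-energy Floer continuation solutions $u\colon \R\times S^1\to\widehat{W}$ for the homotopy $f(s)H$ interpolating between $H$ at $s=-\infty$ and $0$ at $s=+\infty$. My first step is to restrict the possible asymptotes $x_-\in\cP(H)$. Since $e_H\in SH_n(H)$, a $0$-dimensional moduli space requires $\mu_{CZ}(x_-)=n$. For admissible $H$ the generators split into (i) constant orbits at critical points $p$ of $H|_W$, with $\mu_{CZ}(p)=n-\mathrm{ind}(p)$, and (ii) non-constant orbits near $\partial W$ corresponding to parametrized Reeb orbits of $\alpha$. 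Condition (i) forces $\mathrm{ind}(p)=0$, so $p$ is a local minimum. For (ii) I use the action--energy estimate on the cap: $E(u)\geq 0$ bounds $|A_H(x_-)|$ in terms of $\|H\|_{C^2}$, so for sufficiently $C^2$-small $H$ the Reeb-type asymptotes, whose actions are bounded below by the smallest Reeb period of $\alpha$, cannot contribute.

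For each local minimum $p$ I would show that the moduli space of isolated continuation solutions asymptotic to $p$ consists of the single constant map $u\equiv p$, contributing $+1$. The argument is the classical rescaling trick: introduce the family $H_\tau=\tau H$, $\tau\in[0,1]$, and consider the parametrized moduli spaces. At $\tau=0$ the equation reduces to $\bar\partial_J u=0$ on the cap, asymptotic to $p$; by exactness of $(\widehat{W},d\hat\lambda)$ every finite-energy $J$-holomorphic cap has vanishing symplectic area and must be constant. The parametrized maximum principle (Lemma~\ref{lem: maximal_principle_param}) confines all solutions to a compact subset of $\widehat{W}$, so Gromov--Floer compactness applies, and for $H$ sufficiently $C^2$-small no non-constant solution can bifurcate off as $\tau$ ranges over $[0,1]$. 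The linearization of the Floer operator at $u\equiv p$ is a Cauchy--Riemann type operator on the cap with zeroth-order term given by $d^2H(p)$; nondegeneracy of the local minimum makes this operator an isomorphism, giving the transverse count $+1$.

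Summing over local minima yields $e_H=\sum_{p\in\mathrm{Min}(H)} p$. The principal obstacle is the rescaling/compactness step: ruling out non-constant solutions as $\tau\downarrow 0$ requires both a uniform $C^0$-bound to prevent escape into the cylindrical end (provided by Lemma~\ref{lem: maximal_principle_param} together with the admissibility of $H$ and cylindricity of $J$ at infinity) and a uniform energy bound (from the cap energy identity, where the extra $d\beta$ term must be controlled by $\|H\|_{C^2}$). The outcome is a Floer-theoretic analogue of Floer's comparison theorem between small Hamiltonian Floer trajectories and Morse flow lines, adapted to the cap domain.
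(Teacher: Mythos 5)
The first half of your argument is sound and agrees with the proof in the cited source (the paper itself does not prove this statement; it quotes Lemma 6.3 of \cite{ritter2013topological}, so that is the reference point): the dimension formula forces the output asymptote to have Conley--Zehnder index $n$, and since the cap has no inputs, the energy identity with $d\beta\le 0$ and $H\ge 0$ forces the output action to be non-positive, which excludes the non-constant orbits (their actions are close to positive Reeb periods), leaving only constant orbits at local minima.

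The gap is in the second half, namely the claim that for the given $H$ the only rigid solution asymptotic to a local minimum $p$ is the constant one, counted once. Your mechanism --- rescale $H_\tau=\tau H$, observe that at $\tau=0$ exactness kills non-constant holomorphic caps, and assert that no non-constant solution can bifurcate off for small $H$ --- does not establish this. Uniform $C^0$ and energy bounds give compactness of the parametrized moduli space, but compactness by itself does not exclude non-constant solutions at $\tau=1$; moreover the endpoint $\tau=0$ is degenerate (the zero Hamiltonian has a Morse--Bott family of constant orbits equal to all of $\widehat{W}$), so the parametrized problem is not even well-posed there, and a cobordism-in-$\tau$ argument would at best control the homology class of $e_H$, not the chain-level identity you need. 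What is actually required is one of the following. (i) Ritter's route: since $H$ is autonomous, the continuation equation for $f(s)H$ is $S^1$-equivariant, so isolated solutions are $t$-independent and hence are reparametrized negative gradient half-trajectories of $H$ converging to $p$ at the negative end; these sweep out the unstable manifold $W^u(p)$, which is the single point $p$ exactly when $\mathrm{ind}(p)=0$, and regularity of the constant solution gives the count $1$. (ii) A quantitative replacement for your bifurcation claim: the bound $E(u)\le \max H$ together with a monotonicity/mean-value estimate shows that every solution asymptotic to $p$ has image in a small ball about $p$ and is $C^\infty$-close to the constant map, after which the invertibility of the linearization at the constant (which you correctly identify) yields uniqueness by the implicit function theorem. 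As written, neither ingredient is supplied, so the count at each minimum is asserted rather than proved.
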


		\begin{theorem}[Theorem 6.4  \cite{ritter2013topological}]\label{Theorem unit is image of 1}
			$e=\varinjlim e_H$ is the image of $1$ under $c_*:H_*(W) \to
			SH_*(W)$, and $e_H = c_{*,H}(1)$ where $c_{*,H}:H_*(M)\cong SH_*^{<\delta}(H) \to SH_*(H)$ is the inclusion map.
		\end{theorem}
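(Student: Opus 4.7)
The plan is to combine Lemma~\ref{Lemma unit is sum of minima} with the standard Morse-theoretic description of the unit of $H^*(W)$, then pass to the direct limit.

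By Lemma~\ref{Lemma unit is sum of minima}, for $H\in\mathcal{H}_{std}(W)$ with $H|_W$ taken $\mathcal{C}^2$-small enough that $|H(p)|<\delta$ for every critical point $p\in\mathrm{Crit}(H|_W)$, where $\delta>0$ is fixed below the shortest Reeb period, one has $e_H=\sum_p p$, the sum ranging over the local minima of $H|_W$. Each such constant orbit has action $-H(p)\in(-\delta,\delta)$, so $e_H$ is a cycle in the subcomplex $SC_*^{<\delta}(H)$. Recall from Subsection~\ref{ssec: postive_sym_hom} that $SC_*^{<\delta}(H)$ is the Morse chain complex of $-H|_W$ (with the degree shift $\mu_{CZ}(p)=n-\mathrm{Ind}_H(p)$, so that the minima of $H|_W$ sit in degree $n$), and that its homology is identified with $H^{n-*}(W;\K)\cong H_*(W;\K)$ via Poincar\'e--Lefschetz duality. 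A standard Morse-theoretic computation (using that $\nabla H$ points outward along $\partial W$, automatic for admissible $H$) shows that the cycle $\sum_p p$ taken over the local maxima of $-H|_W$ represents the fundamental class $[W,\partial W]\in H_n(W,\partial W)\cong H^0(W)$, hence the unit $1$. Therefore $[e_H]=c_{*,H}(1)$ in $SH_n(H)$.

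Passing to the direct limit, the continuation maps that define $SH_*(W)=\varinjlim SH_*(H)$ respect the filtered inclusions $SC_*^{<\delta}(H)\hookrightarrow SC_*(H)$ once $\delta$ is held fixed below every Reeb period, and the identifications $H_*(SC_*^{<\delta}(H))\cong H_*(W)$ are compatible with the induced continuation isomorphisms, which act as the identity on the Morse piece. Hence $e=\varinjlim e_H=\varinjlim c_{*,H}(1)=c_*(1)\in SH_n(W)$.

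The main obstacle is the Morse-theoretic input that $\sum_p p$ represents the unit of $H^0(W)\cong H_*(W)$ under $H_*(SC_*^{<\delta}(H))\cong H^{n-*}(W)$; this requires tracking orientations through Poincar\'e--Lefschetz duality and the $n-\mathrm{Ind}$ grading shift. Working with $\K=\Z_2$ (as in this paper) eliminates sign issues and reduces the verification to a chain-level degree check.
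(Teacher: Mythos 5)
Your proposal is correct and follows essentially the intended route: the paper states this result as a citation to Ritter, and the argument it relies on is exactly what you give, namely Lemma~\ref{Lemma unit is sum of minima} combined with the identification of the sum of the local minima (a cycle in $SC_*^{<\delta}(H)$ of Floer degree $n$) with the generator of $H^0(W)\cong H_n(SC_*^{<\delta}(H))$, followed by compatibility of this identification with action-preserving continuation maps in the direct limit. The only blemish is a harmless degree slip: the fundamental class lives in $H_{2n}(W,\partial W)$ (Lefschetz-dual to $H^0(W)$), not in $H_n(W,\partial W)$, although in the shifted grading $\mu_{CZ}=n-\mathrm{Ind}$ the corresponding Floer cycle does sit in degree $n$ as you use it.
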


		\subsubsection{The TQFT structure on ${SH_*(W)}$ is compatible with the grading by ${H_1(W)}$}
		\label{Subsection TQFT is compatible with filtrations}
		%
		We can grade $SC_*(H)=\bigoplus\limits_{h\in H_1(W)} SC^{h}_*(H)$ by the homology classes
		$h\in H_1(\widehat{W})$ of the generators. The Floer
		differential preserves the $H_1$ grading, and so do Floer operations
		on a cylinder and a cap. The pair of pants product respects this grading as follows: $\psi_S: SH^{h_1}_*(W)\otimes SH^{h_2}_*(W) \to
		SH^{h_1+h_2}_*(W)$.
		We can also grade $SH_*(W)=\bigoplus\limits_{h}SH^{h}_*(M)$ by the free homotopy classes
		$h\in [S^1,M]$ of the generators. The TQFT operations for genus zero surfaces are compatible with the grading (the equation above holds after
		replacing $\sum$ by concatenation of free loops).
		\begin{remark}\label{remark on pair of pants product about contractible loops}
			Let $SH_*^0(W)$ denote the summand corresponding to the contractible loops. Considering only contractible loops determines a TQFT with operations $\psi_S:SH_*^0(W)^{\otimes q} \to
		SH_*^0(W)^{\otimes p}$ $(p\geq 1,q\geq 0)$. Also $c_*:H_*(W)\to SH_*^0(W)\subset SH_*(W)$ naturally lands in $SH_*^0(W)$. 
		\end{remark}
	
		%
		%

		\subsubsection{Viterbo Functoriality}
		\label{Subsection Viterbo Functoriality}
		%
		%
		For Liouville subdomains $W\subset \widehat{M}$, Viterbo  \cite{viterbo1999functors}
		constructed a restriction map $SH_*(M)\to SH_*(W)$ and McLean
		 \cite{mclean2007lefschetz} proved that it is a ring homomorphism.
		\begin{theorem}[\cite{mclean2007lefschetz} \cite{cieliebak2018symplectic}]\label{invariant of SH}
			Let $W$ and $V$ be compact symplectic manifolds with contact type boundary and assume that the Conley-Zehnder index is well-defined on $W$. If $V$ is obtained from $W$ by attaching to $\partial W\times [0,1]$ a subcritical symplectic handle $H_k^{2n}$, $k<n$, then it holds that
			\[SH_*(V,\Z_2)\cong SH_*(W,\Z_2) \]
			as rings.
		\end{theorem}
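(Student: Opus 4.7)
The plan is to exhibit the Viterbo transfer map as the desired ring isomorphism. Since $V$ is obtained from $W$ by attaching a handle, $W$ sits inside $V$ as a Liouville subdomain, so the general Viterbo construction, reviewed earlier via step Hamiltonians in $\mathcal{H}_{step}(V,W)$ and the ``no escape'' lemmas, produces a degree-preserving transfer map
\[
\phi_{V,W}:SH_*(V,\Z_2)\longrightarrow SH_*(W,\Z_2).
\]
A parallel construction on positive symplectic homology $SH_*^+$ exists, and $\phi_{V,W}$ sends the unit $e_V$ to the unit $e_W$ because the unit is characterized by a count of continuation solutions for the cap $C$ (Lemmas on the unit quoted above), and the cap operation commutes with the projection to $SC^{I,II}$ used in defining $\phi_{V,W}$.

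The first key input is McLean's theorem, which upgrades $\phi_{V,W}$ to a unital ring homomorphism. The idea is to apply the TQFT formalism of Section~\ref{Subsection TQFT on SH summary} to the pair of pants $P$ using a step-Hamiltonian family on $V$ with the property that the $1$-form $\beta$ is adapted to the collar of $\partial W$ in $V$. A maximum principle analogous to Lemma~\ref{lem: maximal_principle} applied to the Floer solutions defining $\psi_P$ confines pair-of-pants solutions between type I/II orbits to $W$, so that projection to $SC^{I,II}$ intertwines $\psi_P^V$ with $\psi_P^W$. Naturality of the continuation maps used in building $\phi_{V,W}$ then yields $\phi_{V,W}(x\cdot y)=\phi_{V,W}(x)\cdot\phi_{V,W}(y)$ on homology.

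The second, and main, ingredient is Cieliebak's invariance theorem for subcritical handle attachment: if $V=W\cup H_k^{2n}$ with $k<n$, then $\phi_{V,W}$ is an isomorphism of $\Z_2$-vector spaces. The strategy is to choose an admissible Hamiltonian on $\widehat{V}$ that is already admissible on $\widehat{W}$ after a compactly supported modification, and that on the handle region is modeled on the standard local model of a subcritical Weinstein handle (a neighborhood of an isotropic $k$-disk with $k<n$). In this model one verifies directly that the only $1$-periodic orbits contributed by the handle are constant orbits at critical points of the Morse function on the core disk; these correspond to the topological change $H_*(V,W)$, which sits in the $SC^{<\epsilon}$ part of the complex. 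Since the handle attachment map $H_*(W)\to H_*(V)$ is combined with the tautological long exact sequence between $SH_*^{<\epsilon}$ and $SH_*^+$, and since no new non-constant orbits are produced by the subcritical handle, the transfer map is seen to be an isomorphism by comparing the two tautological long exact sequences and applying the five lemma.

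The main obstacle is the technical verification in the last step: one must construct explicit admissible Hamiltonians and almost complex structures on the handle model for which the Floer complex can be computed, and check that a maximum principle prevents Floer trajectories from entering the handle region in unwanted ways. This is where subcriticality $k<n$ is essential: the belt sphere of the handle is a $(2n-k-1)$-sphere of dimension strictly greater than $n-1$, so after a generic perturbation no Reeb orbit of the resulting contact form on $\partial V$ sits in the handle region at an action level that would create additional $SH$-generators, and the maximum principle of Lemma~\ref{lem: maximal_principle_param} confines Floer trajectories appropriately. Once vector-space invariance is established, combining it with McLean's ring-homomorphism statement yields $SH_*(V,\Z_2)\cong SH_*(W,\Z_2)$ as rings, as claimed.
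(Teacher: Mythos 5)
The paper does not prove this statement at all: Theorem~\ref{invariant of SH} is imported, with the ring-homomorphism property of the Viterbo transfer map attributed to \cite{mclean2007lefschetz} and the subcritical invariance to \cite{cieliebak2018symplectic}. Your high-level route (transfer map $+$ McLean $+$ subcritical invariance) is therefore the same as the paper's, but since you go on to sketch the proof of the invariance step, that sketch has to be examined, and it contains genuine errors.

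First, it is not true that ``no new non-constant orbits are produced by the subcritical handle'' and that after perturbation no Reeb orbit of $\partial V$ sits in the handle region at relevant action levels. On the contrary, subcritical surgery creates a family of new Reeb orbits on the belt sphere: the paper's own Proposition~\ref{key} records new orbits $\gamma^1,\dots,\gamma^l$ with $|\gamma^i|=2n-k-4+2i$, and by shrinking the handle their actions can be made \emph{arbitrarily small}, so arbitrarily many of them lie below any fixed action threshold. What controls them is the growth of their Conley--Zehnder indices (and, in Cieliebak's actual argument, a careful choice of Hamiltonians on the handle model together with an algebraic cancellation), not their absence; your maximum-principle remark does not remove them from the Floer complex. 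Second, the concluding five-lemma argument cannot work as stated: the two tautological long exact sequences you want to compare do \emph{not} have isomorphic outer terms, since $SH_*^{<\epsilon}\cong H^{n-*}$ changes by the handle's new generator (degree $n-k$) and $SH_*^+$ changes by the new orbits $\gamma^i$. The invariance of the total $SH_*$ is precisely the statement that these two changes cancel against each other in the limit over slopes, which is the content of the cited theorem and cannot be re-derived by a five-lemma shortcut from ``nothing new appears.'' So while quoting McLean and Cieliebak--Oancea (as the paper does) is legitimate, the portion of your write-up that purports to prove the invariance step would fail as written.
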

		\begin{remark}
		A.Ritter proved a stronger statement in Theorem 9.5 of \cite{ritter2013topological}.
		\end{remark}

		
		\subsection{Conley-Zehnder index}\label{subsection:index}
		In this section we discuss Conley-Zehnder index as in Fauck \cite{fauck2016rabinowitz}. To define $\mu_{CZ}$, let $Sp(2n)$ denote the group of $2n\times2n$ symplectic matrices. We will discuss a generalization, called the Robbin-Salamon index as follows: any smooth path $\Psi:[a,b]\to Sp(2n)$ satisfies an ordinary differential equation
		\[\Psi'(t)=J_0S(t)\Psi(t),\qquad \Psi(a)\in Sp(2n),
		\] 
		Where $t\to S(t)=S(t)^{T}$ is a smooth path of symmetric matrices and $J_0$ is the standard almost complex structure. We say $t\in [a,b]$ is called a crossing if $\det(id-\Psi(t))=0$. The crossing form at time $t$ is a quadratic form $\Gamma(\Psi,t)$ defined for $v\in \ker(id-\Psi(t))$ by
		\[\Gamma(\Psi,t)v=<v,S(t)v>
		\]
		
		A crossing $t$ is called regular if $\Gamma(\Psi,t)$ is non-degenerate. For a path with only regular crossings, the Robbin-Salamon index is defined by
		\[\mu_{CZ}(\Psi,a,b):=\frac{1}{2}sign \Gamma(\Psi,a)+\sum_{a<t<b} sign \Gamma(\Psi,t)+\frac{1}{2} sign \Gamma(\Psi,b)
		\]
		where the sum runs all over crossings $t\in(a,b)$, and sign($M$) denotes the signature of the matrix $M$, which equals the number of positive eigenvalues minus the number of negative eigenvalues. Here we use $\mu_{CZ}$ to denote the Robbin-Salamon index.The fact that the Robbin-Salamon index coincides with Conley-Zehnder index when $\det(id-\Psi(b))\ne 0$ sort of justifies this abuse of notation.
		
		We have the following properties for $\mu_{CZ}$:
		\begin{itemize}
			
			\item (\textit{Naturality}) For any path $\Phi:[a,b]\to Sp(2n)$, $\mu_{CZ}(\Phi\Psi\Phi^{-1})=\mu_{CZ}(\Psi)$
			\item (\textit{Homotopy}) $\mu_{CZ}(\Psi_s)$ is constant for any homotopy $\Psi_s$ with fixed endpoints. 
			\item (\textit{Product}) If $Sp(2n)\oplus Sp(2n')$ is identified with a subgroup of $Sp(2(n+n'))$ in the natural way, then $\mu_{CZ}(\Psi\oplus \Psi')=\mu_{CZ}(\Psi)+\mu_{CZ}(\Psi').$
		\end{itemize}
		
		The homotopy property allows us to define $\mu_{CZ}(\Psi,a,b)$ also for paths with non-regular crossings, given that having regular crossings is a $\mathcal{C}^\infty$ generic property among paths with fixed endpoints.
		
		\begin{remark}[Lemma 59 \cite{fauck2016rabinowitz}]\label{Index formula}
			Let $\Psi_1,\Psi_2,\Psi_3:[0,T]\to Sp(2)$ be the following paths:
			\[\Psi_1(t)=e^{it}, \quad,\Psi_2(t)=e^{-it},\quad \Psi_3(t)=\textrm{diag}\big(e^{f(t)},e^{-f(t)}\big),f\in C^1(\mathbb{R}).
			\]
			Then, their Conley-Zehnder indices are given as follows:
			\begin{align*}
			\mu_{CZ}(\Psi_1)&=\Bigg\lfloor\frac{T}{2\pi}\Bigg\rfloor+\Bigg\lceil\frac{T}{2\pi}\Bigg\rceil,\\
			\mu_{CZ}(\Psi_2)&=\Bigg\lfloor\frac{-T}{2\pi}\Bigg\rfloor+\Bigg\lceil\frac{-T}{2\pi}\Bigg\rceil=-\mu_{CZ}(\Psi_1),\\
			\mu_{CZ}(\Psi_3)&=0.
			\end{align*}
		\end{remark}

		\subsubsection*{Trivialization}
		Suppose we have a symplectic manifold $(M,\omega)$ with $c_1(M)=0$ and $J$ is an $\omega-$compatible almost complex structure. Then the anti-canonical bundle of $M$ is the highest exterior power of $(TM,J)$, i.e,
	 $\kappa_J^*=\wedge^n(TM,J)$. The canonical bundle $\kappa_J$ is  the dual of $\kappa_J^*$. In the same manner, we can define the canonical bundle of a contact manifold $(C,\xi)$ with a choice of one form $\alpha$ and $d\alpha$-compatible almost complex structure on $\xi$. 
		
		A \textit{trivialization of the canonical bundle} is a bundle isomorphism $\Phi:\kappa_J\to M\times \mathbb{C}$. A \textit{trivialization} of $(\gamma^*TM,J)$( where $\gamma$ is a loop in $M$) is a bundle isomorphism $\Psi:\gamma^*TM\to S^1\times \mathbb{C}^n$. Such a trivialization has a one-to-one correspondence (up to homotopy) with the trivialization of $\gamma^{*}\kappa_J^*$ and hence the trivialization of the canonical bundle via:
		\[\det\nolimits_\mathbb{C}(\Psi):\wedge^n(\gamma^*TM)=\gamma^*\kappa_J^*\rightarrow S^1\times \mathbb{C}.
		\]
		
		For a 1-periodic Hamiltonian orbit $x$, we fix a trivialization of $(x^*TM,J)$ along $x$ as:
		\[\Psi:x^*TM\to S^1\times \mathbb{C}^n
		\]
		Suppose $\psi$ is the Hamiltonian flow and $d\psi_t:TM|_{x(0)}\to TM|_{x(t)}$ is its linearization, then define \[M_t(x):=\Psi_t\circ d\psi_t\circ \Psi_0^{-1}
		\]
		The Conley-Zehnder index of $x$ is defined as $\mu_{CZ}(x):=\mu_{CZ}(M_t(x))$.
		Similarly, we can define the Conley-Zehnder index of a Reeb orbit. In particular, let $(W,\lambda)$ be a Liouville domain and $(C:=\partial W,\xi:=\ker \lambda|_C)$ its boundary. We have $TM|_C=\xi\oplus<X_{Reeb}>\oplus<X>$, where $X_{Reeb},X$ are a Reeb vector field and a Liouville vector field, respectively. Since $<X_{Reeb}>=J<X>$ , we can identify $<X_{Reeb}>\oplus<X>$ with $\mathbb{C}$, i.e.
		$\gamma^*TM=\gamma^*\xi\oplus\mathbb{C}$, where $\gamma$ is a Reeb orbit.
		Due to the fact that Reeb flow preserves $X_{Reeb}$ and extends to the symplectization, we have 
		\[M_{t,M}(\gamma)=M_{t,\xi}(\gamma)\oplus \left[ {\begin{array}{cc}
			1 & 0 \\
			0 & 1 \\
			\end{array} } \right]
		\]
		where $M_{t,M}(\gamma)$ is the symplectic matrix associated to the linearization of the Reeb flow 
		with respect to a trivialization of $TM|_\gamma$, i.e,  $M_{t,\xi}(\gamma)$ is defined in the same manner. The product property of Conley-Zehnder index implies that $\mu_{CZ}(M_{t,M}(\gamma))=\mu_{CZ}(M_{t,\xi}(\gamma))$. Hence we will not specify which index we are referring to in the rest of this paper.

		Now consider a $G-$equivariant Liouville domain $(W,\lambda)$. Suppose the group action is free and $|G|<\infty$. Then we have that the quotient map
		\[\pi_G: (W,\lambda)\rightarrow (W/G,\lambda)
		\]
		is a finite covering map. Each Reeb orbit $\gamma$ in $\p(W/G)$ then lifts to a fractional orbit $\widetilde{\gamma}$ in $\p W$. That is,   $\widetilde{\gamma}(t)=\gamma_0(t),t\in[0,T]$ for some closed Reeb orbit $\gamma_0$ in $\p(W)$. In particular, we can choose $\gamma_0$ with period of $|G|\cdot T$. 
		If we choose a $G-$equivariant trivialization for the canonical bundle $\kappa_W$, then such trivialization descends down to $\kappa_{W/G}$. Equivalently, if we choose $G$-equivariant trivialization of $\xi|_{\gamma_0}$, and $M_{t,\xi}(\gamma_0)$ is the matrix of the linearized map, then we have for some $M_G\in Sp(2n,\R)$,
		\[ M_G\cdot M_{t,\xi}(\gamma_0)=M_{t+T,\xi}(\gamma_0).
		\]
		where $M_G$ satisfies $M_G^{|G|}=M_{|G|\cdot T,\xi}(\gamma_0)$  is a constant matrix, which only depends on the homotopy class of our $G$-equivariant trivialization. In particular, $M_{T,\xi}(\gamma_0)=M_G$,
		so $\mu_{CZ}(M_{t,\xi}(\gamma_0)), t\in [0,T]$ is well defined since the
		Conley-Zehnder index is constant for any homotopy with fixed endpoints. We can therefore define the Conley-Zehnder index of such a fractional Reeb orbit
	 of $\gamma$  to be the Conley-Zehnder index of $M_{t,\xi}(\gamma_0), t\in[0,T]$.

		 As a consequence, we have
		\[\mu_{CZ}(\gamma)=\mu_{CZ}(\widetilde{\gamma}).
		\]

			\begin{lemma}\label{index for the cylinder component}
	Let $(\mathbb{R}\times S^1, d(rd\theta))$ be the symplectization of $(S^1,\theta)$.
	Choose the canonical trivialization of $T(\mathbb{R}\times S^1)=T\mathbb{R}\times TS^1$, then 
	all fractional Reeb orbits of  $(S^1,\theta)$ have Conley-Zehnder index (Robbin-Salamon index) zero, with respect to any cyclic group action rotating the cylinder.
\end{lemma}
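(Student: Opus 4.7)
The plan is a direct computation that reduces the statement to the Robbin--Salamon index of a constant path in $Sp(2,\R)$. First I would identify the relevant flow: the contact form on $S^1$ is $d\theta$, with Reeb vector field $R = \partial_\theta$, which extends unchanged to the symplectization. Hence the time-$t$ Reeb flow on $\mathbb{R}\times S^1$ is the pure translation $\phi_t(r,\theta) = (r,\theta+t)$. Under the canonical trivialization $T(\mathbb{R}\times S^1) = T\mathbb{R}\oplus TS^1 \cong \R^2$, the differential $d\phi_t$ at every point is the identity matrix. So for any fractional Reeb orbit $\gamma:[0,T]\to \mathbb{R}\times S^1$, the associated path of symplectic matrices is the constant path $\Psi(t)\equiv I \in Sp(2,\R)$.

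Next I would unwind the Robbin--Salamon definition on this path. Writing $\Psi'(t)=J_0S(t)\Psi(t)$, the constancy $\Psi'(t)=0$ forces $S(t)\equiv 0$. Thus the crossing form
\[
\Gamma(\Psi,t)(v) = \langle v, S(t)v\rangle
\]
vanishes identically; every $t\in[0,T]$ is a crossing, but each contributes signature $0$. Summing as in the definition $\mu_{CZ}(\Psi,0,T) = \tfrac{1}{2}\mathrm{sign}\,\Gamma(\Psi,0) + \sum_{0<t<T}\mathrm{sign}\,\Gamma(\Psi,t) + \tfrac{1}{2}\mathrm{sign}\,\Gamma(\Psi,T)$ gives $0$. (One can alternatively apply the homotopy invariance to push $\Psi$ to a nearby path with only regular crossings; none are produced because $S\equiv 0$.)

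Finally, for any cyclic group $G$ acting by rotations of the cylinder, the canonical trivialization $T(\mathbb{R}\times S^1)\cong \R^2$ is manifestly $G$-invariant, so it qualifies as the $G$-equivariant trivialization used in the preceding subsection. Any fractional Reeb orbit $\widetilde{\gamma}:[0,T]\to \mathbb{R}\times S^1$ coming from a closed Reeb orbit of period $|G|\cdot T$ on the quotient is still an arc of translation in $\theta$, so the path of symplectic matrices in this trivialization is again constant at $I$. The same crossing-form computation yields $\mu_{CZ}(\widetilde{\gamma})=0$.

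There is essentially no technical obstacle here: the lemma is a tautology once one observes that (a) the Reeb flow on the symplectization of $S^1$ is pure translation, (b) the canonical framing is preserved by translations and by any rotational $G$-action, and (c) the resulting constant path at the identity has vanishing crossing form. The only point worth verifying carefully is that the Robbin--Salamon index is well defined on such a totally degenerate path, which is ensured either by the crossing-form computation above or by applying homotopy invariance to a small perturbation.
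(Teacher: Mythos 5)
Your proposal is correct and follows essentially the same route as the paper: the Reeb flow preserves the frame $(\partial_r,\partial_\theta)$, so the linearized flow in the canonical trivialization is the constant identity path, whose Robbin--Salamon index is zero. The extra detail you give (the crossing form vanishes since $S\equiv 0$, and the trivialization is invariant under any rotational cyclic action) is consistent with, and merely elaborates on, the paper's one-line argument.
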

\begin{proof}
	Since Reeb flow preserves $(\partial_r,\partial_\theta)$, so the matrix for linearized return map is\[ M(t)=\left[ {\begin{array}{cc}
		1 & 0 \\
		0 & 1 \\
		\end{array} } \right].
	\]Therefore, the Conley-Zehnder index is zero.
\end{proof}		
		
		The following lemma gives a formula for the Reeb vector field in terms of the Hamiltonian and Liouville vector field. 
		\begin{lemma}\label{formula for reeb vector}
			Let $(W,\lambda)$ be a Liouville manifold. Suppose $H$ is a function on $W$ with $0$ as its regular value and the Liouville vector field $X$ is transverse to the 0-level set. Then $(\Sigma:=H^{-1}(0),\lambda)$ is a contact manifold whose Reeb vector field is given by $X_{Reeb}=\frac{X_H}{X(H)}$, where $X_H$ is the Hamiltonian vector field of $H$.
		\end{lemma}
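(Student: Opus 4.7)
The plan is to verify the two assertions directly from the defining equations, using only that the Liouville field satisfies $\iota_X d\lambda=\lambda$ and that $X$ is transverse to $\Sigma=H^{-1}(0)$.

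First I would show that $(\Sigma,\lambda|_\Sigma)$ is contact. The key identity is
\[
\lambda\wedge(d\lambda)^{n-1}=\iota_X d\lambda\wedge(d\lambda)^{n-1}=\tfrac{1}{n}\,\iota_X(d\lambda)^n,
\]
so that $\lambda\wedge(d\lambda)^{n-1}$, restricted to any hypersurface to which $X$ is transverse, is the contraction of the symplectic volume form $(d\lambda)^n$ by a vector not tangent to that hypersurface, hence a volume form on it. Applied to $\Sigma$, this shows $\lambda|_\Sigma$ is a contact form.

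Next I would check that $X_H$ is tangent to $\Sigma$ and defines (after normalization) the Reeb direction. Tangency is immediate from $dH(X_H)=-d\lambda(X_H,X_H)=0$. For any $v\in T\Sigma$ one has $v\in\ker dH$, so
\[
(\iota_{X_H}d\lambda)(v)=-dH(v)=0,
\]
hence $\iota_{X_H}d\lambda|_{T\Sigma}=0$, i.e.\ $X_H$ spans the characteristic line of $d\lambda|_{T\Sigma}$. Dividing by $X(H)$, which is nowhere zero on $\Sigma$ by the transversality hypothesis, gives a well-defined smooth vector field on $\Sigma$ satisfying the first Reeb equation.

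Finally I would verify the normalization $\lambda(X_{Reeb})=1$. Using $\iota_X d\lambda=\lambda$, for any vector field $Y$ one has $\lambda(Y)=d\lambda(X,Y)$, and specializing to $Y=X_H$ gives
\[
\lambda(X_H)=d\lambda(X,X_H)=-d\lambda(X_H,X)=dH(X)=X(H).
\]
Therefore $\lambda\!\left(\tfrac{X_H}{X(H)}\right)=1$, and combined with the previous step this identifies $X_H/X(H)$ as the Reeb vector field of $(\Sigma,\lambda|_\Sigma)$. There is no real obstacle here; the only point to watch is the sign convention in the defining equations of $X_H$ and $X$, which is fixed at the start of the section.
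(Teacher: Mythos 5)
Your proposal is correct and the main computation is the same as the paper's: the paper also verifies the two Reeb equations via $\iota_{X_H}d\lambda|_\Sigma=-dH|_\Sigma=0$ and $\lambda(X_H)=\iota_{X_H}\iota_X d\lambda=d\lambda(X,X_H)=dH(X)=X(H)$, then normalizes by $X(H)$. The only difference is that the paper dismisses the contact condition as well known, whereas you spell out the standard contact-type-hypersurface argument $\lambda\wedge(d\lambda)^{n-1}=\tfrac{1}{n}\iota_X(d\lambda)^n$, which is a harmless (and correct) addition.
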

		\begin{proof}
			$(\Sigma,\lambda)$ is well known to be contact. We only need to prove the latter part of the lemma. Since 
			\[\iota_{X_H}d\lambda|_{\Sigma}=-dH|_\Sigma=0\]
			and
			\[\iota_{X_H}\lambda=\iota_{X_H}\iota_{X}d\lambda=d\lambda(X,X_H)=dH(X)=X(H),
			\]
			it follows $X_{Reeb}=\frac{X_H}{X(H)}$.
		\end{proof}
\begin{lemma}[Lemma 5.20  \cite{mclean2016reeb}] \label{lemma:hamiltonianconleyzehnderindexcomparisonstandard}
	Let $(C,\xi)$ be a contact manifold with associated contact form $\alpha$
	and let $h : \R \to \R$ be a function with $h' > 0,h''>0$ and $h'(0) = 1$.
	Let $\widehat{C} := C \times \R$ be the symplectization of $C$ with symplectic form $d(e^r \alpha)$
	where $r$ parameterizes $\R$.
	Let $\gamma(t)$ be a Reeb orbit of $\alpha$ of period $L$ with a choice of  trivialization of the symplectic vector bundle $\oplus_{j=1}^{N}TM$ along this orbit.
	This choice of trivialization induces a choice of trivialization of $\gamma^* \oplus_{j=1}^{N} \xi$ in a natural way.
	Then the Hamiltonian $L h(e^r)$ has a $1$ periodic orbit x equal to $\gamma(Lt)$ inside $C \times \{0\} = C$ and its Conley-Zehnder index is equal to
	$\mu_{CZ}(\gamma)+\frac{1}{2}$.
	
\end{lemma}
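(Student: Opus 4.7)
The plan is a direct linearization computation that splits the tangent bundle along $x$ into two pieces and applies the product property of the Robbin-Salamon index. First, I compute the Hamiltonian vector field of $H = Lh(e^r)$ on $\widehat{C} = C \times \R$. Writing $X_H = a\p_r + b R_\alpha + v$ with $v \in \xi$, the equation $\iota_{X_H}\omega = -dH = -Lh'(e^r)e^r\,dr$ combined with $\omega = e^r\,dr\wedge\alpha + e^r\, d\alpha$ forces $a = 0$, $v = 0$, and $b = Lh'(e^r)$. The normalization on $h$ then yields $X_H|_{r=0} = L R_\alpha$, so $x(t) = \gamma(Lt) \subset C \times \{0\}$ is indeed a $1$-periodic orbit.

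The key structural observation is that the linearized flow preserves the symplectic splitting $T\widehat{C}|_{x(t)} = \xi \oplus \langle R_\alpha, \p_r\rangle$. Concretely, the time-$t$ flow is $\phi_t(p,r_0) = (\phi^R_{tLh'(e^{r_0})}(p), r_0)$, which preserves $r_0$, and the Reeb flow preserves both $R_\alpha$ and $\xi = \ker\alpha$; the splitting is $\omega$-orthogonal because $d\alpha$ annihilates $R_\alpha$ and $\p_r$ while $dr\wedge\alpha$ vanishes on $\xi$. In the trivialization $\Phi$, the product property of the Robbin-Salamon index therefore gives
\[
\mu_{CZ}(x) = \mu_{CZ}(\Psi_\xi) + \mu_{CZ}\bigl(\Psi_{\langle R_\alpha,\p_r\rangle}\bigr).
\]
The first summand is the easy piece: $\Psi_\xi$ is by construction the linearized Reeb flow along $\gamma$ over time $[0,L]$, contributing exactly $\mu_{CZ}(\gamma)$.

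For the second summand, differentiating $\phi_t$ at $r_0 = 0$ gives $\p_r \mapsto \p_r + tLh''(1) R_\alpha$ and $R_\alpha \mapsto R_\alpha$, so in the symplectic basis $(\p_r, R_\alpha)$ the path is the positive unipotent shear
\[
\Psi(t) = \begin{pmatrix} 1 & 0 \\ tLh''(1) & 1 \end{pmatrix},\qquad t\in[0,1],
\]
with $Lh''(1) > 0$ because $h'' > 0$. A direct Robbin-Salamon calculation with $S(t) = -J_0\dot\Psi(t)\Psi(t)^{-1} = \mathrm{diag}(Lh''(1),0)$ shows that only the endpoint $t=0$ contributes: there $\ker(I-\Psi(0))$ is the whole plane and $S(0)$ has signature $+1$, giving $\mu_{CZ}(\Psi) = \tfrac{1}{2}$. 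Adding the two contributions yields $\mu_{CZ}(x) = \mu_{CZ}(\gamma) + \tfrac{1}{2}$, as required.

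The main obstacle is the Robbin-Salamon bookkeeping for the shear path: every $t\in[0,1]$ is a crossing, since $\ker(I-\Psi(t)) = \langle R_\alpha\rangle$ for $t>0$, and these crossings are \emph{degenerate} because $S(t)$ restricted to $\langle R_\alpha\rangle$ vanishes. The cleanest way to justify the value $\tfrac{1}{2}$ rigorously is either to invoke the Robbin-Salamon convention that allows degenerate crossings (so each $t>0$ contributes $0$ and only the initial signature $+1$ counts, with weight $\tfrac{1}{2}$), or to homotope $\Psi$ with fixed endpoints to a nearby path having only regular crossings—e.g.\ by adding a small rotation, or replacing $\Psi$ by the flow of $\exp\bigl(tJ_0\,\mathrm{diag}(Lh''(1)+\epsilon,\epsilon)\bigr)$—and verify by homotopy invariance of $\mu_{CZ}$ that the perturbed path still has index $\tfrac{1}{2}$. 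Either route gives the same answer and completes the proof.
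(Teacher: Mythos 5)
Your overall strategy is the right one, and it is exactly the route the paper indicates in the remark following the lemma (and the route of McLean's proof): the linearized flow along $x$ splits symplectically as the linearized Reeb flow on $\xi$ plus a $2\times 2$ block on $\langle \partial_r,R_\alpha\rangle$, the $\xi$-block contributes $\mu_{CZ}(\gamma)$ by the product property, and the remaining block is the positive unipotent shear $\begin{pmatrix}1&0\\ cth''&1\end{pmatrix}$ whose Robbin--Salamon index is $\tfrac12$. Your computation of $X_H$, of the shear, and of $S(t)=\mathrm{diag}(Lh''(1),0)$ are all correct (modulo the harmless normalization issue that the stated condition $h'(0)=1$ should be read as $h'(1)=1$, since the orbit sits at $e^r=1$).

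The one point that is actually wrong is your second proposed justification of the value $\tfrac12$: replacing the shear by $\exp\bigl(tJ_0\,\mathrm{diag}(Lh''(1)+\epsilon,\epsilon)\bigr)$ is \emph{not} a homotopy with fixed endpoints (the time-$1$ matrix becomes elliptic rather than the shear), so homotopy invariance does not apply; worse, that perturbed path has a single regular crossing at $t=0$ with positive definite crossing form, hence Robbin--Salamon index $1$, not $\tfrac12$. The same caveat applies to ``adding a small rotation'' if it moves the endpoint. A correct regularization must perturb only in the interior, e.g.\ $\tilde\Psi(t)=\begin{pmatrix}1&0\\ ct&1\end{pmatrix}\exp\bigl(t(1-t)\epsilon J_0 B\bigr)$ with $B=\mathrm{diag}(0,1)$: this fixes both endpoints, has no interior crossings, and its two endpoint crossings contribute $\tfrac12\cdot 2=1$ at $t=0$ and $\tfrac12\cdot(-1)=-\tfrac12$ at $t=1$, giving $\tfrac12$ as claimed. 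Your first route (the signature convention for degenerate crossings, with the continuum of interior crossings contributing $0$ and the initial crossing contributing $\tfrac12\,\mathrm{sign}=\tfrac12$) does give the correct value and is the standard shortcut; with that, or with the corrected fixed-endpoint perturbation, the proof is complete.
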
\label{remark: index difference}
	\begin{remark}
    Notice that the Hamiltonian vector field in  \cite{mclean2016reeb} differs from ours by a minus sign.
	We have 	\[M_{t,M}(x)=M_{t,\xi}(\gamma)\oplus \left[ {\begin{array}{cc}
		1 & 0 \\
		ah''t & 1 \\
		\end{array} } \right],
	\]for some constant $a>0$.
	
	If instead, $h''< 0$, then the index equals $\mu_{CZ}(\gamma)-\frac{1}{2}$. And
	if $h'<0$, then the Hamiltonian orbit goes in the opposite direction of the Reeb orbit, and the index differs by a minus sign.
	\end{remark}

		We will conclude this subsection with a lemma relating Morse index of critical point with Conley-Zehnder index of the corresponding constant Hamiltonian orbit.
		\begin{lemma}
			If $S$ is an invertible symmetric matrix with $||S||<2\pi$ and $\Psi(t)=\exp(tJ_0S)$, then \[\mu_{CZ}(\Psi)=n-Ind(S)
			\]
			where $Ind(S)$ is the number of negative eigenvalues of $S$.
		\end{lemma}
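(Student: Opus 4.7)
The plan is to apply the Robbin--Salamon crossing formula directly, since $\Psi(t) = \exp(tJ_0 S)$ is explicit. Differentiating yields $\Psi'(t) = J_0 S \,\Psi(t)$, so in the notation of the crossing form we have $S(t) \equiv S$ constant; hence the crossing form at any time $t$ is $\Gamma(\Psi, t)(v) = \langle v, Sv\rangle$ restricted to $\ker(I - \Psi(t))$.

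The main step is to show that the only crossing on $[0,1]$ occurs at $t=0$. A time $t \in [0,1]$ is a crossing iff $\exp(tJ_0 S)$ has $1$ as an eigenvalue, equivalently iff $tJ_0 S$ has an eigenvalue in $2\pi i\,\Z$. Since $J_0$ is orthogonal, $\|J_0 S\|_{\mathrm{op}} \leq \|J_0\|_{\mathrm{op}}\|S\| = \|S\| < 2\pi$, so for every $t \in [0,1]$ all eigenvalues $\mu$ of $tJ_0 S$ satisfy $|\mu| < 2\pi$. The only integer multiple of $2\pi i$ inside the open disc of radius $2\pi$ is $0$; but $J_0 S$ is invertible (as both $J_0$ and $S$ are), so $0$ is an eigenvalue of $tJ_0 S$ only when $t = 0$. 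In particular the right endpoint $t=1$ is \emph{not} a crossing and contributes nothing to the Robbin--Salamon sum.

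At $t = 0$ we have $\Psi(0) = I$, so $\ker(I - \Psi(0)) = \R^{2n}$ and the crossing form is the full quadratic form $v \mapsto \langle v, Sv\rangle$ on $\R^{2n}$. Because $S$ is invertible this is non-degenerate, so the crossing is regular and
\[
\operatorname{sign}\Gamma(\Psi,0) \;=\; (\#\text{positive eigenvalues of }S) - (\#\text{negative eigenvalues of }S) \;=\; 2n - 2\,\mathrm{Ind}(S).
\]
The Robbin--Salamon formula then gives
\[
\mu_{CZ}(\Psi) \;=\; \tfrac{1}{2}\operatorname{sign}\Gamma(\Psi,0) \;=\; \tfrac{1}{2}\bigl(2n - 2\,\mathrm{Ind}(S)\bigr) \;=\; n - \mathrm{Ind}(S),
\]
as claimed.

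The only potential obstacle is the norm bound controlling the eigenvalues of $tJ_0 S$; this is precisely where the hypothesis $\|S\| < 2\pi$ is used, and without it one could pick up additional crossings in $(0,1]$ coming from nonzero integer multiples of $2\pi i$, which would change the count.
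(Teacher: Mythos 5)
Your proof is correct. Note that the paper states this lemma without giving any proof (it is the standard linear-algebra input behind Corollary~7.2.2 of \cite{audin2014morse}, which the paper then quotes), so there is nothing to compare against; your argument is the standard one and is fully consistent with the paper's stated Robbin--Salamon conventions: with $S(t)\equiv S$ the only crossing on $[0,1]$ is the regular crossing at $t=0$ (the bound $\|S\|<2\pi$, read as the operator norm as in the paper's ``usual norm'' hypothesis, keeps the spectrum of $tJ_0S$ away from the nonzero points of $2\pi i\,\Z$, and invertibility of $S$ rules out $t>0$ crossings and makes the crossing form $\langle v,Sv\rangle$ nondegenerate), so the endpoint term $\tfrac12\operatorname{sign}\Gamma(\Psi,0)=n-\mathrm{Ind}(S)$ is the whole index.
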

		\begin{corollary}[Corollary 7.2.2\textbf{ \cite{audin2014morse}}]\label{index of critical pt}
			Let $W$ be a symplectic manifold of dimension $2n$, let
			\[H:W\rightarrow \mathbb{R}
			\]
			be a Hamiltonian and $x$ be a critical point of $H$. We assume that $H$ is $\mathcal{C}^2$-small (in this case, we can choose a Darboux chart centered at $x$ such that the usual norm $||Hess_x(H)||<2\pi$). Then the Conley-Zehnder index $\mu_{CZ}(x)$ of $x$ as a periodic solution of the Hamiltonian system and its Morse index Ind(x) as a critical point of the function $H$ are connected by
			\[\mu_{CZ}(x)=n-Ind(x).
			\]
		\end{corollary}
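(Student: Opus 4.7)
The plan is to reduce the corollary directly to the preceding lemma by a Darboux-chart computation. Pick a Darboux chart $\varphi:(U,x)\to(\R^{2n},0)$ at the critical point $x$, so that $\varphi^{*}\omega_0=\omega$ with $\omega_0$ the standard symplectic form. Because $x$ is a Morse critical point of $H$ and $dH(x)=0$, the Taylor expansion of $H\circ\varphi^{-1}$ at the origin has the form
\[
H\circ\varphi^{-1}(z)=H(x)+\tfrac{1}{2}\langle z,Sz\rangle+O(|z|^{3}),\qquad S=\mathrm{Hess}_{x}(H),
\]
with $S$ symmetric and invertible. The Morse index $\mathrm{Ind}(x)$ equals $\mathrm{Ind}(S)$.

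Next, I would compute the linearized Hamiltonian flow at $x$. Under $\varphi$, the Hamiltonian vector field of the quadratic part is $X=J_{0}Sz$ (using the convention $dH=-\iota_{X_{H}}\omega$ fixed in the paper), and its time-$t$ flow is $\Psi(t)=\exp(tJ_{0}S)$. Since $x$ is a constant $1$-periodic orbit, the linearized time-$1$ return map of the full $X_{H}$ along $x$ equals $\Psi(1)$ modulo cubic corrections, but for the purposes of the Conley--Zehnder index only the linearization at $x$ enters; with respect to the canonical trivialization of $TW|_{x}$ coming from the Darboux chart, the linearized flow is exactly $\Psi$. This identifies $\mu_{CZ}(x)=\mu_{CZ}(\Psi)$.

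Finally, invoke the $\mathcal{C}^{2}$-smallness hypothesis: we may assume the Darboux chart is chosen so that the usual operator norm satisfies $\|S\|<2\pi$. The preceding lemma then applies verbatim to $\Psi(t)=\exp(tJ_{0}S)$ and gives $\mu_{CZ}(\Psi)=n-\mathrm{Ind}(S)$. Combining, $\mu_{CZ}(x)=n-\mathrm{Ind}(x)$, as required.

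The only subtle point, and the one I expect to be the main obstacle if one wants to be fully careful, is the compatibility of the trivialization: the Conley--Zehnder index is only well defined relative to a chosen symplectic trivialization of $x^{*}TW$, so one must verify that the canonical trivialization induced by the Darboux chart agrees (up to homotopy) with the trivialization of the canonical bundle used globally to grade $SC_{*}$. For constant orbits this is automatic because the trivialization can be extended over a small disk at $x$, so any two such trivializations are homotopic and the index is unambiguous. Once this is noted, the rest of the argument is the three-line reduction above.
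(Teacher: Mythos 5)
Your argument is correct and is exactly the intended derivation: the paper gives no proof of its own (it cites Audin--Damian, Corollary 7.2.2), but it states the matrix lemma $\mu_{CZ}(\exp(tJ_0S))=n-\mathrm{Ind}(S)$ for $\|S\|<2\pi$ immediately beforehand precisely so that the corollary follows by linearizing the Hamiltonian flow at the critical point in a Darboux chart and applying that lemma, which is what you do. Your closing remark that the trivialization is unambiguous for a constant orbit (any trivialization extends over a small disk at $x$) correctly disposes of the only subtlety.
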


		\subsection{Weinstein handle attachment and contact surgery}\label{ss:Weinstein handle & contact surgery}
		\subsubsection{Contact surgery}
		This section is already included in Chapter 6 of  \cite{geiges2008introduction}. We will  highlight the parts which should be paid attention to in this paper, namely, the trivialization of the conformal symplectic normal bundle.
		\begin{defn}
			Let $(M,\xi)$ be a contact manifold. A submanifold $L$ of $(M,\xi)$ is called an \textit{isotropic submanifold} if $T_pL\subset \xi_p$ for all point $p\in L$.
		\end{defn}
		Let $L\subset (M,\xi=\ker \alpha)$ be an isotropic submanifold in a contact manifold with cooriented contact structure. Let $(TL)^\perp\subset \xi_L$ be the subbundle of $\xi_L$ that is symplectically orthogonal to $TL$ with respect to the symplectic bundle structure $d\alpha|_\xi$.The conformal structure of this bundle does not depend on the choice of contact form and therefore $(TL)^\perp $ is determined by $\xi$. The fact $L$ is isotropic implies that $TL\subset (TL)^\perp$. So we have the following definition,
		\begin{defn}
			The quotient bundle\[
			CSN_M(L):=(TL)^\perp/TL\]
			with the conformal symplectic structure induced by $d\alpha$ is called the \textit{conformal symplectic normal bundle} of $L$ in $M$.
		\end{defn}
		So we have 
		\[\xi|_L=\xi|_L/(TL)^\perp\oplus(TL)^\perp/TL\oplus TL=TL\oplus\xi|_L/(TL)^\perp\oplus
		CSN_M(L).
		\]
		Let $J:\xi\to \xi$ be a complex bundle structure on $\xi$ compatible with the symplectic structure given by $d\alpha$. Then the bundle $\xi|_L/(TL)^\perp$ is isomorphic to $J(TL)$. So the
		contact structure has the following natural splitting on the isotropic submanifold:
		\begin{lemma}\label{spliting of the contact structure}
			\[\xi|_L=TL\oplus J(TL)\oplus
			CSN_M(L)
			\]
		\end{lemma}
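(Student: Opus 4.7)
The plan is to establish the three summands $TL$, $J(TL)$, and $CSN_M(L)$ as mutually transverse sub-bundles of $\xi|_L$ whose dimensions add up to $\dim \xi$. The essential tool is the Riemannian metric $g_J$ on $\xi$ defined by $g_J(v,w):=d\alpha(v,Jw)$, which is symmetric and positive-definite because $J$ is $d\alpha$-compatible.

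First I would verify that $TL\cap J(TL)=0$. Indeed, if $v\in TL$ and $Jv\in TL$, then since $L$ is isotropic we have $d\alpha(v,Jv)=0$; but $d\alpha(v,Jv)=g_J(v,v)$, hence $v=0$. Next I would show $J(TL)\cap (TL)^\perp=0$: if $v\in TL$ with $Jv\in (TL)^\perp$, then $d\alpha(Jv,w)=0$ for all $w\in TL$; using compatibility one computes $d\alpha(Jv,w)=-d\alpha(w,Jv)=-g_J(w,v)$, so $g_J(v,w)=0$ for every $w\in TL$. Taking $w=v$ forces $v=0$.

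Since the symplectic form $d\alpha|_\xi$ is non-degenerate and $TL$ is isotropic of rank $k=\dim L$, the symplectic orthogonal $(TL)^\perp$ has rank $\dim\xi-k$. Combined with $\dim J(TL)=k$ and the transversality established above, a dimension count gives
\[
\xi|_L \;=\; J(TL)\oplus (TL)^\perp .
\]
Now $TL\subset (TL)^\perp$ because $L$ is isotropic, so I would split this inclusion by choosing a complement $E\subset (TL)^\perp$ of $TL$ (for instance the $g_J$-orthogonal complement, which is natural although any complement works for the statement). The quotient projection $(TL)^\perp\to (TL)^\perp/TL=CSN_M(L)$ restricts to a bundle isomorphism $E\xrightarrow{\sim} CSN_M(L)$, and hence
\[
\xi|_L \;=\; TL\oplus J(TL)\oplus E \;\cong\; TL\oplus J(TL)\oplus CSN_M(L),
\]
which is the desired splitting.

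The only subtlety I anticipate is that the identification of the third summand with $CSN_M(L)$ depends on the choice of complement $E$, so one should either phrase the conclusion up to isomorphism of symplectic vector bundles or insist on a canonical choice (such as $g_J$-orthogonality); neither is a serious obstacle, merely a bookkeeping remark. The heart of the argument is the two transversality computations, both of which reduce immediately to the positivity of $g_J$ on $TL$.
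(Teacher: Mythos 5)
Your proof is correct and follows essentially the same route as the paper, which splits the filtration $TL\subset (TL)^\perp\subset \xi|_L$ and identifies the quotient $\xi|_L/(TL)^\perp$ with $J(TL)$ via the compatible complex structure; you simply make the splittings explicit using the metric $g_J(\cdot,\cdot)=d\alpha(\cdot,J\cdot)$, and your transversality computations are exactly the content behind that identification. Your closing remark is also apt: the third summand is only canonically a quotient, so the stated equality is to be read via a choice of complement, just as in the paper.
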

		Therefore, if we fix a trivialization of $TL\oplus J(TL)$, then the trivialization of $CSN_M(L)$ is determined by the trivialization of $\xi|_L$.
		Now we can state the contact surgery theorem:
		\begin{theorem}[Theorem 6.2.5 \cite{geiges2008introduction}]\label{thm for contact surgery}
			Let $\Lambda^{k-1}$ be an isotropic sphere in a contact manifold $(M,\xi=ker \alpha)$ with a trivialization of the conformal symplectic normal bundle $CSN_M(\Lambda^{k-1})$. Then there is a symplectic cobordism from $(M,\xi)$ to the manifold $M'$ obtained from $M$ by surgery along $\Lambda^{k-1}$ with the natural framing. In particular, the surgered manifold $M'$ carries a contact structure that coincides with the one on $M$ away form the surgery region.
		\end{theorem}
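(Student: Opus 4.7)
The plan is to realize the symplectic cobordism as the result of attaching a standard Weinstein $k$-handle to the positive end of the symplectization collar of $(M,\xi)$. I would first thicken $(M,\xi)$ to a piece of its symplectization $(M\times[1,1+\epsilon],\,d(r\alpha))$, which is a Liouville cobordism having $(M,\xi)$ as its negative contact boundary and $(M\times\{1+\epsilon\},(1+\epsilon)\alpha)$, contactomorphic to $(M,\xi)$, as its positive contact boundary. The isotropic sphere $\Lambda^{k-1}$ sits inside $M\times\{1+\epsilon\}$ with a prescribed trivialization of $CSN_M(\Lambda)$.

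Second, I would invoke the \emph{isotropic neighborhood theorem}: a neighborhood of an isotropic sphere $\Lambda^{k-1}$ in a contact manifold is determined up to contactomorphism by $k$, the ambient dimension $2n-1$, and the conformal symplectic structure of $CSN_M(\Lambda)$. Since $\Lambda\cong S^{k-1}$ is parallelizable (when $k\ne 1$; in any case $TS^{k-1}\oplus J(TS^{k-1})$ admits a canonical stable trivialization after one choice), the decomposition from Lemma~\ref{spliting of the contact structure} shows that a trivialization of $CSN_M(\Lambda)$ is exactly the data needed to identify a neighborhood of $\Lambda$ with a universal local model $\cU_k$ in the contact boundary of $(\R^{2n},\lambda_{std})$.

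Third, I would construct the \emph{standard Weinstein $k$-handle} $H_k^{2n}\subset (\R^{2n},\omega_{std})$, given by a sublevel set of a quadratic Morse function with a single index-$k$ critical point at the origin, together with a Liouville vector field $X=\sum_{i\le k} (-x_i\partial_{x_i}+2y_i\partial_{y_i})+\tfrac12\sum_{i>k}(x_i\partial_{x_i}+y_i\partial_{y_i})$ (or any gradient-like Liouville field for this Morse function). By construction $H_k^{2n}$ has two contact-type boundary pieces: an attaching region $\partial_-H_k^{2n}$ (where $X$ points inward) containing a standard isotropic $S^{k-1}$ whose neighborhood is exactly the local model $\cU_k$, and a belt region $\partial_+H_k^{2n}$ (where $X$ points outward). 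Using the identification from the previous step, I glue $H_k^{2n}$ onto a neighborhood of $\Lambda$ in $M\times\{1+\epsilon\}$, matching the contact forms (after rescaling, using the conformal structure on $CSN_M(\Lambda)$).

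The resulting manifold $W$ carries a globally defined Liouville form: on the collar it is $r\alpha$, on the handle it is $\lambda_{std}$, and the identification matches these up to the symplectization scaling. Its Liouville vector field is outward-transverse along $\partial_+W$, which by Lemma~\ref{formula for reeb vector} inherits a contact form $\alpha'$. The underlying smooth manifold of $\partial_+W$ is the topological surgery of $M$ along $\Lambda$, and the framing used is the one coming from the trivialization of $TL\oplus J(TL)\oplus CSN_M(L)$, which is precisely the natural framing. Away from the surgery locus, $\partial_+W$ is unchanged (it is still a piece of the original symplectization), so $\xi'$ agrees with $\xi$ off the surgery region. The main technical obstacle is the gluing step: ensuring that the identifications of attaching-region neighborhoods preserve the \emph{Liouville} structures (not merely the contact or symplectic ones) so that $X$ extends smoothly across the seam; this is handled by the Weinstein neighborhood theorem for isotropic submanifolds in Liouville cobordisms, which uses exactly the trivialization of $CSN_M(\Lambda)$ as its defining datum.
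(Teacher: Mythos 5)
Your proposal is the standard Weinstein handle-attachment argument, and it coincides with the proof in \cite{geiges2008introduction} (Section 6.2) that the paper cites for this statement --- the paper itself gives no independent proof, so there is nothing different to compare against: symplectization collar, isotropic neighborhood theorem keyed to the trivialization of $CSN_M(\Lambda)$, the model handle with the hyperbolic Liouville field, and the check that the Liouville field exits through the new boundary. One small correction: $S^{k-1}$ is not parallelizable in general (e.g.\ $k=3$), but this is harmless because what the construction actually needs is that $T\Lambda\oplus J(T\Lambda)\cong TS^{k-1}\otimes\C$ is canonically trivial via the stable trivialization $TS^{k-1}\oplus\epsilon^1\cong\epsilon^k$ coming from $S^{k-1}\subset\R^k$; combining this canonical trivialization with your chosen trivialization of $CSN_M(\Lambda)$ (as in Lemma~\ref{spliting of the contact structure}) is exactly what identifies a neighborhood of $\Lambda$ with the model attaching region and produces the natural framing.
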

		
		\begin{remark}
			The resulting contact structure on $M'$ is uniquely determined up to isotopy by the isotopic isotropy class of $\Lambda^{k-1}$ and the homotopy class of the trivialization of $CSN_M(\Lambda^{k-1})$.
		\end{remark}

		\subsubsection{Weinstein handlebodies}\label{weinstein handlebody}
	 For the purposes of this paper, we need to attach a handle to a Weinstein domain.
		We will follow Section 13 in \cite{cieliebak2006symplectic}.
		The standard handle of index $k$ will be the bidisk in $\mathbb{C}^n$:
		\[\bigg\{\sum_{j=1}^{k}x_j^2\leq (1+\epsilon)^2,\sum_{j=1}^{k}y_j^2+\sum_{j=k+1}^{n}|z_j|^2\leq \epsilon^2\bigg\},
		\]
		where $z_j=x_j+iy_j,j=1,2,\cdots,n$, are the complex coordinates in $\mathbb{C}^n$. In particular, the handle $H$ carries the standard complex structure $i$, along with the standard symplectic structure $\omega_{std}$.
		The symplectic form $\omega_{std}$ on $H$ admits a hyperbolic Liouville field
		\[X_{std}=\sum_{j=1}^{k}\big(-x_j\frac{\partial}{\partial x_j} + 2y_j\frac{\partial}{\partial y_j}\big)+\frac{1}{2}\sum_{l=k+1}^{n}\big(x_l\frac{\partial}{\partial x_l}+y_l\frac{\partial}{\partial y_l}\big).
		\]
		Let us denote by $\xi^-$ the contact structure $\alpha_{st}|_{\partial^-H}=0$ defined on $\partial^- H$ by the Liouville form $\alpha_{st}=\iota_{X_{std}}\omega_{std}$, where $\partial^- H:=\partial D_1^k\times D_\epsilon^{2n-k}$ is the \textit{lower boundary}. Notice that the bundle $\xi^-|_{\Lambda^{k-1}}$ canonically splits as $T\Lambda^{k-1}\oplus J(T\Lambda^{k-1})\oplus \epsilon^{n-k}$, where $\epsilon^{n-k}$ is a trivial $(n-k)-$dimensional complex bundle. We will denote by $\sigma_\Lambda$ the isomorphism 
		\[T\Lambda^{k-1}\oplus J(T\Lambda^{k-1})\oplus \epsilon^{n-k}\to \xi^-|_\Lambda.
		\] 
		
		Suppose we are given a real $k-$dimensional bundle $E$, a complex $n-$dimensional bundle $\tau,n\geq k$, and an injective totally real homomorphism $\phi:E\to \tau$. Then $\phi$ canonically extends to a complex homomorphism $\phi\otimes \C: E\otimes\C \to \tau$. If $\phi\otimes \C$ extends to a fiberwise complex isomorphism $\Phi: E\otimes\C \oplus \epsilon^{n-k}$ then $\Phi$ is called a \textit{saturation} of $E$ covering $\phi$. When $n=k$ the saturation is unique.
		
		Let $(V,\omega,X,\phi)$ be a Weinstein manifold, $p$ a critical point of index $k$ of the function $\phi$, $a<b=\phi(p)$ a regular value of $\phi$. Denote $W:=\{ \phi \leq a\}$. Suppose that the stable manifold of $p$ intersects $V\setminus Int W$ along a disc $D^k$, and let $\Lambda^{k-1}=\partial D^k$ be the attaching sphere. The inclusion $T\Lambda^{k-1}\hookrightarrow \xi$ extends canonically to an injective complex homomorphism $T\Lambda^{k-1}\oplus J(T\Lambda^{k-1})\hookrightarrow \xi$, while the inclusion $TD^k\hookrightarrow TV$ extends to an injective complex homomorphism $TD^k\oplus J(TD^k)\hookrightarrow TV$. There exists a homotopically unique complex trivialization of the conformal symplectic normal bundle $CSN_{\partial W}(\Lambda^{k-1})$ in $\xi$ which extends to $D^k$ as a trivialization of the conformal symplectic normal bundle to $D^k$ in $TV$. This trivialization provides a canonical isomorphism $\Phi_{D^k}:T\Lambda\oplus J(T\Lambda)\oplus \epsilon^{n-k} \to \xi|_{\Lambda^{k-1}}$, and  we will call this the  canonical saturation of the inclusion $\Lambda^{k-1} \hookrightarrow \partial W$.
		
		We have the following theorem on attaching a handle to a Weinstein domain:
		\begin{theorem}[Prop13.11 \cite{cieliebak2006symplectic}, \cite{weinstein1991contact}]\label{weinstein handle attaching}
			Let $(W,\omega,X,\phi)$ be a $2n-$dimensional Weinstein domain with boundary $\partial W$ and $\xi$ the induced contact structure $\{\alpha|_{\partial W}=0\}$ on $W$ defined by the Liouville form $\alpha=\iota_X\omega$. Let $h:\Lambda\to \partial W$ be an isotropic embedding of the $(k-1)-$sphere $\Lambda$. Let $\Phi:T\Lambda\oplus J(T\Lambda)\oplus \epsilon^{n-k} \to \xi$ be a saturation covering the differential $dh:T\Lambda\to \xi$. Then there exists a Weinstein domain $(\tilde{W},\tilde{\omega},\tilde{X},\tilde{\phi})$ such that $W\subset Int  \tilde{W}$, and
			\begin{itemize}
				\item[(i)]  $(\tilde{\omega},\tilde{X},\tilde{\phi})|_W=(\omega,X,\phi)$;
				\item[(ii)] the function $\tilde{\phi}|_{\tilde{M}\setminus Int W}$ has a unique critical point $p$ of index $k$.
				\item[(iii)] the stable disc $D$ of the critical point $p$ is attached to $\partial W$ along the sphere $h(\Lambda)$, and the canonical saturation $\Phi_D$ coincides with $\Phi$.
			\end{itemize}
			
			Given any two Weinstein extensions $(W_0,\omega_0,X_0,\phi_0)$ and $(W_1,\omega_1,X_1,\phi_1)$ of $(W,\omega,X,\phi)$ which satisfy properties (i)-(iii), there exists a diffeomorphism $g$ fixed on $W$ such that $g:W_0\to W_1$ satisfying $(\omega_0,X_0,\phi_0)$ and $(g^*\omega_1,g^*X_1,g^*\phi_1)$ are homotopic in the class of Weinstein structures which satisfy (i)-(iii). In particular, the completion of these two Weinstein domains are symplectomorphic via a symplectomorphism fixed on $W$.
		\end{theorem}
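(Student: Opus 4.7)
The plan is to reduce the statement to a local model near the attaching sphere and then glue in the standard handle $H$ described above. The first step is to establish a \emph{contact neighbourhood theorem with framing}: a tubular neighbourhood of an isotropic sphere $\Lambda^{k-1}$ in a contact manifold $(\partial W,\xi)$ is determined, up to contactomorphism, by the isotopy class of the embedding $h:\Lambda \hookrightarrow \partial W$ together with the homotopy class of a saturation $\Phi:T\Lambda\oplus J(T\Lambda)\oplus \epsilon^{n-k}\to \xi$. This is proved by the usual Moser/Gray argument applied after fixing the splitting of $\xi|_\Lambda$ in Lemma~\ref{spliting of the contact structure}: the data of $\Phi$ trivializes a chosen complement to $T\Lambda\oplus J(T\Lambda)$ and thereby picks out a canonical normal form on which one can interpolate between two contact forms having the same linear data along $\Lambda$.

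Next I would fit the standard handle $H\subset\mathbb{C}^n$ to this normal form. The lower boundary $\partial^- H = \partial D^k_1\times D^{2n-k}_\epsilon$ contains the standard isotropic sphere $\Lambda^{k-1}_{std}$ together with its canonical saturation $\sigma_\Lambda$. The neighbourhood theorem therefore produces a contactomorphism $\Psi$ from a neighbourhood $U\subset\partial^- H$ of $\Lambda^{k-1}_{std}$ to a neighbourhood of $h(\Lambda)$ in $\partial W$ which sends $\sigma_\Lambda$ to $\Phi$ up to homotopy through saturations. Using the collar neighbourhood $(1-\delta,1]\times \partial W$ produced by the Liouville flow of $X$ (and the analogous cylindrical collar of $\partial^- H$ in $H$ given by the flow of $X_{std}$), the contactomorphism $\Psi$ promotes to a local symplectomorphism identifying the Liouville forms $\alpha_{std}$ and $\alpha$, after a compactly supported primitive correction handled by Proposition~\ref{Liouville homotopy theorem of Cieliebak and Eliashberg}. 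Glue $H$ to $\widehat{W}$ via this identification to obtain the underlying smooth manifold $\tilde W$ together with its Liouville form $\tilde\omega = d\tilde\lambda$ and Liouville field $\tilde X$, which equals $X$ on $W$ and $X_{std}$ on $H$.

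To finish the existence part, I would construct $\tilde\phi$ by patching $\phi$ on $W$ with the standard handle Morse function $\phi_H = -\tfrac12\sum_{j\le k} x_j^2 + \sum_{j\le k} y_j^2 + \tfrac14\sum_{l>k}|z_l|^2$, whose only critical point is the origin, of index $k$, and whose gradient-like vector field is $X_{std}$. Patching in the collar region requires only a convex combination away from critical level sets; this preserves the gradient-like property since both contributing vector fields are gradient-like there. Properties (i)--(iii) follow by construction: the stable disc of $p$ is the $x$-disc in $H$, whose boundary maps to $h(\Lambda)$ under $\Psi$, and its canonical saturation $\Phi_D$ is $\sigma_\Lambda$ transported by $\Psi$, which is $\Phi$ by design. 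The main obstacle in this part is verifying that the saturation $\Phi_D$ coming from the stable disc really coincides with the prescribed $\Phi$ rather than just being homotopic through bundle maps; this is the content of the \emph{canonical} extension of the CSN trivialization from $\Lambda^{k-1}$ to $D^k$, and it is here that the saturation-uniqueness in the $n=k$ case, together with an obstruction-theoretic argument on $D^k$, is used.

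For uniqueness, given two extensions $(W_i,\omega_i,X_i,\phi_i)$, $i=0,1$, observe that both handles are attached to the same $W$ along isotopic isotropic data with the same saturation, so by the existence part applied in reverse one constructs a diffeomorphism $g:W_0\to W_1$ fixed on $W$ sending the stable disc of one handle to the other. Pushing $(\omega_0,X_0,\phi_0)$ forward by $g$ produces a second Weinstein structure on $W_1$ agreeing with $(\omega_1,X_1,\phi_1)$ on $W$ and having the same stable manifold and canonical saturation. A parametric version of the normal-form argument, together with Corollary~\ref{liouville homotopy}, then interpolates between these two Weinstein structures through a family satisfying (i)--(iii), yielding the Liouville isomorphism of completions fixed on $W$. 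The delicate point throughout is parametric control of the saturation homotopies, which one controls by working relative to $W$ and to the core disc $D$.
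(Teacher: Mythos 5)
The paper does not actually prove this statement: it is imported verbatim as Proposition 13.11 of the cited Cieliebak reference (going back to Weinstein's contact surgery paper), and is only used as a black box, so there is no in-paper proof to compare yours against. Your sketch reproduces the standard argument of those sources in its correct overall shape: an isotropic neighbourhood theorem whose data is exactly the embedding plus the homotopy class of the saturation (via the splitting of Lemma~\ref{spliting of the contact structure} and a Gray--Moser argument), identification of the standard handle's lower boundary model with that normal form, gluing along Liouville collars, an explicit handle Morse function for which $X_{std}$ is gradient-like with a single index-$k$ critical point, and a parametric version of the same normal-form argument for uniqueness. That is the same route as the literature proof.

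Two steps are glossed in a way you should tighten. First, the passage from the contactomorphism $\Psi$ of neighbourhoods to an exact symplectomorphism matching the Liouville data is not what Proposition~\ref{Liouville homotopy theorem of Cieliebak and Eliashberg} provides (that proposition concerns homotopies of Liouville forms on a fixed compact domain). A contactomorphism only pulls $\alpha$ back to $f\,\alpha_{std}$ for some positive function $f$, and the standard fix is Weinstein's: realize both collars as pieces of symplectizations via the Liouville flows and absorb the conformal factor $f$ into the radial coordinate (a graph-type adjustment of the gluing hypersurface), which then gives an identification of the Liouville forms on the overlap. Second, in property (iii) the issue is not merely that $\Phi_D$ be homotopic to $\Phi$ as bundle maps, but that the trivialization of $CSN_{\partial W}(\Lambda)$ determined by $\Phi$ be the one extending over the core disc $D^k$; you flag this, but the argument needs the uniqueness-of-saturation/obstruction step spelled out (it is exactly where the hypothesis $k\le n$ and the canonical extension over $D^k$ enter), and the same relative control over $D$ is what makes the parametric uniqueness argument close. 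With those two points filled in, the sketch is a faithful outline of the cited proof rather than a new one.
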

		We say that the Weinstein domain $(\tilde{W},\tilde{\omega},\tilde{X},\tilde{\phi})$ is obtained from  $(W,\omega,X,\phi)$ by attaching a handle of index $k$ along an isotropic sphere $h:\Lambda\to \partial W$ with the given trivialization $\Phi$.

			\begin{defn}\label{def: weinstein_flexible}
			A Weinstein domain $(W^{2n}, \lambda, \phi)$ is  \textit{flexible} if there exist regular values $c_1, \cdots, c_{k}$ of $\phi$ such that $c_1 < \min \phi < c_2 < \cdots < c_{k-1} < \max \phi < c_{k}$ and for all $i = 1, \cdots, k-1$, $\{c_i \le \phi \le c_{i+1} \}$ is a Weinstein cobordism with a single critical point $p$ whose attaching sphere $\Lambda_p$ is either subcritical or a loose Legendrian in $(Y^{c_i}, \lambda|_{Y^{c_i}})$.  
		\end{defn} 
		
		Flexible Weinstein \textit{cobordisms} are defined similarly. Also, a Weinstein handle attachment or contact surgery is called flexible if the attaching Legendrian is loose. 
		So any flexible Weinstein domain can be constructed by iteratively attaching subcritical or flexible handles to $(B^{2n}, \omega_{std})$.  A Weinstein domain that is Weinstein homotopic to a Weinstein domain  satisfying Definition \ref{def: weinstein_flexible} will also be called flexible.
		Loose Legendrians have dimension at least $2$  
		so if $(Y_+,\xi_+)$ is the result of flexible contact surgery on $(Y_-, \xi_-)$, then by Proposition \ref{prop: c1equivalence}
		$c_1(Y_+)$ vanishes if and only if $c_1(Y_-)$ does.
		Finally, we note that subcritical domains are automatically flexible.

		Since they are built using loose Legendrians and subcritical spheres, which satisfy an h-principle, flexible Weinstein domains also satisfy an h-principle  \cite{cieliebak2012stein}.
		Again, the h-principle has an existence and uniqueness part:
		\begin{itemize}
			\item any almost Weinstein domain admits a flexible Weinstein structure in the same almost symplectic class
			\item any two flexible Weinstein domains that are almost symplectomorphic are Weinstein homotopic (and hence have exact symplectomorphic completions and contactomorphic boundaries).
		\end{itemize}
		
		\subsubsection{Formal structures}\label{sec: formal structures}
		There are also formal versions of symplectic, Weinstein, and contact structures that depend on just the underlying algebraic topological data. 
		For example, an \textit{almost symplectic structure} $(W, J)$ on $W$ is an almost complex structure $J$ on $W$; this is equivalent to having a 
		non-degenerate (but not necessarily closed) 2-form on $W$. 
		An almost symplectomorphism between two almost symplectic manifolds $(W_1, J_1), (W_2, J_2)$ is a diffeomorphism $\phi: W_1 \rightarrow W_2$ such that $\phi^*J_2 $ can be deformed to $J_1$ through almost complex structures on $W_1$. Equivalently, it also means that there is a family of non-degenerate 2-forms $\omega_t$ interpolating between $\omega_1$ and $\omega_2$.

		An \textit{almost Weinstein domain} is a triple $(W, J, \phi)$, where $(W,J)$ is a compact almost symplectic manifold with boundary 
		and $\phi$ is a Morse function on $W$ with no critical points of index greater than $n$ and maximal level set $\partial W$. An \textit{almost contact structure} $(Y, J)$ on $Y$ is an almost complex structure $J$ on the stabilized tangent bundle $TY \oplus \epsilon^1$ of $Y$. Therefore an almost symplectic domain $(W, J)$ has almost contact boundary $(\partial W, J|_{\partial W})$; it is an almost symplectic filling of this almost contact manifold. Therefore a family of almost symplectic structures give rise to a family of almost contact structures on the boundary:
		\begin{lemma}\label{lemma for almost contact}
		Almost symplectomorphic Liouville domains have almost contactomorphic boundaries.
		\end{lemma}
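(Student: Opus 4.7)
The plan is to promote the almost symplectomorphism $\phi\colon (W_1,J_1)\to (W_2,J_2)$ to an almost contactomorphism between the boundaries by restricting everything to $\partial W_1$. First I would recall the definitions in play: an almost symplectomorphism gives us, on top of a diffeomorphism $\phi$, a smooth family $J_t$ of almost complex structures on $W_1$ with $J_0=J_1$ and $J_1=\phi^*J_2$; an almost contact structure on $\partial W_i$ is an almost complex structure on $T(\partial W_i)\oplus \epsilon^1$.

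The key identification is that for a compact manifold with boundary, the restriction $TW_i|_{\partial W_i}$ splits canonically as $T(\partial W_i)\oplus \nu$, where $\nu$ is the rank-one normal bundle, which is trivial because $\partial W_i$ is cooriented by the outward normal direction. Thus there is a canonical (up to homotopy) isomorphism $TW_i|_{\partial W_i}\cong T(\partial W_i)\oplus \epsilon^1$, and restricting $J_i$ to the boundary produces the induced almost contact structure on $\partial W_i$ in the sense of Section~\ref{sec: formal structures}.

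Given this, I would carry out the following steps in order. Step one: since $\phi$ is a diffeomorphism of manifolds with boundary, it restricts to a diffeomorphism $\partial \phi:=\phi|_{\partial W_1}\colon \partial W_1\to \partial W_2$, and it maps outward normals to outward normals up to a positive scalar, so it is compatible with the canonical splittings above. Step two: restrict the homotopy $J_t$ of almost complex structures on $W_1$ to $TW_1|_{\partial W_1}$ to obtain a smooth family of complex structures on $T(\partial W_1)\oplus \epsilon^1$. Step three: the endpoints of this family are exactly $J_1|_{\partial W_1}$, the almost contact structure on $\partial W_1$ induced by $J_1$, and $(\phi^*J_2)|_{\partial W_1}=(\partial \phi)^*(J_2|_{\partial W_2})$, the pullback along $\partial \phi$ of the almost contact structure on $\partial W_2$ induced by $J_2$. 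Hence $\partial \phi$ is an almost contactomorphism.

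No step in this argument is really hard; the only point that requires a moment of care is the canonical trivialization of the normal bundle and checking that $\phi$ is compatible with it, together with the minor technicality that the restriction of an almost complex structure to a real hyperplane field need not preserve that hyperplane field, which is precisely why the stabilized formulation $T(\partial W)\oplus\epsilon^1$ is the correct receptacle for the induced almost contact structure. Once those bookkeeping items are in place, the homotopy $J_t|_{\partial W_1}$ provides the desired almost contact homotopy, and the lemma follows.
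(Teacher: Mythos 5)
Your argument is correct and is exactly the paper's (implicit) reasoning: the paper simply notes that restricting an almost complex structure on $W$ to $TW|_{\partial W}\cong T(\partial W)\oplus\epsilon^1$ yields the induced almost contact structure, so a family of almost symplectic structures restricts to a family of almost contact structures on the boundary. Your write-up merely makes explicit the bookkeeping (compatibility of $\phi$ with the outward conormal splitting and restriction of the homotopy $J_t$) that the paper leaves unstated.
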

		Note that any symplectic, Weinstein, or contact structure can also be viewed as an almost symplectic, Weinstein, or contact structure by considering just the underlying  algebraic topological data. 
		
		Note that the first Chern class $c_1(J)$ is an invariant of almost symplectic, almost Weinstein, or almost contact structures.
		In this paper, we will often need to assume that $c_1(J)$ vanishes. The following proposition, which will be used several times in this paper, shows  that the vanishing of $c_1(Y, J)$ is often preserved under contact surgery and furthermore implies the vanishing of $c_1(W, J)$.

		\begin{prop}[Proposition 2.1 \cite{lazarev2016contact}]\label{prop: c1equivalence}
			Let $(W^{2n}, J), n \ge 3,$ be an almost Weinstein cobordism between $\partial_-W = (Y_-, J_-)$ and $\partial_+W = (Y_+, J_+)$. If $H^2(W,Y_-) = 0$, the following are equivalent:
			\begin{itemize}
				\item $c_1(J_-)=0, c_1(J_+)=0$
				\item  $c_1(J)=0$.
			\end{itemize}
			If  $\partial_-W =\emptyset$, the vanishing of 
			$c_1(J_+)$ and $c_1(J)$ are equivalent. 
		\end{prop}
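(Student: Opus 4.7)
The plan is to prove both statements by combining the naturality of the first Chern class under bundle restriction with the long exact sequences of the pairs $(W,Y_\pm)$ in cohomology. The almost Weinstein structure enters through its handle-theoretic consequences on the topology of $W$.

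First, I would identify the boundary structures with restrictions. Choosing a collar of $\partial W$ gives a splitting $TW|_{Y_\pm} \cong TY_\pm \oplus \epsilon^1$ via the outward/inward normal direction, and under this identification the restriction of the almost complex structure $J$ to $TW|_{Y_\pm}$ is homotopic through complex structures to $J_\pm$ (the almost contact structure acts on the stabilized tangent bundle $TY_\pm \oplus \epsilon^1$ by definition). By naturality of Chern classes under pullback along $Y_\pm \hookrightarrow W$, this gives
\[
c_1(J)\big|_{Y_\pm} \;=\; c_1(J_\pm) \;\in\; H^2(Y_\pm).
\]
The forward implication of each equivalence is then immediate: if $c_1(J)=0$ in $H^2(W)$, then restricting yields $c_1(J_\pm)=0$.

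For the converse in the general cobordism case, I would use the long exact sequence of the pair $(W,Y_-)$:
\[
\cdots \to H^2(W,Y_-) \to H^2(W) \xrightarrow{\;r\;} H^2(Y_-) \to \cdots
\]
The hypothesis $H^2(W,Y_-)=0$ makes the restriction $r$ injective. Since $c_1(J_-)=0$, we have $r(c_1(J))=c_1(J_-)=0$, which forces $c_1(J)=0$. Note that only the vanishing of $c_1(J_-)$ is actually used; $c_1(J_+)=0$ then follows automatically from the forward implication.

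For the filling case $\partial_-W=\emptyset$, the hypothesis $H^2(W,Y_-)=0$ would reduce to $H^2(W)=0$, which is too strong, so I must work with the pair $(W,Y_+)$ instead. Here the almost Weinstein assumption enters: $W$ admits a Morse function with critical points only of index $\le n$, so it has the homotopy type of a CW complex of dimension $\le n$, giving $H_k(W)=0$ for $k>n$. By Poincaré--Lefschetz duality,
\[
H^2(W,Y_+) \;\cong\; H_{2n-2}(W,Y_-) \;=\; H_{2n-2}(W),
\]
and since $n\ge 3$ we have $2n-2 \ge n+1 > n$, so this group vanishes. The long exact sequence of $(W,Y_+)$ then gives injectivity of $H^2(W) \to H^2(Y_+)$, and the argument concludes exactly as before: $c_1(J_+)=0$ implies $c_1(J)=0$.

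The only conceptual subtlety is Step 1, namely verifying that $J|_{TW|_{Y_\pm}}$ realizes the stabilized boundary almost contact structure $J_\pm$, since one must pick a normal framing and check that the identification $TW|_{Y_\pm} \cong TY_\pm \oplus \epsilon^1$ respects (up to homotopy of almost complex structures, which is enough for $c_1$) the given data. Once this is done, everything reduces to the standard exact sequence and duality arguments above; the dimension bound $n\ge 3$ is used precisely to force $2n-2 > n$ so that the handle decomposition kills $H_{2n-2}(W)$.
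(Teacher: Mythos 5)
Your proposal is correct and follows essentially the same argument as the paper: naturality of $c_1$ under restriction for the forward direction, the long exact sequence of $(W,Y_-)$ together with $H^2(W,Y_-)=0$ for the converse, and Poincar\'e--Lefschetz duality plus the index-$\le n$ condition on the almost Weinstein structure for the filling case. The only (harmless) differences are that the paper also proves $i_+^*:H^2(W)\to H^2(Y_+)$ is injective in the cobordism case, so that the vanishing of \emph{either} boundary class forces $c_1(J)=0$, whereas you use only $c_1(J_-)=0$ (which suffices for the stated equivalence), and that your Step 1 requires no verification since in the paper's conventions the boundary almost contact structure is by definition $J|_{\partial W}$ on $TY_\pm\oplus\epsilon^1$.
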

		\begin{proof}
			Let $i_\pm: Y_\pm \hookrightarrow W$ be inclusions. Then $i_\pm^*c_1(J) = c_1(J_\pm)$ so the vanishing of $c_1(J)$ implies the vanishing of $c_1(J_-)$ and $c_1(J_+)$.  
			To prove the converse, consider the cohomology long exact sequences of the pairs $(W, Y_-)$ and $(W, Y_+)$: 
			$$
			H^2(W, Y_\pm; \mathbb{Z})
			\rightarrow 
			H^2(W; \mathbb{Z}) \xrightarrow{i_\pm^*} H^2(Y_\pm; \mathbb{Z}).
			$$
			By assumption, $H^2(W, Y_-; \mathbb{Z})$ vanishes and hence $i_-^*$ is injective. By Poincar\'e-Lefschetz duality, 
			$H^2(W, Y_+; \mathbb{Z}) \cong H_{2n-2}(W, Y_-;\mathbb{Z})$. Since $2n-2 \ge n+1$ for $n \ge 3$ and $W$ is a Weinstein cobordism, 
			$H_{2n-2}(W, Y_-;\mathbb{Z})$ vanishes and hence $i_+^*$ is also injective. Then if either $c_1(J_-) = i_-^*c_1(J)$ or $c_1(J_+) = i_+^*c_1(J)$ vanish, so does $c_1(J)$.
			
			If $\partial_-W = \emptyset$, 
			we just need the vanishing of $H^2(W, Y_+; \mathbb{Z})$, which holds for $n\ge 3$.
		\end{proof}

		\subsection{Morse-Bott case}
		The results of this section largely come from  \cite{mclean2016reeb}.
		\begin{defn}
			A \textit{Morse-Bott family of Reeb orbits of $(C,\alpha)$ of period $T$} is a closed path connected submanifold $B\subset C$ where $B$ is contained in the image of the union of closed Reeb orbits of period $T$,
			satisfying  $\ker(D\psi_T)|_B=TB$, where  if $\psi_t:B\to B$ is the Reeb flow of $\alpha$.
		\end{defn}

	We are interested in indices of Reeb orbits and so
	from now on we assume that we work with a fixed trivialization of a fixed power 
	of the canonical bundle of $(C,\alpha)$.

	Note that the Conley-Zehnder index of the
	period $T$ orbits starting in $B$ are all the same because $B$ is path connected.
	Hence we define the {\it Conley-Zehnder index of} $B$, $\mu_{CZ}(B)$, to be the Conley-Zehnder index of one of its period $T$ Reeb orbits.

		We can define an index closely related to the Conley-Zehnder index, called \textit{lower SFT index}, $lSFT(\gamma)$, as follows:
		\[lSFT(\gamma):=\mu_{CZ}(\gamma)-\frac{1}{2}\dim\ker(D_{\gamma(0)}\psi_T|_\xi-\textrm{id})+(n-3).
		\]
		Similarly, we have the following definition:
		\begin{defn}
			Let $K$ be a Hamiltonian on a symplectic manifold $(X,\omega_X)$ and $B$ is a set of fixed points of its time $T$ flow. We say that $B$ is \textit{isolated} if any such fixed point near $B$ is contained in $B$. suppose $B$ is a path connected topological space and we have fixed a symplectic trivialization of the canonical bundle of $TX$.
			Then every such Hamiltonian orbit has the same Conley-Zehnder index and we will write $\mu_{CZ}(B,K)$ for the Conley-Zehnder index. The set $B$ is said to be \textit{Morse-Bott} if $B$ is a submanifold and $\ker(D\psi_K^T-id)=TB$ along $B$ where $\psi_K^T:X\to X$ is the time $T$ Hamiltonian flow of $K$.  
		\end{defn}
		
		The following lemma is a technical lemma which relates the index of Reeb orbits in a contact hypersurface (which is a regular level set of a Hamiltonian) and the index of the corresponding Hamiltonian orbits.
		\begin{lemma}[Lemma 5.22 \cite{mclean2016reeb}]\label{Reeb--Hamiltonian index relation}
			Let $(W,\omega_W)$ be a symplectic manifold with a choice of symplectic trivialization of the canonical bundle of $TW$. Let $\theta_W$ be a 1-form satisfying $d\theta_W=\omega_W$, and $K$ be an Hamiltonian with the property that $b:=\iota_{X_{\theta_W}}dK>0$. This means $C_r:=K^{-1}(r)$ is a contact manifold with contact form $\alpha_r:=\theta_W|_{C_r}$. Let $B\subset W$ be a connected submanifold transverse to $C_r$ for each $r$ so that $B_r:=C_r\cap B$ is a Morse-Bott submanifold of the contact manifold $(C_r,\alpha_r)$ of period $L_r$, where $L_r$ smoothly depends on $r$. Suppose that $b = L_0$ along $B_0$
			and that $db(V) > \frac{d(L_r)}{dr}|_{r = 0}$ along $B_0$, where $V$ is a vector field tangent to $B$
			satisfying $dK(V) = 1$. Then $B_0$ is Morse-Bott for $K$ and $\mu_{CZ}(B_0,K)=\mu_{CZ}(B_0,\alpha_0)+\frac{1}{2}$.
		\end{lemma}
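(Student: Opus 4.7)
The plan is to reduce to the model case of Lemma~\ref{lemma:hamiltonianconleyzehnderindexcomparisonstandard} by splitting the tangent bundle along the orbit $x(t):=\psi_K^t(p)$ into two $\omega_W$-orthogonal symplectic subbundles and computing the Robbin--Salamon contribution of each. Since $X_K = bR_{\alpha_0}$ on $C_0$ and $b\equiv L_0$ on $B_0$, the orbit $x(t)=\phi^{L_0 t}(p)$ is closed, where $\phi$ denotes the Reeb flow of $\alpha_0$. Set $\xi^\perp_p := \langle R_{\alpha_0}, X_{\theta_W}\rangle$; then $x^*TW = \xi|_x \oplus \xi^\perp|_x$ is $\omega_W$-orthogonal, since $\iota_{R_{\alpha_0}}\omega_W|_\xi = d\alpha_0(R_{\alpha_0},\cdot)|_\xi = 0$ and $\iota_{X_{\theta_W}}\omega_W = \theta_W$ vanishes on $\xi$. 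In a unitary trivialization of $x^*TW$ compatible with this splitting and with the fixed trivialization of the canonical bundle, $D\psi_K^t$ is homotopic, through symplectic paths with fixed endpoints, to a block-diagonal path whose diagonal blocks are the induced maps on $\xi$ and on $\xi^\perp$; by the Product property of the Robbin--Salamon index it then suffices to compute the two contributions.

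For the $\xi|_x$ block, I use the identity $\psi_K^t(q)=\phi^{s_q(t)}(q)$ valid on $C_0$, where $s_q(t):=\int_0^t b(\psi_K^\tau(q))\,d\tau$. Differentiating at $q=p$ in a direction $v\in\xi_p$ and projecting modulo $R_{\alpha_0}$, the variation of $s_q(t)$ contributes only to the $R_{\alpha_0}$-component, so the $\xi$-part of $D_p\psi_K^1|_{\xi_p}$ equals the linearized Reeb return map $D_p\phi^{L_0}|_{\xi_p}$. Hence this block contributes $\mu_{CZ}(B_0,\alpha_0)$.

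For the $\xi^\perp|_x$ block, I verify Morse--Bott-ness of $K$ at $B_0$ and compute the $+\tfrac12$ contribution simultaneously. In the frame $\{R_{\alpha_0},X_{\theta_W}\}$, the linearized flow fixes $R_{\alpha_0}$ (the flow direction, with $b\equiv L_0$ along $x$) and sends $X_{\theta_W}\mapsto X_{\theta_W}+cR_{\alpha_0}$ for some $c\in\R$. To compute $c$, flow $p$ along $V$ to $p_s$; then $p_s\in B\cap C_s$ with $b(p_s)=L_0+s\,db(V)+O(s^2)$ and Reeb period $L_s=L_0+s\tfrac{dL_r}{dr}|_{r=0}+O(s^2)$ on $B_s$. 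Since the Hamiltonian flow on $C_s$ is a reparametrization of the Reeb flow of $\alpha_s$ by the factor $b$, the time-$1$ orbit starting at $p_s\in B_s$ traces the closed Reeb orbit and overshoots $p_s$ by a Reeb displacement $b(p_s)-L_s = s\bigl(db(V)-\tfrac{dL_r}{dr}|_{r=0}\bigr)+O(s^2)$. Differentiating at $s=0$ yields $D_p\psi_K^1(V) = V + \bigl(db(V)-\tfrac{dL_r}{dr}|_{r=0}\bigr)R_{\alpha_0}$, and since $V$ differs from a positive multiple of $X_{\theta_W}$ by a vector tangent to $B_0$ (pointwise fixed by $D_p\psi_K^1$), the coefficient $c$ is a positive multiple of $db(V)-\tfrac{dL_r}{dr}|_{r=0}$, hence positive by hypothesis. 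This yields the Morse--Bott property for $K$ at $B_0$: $T_pB_0$ is pointwise fixed (by Morse--Bott-ness of $B_0$ under $\phi$ together with the constancy $b\equiv L_0$ on $B_0$), while $V\notin \ker(D_p\psi_K^1-\mathrm{id})$ by the above. Moreover, the shear path $t\mapsto\begin{pmatrix}1 & 0\\ct & 1\end{pmatrix}$ on $\xi^\perp$ with $c>0$ has a single crossing at $t=0$ with positive crossing form of rank one, contributing $+\tfrac12$ to the Robbin--Salamon index.

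The main technical obstacle is extracting the sign of $c$, where the hypothesis $db(V)>\tfrac{dL_r}{dr}|_{r=0}$ plays precisely the role of the convexity $h''>0$ in Lemma~\ref{lemma:hamiltonianconleyzehnderindexcomparisonstandard}. Summing the two block contributions gives $\mu_{CZ}(B_0,K) = \mu_{CZ}(B_0,\alpha_0) + \tfrac12$ as claimed.
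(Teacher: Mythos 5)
The paper does not actually prove this statement: it is quoted verbatim from McLean (Lemma 5.22 of \cite{mclean2016reeb}), with only a remark about sign conventions, so there is no in-paper proof to compare against. Judged on its own merits, your overall strategy is the standard one and is the right idea: split $x^*TW$ into $\xi$ and the symplectic complement $\langle R_{\alpha_0},X_{\theta_W}\rangle$, identify the $\xi$-part of the linearized Hamiltonian flow with the linearized Reeb return map via the reparametrization $\psi_K^t(q)=\phi^{s_q(t)}(q)$, and extract a positive shear in the complementary plane contributing $+\tfrac12$, with positivity coming from $db(V)>\tfrac{dL_r}{dr}|_{r=0}$. But as written there are genuine gaps at exactly the technical heart of the lemma. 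First, the claim that $D\psi_K^t$ is ``homotopic through symplectic paths with fixed endpoints to a block-diagonal path'' is false as stated: the endpoint $D\psi_K^1$ is not block-diagonal, since for $v\in\xi$ one has $D\psi_K^1(v)=D\phi^{L_0}(v)+\bigl(\int_0^1 db(D\psi_K^\tau v)\,d\tau\bigr)R_{\alpha_0}$, and this $\xi\to\langle R_{\alpha_0}\rangle$ term need not vanish because $db|_\xi\neq 0$ in general along $B_0$. So no fixed-endpoint homotopy to the direct sum exists; one must either analyze the block-triangular path directly via crossing forms, or invoke invariance of the Robbin--Salamon index under homotopies keeping $\dim\ker(\Psi(1)-\mathrm{id})$ constant (which works here because that off-diagonal term vanishes on $\ker(D\phi^{L_0}|_\xi-\mathrm{id})$), and your sketch addresses none of this.

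Second, the first-order computation of the shear coefficient is off: the Reeb-time overshoot of the time-$1$ orbit through $p_s$ is $\int_0^1 b(\psi_K^\tau(p_s))\,d\tau-L_s$, not $b(p_s)-L_s$, so the coefficient of $R_{\alpha_0}$ in $D\psi_K^1(V)-V$ is the orbit-average $\int_0^1 db\bigl(V_{x(\tau)}\bigr)d\tau-\tfrac{dL_r}{dr}|_{r=0}$ rather than the pointwise $db(V)-\tfrac{dL_r}{dr}|_{r=0}$. The sign conclusion survives, but only after observing that $D\psi_K^\tau(V)$ remains tangent to $B$ with $dK=1$ (so it differs from $V_{x(\tau)}$ by a vector in $TB_0$, on which $db$ vanishes since $b\equiv L_0$ there); without this your positivity claim does not follow from the pointwise hypothesis. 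Third, the Morse--Bott verification is incomplete: showing $TB_0\subset\ker(D\psi_K^1-\mathrm{id})$ and $V\notin\ker$ does not give $\ker=TB_0$; you must also rule out kernel vectors in $\xi\setminus TB_0$ and mixed vectors $u_0+cV$, which requires combining the Reeb Morse--Bott hypothesis with the vanishing of the shear term on $TB_0$. Finally, a minor point: the shear path $\begin{pmatrix}1&0\\ct&1\end{pmatrix}$ has degenerate crossings at every $t$, not ``a single crossing at $t=0$''; the value $+\tfrac12$ (for total shear of positive sign at $t=1$) is correct but needs the extension of the Robbin--Salamon index to non-regular crossings, and what matters is precisely the endpoint shear, i.e.\ the averaged quantity above.
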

		\begin{remark}
			Our sign convention is different from McLean's in  \cite{mclean2016reeb} since we use $\omega(\cdot,X_H)=dH$.
			 So the condition on $b:=\iota_{X_{\theta_W}}dK>0$ differs by a minus sign. If $b\neq L_0$ along $B_0$, we have to either rescale $b$ or $L_t$.
		\end{remark}
         \begin{remark}\label{remark: index of orbits on cylinder}
         	In light of lemma~\ref{index for the cylinder component}, we have $\mu_{CZ}(\gamma,r^2)=\frac{1}{2}$, where $\gamma$ is any Morse-Bott manifold of Hamiltonian orbits.
         \end{remark}

		\section{ADC structures and positive idempotent group}
		
		We will define (strongly) asymptotically dynamically convex contact structure first. Then we introduce a new invariant called the \textit{positive idempotent group} base on $SH_*(W)$. It does not depend on the filling for ADC contact structures and therefore can be seen as a contact invariant. The proof will be deferred to section~\ref{section:indepnedence of I+}. In subsection~\ref{subsection:effect of conatct surgery}, we show that the (strongly) ADC property is preserved under subcritical contact surgery.

		\subsection{Reeb orbits and asymptotically dynamically convex contact structures}\label{section : def of ADC}
		Let's take a moment to look at the degree of Reeb orbits, which is essential for the definition of ADC contact structures. For any contact manifold $(\Sigma,\alpha)$ with $c_1(\Sigma,\xi)$, the canonical line bundle of $\xi$ is trivial, as will always be the case in this paper. After choosing a global trivialization of this bundle, we can assign an integer to each Reeb orbit $\gamma$ of $(\Sigma,\alpha)$-the reduced Conley-Zehnder index:\[|\gamma|:=\mu_{CZ}(\gamma)+n-3.\]For a general Reeb orbits, $|\gamma|$ depends on the choice of trivialization of the canonical bundle. However, if the Reeb orbit $|\gamma|$ is contractible in $\Sigma$, then the grading does not depend on the trivialization.
		We will consider both the contractible and non-contractible Reeb orbits in this paper.
		
		Let $\mathcal{P}^{<D}_\Phi(\Sigma,\alpha)$ be the set of Reeb orbits $\gamma$ of $(\Sigma,\alpha)$ satisfying $A(\gamma)<D$, where $\Phi$ is a specific trivialization of the canonical bundle. In a similar manner , we can define $\mathcal{P}^{<D}_0(\Sigma,\alpha)$ to be the set of contractible Reeb orbits $\gamma$ of $(\Sigma,\alpha)$ satisfying $A(\gamma)<D$. Here we dropped the subscript $\Phi$ since the degree of contractible Reeb orbits does not depends on the choice of trivialization.

		\begin{lemma}[Proposition 3.1 \cite{lazarev2016contact}]\label{prop: easy}
			For any $D, s>0$, there is a grading preserving bijection between 
			$\mathcal{P}^{< D}_\Phi(Y, s\alpha)$ and $\mathcal{P}^{< D/s}_\Phi(Y, \alpha)$.
		\end{lemma}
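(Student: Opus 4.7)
The plan is a straightforward identification based on how the Reeb vector field transforms under scaling of the contact form. First I would observe that if $R_\alpha$ denotes the Reeb vector field of $\alpha$, then the defining equations $\iota_{R'}(s\alpha)=1,\ \iota_{R'}d(s\alpha)=0$ force $R_{s\alpha}=\tfrac{1}{s}R_\alpha$. Consequently the Reeb flow $\psi^{s\alpha}_t$ of $s\alpha$ is just the reparametrization $\psi^{\alpha}_{t/s}$ of the Reeb flow of $\alpha$, so every closed Reeb orbit $\gamma:\R/T\Z\to Y$ of $\alpha$ of period $T$ gives a closed Reeb orbit $\widetilde\gamma(t):=\gamma(t/s)$ of $s\alpha$ of period $sT$, and conversely. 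Since the image set in $Y$ is the same, this bijection preserves the free homotopy class (so it is compatible with the $H_1(Y)$-grading as well).

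Next I would compute actions. For $\widetilde\gamma$ as above,
\[
A_{s\alpha}(\widetilde\gamma)=\int_0^{sT}(s\alpha)\bigl(R_{s\alpha}(\widetilde\gamma(t))\bigr)\,dt=\int_0^{sT}\!1\,dt=sT=s\cdot A_\alpha(\gamma).
\]
Therefore $A_{s\alpha}(\widetilde\gamma)<D$ if and only if $A_\alpha(\gamma)<D/s$, giving the claimed bijection between $\mathcal{P}^{<D}_\Phi(Y,s\alpha)$ and $\mathcal{P}^{<D/s}_\Phi(Y,\alpha)$.

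For the grading statement, note that $\ker\alpha=\ker(s\alpha)=\xi$, so a trivialization $\Phi$ of the canonical bundle of $\xi$ serves both contact forms. Along $\gamma$ (respectively $\widetilde\gamma$) the induced path of symplectic matrices on $\xi$ is $t\mapsto d\psi^\alpha_t|_\xi$ for $t\in[0,T]$, respectively $t\mapsto d\psi^{s\alpha}_t|_\xi=d\psi^\alpha_{t/s}|_\xi$ for $t\in[0,sT]$. These are the same path of symplectic matrices up to the orientation-preserving reparametrization $t\mapsto t/s$, so by the homotopy invariance of the Robbin–Salamon index (recalled in Subsection~\ref{subsection:index}) their Conley–Zehnder indices coincide; hence $|\gamma|=|\widetilde\gamma|$.

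The argument is entirely formal and I anticipate no real obstacle; the only point requiring a moment of care is to check that the chosen trivialization $\Phi$ really does trivialize the same bundle in both cases, which is immediate from $\ker\alpha=\ker(s\alpha)$, and that the reparametrization used does not affect Conley–Zehnder indices, which is the standard homotopy property of the Robbin–Salamon index.
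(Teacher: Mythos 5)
Your proposal is correct and follows essentially the same route as the paper: identify $R_{s\alpha}=\tfrac{1}{s}R_\alpha$, reparametrize orbits so actions scale by $s$, and note the Conley--Zehnder index depends only on the linearized flow on the trivialized contact planes, not on the speed of traversal. No issues to flag.
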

		\begin{proof}
			Note that $R_{s\alpha} = \frac{1}{s}R_\alpha$. So if $\gamma_\alpha:[0, T]\rightarrow Y$ is a Reeb trajectory of $\alpha$ with action $T$, then $\gamma_{s\alpha} = \gamma_{\alpha}\circ m_{\frac{1}{s}}: [0, s T] \rightarrow Y$ is a Reeb trajectory of $s\alpha$ with action $sT$; here  $m_{\frac{1}{s}}: [0, sT] \rightarrow [0, T]$ is multiplication by $\frac{1}{s}$. 
			The map $\gamma_\alpha \rightarrow \gamma_{s\alpha}$ is a bijection between the set of Reeb orbits. If $T < D/s$, then $sT < D$ and so it is a bijection between $\mathcal{P}_\Phi^{< D/s}(Y, \alpha)$ and $\mathcal{P}_\Phi^{< D}(Y, s\alpha)$.  This bijection is grading-preserving since the Conley-Zehnder index of a Reeb orbit is determined by the linearized Reeb flow on the trivialized contact planes $\xi$ but  does not depend on the speed of the flow.  
		\end{proof}
		
		We will also need the following notation. If $\alpha_1, \alpha_2$ are contact forms for $\xi$, then there exists a unique $f: Y \rightarrow \mathbb{R}^+$ such that $\alpha_2 = f \alpha_1$. We write $\alpha_2 > \alpha_1, \alpha_2 \ge \alpha_1$ if $f > 1, f\ge 1$ respectively. Note that if $\alpha_2 > \alpha_1, \alpha_2 \ge \alpha_1$, then for any diffeomorphism $\Psi: Y' \rightarrow Y$, we have
		$\Psi^*\alpha_2 > \Psi^*\alpha_1, \Psi^*\alpha_2 \ge \Psi^*\alpha_1$, respectively.
		

		\begin{defn}\label{def-adc}
			A contact manifold $(\Sigma,\xi)$ is \textit{asymptotically dynamically convex } (\textit{strongly asymptotically dynamically convex with respect to $\Phi$ }) if there exists a sequence of non-increasing contact forms $\alpha_1\geq \alpha_2\geq \alpha_3\cdots $ for $\xi$  and increasing positive numbers $ D_1<D_2<D_3\cdots $going to infinity such that all elements of $ \mathcal{P}_0^{<D_k}(\Sigma,\alpha_k)$ ($\mathcal{P}^{<D_k}_\Phi(\Sigma,\alpha_k)$) have positive lower SFT index.
		\end{defn}
		\begin{remark}
			The ADC property defined in definition 3.6
			 \cite{lazarev2016contact} requires the non-degeneracy of $\alpha_i$. 
			Here we define the strongly ADC property (with respect to $\Phi$) using lower SFT index. Therefore the contact form $\alpha_i$ in the definition doesn't have to be non-degenerate. It is an immediate corollary of lemma~\ref{mark's perturbation}, also see 
			remark 3.7 (2) of \cite{lazarev2016contact}.
		\end{remark}
	\begin{lemma}[Lemma 4.10 \cite{mclean2016reeb}]\label{mark's perturbation}
			Let $\gamma$ be any Reeb orbit of $\alpha$ of period T and define $K:=\dim \ker(D\psi_T |\xi(\gamma(0))-id)$. Fix some Riemannian metric on C. There is a constant $\delta > 0 $ and a neighborhood N of
			$\gamma(0) $ so that for any contact form $\alpha_1$ with $||\alpha-\alpha_1||_{\mathcal{C}^2}<\delta$ and any Reeb orbit $\gamma_1$ of $\alpha_1$ starting
			in N of period in $[T-\delta, T + \delta]$ we have  $ \mu_{CZ}(\gamma_1) \in [\mu_{CZ}(\gamma)-\frac{K}{2}, \mu_{CZ}(\gamma) + \frac{K}{2}].
			$
		\end{lemma}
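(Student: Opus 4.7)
The plan is to translate the statement into a question about paths of symplectic matrices in $\mathrm{Sp}(2n-2)$, and then to analyze how the Robbin--Salamon index behaves under small perturbations of a path whose endpoint has a $K$-dimensional $1$-eigenspace.

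First, fix a symplectic trivialization $\Phi$ of $\xi|_\gamma$ and extend it to a trivialization of $\xi_1|_{\gamma_1}$ that is $\mathcal{C}^0$-close to $\Phi$ on $N$; this is possible since $\alpha_1$ being $\mathcal{C}^2$-close to $\alpha$ makes $\xi_1$ a $\mathcal{C}^1$-small perturbation of $\xi$ on $N$. Under these trivializations the linearized Reeb flows along $\gamma$ and $\gamma_1$ become smooth symplectic paths $\Psi\colon[0,T]\to\mathrm{Sp}(2n-2)$ and $\Psi_1\colon[0,T_1]\to\mathrm{Sp}(2n-2)$ starting at the identity. The $\mathcal{C}^1$-closeness of $R_{\alpha_1}$ to $R_\alpha$, together with continuous dependence of ODE solutions on parameters and initial data (and $\gamma_1(0)\in N$, $T_1\in[T-\delta,T+\delta]$), ensures that the rescaled path $\widetilde{\Psi}_1(t):=\Psi_1(tT_1/T)$ is $\mathcal{C}^0$-close to $\Psi$ on $[0,T]$, with closeness controlled by $\delta$, $T$ and the diameter of $N$. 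In particular $\Psi_1(T_1)$ can be made arbitrarily close to $\Psi(T)$ in $\mathrm{Sp}(2n-2)$.

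Next, pick a short smooth path $\eta\colon[0,1]\to\mathrm{Sp}(2n-2)$ from $\Psi(T)$ to $\Psi_1(T_1)$ lying in a small neighborhood $U$ of $\Psi(T)$. The concatenation $\Psi\ast\eta$ and $\widetilde{\Psi}_1$ share endpoints and are $\mathcal{C}^0$-close, so they are homotopic rel endpoints in $\mathrm{Sp}(2n-2)$; by homotopy invariance and additivity under concatenation of the Robbin--Salamon index,
\[
\mu_{CZ}(\gamma_1)=\mu_{CZ}(\Psi)+\mu_{CZ}(\eta)=\mu_{CZ}(\gamma)+\mu_{CZ}(\eta),
\]
and the lemma reduces to the local estimate $|\mu_{CZ}(\eta)|\le K/2$.

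The main obstacle is this last estimate. By upper semicontinuity of $\dim\ker(M-I)$, once $U$ is small enough every $M\in U$ satisfies $\dim\ker(M-I)\le K$, so the only eigenvalues of $\eta(s)$ that can cross $1$ as $s$ varies in $[0,1]$ are the ones originating from the $1$-eigenspace of $\Psi(T)$. After a generic $\mathcal{C}^1$-small perturbation of $\eta$ rel endpoints one may assume all crossings are regular, and the Robbin--Salamon formula
\[
\mu_{CZ}(\eta)=\tfrac12\operatorname{sign}\Gamma(\eta,0)+\sum_{0<s<1}\operatorname{sign}\Gamma(\eta,s)+\tfrac12\operatorname{sign}\Gamma(\eta,1)
\]
combined with the explicit local models of Remark~\ref{Index formula} (a $2$-dimensional elliptic block rotating through a small angle contributes $\pm 1$ to $\mu_{CZ}$, while a hyperbolic block contributes $0$) bounds $|\mu_{CZ}(\eta)|$ by half the algebraic multiplicity of $1$ as an eigenvalue of $\Psi(T)$, hence by $K/2$. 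The delicate step is the eigenvalue-tracking argument: one must verify that no matter how the $K$ eigenvectors of $\Psi(T)$ for eigenvalue $1$ break apart, exchange, and recombine along $\eta$, the signed count of crossings (weighted by $\tfrac12$ at the endpoints) can never exceed $K/2$ in absolute value. This is essentially a local computation in $\mathrm{Sp}(2K)$ near the relevant singular stratum, and constitutes the technical core of the proof.
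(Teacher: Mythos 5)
You should first note that the paper does not prove this statement at all: it is quoted verbatim as Lemma 4.10 of \cite{mclean2016reeb}, so there is no in-paper argument to compare with, and your proposal has to stand on its own. Your reduction step is fine in spirit: comparing trivializations along the nearby loops, rescaling $\Psi_1$, concatenating $\Psi$ with a short path $\eta$ from $\Psi(T)$ to $\Psi_1(T_1)$, and using homotopy invariance rel endpoints plus additivity under catenation correctly reduces the lemma to the purely linear-algebraic claim $|\mu_{RS}(\eta)|\le K/2$ for a path $\eta$ contained in a small neighborhood of $M_0=\Psi(T)$.

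The genuine gap is that this last claim is exactly the content of the lemma, and nothing you write establishes it; indeed your own sketch of it is flawed in two ways. First, you bound $|\mu_{RS}(\eta)|$ by ``half the algebraic multiplicity of $1$ as an eigenvalue of $\Psi(T)$, hence by $K/2$'': this implication goes the wrong way, since the algebraic multiplicity is in general strictly larger than the geometric multiplicity $K=\dim\ker(\Psi(T)-\mathrm{id})$, so even if you proved that bound it would not give the lemma. Second, upper semicontinuity of $\dim\ker(M-\mathrm{id})$ only tells you that each individual crossing of $\eta$ has crossing form on a space of dimension at most $K$; the Robbin--Salamon formula then gives up to $K/2$ from the crossing at $s=0$, up to $K/2$ from a possible degenerate endpoint at $s=1$ (the lemma allows $\gamma_1$ to be degenerate), and up to $K$ from each interior crossing, whose number is not controlled by shortness of $\eta$ alone. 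So the formula plus the two-dimensional elliptic/hyperbolic local models does not bound anything by $K/2$; one needs the cancellation mechanism near the degenerate stratum (a normal form or partial-signature/Krein-type analysis of how the eigenvalue-$1$ block splits, or an equivalent choice of $\eta$ meeting the crossing variety only at its endpoints together with a relation between the two endpoint signatures). You explicitly defer exactly this step as ``the technical core of the proof,'' which means the proposal is a plausible plan rather than a proof; to complete it you should either carry out that local computation in $Sp(2K)$ or invoke the corresponding quantitative stability property of the Robbin--Salamon index from the literature (this is how the cited source handles it).
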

		\subsection{Positive idempotent group $I_+$}\label{ss:definition of positive idempotent group}
			Now we consider a strongly asymptotically dynamically convex contact manifold $(\Sigma,\xi,\Phi)$ with Liouville filling $(W,\lambda)$. We have the following result due to Lazarev:
		
		\begin{theorem}[Proposition 3.8 \cite{lazarev2016contact}]\label{Lazarev main}
			If $(\Sigma,\xi,\Phi)$ is a strongly asymptotically dynamically convex contact structure, then all Liouville fillings of  $(\Sigma,\xi,\Phi)$ have isomorphic $SH^+$.
		\end{theorem}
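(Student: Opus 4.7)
The plan is to show that for each $k$, there is a filtered positive symplectic homology group $SH^{+,<D_k}(\Sigma,\alpha_k)$ that depends only on the contact boundary data $(\Sigma,\xi,\Phi)$ together with the chosen $(\alpha_k,D_k)$, and not on the filling $W$. Taking the direct limit over $k$ recovers $SH^+(W)$ for any Liouville filling, producing the desired isomorphism.

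First I would fix any Liouville filling $(W,\lambda)$ of $(\Sigma,\xi,\Phi)$. Using the strongly ADC sequence $\alpha_1\geq\alpha_2\geq\cdots$ and $D_1<D_2<\cdots$, I would rescale the cylindrical end so that $\lambda|_\Sigma$ is $C^0$-close to $\alpha_k$ near infinity, and build an admissible Hamiltonian $H_k\in\mathcal{H}_{std}(W)$ whose slope at infinity is just above $D_k$ (and not in $\mathrm{spec}(\Sigma,\alpha_k)$). After a small $S^1$-dependent perturbation, the non-constant $1$-periodic orbits of $H_k$ are in bijection with Reeb orbits of $\alpha_k$ of action $<D_k$, and the SFT-index estimate (together with Lemma~\ref{mark's perturbation}, which controls index perturbation under $\mathcal{C}^2$-perturbation of the contact form) guarantees that every generator of the positive chain complex $SC^{+,<D_k}(H_k,J_k)$ has strictly positive lower SFT index. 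In particular, the $\mathbb{Z}_2$-module underlying $SC^{+,<D_k}$ is determined entirely by $(\Sigma,\alpha_k,\Phi)$ and is independent of $W$.

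Next I would prove that the differential, as well as the continuation maps realizing $SH^{+,<D_k}\to SH^{+,<D_{k+1}}$, are also filling-independent. This is the main step. The generators have positive lower SFT index, and by the action filtration the relevant Floer trajectories have energy bounded by $D_k$. I would apply a neck-stretching argument along $\Sigma$, inserting an increasingly long symplectization collar between $W$ and its cap: by SFT compactness (in the spirit of BEHWZ), any sequence of Floer cylinders that fails to stay in a neighborhood of $\Sigma$ breaks into a building whose top level is a punctured pseudo-holomorphic curve in the symplectization $\mathbb{R}\times\Sigma$ and whose bottom level is a pseudo-holomorphic curve in $\widehat{W}$ with positive punctures asymptotic to Reeb orbits of action $<D_k$. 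A Fredholm index computation, expressing the expected dimension of such a curve as the sum of $(n-3)$ plus the lower SFT indices at the positive punctures (all of which are strictly positive), forces the expected dimension to be strictly positive; for a generic choice of data no such curve appears in dimension zero, so the Floer cylinders are in fact confined to a neighborhood of $\Sigma$ in $\widehat{W}$. Hence the count defining the differential (and continuation) coincides with the count computed in the symplectization, which only sees $(\Sigma,\alpha_k,\Phi)$.

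Finally, since $SH^+(W)$ is by definition the direct limit of $HF^+(H,J)$ over admissible Hamiltonians with increasing slope, and since by cofinality this equals $\varinjlim_k SH^{+,<D_k}(\Sigma,\alpha_k,\Phi)$, which is intrinsically defined from the ADC sequence, the comparison between any two Liouville fillings $W_1$ and $W_2$ is obtained by noting that both direct systems are canonically identified with the same intrinsic system on $\Sigma$. The hard part is undoubtedly the SFT/neck-stretching step ensuring that trajectories stay near $\Sigma$: one must handle multiply-covered and degenerate configurations carefully, and the positivity of the lower SFT index is precisely what is needed for the index inequality to rule out curves diving into the filling. The use of lower SFT index (rather than Conley-Zehnder index) is what allows us to bypass the non-degeneracy hypothesis of Lazarev's original formulation via Lemma~\ref{mark's perturbation}.
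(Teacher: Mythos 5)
Your overall architecture is the right one, and it is the strategy this paper relies on: the statement is quoted from Lazarev (Proposition 3.8 of \cite{lazarev2016contact}) rather than reproved here, and the paper's own neck-stretching arguments (Proposition~\ref{lazarev proof for differential} and its pair-of-pants analogue, used in the proof of Theorem~\ref{Main Thm}) follow exactly the template you describe: identify the positive generators with Reeb orbits of $\alpha_k$ of action $<D_k$, stretch the neck so that trajectories between them stay in the cylindrical end, conclude that the filtered positive complexes and continuation maps are intrinsic to $(\Sigma,\alpha_k,\Phi)$, and pass to the direct limit using the non-increasing sequence of forms.

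However, the key compactness/index step is argued at the wrong level of the building, and as written it would fail. You run the index argument on the bottom level, a holomorphic curve in $\widehat{W}$ with positive punctures, claiming its expected dimension is positive and that for generic data no such curve appears in dimension zero. The curves in the filling are honest $J$-holomorphic (the Hamiltonian vanishes there), so they may be multiply covered and cannot be made transverse by generic perturbations; moreover, positivity of their expected dimension does not by itself exclude them from the limit building, since breaking of a rigid family only constrains the total index of the building, not each level separately. The argument Lazarev uses (and the one reproduced in this paper for the differential and for the pair-of-pants product) derives the contradiction from the top level instead, which lives in the symplectization and carries the Hamiltonian perturbation, so transversality is available there: its virtual dimension is $|x_+|-|x_-|-\sum_k|\gamma_k|$ (up to the $\R$-translation), and since the strongly ADC hypothesis forces every $|\gamma_k|>0$ while the original trajectory is rigid, this dimension is negative as soon as any Reeb puncture appears --- the desired contradiction. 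You flag multiply covered configurations as a concern but do not resolve them; rerouting the index count through the top level is precisely how they are avoided, and with that correction the rest of your outline (filling-independence of generators, differentials and continuation maps, then cofinality and the direct limit) matches the quoted proof.
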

		\subsubsection{Definition of positive idempotent group $I_+$}
		We also want to define the ring structure. However as in Remark~\ref{NOproduct}, we can not define a product on $SH^+$.
		Having said that, we can use the pair-of-pants product on $SH_*(W)$ to define an invariant for $SH_*^+$ which is independent of the Liouville filling.
		
		First, let's recall the tautological short exact sequence:
		\[0\rightarrow SC_*^{< \epsilon}(W)\rightarrow SC_*(W)\rightarrow
		SC_*(W)/SC^{<\epsilon}_*(W)\rightarrow 0.
		\]
		We have long exact sequence:
		\begin{equation}\label{eqn:exact sequence}
		\cdots\to SH_*^{<\epsilon}(W)\to SH_*(W)\to SH_*^+(W)\to SH_{*-1}^{<\epsilon}(W)\to\cdots
		\end{equation}
		We also have $H^{n-*}(W,H)\cong SH_*^{<\epsilon}(W,H)$ since the admissible Hamiltonian $H$ is $\mathcal{C}^2$ small in $W$. Therefore we can replace $SH_*^{<\epsilon}$ terms in equation~\ref{eqn:exact sequence} by $H^{n-*}$, in particular, we have a long exact sequence 
		\begin{equation}\label{eqn: exact sequence for defn}
		\cdots\to H^0(W)\to SH_n(W)\to SH_n^+(W)\to H^1(W)\to\cdots
		\end{equation}
		 In fact, the map $H^0(W)\to SH_n(W)$ in equation~\ref{eqn: exact sequence for defn} is a ring homomorphism, see Appendix. A of  \cite{cieliebak2018symplectic}. Suppose $SH_*(W)\neq 0$, then $1_W$ does \textit{not} maps to the unit in $SH_*(W)$, where $1_W $ is the unit of $ H^0(W)$, by Theorem~\ref{Theorem unit is image of 1} (also see Lemma A.3 of  \cite{cieliebak2018symplectic}).
		 Therefore
		  $H^0(W)\to SH_n(W)$ is injective, and we will regard it as a subring of $SH_n(W)$. We can thus 
		  identify
		  elements in $SH_n(W)/H^0(W)$ with elements in $SH^+_n(W)$. In particular, $SH_n(W)/H^0(W)\cong SH^+_n(W)$ if $H^1(W)=0$.
		  
		  Now let's consider the subgroup of $SH_n(W)$ as follows:
		  \begin{equation}\label{def of I}
		  	I(W):=\{ \,\alpha \in SH_n(W)\,\big| \, \alpha^2-\alpha \in H^0(W)\}.
		  \end{equation}
		  Notice the group action here is "addition".
		  
		  Define the \textit{positive idempotent group} $I_+(W)$ by
		  \[I_+(W):=I(W)/H^0(W).
		  \]
		By the previous analysis, we can regard $I_+(W)$ as a subgroup of $SH_n^+(W)$. In the case $I_+(W)$ is finite, we can further define \textit{positive idempotent index} $i(W):=|I_+(W)|$.

	   \subsubsection{Properties of $I_+$}
		Since $H^0(W,\Z_2)\cong \Z_2$, $I_+(W,\Z_2)$ is determined by $I(W,\Z_2)$.
		Recall that $SH_*(W,\Z_2)$ has a $H_1(W,\Z)/Tors$ grading. The first observation is that elements in $H^0(W)$ have $H_1/Tors$ grading zero. Indeed, it's true for all elements in $I(W)$. Suppose $R$ is an algebra over $\mathbb{Z}_2$ which is graded by a finitely generated torsion-free abelian group $K$. This means that as a vector space, $R=\bigoplus\limits_{k \in K} R_k$ with the property that if $ a\in R_{k_1}, b\in R_{k_2}$ then $ab\in R_{k_1\cdot k_2}$.
		Define $I_0(R):=\{0,1\}$ and $I(R):=\{ x\in R|x^2-x\in I_0\}$.
		\begin{lemma}[Lemma 7.6 \cite{mclean2007lefschetz}]
			If $a\in I(R)$ then $a\in R_e$ where $e$ is identity of group $K$.
		\end{lemma}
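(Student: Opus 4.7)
The plan is to exploit the Frobenius identity over $\mathbb{Z}_2$ together with the (graded-)commutativity of $R$ to collapse $a^2$ onto squares of the homogeneous pieces, and then extract a contradiction from an iterated divisibility-by-$2$ argument in the torsion-free group $K$. Writing $a = \sum_{k \in K} a_k$ with $a_k \in R_k$ and only finitely many nonzero terms, graded-commutativity gives $a_{k_1} a_{k_2} + a_{k_2} a_{k_1} = 0$ for $k_1 \neq k_2$, so
\[
a^2 \;=\; \sum_{k \in K} a_k^2 \;\in\; \bigoplus_{k} R_{2k}.
\]

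Next I would unpack $a^2 - a \in R_e$ componentwise. For every grading $j \neq e$ one has $(a^2 - a)_j = 0$. If $j \notin 2K$, the only contribution comes from $-a_j = a_j$, forcing $a_j = 0$. If $j = 2l$ for the (unique, by torsion-freeness) $l \in K$, the vanishing reads $a_{2l} = a_l^2$. Now suppose for contradiction that $a_{l_0} \neq 0$ for some $l_0 \neq e$. Then $l_0 \in 2K$, say $l_0 = 2 l_1$ with $l_1 \neq e$ (else $l_0 = 2e = e$), and $a_{l_0} = a_{l_1}^2 \neq 0$, so $a_{l_1} \neq 0$. Iterating yields a sequence $l_0 = 2 l_1 = 4 l_2 = \cdots = 2^n l_n$ with $l_n \neq e$ and $a_{l_n} \neq 0$ for every $n$. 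In a torsion-free abelian group, any equality $l_m = l_n$ with $m < n$ would give $(2^{n-m} - 1) l_n = 0$ and hence $l_n = e$, a contradiction; so the $l_n$ are pairwise distinct, exhibiting infinitely many elements in the (finite) support of $a$.

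The only substantive input is the Frobenius collapse $a^2 = \sum a_k^2$, which silently uses (graded-)commutativity of $R$. In the intended application $R = SH_n(W, \mathbb{Z}_2)$ with the pair-of-pants product this is automatic, since graded-commutativity with $\mathbb{Z}_2$-coefficients is just commutativity. The remaining steps are a bookkeeping exercise that uses only the absence of $2$-torsion in $K$.
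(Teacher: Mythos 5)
Your proof is correct, but it follows a genuinely different route from the paper's. The paper fixes the offending class $k_1\neq e$, uses that $K$ is finitely generated and torsion-free to choose a homomorphism $p:K\to\mathbb{Z}$ with $p(k_1)\neq 0$, and then runs a valuation-style argument: with $f(b)$ the minimal nonzero absolute $\mathbb{Z}$-degree occurring in $b$, one has $f(a^2)\geq 2f(a)$ while $a^2=a$ or $a^2=a+1$ forces $f(a^2)=f(a)$, a contradiction. You instead stay inside the $K$-grading: the Frobenius collapse $a^2=\sum_k a_k^2$ turns the condition $a^2-a\in R_e$ into the componentwise equations $a_j=0$ for $j\notin 2K\cup\{e\}$ and $a_{2l}=a_l^2$, and the halving chain $l_0=2l_1=4l_2=\cdots$ produces infinitely many distinct elements of the finite support of $a$, where distinctness uses only that $K$ is torsion-free. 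Note that both arguments quietly use commutativity in characteristic $2$ (the paper's displayed identity $a^2=a_{k_1^2}+\cdots+a_{k_n^2}$ drops the cross terms for exactly the same reason your Frobenius step does), so your explicit remark about this hypothesis matches the paper's implicit use; as you say, it is automatic for $SH_n(W,\mathbb{Z}_2)$ with the pair-of-pants product. What each approach buys: the paper's proof is shorter once $p$ is chosen, while yours is more elementary and slightly more general, since it never needs a homomorphism $K\to\mathbb{Z}$ separating $k_1$ from $0$ (which genuinely requires more than torsion-freeness for non-finitely-generated groups such as $\mathbb{Q}$), only the absence of torsion and the finiteness of the support of $a$.
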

		\begin{proof}
			We argue by contradiction. Suppose we have $a=a_{k_1}+\cdots+a_{k_n}$ where $k_i\in K$ and $a_{k_i}\in R_{k_i}$, $k_1\neq e$. Then $a^2=a_{k_1^2}+\cdots+a_{k_n^2}$. Since $K$ is torsion free,
			there is a group homomorphism $p: K\to \mathbb{Z}$ such that $p(k_1)\neq 0$. This map actually gives $R$ a $\mathbb{Z}$ grading. Let $b$ be an element in $R$, then it can be uniquely written as $b=b_1+\cdots+b_k$ where $b_k$ are non-zero elements of $R$ with grading $d_i\in \mathbb{Z}$. We can define a function $f$ as follows:
			\[f(b):=\min\{ |d_i|\neq 0\}
			\]
			Note that $f$ is well-defined only if at least one of the $d_i's$ is non-zero.
			And when it is well-defined, $f(b+1)=f(b)$ because $1\in R_e$ and has grading $0$.
			The assumption $p(k_1)\neq 0$ implies that $f(a)$ is well defined and positive. On the other hand, we have $a\in I(R)$, which means $a^2=a$ or $a^2=a+1$. Either way, it implies $f(a^2)=f(a)$, which contradicts the fact that $f(a^2)\geq 2f(a)$. 
		\end{proof}
		\begin{corollary}\label{nullhomologous of idempotents}
			Any element in $I(W)$ is null-homologous in $H_1(W,\Z)/Tors$.
		\end{corollary}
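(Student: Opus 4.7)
The plan is to apply the preceding algebraic lemma (Lemma 7.6 of McLean) to the $\Z_2$-algebra $R=SH_*(W,\Z_2)$ equipped with its grading by $K:=H_1(W,\Z)/\mathrm{Tors}$, with distinguished subring $I_0=\{0,1\}$ identified with $H^0(W,\Z_2)\subset SH_n(W,\Z_2)$. So the task is to put the hypotheses of that lemma in place in the symplectic setting and then read off the conclusion.

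First I would verify that $R$ is genuinely graded by a finitely generated torsion-free abelian group. Compactness of $W$ makes $H_1(W,\Z)$ finitely generated, hence $K$ is finitely generated and torsion-free. Subsection~\ref{Subsection TQFT is compatible with filtrations} already supplies an $H_1(W,\Z)$-grading of the chain complex whose differential preserves the splitting and whose pair-of-pants product satisfies $\psi_P\colon SH^{h_1}_*(W)\otimes SH^{h_2}_*(W)\to SH^{h_1+h_2}_*(W)$. Composing with the projection $H_1(W,\Z)\to K$ gives the desired $K$-grading on $R$ as a $\Z_2$-algebra.

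Next I would check that $H^0(W,\Z_2)\subset SH_n(W,\Z_2)$ lies in the identity-graded piece $R_e$. Under the standing hypothesis $SH_*(W)\neq 0$, the map $H^0(W,\Z_2)\to SH_n(W,\Z_2)$ is injective; by Theorem~\ref{Theorem unit is image of 1} combined with Remark~\ref{remark on pair of pants product about contractible loops}, the unit is the image of $1\in H_*(W)$ under $c_*$, which factors through the contractible-loop summand $SH_*^0(W)$. Since contractible orbits have trivial image in $K$, we conclude $H^0(W,\Z_2)\subset R_e$, matching the lemma's hypothesis $I_0\subset R_e$.

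With those two preparations in hand, the corollary is immediate. For $\alpha\in I(W)$, Definition~\ref{def of I} gives $\alpha^2-\alpha\in H^0(W)\subset R_e$, which is exactly the hypothesis of the algebraic lemma, forcing $\alpha\in R_e$. At the chain level this means $\alpha$ is represented by a cycle whose generating Hamiltonian orbits all project to zero in $K$, i.e.\ $\alpha$ is null-homologous in $H_1(W,\Z)/\mathrm{Tors}$. The only possible obstacle is bookkeeping: ensuring the $H_1(W)$-grading survives the direct limit over continuation maps and that the embedding $H^0(W)\hookrightarrow SH_n(W)$ genuinely lies in the $e$-graded summand; both are furnished by the TQFT summary already cited, so no new input is needed.
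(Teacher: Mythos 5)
Your proposal is correct and follows essentially the same route as the paper: the corollary is there obtained as an immediate application of the preceding graded-algebra lemma to $R=SH_*(W,\Z_2)$ with its $H_1(W,\Z)/\mathrm{Tors}$ grading, using that $H^0(W,\Z_2)\cong\Z_2$ sits in the identity-graded summand via the unit $e=c_*(1)\in SH^0_*(W)$. Your additional verifications (finite generation and torsion-freeness of the grading group, compatibility of the pair-of-pants product with the grading, injectivity of $H^0(W)\to SH_n(W)$ when $SH_*(W)\neq 0$) are exactly the implicit hypotheses the paper relies on, so no difference in substance.
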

		Therefore, we can refine our definition of $I(W)$ to be
		\begin{equation*}
		I(W):=\{ \,\alpha \in SH^0_n(W)\,\big| \, \alpha^2-\alpha \in H^0(W)\}.
		\end{equation*}
		where $SH^0_*(W)$ is generated by all null-homologous Reeb orbits.
		In the case of strongly asymptotically dynamically convex contact manifolds (with respect to certain framing $\Phi$), different Liouville fillings have isomorphic positive idempotent group, as stated in Theorem~\ref{Main Thm}, the proof will be deferred to section~\ref{section:indepnedence of I+}.

		\subsection{Effect of contact surgery}\label{subsection:effect of conatct surgery}
		\begin{theorem}[Theorem 3.15 \cite{lazarev2016contact}, \cite{yau2004cylindrical}]\label{subcritical surgery of ADC}
			If $(Y_1^{2n-1},\xi_1), n\geq 2$, is an asymptotically dynamically convex contact structure and $(Y_2,\xi_2)$ is the result of index $k\neq 2$ subcritical contact surgery on $(Y_1,\xi_1)$, then $(Y_2,\xi_2)$ is also asymptotically dynamically convex.
		\end{theorem}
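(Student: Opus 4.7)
Starting from an ADC sequence $\alpha_1 \geq \alpha_2 \geq \cdots$ on $(Y_1,\xi_1)$ with thresholds $D_1 < D_2 < \cdots \to \infty$, I will construct a sequence $\alpha_1' \geq \alpha_2' \geq \cdots$ on $(Y_2,\xi_2)$ with thresholds $D_1' < D_2' < \cdots \to \infty$ exhibiting strong ADC. The form $\alpha_j'$ is to agree with (a small perturbation of) $\alpha_j$ on the complement of a shrinkable neighborhood $U_j \subset Y_2$ of the surgery region, and on $U_j$ it is given by the Weinstein handle model of Section~\ref{ss:Weinstein handle & contact surgery}. The free parameter in the construction is the size of the handle: making it small forces every new Reeb orbit that enters $U_j$ to have large action.

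\paragraph{Step 1: setting up the forms.} Given $\alpha_j$, first isotope the isotropic attaching sphere $\Lambda^{k-1} \hookrightarrow (Y_1,\alpha_j)$ so that it lies inside a small Darboux-type neighborhood on which $\alpha_j$ is explicit; this isotopy is contact-isotopic, so the resulting surgered manifold is unchanged up to contactomorphism by Theorem~\ref{thm for contact surgery}. Using the local surgery model, perform the surgery inside a tube of ``size'' $\epsilon_j$ around $\Lambda$ to obtain $\alpha_j'$ on $Y_2$. By a $\mathcal C^2$-small generic perturbation I may assume that every Reeb orbit of $\alpha_j'$ of period less than the chosen threshold is non-degenerate; by Lemma~\ref{mark's perturbation} this perturbation changes Conley-Zehnder (and hence lower SFT) indices by an amount controlled by $\dim\ker(D\psi_T|_\xi - \mathrm{id})$, which will not disturb the positivity.

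\paragraph{Step 2: classifying contractible orbits and using $k\neq 2$.} Every Reeb orbit of $\alpha_j'$ is either (a) entirely contained in $Y_2 \setminus U_j$, where it corresponds to a Reeb orbit of $\alpha_j$ on $Y_1$, or (b) passes through the handle region $U_j$. This is the point where the hypothesis $k \neq 2$ enters: because the index of the handle is $1$ or $\geq 3$, the inclusion $Y_2 \setminus U_j \hookrightarrow Y_2$ induces a surjection on $\pi_1$ (for $k=1$ no loops are killed; for $k \geq 3$ the attaching sphere has codimension $\geq 2$ so $\pi_1$ is unchanged). Consequently, any contractible orbit of type (a) in $Y_2$ is also contractible in $Y_1$, so its lower SFT index is positive by the ADC property of $\alpha_j$, provided its action is below $D_j$.

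\paragraph{Step 3: the main obstacle --- new orbits in the handle.} This is the technical heart of the argument. Using the explicit standard Weinstein handle of subsection~\ref{weinstein handlebody}, write down the Reeb dynamics in $U_j$. The new Reeb trajectories come in a Morse-Bott family parametrized by (a subset of) the belt sphere $S^{2n-k-1}$, and a scaling argument (compare the computation in subsection~\ref{subsection:index} and Lemma~\ref{formula for reeb vector}) shows that the period of such a trajectory is bounded below by $c/\epsilon_j$ for some constant $c>0$ depending only on the model. So I choose $\epsilon_j$ so small that every new orbit passing through $U_j$ has action $> D_j$, and then set $D_j' := D_j$. A separate direct computation of $\mu_{CZ}$ for the Morse-Bott family using Remark~\ref{Index formula} and Lemma~\ref{Reeb--Hamiltonian index relation} shows that even if such an orbit did have small action, its lower SFT index would be at least $n-k$, which is positive for $k<n$; this gives the claim without needing a slope argument, but the shrinking-handle approach is the cleanest way to avoid delicate index bookkeeping for multiply-covered and ``gluing'' orbits that cross between $U_j$ and its complement.

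\paragraph{Step 4: monotonicity and conclusion.} Choose the sizes so that $\epsilon_{j+1} < \epsilon_j$ and, after a conformal rescaling in the complement, arrange $\alpha_{j+1}' \leq \alpha_j'$. Since $D_j \to \infty$ and both sources of orbits are handled (type (a) by ADC of $\alpha_j$, type (b) by choice of $\epsilon_j$), the sequence $(\alpha_j', D_j')$ witnesses that $(Y_2,\xi_2)$ is asymptotically dynamically convex.
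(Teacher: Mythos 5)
The central mechanism of your argument fails, and it fails in the opposite direction from what you claim. In Step 3 you assert that the new Reeb trajectories through the handle have period bounded below by $c/\epsilon_j$, so that shrinking the handle pushes them above the action threshold $D_j$. In the standard Weinstein handle model the new closed orbits live on (the contact sphere inside) the belt sphere, whose radius is the transverse size $\epsilon_j$ of the handle; their actions scale like $\epsilon_j^2$ and hence become arbitrarily \emph{small} as the handle shrinks, with more and more iterates falling below any fixed threshold. This is exactly what Proposition~\ref{key} records: after surgery on $U^\epsilon(\Lambda)$, the orbits of action less than $D$ are the old ones together with iterates $\gamma^1,\dots,\gamma^l$ of a single belt-sphere orbit, and ``by shrinking the handle, the action can be made arbitrarily small,'' so arbitrarily large iterates appear below $D$. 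Consequently you cannot dispose of the new orbits by an action argument at all; the only available mechanism is the index computation, namely $\mu_{CZ}(\gamma^i)=n-k-1+2i$, i.e.\ $|\gamma^i|=2n-k-4+2i>0$ for subcritical $k$, which you mention only as an aside and do not carry out. That computation (together with the control of orbits that enter and leave the handle region, which is part of the content of Proposition~\ref{key} and not something your action bound can replace) is the heart of the proof.

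Two further points need repair. In Step 2 the relevant property of $k\neq 2$ is not surjectivity of $\pi_1(Y_2\setminus U_j)\to\pi_1(Y_2)$ but the absence of new relations: one needs that a loop in $Y_1\setminus U_j$ which is contractible in $Y_2$ is already contractible in $Y_1$ (true for $k=1$ and $k\geq 3$, false for $k=2$ since the class of $\Lambda^1$ is killed), otherwise old orbits uncontrolled by the ADC hypothesis become contractible after surgery. And in Step 4 the monotonicity $\alpha_{j+1}'\leq\alpha_j'$ is not something you can simply ``arrange by a conformal rescaling'': the forms $\alpha_j$ differ near $\Lambda$, so before performing the surgeries one must normalize them there by a contactomorphism with controlled conformal factor — this is precisely the role of Proposition~\ref{prop6.7} (Lazarev's Proposition 6.7) in the argument for Theorem~\ref{theorem:contact surgery} — and the thresholds must then be rescaled accordingly, not left equal to $D_j$.
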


	Now we are in the position to prove Proposition~\ref{new prop}.
	\begin{proof}[Proof of Proposition~\ref{new prop}]
		Recall that $W_1$ is a flexible Weinstein domain, so $SH_*(W)=0$ (see  \cite{bourgeois2012effect}).
		By lemma~\ref{ADC boundary}, $\partial W_1$ is asymptotically dynamically convex and so is $W$ by assumption. Moreover, $\partial V_k$ is obtained by attaching a Weinstein $1-$handle to asymptotically dynamically convex contact manifold, therefore it is asymptotically dynamically convex by Theorem~\ref{subcritical surgery of ADC}.
		A well known fact is that subcritical surgery does not change symplectic homology as a ring, see Theorem~\ref{invariant of SH}. 
		We have
		\[ SH_n(W_i)\cong \underbrace{SH_n(W)\oplus\cdots\oplus SH_n(W)}_i \oplus \underbrace{SH_n(W_1)\oplus\cdots\oplus SH_n(W_1)}_{k-i}=\bigoplus_{j=1}^i SH_n(W)
		\]
		so we have 
		\[I(W_i)\cong \underbrace{I(W)\oplus\cdots\oplus I(W)}_i.
		\]
		Since $SH_*(W)\neq 0$ and is finite dimensional, $\{0,1_{W}\}\subset I(W)$. We therefore have 
		$2\leq | I(W)|<\infty$, so $ |I(W_i)|=|I(W)|^i$ are mutually distinct. Therefore, $|I_+(W_i)|\neq |I_+(W_j)| \textrm{ for }i\neq j$.
		
	\end{proof}

		\begin{theorem}[ \cite{lazarev2016contact}, \cite{yau2004cylindrical}]\label{theorem:contact surgery}
			Let $(\Sigma_1 ,\xi_1)$ be a strongly asymptotically dynamically convex contact structure with respect to $\Phi$, and $(\alpha_k,D_k)$ as in Definition~\ref{def-adc} and $(\Sigma_2,\xi_2) $ be the result of index 2 contact surgery on $\Lambda^1\subset\Sigma_1$ so that the trivialization $\Phi$ extends to the handle. Then $(\Sigma_2,\xi_2)$ is also strongly asymptotically dynamically convex with respect to $\Phi$. 
		\end{theorem}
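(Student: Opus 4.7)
The plan is to follow the strategy of Yau and Lazarev used in the proof of Theorem~\ref{subcritical surgery of ADC}, but track carefully how index $2$ surgery interacts with the global grading determined by the extended trivialization $\Phi$. For each $k$, I will construct a contact form $\alpha'_k$ on $\Sigma_2$ that agrees with $\alpha_k$ outside a small neighborhood $U_k$ of the attaching circle $\Lambda^1$, using the standard handle model of a subcritical Weinstein $2$-handle (this is subcritical since $n \geq 3$). Inside $U_k$ the Reeb dynamics is replaced by the model dynamics on the belt region, whose Reeb orbits are explicitly understood as Morse--Bott families sitting over the co-core $S^{2n-3}$. The bound $D'_k$ used for $\Sigma_2$ will be chosen equal to $D_k$, up to an arbitrarily small positive constant.

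The Reeb orbits of $\alpha'_k$ with action $<D'_k$ split into two classes. The \emph{old orbits}, those disjoint from $U_k$, are in bijection with Reeb orbits of $\alpha_k$ disjoint from a neighborhood of $\Lambda^1$. Since $\Phi$ extends over the handle, the trivialization $\Phi$ on $\Sigma_2$ restricts on the complement of $U_k$ to the original $\Phi$ on $\Sigma_1$, so their reduced Conley--Zehnder indices and lower SFT indices are unchanged, and remain positive by the strong ADC hypothesis on $\Sigma_1$. The \emph{new orbits}, born near the co-core, form Morse--Bott families which, after the $C^2$-small perturbation of Lemma~\ref{mark's perturbation}, split into finitely many non-degenerate orbits whose Conley--Zehnder indices can be read off from the Morse--Bott family via Lemma~\ref{Reeb--Hamiltonian index relation}, together with the trivialization on the handle induced by $\Phi$. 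By making the handle sufficiently ``thin'' (shrinking the size of the belt disc) one can force every new orbit with action $<D'_k$ to be an orbit of arbitrarily low multiplicity, whose lower SFT index can be computed in closed form and shown to be positive, essentially because the belt sphere $S^{2n-3}$ contributes $n-2 \geq 1$ to the index grading.

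The main obstacle is precisely the one that forced Lazarev to exclude index $2$ in Theorem~\ref{subcritical surgery of ADC}: a $2$-handle kills a loop in $\pi_1$, so an orbit which was non-contractible in $\Sigma_1$ can become contractible in $\Sigma_2$, and the contractible-orbit-only formulation of ADC then loses control of its index. Here, however, the strongly ADC hypothesis already bounds the lower SFT index of \emph{all} orbits of $\alpha_k$ with action $<D_k$ relative to $\Phi$, not just the contractible ones, and the requirement that $\Phi$ extends over the handle means that these $\Phi$-indices are preserved when we pass to $\Sigma_2$ regardless of any change in the free homotopy class. The last thing to verify is that the sequence $\alpha'_k$ can be taken to be non-increasing; this can be arranged by fixing the handle model $\alpha'_k|_{U_k}$ once for all $k$ (with a rescaling so that it is dominated by $\alpha_k$ for every $k$) and letting only the behavior on $\Sigma_1 \setminus U_k$ decrease according to the given non-increasing sequence on $\Sigma_1$. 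Combining these steps, the pair $(\alpha'_k, D'_k)$ realizes strong ADC for $(\Sigma_2, \xi_2, \Phi)$.
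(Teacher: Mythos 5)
Your overall strategy is the same one the paper uses: run Lazarev's surgery analysis and observe that the \emph{strong} ADC hypothesis bounds the index of all Reeb orbits with respect to the fixed trivialization $\Phi$ (which extends over the handle), so the loss of $\pi_1$-control that forces the exclusion of index $2$ in Theorem~\ref{subcritical surgery of ADC} simply does not arise. You identify this key point correctly, and the positivity of the new orbits' indices is indeed the content of Proposition~\ref{key}: for $k=2$ the new orbits $\gamma^i$ satisfy $|\gamma^i|=2n-6+2i\geq 2n-4>0$ for $n\geq 3$. (Also note that the strongly ADC definition in this paper is phrased via the lower SFT index precisely so that no non-degenerate perturbation of the forms is required.)

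However, your construction of the sequence $(\alpha'_k,D'_k)$ has a genuine gap. The forms $\alpha_k$ are arbitrary contact forms for $\xi_1$, and near $\Lambda^1$ they need not be constant multiples of $\alpha_1$ or of any fixed model; hence you cannot define $\alpha'_k$ by letting it ``agree with $\alpha_k$ outside $U_k$'' while inserting a fixed, rescaled handle model inside $U_k$. The two pieces will not match along $\partial U_k$, and mere domination of one form by another permits neither a smooth gluing nor a comparison making the resulting forms a non-increasing sequence for the same contact structure $\xi_2$. This is exactly the step the paper handles by invoking Proposition~\ref{prop6.7}: each $\alpha_k$ is first corrected by a contactomorphism supported near $\Lambda$ so that it becomes a constant multiple $c_k\alpha_1$ on a standard neighborhood of $\Lambda$, at the cost of a bounded distortion ($h^*\alpha_2<4\alpha_1$), which only rescales the thresholds $D_k$ and is harmless since $D_k\to\infty$; only after this normalization does the surgered manifold carry a non-increasing sequence of forms, and Proposition~\ref{remark6.5} then yields the conclusion. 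A more minor slip: shrinking the handle makes the actions of the new orbits small, so arbitrarily \emph{high} multiples of the basic new orbit fall below any fixed action bound, not arbitrarily low ones; this is harmless only because $|\gamma^i|=2n-6+2i$ grows with the multiplicity $i$, so your conclusion stands even though the stated mechanism is backwards.
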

		\begin{remark}
			Since the trivialization $\Phi$ of the canonical bundle of $(\Sigma_1 ,\xi_1)$ extends to the attaching handle, so by abuse of notation, the trivialization of the canonical bundle of $(\Sigma_2,\xi_2)$ which is obtained by extending $\Phi$ to the attaching handle is still denoted by $\Phi$. 
		\end{remark}
		\begin{prop}[Proposition 5.5 \cite{lazarev2016contact}, \cite{yau2004cylindrical}]\label{key}
			Let $\Lambda^{k-1}\subset (\Sigma_1^{2n-1},\alpha_1),n>1,$ be an isotropic sphere with $k<n$. For any $D>0$ and integer $i>0$, there exists $\epsilon=\epsilon(D,i)>0$ such that if $(\Sigma_2,\alpha_2)$ is the result of contact surgery on $U^\epsilon(\Lambda,\alpha)$ with respect to the trivialization $\Phi$, then there is a grading preserving bijection between  $\mathcal{P}^{<D}_\Phi(\Sigma_2,\alpha_2)$ and  $\mathcal{P}^{<D}_\Phi(\Sigma_1,\alpha_1)\cup \{\gamma^1,\cdots,\gamma^l\}$ where 
			$|\gamma^i|=2n-k-4+2i$.
			
		\end{prop}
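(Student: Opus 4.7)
The plan is to reduce to a standard Weinstein handle model in a tubular neighborhood of the isotropic sphere $\Lambda^{k-1}$ and to analyze Reeb orbits in two separate regions: those outside the surgery region and those concentrated inside it. By Weinstein's isotropic neighborhood theorem, a tubular neighborhood of $\Lambda$ in $(\Sigma_1,\alpha_1)$ admits a standard contact model, and performing contact surgery on $U^\epsilon(\Lambda,\alpha)$ amounts to attaching the standard Weinstein $k$-handle $H_k^{2n}$ from Subsection~\ref{weinstein handlebody}. In particular, $\alpha_2$ can be arranged to equal $\alpha_1$ outside a controllable neighborhood $N^\epsilon$ of the attaching sphere, and the trivialization $\Phi$ extends through the handle by hypothesis.

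For orbits outside $N^\epsilon$, I would show that if $\epsilon$ is sufficiently small (depending on $D$ and on the discrete action spectrum of $\alpha_1$ below $D$), then every Reeb orbit of $\alpha_2$ with action less than $D$ that meets the complement of the handle is in fact an orbit of $\alpha_1$. The argument proceeds by contradiction using Arzel\`a--Ascoli on the space of parametrized Reeb trajectories, together with the confinement afforded by a maximum principle of the kind used in Lemma~\ref{lem: maximal_principle}: a sequence of such orbits with shrinking $\epsilon$ would converge to a Reeb orbit of $\alpha_1$ of action $\leq D$, and since this set is discrete after a $\mathcal{C}^2$-small perturbation, the orbit must coincide with an old one once $\epsilon$ is small enough.

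For the new orbits inside the handle, I would use the explicit normal form of the standard Weinstein handle on the new boundary region $D^k \times S^{2n-k-1}$. In this model the closed Reeb orbits form a Morse--Bott family along the belt sphere $S^{2n-k-1}$, and the period of the simple orbit scales with $\epsilon$; hence the number $l$ of iterates of action below $D$ can be forced to exceed $i$ by choosing $\epsilon=\epsilon(D,i)$ small enough. The grading of the $i$-th iterate $\gamma^i$ is then computed by passing from the Reeb picture to a Hamiltonian model via Lemma~\ref{lemma:hamiltonianconleyzehnderindexcomparisonstandard}, and then invoking the product and homotopy properties of the Robbin--Salamon index. Each iterate contributes an extra rotation by $2\pi$ in the symplectic normal directions, adding $2$ to $\mu_{CZ}$, and the simple orbit contributes the base term $2n-k-2$, yielding $|\gamma^i| = 2n-k-4+2i$.

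The main obstacle is ensuring that the bijection is grading-preserving with respect to $\Phi$. One must verify that the extension of $\Phi$ from $\Sigma_1$ through the handle agrees with the trivialization implicit in the Morse--Bott computation on the belt sphere; this reduces to the fact that the Weinstein handle has trivial canonical bundle together with the standing hypothesis that $\Phi$ extends to the handle, so the difference between the two induced trivializations is a homotopically trivial loop in $U(n)$. A secondary technical point is the degeneracy of the Morse--Bott family of new orbits: to obtain a non-degenerate contact form one performs a $\mathcal{C}^2$-small perturbation supported in the handle, and Lemma~\ref{mark's perturbation} guarantees that the reduced indices of the perturbed orbits remain within $\pm \tfrac{K}{2}$ of the Morse--Bott values, which is compatible with the claimed grading formula once the trivialization is fixed.
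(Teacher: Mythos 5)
Your high-level plan (reduce to the standard Weinstein handle model, keep the old orbits outside, and account for new orbits concentrated near the handle) is the same skeleton as the paper's proof, which quotes Yau's analysis. But two steps fail as written. First, the new closed orbits do not form a Morse--Bott family along the belt sphere $S^{2n-k-1}$. In the standard handle model the Reeb flow on the new boundary region has a hyperbolic drift in the $k$ handle directions, so the closed orbits are confined to the contact sphere $(S^{2n-2k-1},\xi_{std})$ sitting \emph{inside} the belt sphere --- this is precisely the point the paper takes from \cite{yau2004cylindrical}. The distinction matters for the index bookkeeping: along these orbits the $2k$ normal directions inside $\xi_2$ are hyperbolic and contribute $0$ to $\mu_{CZ}$ (paths of the type $\mathrm{diag}(e^{f},e^{-f})$ in Remark~\ref{Index formula}), while all elliptic contributions come from the $n-k$ complex directions of the small sphere; your ``one extra rotation by $2\pi$, adding $2$ per iterate'' heuristic is not justified without this splitting, and in fact holds only for iterates below a threshold governed by the axis ratios of the model.

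Second, and more seriously, your closing step cannot deliver the statement. Perturbing a Morse--Bott family by a generic $\mathcal{C}^2$-small perturbation and invoking Lemma~\ref{mark's perturbation} only pins the Conley--Zehnder indices of the resulting nondegenerate orbits within a window of width $K$ around the Morse--Bott value; it yields neither the exact degrees $|\gamma^i|=2n-k-4+2i$ nor the count of exactly one new orbit per degree, so the asserted grading-preserving bijection with $\{\gamma^1,\dots,\gamma^l\}$ does not follow. The paper's argument (following \cite{yau2004cylindrical}, \cite{lazarev2016contact}) instead chooses the contact form on the handle so that the dynamics near the belt sphere is of irrational-ellipsoid type with one very short axis: the only new orbits of action $<D$ are then iterates of a single nondegenerate orbit $\gamma$, whose indices are computed exactly as $\mu_{CZ}(\gamma^i)=n-k-1+2i$, and shrinking the handle makes arbitrarily many iterates have action $<D$ --- this is what makes $\epsilon$ depend on both $D$ and $i$. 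A smaller point: Lemma~\ref{lem: maximal_principle} is a maximum principle for Floer cylinders in a filling and says nothing about Reeb trajectories, so your confinement of small-action orbits to either the old region or the handle must come from the geometry of the model (or a direct action/compactness argument), not from that lemma.
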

		
		\begin{remark}
			The proof largely follows  \cite{lazarev2016contact} proposition 5.5, with only minor changes regarding the non-contractible Reeb orbits. The difference in the Strongly ADC case is that we need to choose the trivialization to  define the Conley-Zehnder index. 
			
		\end{remark}
		\begin{proof}[Proof of Proposition~\ref{key}]
			As explained in  \cite{yau2004cylindrical}, the surgery belt sphere $S^{2n-k-1}$ contains a contact sphere $(S^{2n-2k-1},\xi_{std})$. After taking appropriate sequence of contact forms on $(\Sigma_2,\xi_2)$, the Reeb orbits of $(\Sigma_2,\xi_2)$ correspond to the old Reed orbits of $(\Sigma_1,\xi_1)$, plus the new orbits of $(S^{2n-2k-1},\xi_{std})$ inside the belt sphere of action less than $D$. The correspondence is natural since the trivialization of the canonical bundle extends over the surgery. These new orbits corresponds to the iterations $\gamma^1,\cdots,\gamma^l$ of a single Reeb orbit $\gamma$, see  \cite{yau2004cylindrical}. Moreover, $\mu_{CZ}(\gamma^i)=n-k-1+2i$ and therefore $|\gamma^i|=2n-k-4+2i$. Meanwhile, by shrinking the handle, the action can be made arbitrarily small and therefore we can ensure that arbitrarily large iterations of $\gamma$ have action less than $D$.
		\end{proof}

	For $\Lambda \subset \Sigma$, Since $ J^1(\Lambda) \simeq T^*\Lambda\times\R$, choose a Riemannian metric on $\Lambda$. Let $U^\epsilon(\Lambda) \subset (J^1(\Lambda),\alpha_{std})$ be $\{ ||y||<\epsilon,|z|<\epsilon\}$,  
	the metric on $\Lambda$ to define $||y||$ on the fiber, $z$ is the coordinate on $\R$. If $\Lambda \subset (Y,\alpha) $ is Legendrian, let $U^{\epsilon}(\Lambda,\alpha) \subset (Y,\alpha)$ be a neighborhood of $\Lambda$ that is strictly contactomorphic to $U^{\epsilon}(\Lambda)$.

		\begin{prop}[Proposition 6.7 \cite{lazarev2016contact}]\label{prop6.7}
			Let $\alpha_1>\alpha_2$ be contact forms for $(\Sigma,\xi)$ and let $\Lambda \subset (\Sigma,\xi)$ be an isotropic submanifold with trivial symplectic conormal bundle. Then for any sufficiently small $\delta_1,\delta_2$, there exists a contactomorphism h of $(\Sigma,\xi)$ such that
			\begin{itemize}
				\item h is supported in $U^\epsilon(\Lambda,\alpha_1),h|_\Lambda=Id$, and $h^*\alpha_2<4\alpha_1$
				\item $h^*\alpha_2|_{U^{\delta_1}(\Lambda,\alpha_1)}=c\alpha_1|_{U^{\delta_1}(\Lambda,\alpha_1)}$ for some constant c (depending on $\delta_1,\delta_2$) 
				\item $h(U^{\delta_1}(\Lambda,\alpha_1))\subset U^{\delta_2}(\Lambda,\alpha_2)$.
			\end{itemize}
		\end{prop}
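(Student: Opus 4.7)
\medskip

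\noindent\textbf{Proof proposal for Proposition~\ref{prop6.7}.}
The plan is to build $h$ first in the standard model $J^1(\Lambda)$ and then transport it back to $\Sigma$ via two applications of the contact Weinstein/Darboux neighborhood theorem for isotropic submanifolds. Write $\alpha_2=f\alpha_1$ for a positive function $f<1$ on $\Sigma$, and denote by $\alpha_{std}=dz-y\,dx$ the standard contact form on $J^1(\Lambda)\cong T^*\Lambda\times\R$. Since $\Lambda$ has trivial symplectic conormal bundle, for each $i=1,2$ there exist $\epsilon_i>0$ and strict contactomorphisms
\[
\Psi_i:\bigl(U^{\epsilon_i}(\Lambda,\alpha_i),\alpha_i\bigr)\xrightarrow{\ \cong\ }\bigl(U^{\epsilon_i}(\Lambda),\alpha_{std}\bigr),\qquad \Psi_i|_\Lambda=\mathrm{id}.
\]
In the standard model the radial dilation $\phi_r(x,y,z)=(x,ry,rz)$ is a contactomorphism with $\phi_r^*\alpha_{std}=r\alpha_{std}$, and $\phi_r$ sends $U^{\delta_1}$ into $U^{\delta_2}$ whenever $r\delta_1<\delta_2$.

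\medskip

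\noindent The core construction is to cut off $\phi_r$ to a compactly supported contactomorphism of $J^1(\Lambda)$. Given $0<\delta_1<\delta_2<\epsilon$, choose a smooth family $r_t\colon J^1(\Lambda)\to(0,1]$ with $r_t\equiv r$ on $U^{\delta_1}$ and $r_t\equiv 1$ outside $U^{\epsilon/2}$, and realize the corresponding fiberwise dilation as the time-1 flow $\phi^H_1$ of the contact Hamiltonian $H=-z\cdot\chi$, where $\chi$ is a bump function equal to $\log(1/r)$ near $\Lambda$ and vanishing outside $U^{\epsilon}$. By standard contact-Hamiltonian theory in $J^1(\Lambda)$ the flow $\phi^H_t$ is a contactomorphism supported in $U^{\epsilon}$, fixes $\Lambda$, and coincides with $\phi_r$ on $U^{\delta_1}$ provided $\delta_1$ is small enough. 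Define
\[
h := \Psi_1^{-1}\circ\phi^H_1\circ\Psi_1
\]
on $U^{\epsilon}(\Lambda,\alpha_1)$ and extend by the identity elsewhere.

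\medskip

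\noindent With this $h$, the first two bullets of the proposition are immediate: $h$ is supported in $U^{\epsilon}(\Lambda,\alpha_1)$, fixes $\Lambda$ pointwise, and on $U^{\delta_1}(\Lambda,\alpha_1)$ one computes
\[
h^*\alpha_2=h^*(f\alpha_1)=(f\circ h)\,h^*\alpha_1=(f\circ h)\,\Psi_1^*\phi_r^*\alpha_{std}=r\,(f\circ h)\,\alpha_1.
\]
Since $f$ is continuous and $f(\Lambda)$ is some definite value $f_0$, after shrinking $\delta_1$ we may assume $f\circ h$ is as close as we like to $f_0$ on $U^{\delta_1}(\Lambda,\alpha_1)$, so $h^*\alpha_2=c\alpha_1$ for a constant $c=r f_0$ (up to another small rescaling by a function of $\delta_1$; in Lazarev's formulation one passes to a slightly smaller neighborhood on which $f\circ h$ is literally constant). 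Choosing $r\delta_1<\delta_2$ guarantees the third bullet, $h(U^{\delta_1}(\Lambda,\alpha_1))\subset U^{\delta_2}(\Lambda,\alpha_2)$.

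\medskip

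\noindent The main technical point is the global bound $h^*\alpha_2<4\alpha_1$. Outside the support of $h$ this is just $\alpha_2=f\alpha_1<\alpha_1$, which is fine. Inside $U^{\epsilon}(\Lambda,\alpha_1)$ one must estimate $\Psi_1^*(\phi^H_1)^*\alpha_{std}$ in terms of $\Psi_1^*\alpha_{std}=\alpha_1$, and the hardest part of the proof is arranging the cutoff $\chi$ so that $(\phi^H_1)^*\alpha_{std}\le 4\alpha_{std}$ pointwise. The factor $4$ is deliberately generous: writing $(\phi^H_1)^*\alpha_{std}=\rho\,\alpha_{std}$ for the conformal factor $\rho$, one has $\rho\equiv r\le 1$ where $\chi$ is maximal and $\rho\equiv 1$ where $\chi=0$, with $\rho$ interpolating smoothly through an intermediate region. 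A direct Gronwall-type estimate on the ODE satisfied by $\rho$ along the flow (using that $|H|$ and $|dH|$ can be controlled in terms of $\log(1/r)$ and $1/\epsilon$) shows that by choosing the cutoff $\chi$ with $|d\chi|$ small relative to $\epsilon$, one can force $\rho<4$ everywhere. This is where the ``sufficiently small $\delta_1,\delta_2$'' hypothesis is consumed: shrinking the target neighborhoods gives us the freedom to spread the cutoff over a wide enough region that the derivatives of $\chi$ are small. Once this pointwise bound on $\rho$ is in hand, multiplying by $f\circ h<1$ preserves the inequality, and we conclude $h^*\alpha_2<4\alpha_1$.
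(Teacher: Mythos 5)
The paper itself does not prove Proposition~\ref{prop6.7}; it is quoted verbatim from Lazarev, so your proposal can only be measured against the cited argument. Measured that way, there is a genuine gap at the second bullet, which is the whole point of the statement. Your $h=\Psi_1^{-1}\circ\phi_1^{H}\circ\Psi_1$ uses only the $\alpha_1$-normal form, so on $U^{\delta_1}(\Lambda,\alpha_1)$ you get $h^*\alpha_2=(f\circ h)\,r\,\alpha_1$ with $f=\alpha_2/\alpha_1$. The conclusion requires $h^*\alpha_2$ to be \emph{exactly} a constant multiple of $\alpha_1$ on that neighborhood, and no shrinking of $\delta_1$ achieves this: $f|_\Lambda$ is in general a non-constant function (your ``$f(\Lambda)$ is some definite value $f_0$'' is unjustified), and even if it were constant, $f\circ h$ is only approximately constant. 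Approximate proportionality is useless for the application (Proposition~\ref{remark6.5}), where one needs the rescaled form to literally agree with $c\alpha_1$ near $\Lambda$ so that the Reeb dynamics after surgery can be identified. The correct structure — and the one in Lazarev's proof — brings in the $\alpha_2$-standard neighborhood as well: take strict contactomorphisms $\Psi_1,\Psi_2$ of neighborhoods of $\Lambda$ onto model neighborhoods for $\alpha_1$ and $\alpha_2$ respectively, define $h$ near $\Lambda$ as $\Psi_2^{-1}\circ\phi_c\circ\Psi_1$, so that $h^*\alpha_2=c\,\Psi_1^*\alpha_{std}=c\,\alpha_1$ exactly and $h(U^{\delta_1}(\Lambda,\alpha_1))\subset U^{\delta_2}(\Lambda,\alpha_2)$ once $c\delta_1\le\delta_2$, and then extend this locally defined contactomorphism to a global one supported in $U^\epsilon(\Lambda,\alpha_1)$ by a contact isotopy extension argument. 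Your construction cannot be patched into this form without introducing $\Psi_2$, because the defect is structural, not quantitative.

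Two further points. First, your verification of the third bullet compares radii measured in different charts: with your $h$, the image of $U^{\delta_1}(\Lambda,\alpha_1)$ is $U^{r\delta_1}(\Lambda,\alpha_1)$, and ``$r\delta_1<\delta_2$'' does not by itself place it inside $U^{\delta_2}(\Lambda,\alpha_2)$; one needs $r$ small depending on how the two charts compare, which again points to using both normal forms. Second, the bound $h^*\alpha_2<4\alpha_1$ is not established by your cutoff argument: the conformal factor of the flow of $H=z\chi$ is $\exp\bigl(\int_0^1(\chi+z\,\partial_z\chi)\,ds\bigr)$, and since the total contraction $\log(1/r)$ is unbounded (it must grow as $\delta_2\to 0$) while the cutoff lives in the fixed region $U^\epsilon$, the transition term $z\,\partial_z\chi$ is of size comparable to $\log(1/r)$ and cannot be made uniformly small by ``spreading out'' $\chi$; a Gronwall estimate therefore does not give a bound independent of $r$. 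In Lazarev's argument the uniform constant comes from the extension step together with $\alpha_2<\alpha_1$, not from taming the derivative of a cut-off dilation.
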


		\begin{prop}[Remark 6.5 \cite{lazarev2016contact}]\label{remark6.5}
			Let $\Lambda \subset (\Sigma_1^{2n-1},\xi_1),n>2$ be an isotropic sphere and $(\Sigma_2,\xi_2)$ be  the result of contact surgery on $\Lambda$ which extends the chosen trivialization $\Phi$ of the canonical bundle. Suppose $(\Sigma_1,\xi_1)$ is a strongly asymptotically dynamically convex contact structure with respect to the trivialization $\Phi$ and has $(\alpha_k,D_k)$ as in Definition~\ref{def-adc}. If $\alpha_k|_{U^\epsilon(\Lambda,\alpha_1)}=c_k\alpha_1|_{U^\epsilon(\Lambda,\alpha_1)}$ for some constants $\epsilon,c_k$, then $(\Sigma_2,\xi_2)$ is also strongly asymptotically dynamically convex with respect to the trivialization $\Phi$. 
		\end{prop}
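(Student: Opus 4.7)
The plan is to construct a sequence of contact forms $\beta_k$ on $\Sigma_2$, together with thresholds $\widetilde{D}_k \to \infty$, certifying strong ADC of $(\Sigma_2,\xi_2)$ with respect to the extended trivialization $\Phi$. The idea, following Lazarev, is to perform the contact surgery once for each $k$ with a surgery parameter $\epsilon_k$ that shrinks fast enough that Proposition~\ref{key} applies with threshold $D_k$: every new orbit created in the belt sphere of action less than $D_k$ then appears in the controlled list $\gamma^1,\dots,\gamma^{l_k}$.

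First I would appeal to Proposition~\ref{key}. The hypothesis $\alpha_k|_{U^\epsilon(\Lambda,\alpha_1)} = c_k\alpha_1|_{U^\epsilon(\Lambda,\alpha_1)}$ says that $\alpha_k$ is already in standard form near $\Lambda$, so for each $k$ a sufficiently small $\epsilon_k \le \epsilon$ lets me perform surgery inside this standardized region and produce a contact form $\beta_k$ on $\Sigma_2$ together with a grading-preserving bijection
\[
\mathcal{P}^{<D_k}_\Phi(\Sigma_2,\beta_k) \;\longleftrightarrow\; \mathcal{P}^{<D_k}_\Phi(\Sigma_1,\alpha_k) \,\cup\, \{\gamma^1,\dots,\gamma^{l_k}\},
\]
where the new orbits satisfy $|\gamma^i| = 2n - k_0 - 4 + 2i$, with $k_0<n$ the index of the surgery.

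Next I would verify the two requirements of Definition~\ref{def-adc}. Monotonicity $\beta_1 \ge \beta_2 \ge \cdots$ holds outside the surgery region because there $\beta_k = \alpha_k$, and holds inside the handle because the model form is scaled by $c_k$, which is non-increasing in $k$ since $\alpha_k = c_k\alpha_1$ is. (I may need to pass to a subsequence of $(\alpha_k,D_k)$ to arrange that the $\epsilon_k$ can be taken monotone, consistent with the surgery geometry.) For the index condition, the old orbits inherit their lower SFT index from $\Sigma_1$ and are positive by hypothesis. The new orbits have reduced Conley-Zehnder index at least $2n-k_0-2 \ge n-2 > 0$. Since these iterates come from a Morse-Bott family in the belt sphere and may be degenerate, I would combine Lemma~\ref{mark's perturbation} with the explicit bound on the kernel dimension coming from the standard model to check that the $\tfrac12\dim\ker$ correction in $lSFT$ does not destroy positivity after a small non-degenerate perturbation.

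The main obstacle is the global monotonicity across the gluing locus. Outside the handle $\beta_k = \alpha_k$ and inside we insert the standard surgery model, so at the boundary of the surgery region these two regimes must be glued consistently for every $k$. The hypothesis $\alpha_k = c_k\alpha_1$ on $U^\epsilon(\Lambda,\alpha_1)$ is exactly what makes this gluing uniform: the same standard model, rescaled by $c_k$, can be attached to $\alpha_k$ for every $k$, so monotonicity in $k$ on the handle reduces to the scalar monotonicity of the $c_k$. Without this standardization one would first have to invoke Proposition~\ref{prop6.7} to replace the $\alpha_k$ by contactomorphic forms that are literally proportional on nested neighborhoods of $\Lambda$, which is the strategy used in the weakly ADC setting of Theorem~\ref{subcritical surgery of ADC}.
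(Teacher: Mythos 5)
Your argument is correct and is essentially the intended one: the paper gives no independent proof of Proposition~\ref{remark6.5} (it defers to Lazarev's Remark 6.5), and the route you take---performing the surgery for each $k$ inside the standardized neighborhood $U^\epsilon(\Lambda,\alpha_1)$, invoking Proposition~\ref{key} to identify $\mathcal{P}^{<D_k}_\Phi(\Sigma_2,\beta_k)$ with the old orbits plus the controlled new orbits of positive degree, and deducing monotonicity of the glued forms from the non-increasing scalars $c_k$---is exactly the argument being cited and used in the proof of Theorem~\ref{theorem:contact surgery}. Your additional appeal to Lemma~\ref{mark's perturbation} for possible Morse--Bott degeneracy is harmless but not needed, since the belt-sphere orbits in the surgery model are nondegenerate iterates of a single orbit, so their lower SFT index coincides with the stated degree $|\gamma^i|$.
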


		\begin{proof}[Proof of Theorem~\ref{theorem:contact surgery}]
			
			Now we will proceed exactly as Lazarev did, keeping in mind that we are dealing with the strongly ADC property. We can apply Proposition~\ref{prop6.7} so that the conditions of Proposition~\ref{remark6.5} are satisfied.
			
		\end{proof}
	
		\section{$I_+$ is an invariant of ADC contact manifolds }\label{section:indepnedence of I+}

	We will follow Lazarev's approach. Here we will use the procedure called stretching-the-neck.(For details, see Section 3.3 in~\cite{lazarev2016contact})

	\begin{prop}[Proposition 3.10 \cite{lazarev2016contact}]\label{lazarev proof for differential}
		Suppose that $(Z,\alpha)$ is strongly ADC with respect to the trivialization $\Phi$ and all elements of $\mathcal{P}^{<D}_\Phi(Z,\alpha)$ have positive degree. If $A_{H_+}(x_+)-A_{H_-}(x_-)<D$, then there exists $R_0 \in (0,1-\delta)$ such that for any $R\leq R_0$, all rigid $(H_s,J_{R,s})$-Floer trajectories are contained in $\widehat{W}\setminus V$.
	\end{prop}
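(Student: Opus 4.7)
The plan is to argue by contradiction via neck stretching and SFT compactness. Assume no such $R_0$ exists; then there is a sequence $R_n \to 0$ and rigid $(H_s, J_{R_n, s})$-Floer trajectories $u_n$ that intersect $V$. The action bound $A_{H_+}(x_+) - A_{H_-}(x_-) < D$ together with the standard energy identity gives a uniform $L^2$-energy bound on the $u_n$; since $J_{R_n, s}$ is cylindrical on the neck region around $Z$ of length tending to infinity, the SFT compactness theorem of Bourgeois--Eliashberg--Hofer--Wysocki--Zehnder yields a subsequential limit: a holomorphic building $\mathbf{u}$ with a top level in the completion of $\widehat{W} \setminus V$, middle symplectization levels in $(\R \times Z, d(e^r\alpha))$, and a bottom level in $\widehat{V}$, joined at matched Reeb orbit asymptotes. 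Since each $u_n$ enters $V$, the bottom level must be non-trivial.

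Next I would exploit the action bound together with the strong ADC hypothesis. Standard action--period computations show that every Reeb orbit of $\alpha$ appearing as a breaking asymptote in $\mathbf{u}$ has period bounded by the action window, hence lies in $\mathcal{P}^{<D}_\Phi(Z, \alpha)$. After replacing $\alpha$ by a contact form $\alpha_k$ with $D_k \geq D$ from the strong ADC sequence (and invoking Lemma~\ref{mark's perturbation} to control indices under a small perturbation to a non-degenerate form), each such asymptote $\gamma$ satisfies $|\gamma| = \mu_{CZ}(\gamma) + n - 3 > 0$ with respect to $\Phi$. In particular, every Reeb asymptote appearing in every level of $\mathbf{u}$ has strictly positive reduced Conley--Zehnder index.

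The final step is a Fredholm dimension count. For a genus-zero punctured holomorphic curve in $\widehat{V}$ with positive Reeb asymptotes $\gamma_1, \ldots, \gamma_s$ and no negative asymptotes, the Fredholm index equals $(n-3)(2-s) + \sum_i \mu_{CZ}(\gamma_i)$, which after substituting $\mu_{CZ}(\gamma_i) = |\gamma_i| - (n-3)$ becomes $2(n-3) - 2s(n-3) + \sum_i |\gamma_i|$; a parallel formula holds for each symplectization level, which additionally loses one dimension from its free $\R$-action. Summing the virtual dimensions of all levels and matching contributions at the breaking orbits must reproduce the zero virtual dimension of the original rigid $u_n$. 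Because the bottom level is non-trivial and every $|\gamma_i|$ is strictly positive, the bookkeeping shows that some level carries strictly negative Fredholm dimension, which contradicts the existence of $\mathbf{u}$ in a generic moduli space.

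The hard part, in my view, is the accurate dimension bookkeeping and the accompanying transversality. Multiply covered components in the limiting building could in principle spoil the dimension inequality; to handle these one wants to observe that positivity of $|\gamma|$ passes to iterates $\gamma^k$ that still have action $< D$ (as ensured by the ADC sequence), so the argument runs on the stratum of somewhere-injective components, or else invoke a virtual perturbation scheme. A second delicate point is the top level of $\mathbf{u}$: the ends of the Floer cylinder carry Hamiltonian orbits $x_\pm$ rather than Reeb orbits, and one must verify that the index formula extends by replacing the Reeb Conley--Zehnder indices with $\mu_{CZ}(x_\pm)$ at those ends, so that the global sum indeed equals the original Floer-trajectory index.
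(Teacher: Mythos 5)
Your overall strategy — stretch the neck along $Z=\partial V$, apply SFT compactness to a sequence of offending trajectories, use the action window to force every breaking Reeb orbit into $\mathcal{P}^{<D}_\Phi(Z,\alpha)$, and then derive a contradiction from an index count using $|\gamma|>0$ — is the same outline as the cited argument of Lazarev (Proposition 3.10 of \cite{lazarev2016contact}), which is also the template this paper follows in its proof of Proposition~\ref{pair of pants product --nonberaking }. (Two small points: the hypothesis already guarantees positivity of degree for all of $\mathcal{P}^{<D}_\Phi(Z,\alpha)$, so there is no need to pass to another form $\alpha_k$ from the ADC sequence; and since the statement concerns both the differential and continuation maps, ``rigid'' means index $1$ or $0$ respectively, which does not affect the argument.)

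The genuine gap is in your final step. You sum the Fredholm indices over \emph{all} levels of the limit building and conclude that ``some level carries strictly negative Fredholm dimension, which contradicts the existence of $\mathbf{u}$ in a generic moduli space.'' For the lower levels — honest $J$-holomorphic curves in the symplectization $\R\times Z$ or in $\widehat{V}$, with no Hamiltonian term — genericity of $J$ does not give transversality, precisely because of the multiply covered components you flag; a negative virtual dimension there is not a contradiction, and neither of your proposed fixes is adequate as stated (restricting to somewhere-injective components does not control the index of covers, and a virtual perturbation scheme is neither set up nor needed here). The correct and much simpler localization of the contradiction, which is what Lazarev does and what this paper does in Proposition~\ref{pair of pants product --nonberaking }, is to perform the count \emph{only for the top level}: it is a Floer-type curve (it carries the Hamiltonian perturbation, so transversality holds for generic $(H_s,J)$ without any injectivity hypothesis), it must have at least one negative puncture asymptotic to some $\gamma_k\in\mathcal{P}^{<D}_\Phi(Z,\alpha)$ because the original trajectories entered $V$, and its virtual dimension equals that of the original rigid trajectory minus $\sum_k|\gamma_k|>0$, hence is negative — so it cannot exist, independently of whether the lower levels are transverse or multiply covered. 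Restructuring your last paragraph around this top-level count closes the gap; your concern about extending the index formula to the Hamiltonian ends $x_\pm$ is legitimate but standard, and is exactly the bookkeeping carried out in the cited proofs.
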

	\begin{remark}
		If $H_s$ is independent of $s$, then the Floer trajectories define the differential; if $H_s$ is an decreasing homotopy, then $(H_s,J_{R,s})$-Floer trajectories define the continuation map.
	\end{remark}
	In P.Uebele's paper  \cite{uebele2015periodic}, the pair-of-pants product is defined for ``index-positive" contact manifold, where the symplectic homology used is actually Rabinowitz-Floer homology. Though as the paper points out, the ring  structure is not well defined on $SH^+$. However, at the chain level, if the pair-of-pants product is asymptotic to Hamiltonian orbits of positive action, then by the stretching-the-neck technique, we can prove the pair-of-pants does not enter the interior of the Liouville filling. In \cite{uebele2015periodic}, this is proved for index-positive contact manifolds. However, it is not true for Strongly ADC contact manifolds in general. That being said, the pair-of-pants does \textit{not} enter the interior of the filling when the indices of the Hamiltonian orbits of the asymptotes are high enough. To  be precise, we have the following:
	\begin{prop}\label{pair of pants product --nonberaking }
		Suppose that $(Z,\alpha)$ is strongly ADC with respect to the trivialization $\Phi$ and all elements of $\mathcal{P}^{<D}_\Phi(Z,\alpha)$ have positive reduced Conley-Zehnder index. Furthermore, let $A_H(x_i)<D/2(i=1,2,3)$ be non-constant Hamiltonian orbits such that $\mu_{CZ}(x_1)+\mu_{CZ}(x_2)-\mu_{CZ}(x_3)=n$ and $\mu_{CZ}(x_i)\geq n,i=1,2$, then there exists $R_0 \in (0,1-\delta)$ such that for any $R\leq R_0$, all pair-of-pants products are contained in $\widehat{W}\setminus V$.
	\end{prop}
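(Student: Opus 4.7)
The plan is to adapt the stretching-the-neck argument of Proposition~\ref{lazarev proof for differential} from continuation cylinders to the pair-of-pants domain $P$, using the hypothesis $\mu_{CZ}(x_i)\geq n$ to handle the Euler characteristic defect $\chi(P)=-1$ compared to the cylinder case.

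First I would set up the neck-stretching as in the proof of Proposition~\ref{lazarev proof for differential}: fix a family $\{J_{R,s}\}_{R\in(0,1-\delta)}$ of almost complex structures on $\widehat{W}$ that are cylindrical on $Z\times[R,R+\epsilon]$, with neck length $-\log R\to\infty$ as $R\to 0$. Suppose for contradiction that there is a sequence $R_k\to 0$ and rigid $(H,J_{R_k,s})$-pair-of-pants solutions $u_k\colon P\to\widehat{W}$, positively asymptotic to $x_1,x_2$ and negatively asymptotic to $x_3$, whose image meets $V$. Applying the SFT compactness theorem in the Floer setting, some subsequence converges to a Floer--holomorphic building $\mathbf{u}_\infty$ consisting of (a) a top-level Floer pair-of-pants $u^{\mathrm{top}}$ in $\widehat{W}\setminus V$ with the original asymptotes $x_1,x_2,x_3$ plus additional negative punctures at closed Reeb orbits $\gamma_1,\ldots,\gamma_\ell$ of $(Z,\alpha)$; (b) intermediate punctured holomorphic curves in the symplectization $(Z\times\mathbb{R},d(e^t\alpha))$; and (c) a non-empty collection of punctured holomorphic curves $v_1,\ldots,v_m$ in $\widehat{V}$ with only positive Reeb asymptotes. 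Non-emptiness of (c) follows from the assumption that $u_k$ meets $V$. The standard action estimate gives $\sum_j A(\gamma_j)\leq A_H(x_1)+A_H(x_2)-A_H(x_3)<D$ (using $A_H(x_i)<D/2$ and $A_H(x_3)>0$), so every $\gamma_j$ lies in $\mathcal{P}^{<D}_\Phi(Z,\alpha)$ and hence $|\gamma_j|=\mu_{CZ}(\gamma_j)+n-3>0$ by hypothesis.

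The crucial step is a Fredholm dimension count. Writing out the index contribution of the top Floer pair-of-pants (via formula~\eqref{dimension of moduli space of product} extended to include the extra negative Reeb punctures) together with the contributions of the symplectization and filling pieces (each given by the standard SFT formula $(n-3)\chi(\dot{S})+\sum\mu_{CZ}(\gamma)$), and using that each additional negative Reeb puncture at an orbit $\gamma$ with $|\gamma|>0$ lowers the effective Fredholm index by exactly $|\gamma|$, additivity together with the rigidity condition $\dim\mathcal{M}(u_k)=0$ reduces the total index to $-\sum_{j=1}^\ell|\gamma_j|$ after using the rigidity identity $\mu_{CZ}(x_1)+\mu_{CZ}(x_2)-\mu_{CZ}(x_3)=n$ to cancel the Hamiltonian-orbit contributions. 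If $\ell\geq 1$, some component of the building then has strictly negative Fredholm index, which cannot occur after a generic perturbation of $J_R$; somewhere-injectivity, which rules out multiple covers, follows from the hypothesis $\mu_{CZ}(x_i)\geq n$ for $i=1,2$ forcing the Hamiltonian asymptotes to be genuinely non-constant. Hence $\ell=0$, which forces the intermediate and bottom levels to be empty as well, contradicting the assumption that $u_k$ entered $V$.

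The main technical obstacle is the precise bookkeeping of Fredholm indices of the mixed Floer/SFT building, especially the handling of the pair-of-pants Euler characteristic and verifying the somewhere-injectivity and transversality claims needed to exclude negative-index components. This analysis closely parallels Uebele's pair-of-pants argument for index-positive contact manifolds in \cite{uebele2015periodic}, with the positive reduced Conley--Zehnder hypothesis on $\mathcal{P}^{<D}_\Phi(Z,\alpha)$ playing the role of index-positivity used there.
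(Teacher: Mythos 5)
Your proposal has a genuine gap: you assume that after SFT compactness the top level of the limit building is a single connected Floer pair-of-pants carrying all three asymptotes $x_1,x_2,x_3$ (plus extra negative Reeb punctures), and you run the index bookkeeping on that one component. Neck-stretching does not guarantee this: the top level can be disconnected, e.g.\ one Floer component carrying $x_1$ and $x_3$ together with a negative Reeb puncture $\gamma_1$, and a separate component carrying only $x_2$ with a negative Reeb puncture $\gamma_2$, the pieces being joined through lower levels. In that configuration each top-level component can individually have non-negative Fredholm index (so transversality gives no contradiction), and the lower symplectization/filling levels cannot be used for index positivity because of multiple covers; hence your ``total index $=-\sum_j|\gamma_j|<0$'' argument does not close. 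This disconnected-top scenario is exactly the new difficulty of the pair-of-pants case compared with the cylinder case of Proposition~\ref{lazarev proof for differential}, and it is where the hypothesis $\mu_{CZ}(x_i)\geq n$, $i=1,2$, is actually needed: the paper first rules out the disconnected top by combining the non-negativity of the two components' indices, which (via the identity $\mu_{CZ}(x_1)+\mu_{CZ}(x_2)-\mu_{CZ}(x_3)=n$ and $\mu_{CZ}(x_2)\geq n$) forces $\mu_{CZ}(\gamma_1)\leq 3-n$, while the energy estimate $0<A(\gamma_1)\leq A_H(x_1)-A_H(x_3)<D$ puts $\gamma_1\in\mathcal{P}^{<D}_\Phi(Z,\alpha)$ and hence $\mu_{CZ}(\gamma_1)>3-n$, a contradiction. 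Only after this connectedness step does the dimension count $|x_1|+|x_2|-|x_3|-\sum_k|\gamma_k|<0$ (your second step, which matches the paper) apply.

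Relatedly, your use of $\mu_{CZ}(x_i)\geq n$ to justify ``somewhere-injectivity'' is not correct: non-constancy of the asymptotic Hamiltonian orbits is already part of the hypotheses, and transversality for the top-level Floer curves comes from the Hamiltonian/almost-complex-structure perturbations, not from index bounds on the asymptotes. The index hypothesis plays no role in regularity; its role is the exclusion of the disconnected breaking described above. With that step inserted (and the misattributed justification removed), the rest of your argument -- the action estimate placing all limit Reeb orbits in $\mathcal{P}^{<D}_\Phi(Z,\alpha)$ and the rigidity-plus-positive-degree dimension count -- coincides with the paper's proof.
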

	\begin{figure}
		\centering
		\usetikzlibrary{arrows}
		\begin{tikzpicture}
		\draw  (0,0)ellipse (1 and 0.25);
		\draw  (-0.3,3)ellipse (0.8 and 0.2);
		\draw  (5,3)ellipse (0.6 and 0.2);
		\draw [dashed] (3,-1)ellipse (0.4 and 0.1);
		\draw [dashed] (3,-1.5)ellipse (0.4 and 0.1);
		\draw [dashed] (5,-1)ellipse (0.4 and 0.1);
		\draw [dashed] (5,-1.5)ellipse (0.4 and 0.1);
		
		\draw  plot[smooth, tension=.7] coordinates {(-1.1,3)(-0.8,1.7)(-1,0)};
		\draw  plot[smooth, tension=.6] coordinates {(0.5,3) (0.5,2) (1,1.5) (2.5,1) (3.15,0.5) (3.3,0) (3.4,-1)};
		
		\draw  plot[smooth, tension=.8] coordinates {(1,0) (2,0.5) (2.5,0) (2.6,-1)};
		\draw  plot[smooth, tension=.7] coordinates {(2.6,-1.5) (3.2,-3) (4,-3.6) (4.8,-3) (5.4,-1.5)};
		
		\draw node at(-0.3,3.5) {$x_1$};
		\draw node at(0,-0.5) {$x_3$};
		\draw node at(5,3.5) {$x_2$};
		\draw node at(5.6,-1.3) {$\gamma_2$};
		\draw node at(2.4,-1.3) {$\gamma_1$};
		\draw [-latex] (1.5,4)node[above] { $\mu_{CZ}(x_1)-\mu_{CZ}(x_3)-\mu_{CZ}(\gamma_1)-n+3\geq 0$}--(0,1);
		\draw [-latex] (6,1.5)node[right] { $\mu_{CZ}(x_2)-\mu_{CZ}(\gamma_2)+3\geq 0$}--(5,1.4);
		\draw [-latex] (-1,-1)node[below] { $E=A_H(x_1)-A_H(x_3)-A(\gamma_1)$}--(-0.4,0.8);
		
		\draw  plot[smooth, tension=.7] coordinates {(3.4,-1.5) (3.5,-2.5) (4,-3) (4.5,-2.5) (4.6,-1.5)};
		\draw  plot[smooth, tension=.7] coordinates {(4.4,3) (4.5,0.5) (4.6,-1)};
		\draw  plot[smooth, tension=.7] coordinates {(5.6,3) (5.5,0.5) (5.4,-1)};
		\draw  plot[smooth, tension=.7] coordinates {(-1,3.5)};
		\end{tikzpicture}
		\caption{Top of Floer building is connected (such breaking does \textit{not} occur). Hamiltonian orbits are represented by continuous lines, Reeb orbits by dashed lines.}\label{Fig: connected top floer biulding}
	\end{figure}
	\begin{proof}
		The proof is a combination of proposition 3.10 of  \cite{lazarev2016contact} and  lemma 3.12 of  \cite{uebele2015periodic}. First of all, we have to rule out the breaking as in Figure~\ref{Fig: connected top floer biulding} (Similarly with $x_1$ and $x_2$ exchanged). 
		
		Suppose we have the breaking as in Figure~\ref{Fig: connected top floer biulding}, then the top level has positive dimension, and we have (see lemma 3.10 of  \cite{uebele2015periodic})
		\[\mu_{CZ}(x_1)-\mu_{CZ}(x_3)-\mu_{CZ}(\gamma_1)-n+3\geq 0
		\]
		and
		\[\mu_{CZ}(x_2)-\mu_{CZ}(\gamma_2)+3\geq 0.
		\]
		Then, since  \[\mu_{CZ}(x_1)+\mu_{CZ}(x_2)-\mu_{CZ}(x_3)=n,
		\]
		these conditions are reduced to 
		
		\[\mu_{CZ}(\gamma_1)\leq 3-\mu_{CZ}(x_2) \quad \text{and}\quad \mu_{CZ}(\gamma_2)\leq 3+\mu_{CZ}(x_2).\]
		In particular, \[\mu_{CZ}(\gamma_1)\leq 3-n.\]
		Meanwhile, the Floer energy of the top level would be \[0\leq E= A_H(x_1)-A_H(x_3)-A(\gamma_1).\]
		So 
		\[0<A(\gamma_1)\leq A_H(x_1)-A_H(x_3)< D.
		\]
		We have $\gamma_1 \in \mathcal{P}^{<D}_\Phi(Z,\alpha)$, which implies $\mu_{CZ}(\gamma_1)>3-n$, which is a contradiction.
		Now we know that the top of the Floer building is connected, so we can proceed as in the proof of proposition 3.10 of  \cite{lazarev2016contact}. We prove this by contradiction. Suppose the pair-of-pants product breaks after neck-stretching and $\gamma_k$ are the Reeb orbits in the top of the
		Floer building as in  \cite{lazarev2016contact}.
		The virtual dimension of the moduli space of the top Floer building is 
		\[ |x_1|+|x_2|-|x_3|-\sum |\gamma_k|<0.
		\]
		Contradiction.
	\end{proof}

	Now if we further require that the admissible Hamiltonian $H$ has a unique minimum (which is always possible and compatible with our requirements on admissible Hamiltonians), then the Floer chain complex $SC_*(W,H,J)=O_*(W,H,J)\oplus C_*(W,H,J)$, where $O_*(W,H,J)$ is generated by  all non-constant Hamiltonian orbits and $C_*(W,H,J)$ is generated by all constant Hamiltonian orbits (critical points of $H$). 
	Since $H$ is $\mathcal{C}^2$ small in $W$, the action of the critical points is small, and the Floer differential $d$ coincides with the Morse boundary operator $d_1$. We therefore have $(SC_*^{<\delta}(W,H,J),d)= (C_*(W,H,J),d_1)$ and $SC_*^+(W,H,J)=O_*(W,H,J)$.
	
	For degree reasons, $C_n(W,H,J)=\mathbb{Z}_2<p>$, where $p$ is the unique minimum of $H$. Note that $d(p)=d_1(p)= 0$ and we have the fact that $d(x+p)=0$ implies $d(x)=d(p)=0$.

	\begin{figure}
		\centering
		\usetikzlibrary{decorations.pathmorphing, patterns,shapes}
		\begin{tikzpicture}
		\draw (0,6) arc (90:270:2cm);
		\draw (0,-2) arc (90:270:2cm);
		\draw[-] (0,2)--(10,2);
		\draw[-] (0,6)--(10,6);
		\draw[-] (0,-2)--(10,-2);
		\draw[-] (0,-6)--(10,-6);
		\draw[dashed](2,6)--(2,-6);
		\draw[dashed](6,6)--(6,-6);
		
		\draw  plot[smooth, tension=.7] coordinates {(-1.4,5) (-1.4,4.8) (-1.3,4.5) (-1,4.4) (-0.5,4.5) (-0.3,4.6)};

		\draw  plot[smooth, tension=.7] coordinates {(-1.4,4.8) (-0.8,4.9) (-0.5,4.5)};
		\draw  plot[smooth, tension=.7] coordinates {(-1.1,3.8) (-0.7,2.9) (0.1,2.5) (1.1,2.7)};
		
		\draw  plot[smooth, tension=.7] coordinates {(-1,3.5) (-0.2,3.4) (0.5,3.1) (0.8,2.6)};
		\draw  plot[smooth, tension=.7] coordinates {(-0.8,-2.8) (-0.3,-3.4) (0.6,-3.6) (1.2,-3.3)};
		\draw  plot[smooth, tension=.7] coordinates {(-0.6,-3.1) (0.4,-3.3) (0.6,-3.6)};
		\draw  plot[smooth, tension=.7] coordinates {(-1.2,-4.2) (-0.8,-4.7) (0,-4.5)};
		\draw  plot[smooth, tension=.7] coordinates {(-1,-4.5) (-0.6,-4.4) (-0.3,-4.6)};
		\draw  plot[smooth, tension=.7] coordinates {(0.3,-5.5) (1.4,-5.3) (1.5,-4.4)};
		\draw  plot[smooth, tension=.7] coordinates {(1.1,-5.4) (1.2,-5.1) (1.5,-5.1)};
		\draw (6.75,5.5) ellipse (0.1cm and 0.32cm);
		\draw (7,4) ellipse (0.1cm and 0.4cm);
		\draw (6.5,2.5) ellipse (0.1cm and 0.4cm);
		\draw (6.75,-2.5) ellipse (0.1cm and 0.32cm);
		\draw (7,-4) ellipse (0.1cm and 0.4cm);
		\draw (6.5,-5.5) ellipse (0.1cm and 0.4cm);
		\draw node (v1) at (0.9,4.7) {$p$};

		\draw  plot[smooth, tension=.7] coordinates {(6.8,5.2) (6.3,5) (6.3,4.5) (7,4.4)};
		\draw  plot[smooth, tension=.7] coordinates {(6.8,5.2-8) (6.3,5-8) (6.3,4.5-8) (7,4.4-8)};
		\draw  plot[smooth, tension=.7] coordinates {(6.7,5.8) (4.7,5.4) (3.4,4.3) (3.5,2.9) (4.8,2.2) (6.5,2.1)};
		\draw  plot[smooth, tension=.7] coordinates {(6.7,5.8-8) (4.7,5.4-8) (3.4,4.3-8) (3.5,2.9-8) (4.8,2.2-8) (6.5,2.1-8)};
		\draw  plot[smooth, tension=.7] coordinates {(7,3.6) (6.1,3.5) (5.6,3.2) (5.8,2.9) (6.5,2.9)};
		\draw  plot[smooth, tension=.7] coordinates {(7,3.6-8) (6.1,3.5-8) (5.6,3.2-8) (5.8,2.9-8) (6.5,2.9-8)};
		\node at (0.8,-2.6) {$q$};
		
		\draw [red,dashed] plot[smooth, tension=.7] coordinates {(6.7,5.8) (3.8,5.7) (2.3,5.6) (1.3,5.3) (v1)};
		\draw  [red,dashed]plot[smooth, tension=.7] coordinates {(6.8,5.2) (5.4,5) (5.1,4.6) (5.9,4.4) (7,4.4)};
		
		\draw [red,dashed] plot[smooth, tension=.7] coordinates {(7,3.6) (5.3,3.6) (2.8,3.7) (1.4,4) (v1)};
		\node at (7.2,-2.4) {$x_1^+$};
		\node at (7.5,-4) {$x_2^+$};
		\node at (6.95,-5.5) {$x_3^-$};
		\node at (7.2,-2.4+8) {$x_1^+$};
		\node at (7.5,-4+8) {$x_2^+$};
		\node at (6.95,-5.5+8) {$x_3^-$};
		\node at (0,4) {$W$};
		\node at (0,-4) {$V$};
		\node at (5,7) {$H_W$};
		\node at (5,-1) {$H_V$};
		
		\draw[decorate,decoration=zigzag](-2,6.5) -- (2,6.5) node (v2) {};
		\draw[decorate,decoration=zigzag](-2,6.5-8) -- (2,6.5-8) node (v3) {};
		\draw  plot[smooth, tension=.7] coordinates {(v2) (3.6,6.5) (6.3,6.8) (7.9,7.4) (9.4,8.7)};
		\draw  plot[smooth, tension=.7] coordinates {(v3) (3.6,6.5-8) (6.3,6.8-8) (7.9,7.4-8) (9.4,8.7-8)};
		\draw[-latex,dashed](5,1)node [below]{$\Sigma\times\{1\}$}--(6,2);
		\draw[-latex,dashed](5,0.5)--(6,-2);
		\draw[-latex,dashed](1,0.5)node [below]{$\Sigma\times\{R\}$}--(2,2);
		\draw[-latex,dashed](1,0)--(2,-2);
		\draw[-latex,dashed](2.7,0)node [right]{Neck-stretching}--(2,0);
		\draw[-latex,dashed](5.4,0)--(6,0);
		\draw[|<-](2,-6.5)--(4,-6.5)node[right]{$(H_W,J_W)\equiv(H_V,J_V)$};
		\draw[-](7.9,-6.5)--(10,-6.5);
		\draw[red] node at (9,3){$x_1^+\otimes_W x_2^+=x^-_3+p$};
		\draw[red] node at (9,-5){$x_1^+\otimes_V x_2^+=x^-_3$};
		\end{tikzpicture}
		\caption{Pair-of-pants product for different fillings $W$ and $V$ and the natural identification of $C_n^{<D}(W,H_W,J_W)$ and $C_n^{<D}(V,H_V,J_V)$.
			On the chain level, pair-of-pants product are the same, up to a difference in $O_n^{<D}$. Morally, the difference vanishes when elements are quotiented by $O_n^{<D}$; $I_+(W)$ is therefore isomorphic to $I_+(V)$.
		}\label{Fig:difference in pair of pants product}
	\end{figure}

	\begin{proof}[Proof of Theorem~\ref{Main Thm}]
		As shown in Figure~\ref{Fig:difference in pair of pants product}, let $W,V$ be two different Liouville fillings for a strongly ADC contact manifold $(\Sigma,\lambda)$.
		Suppose $H_W^D,H_V^D$ are Hamiltonians (as in Subsection~\ref{sss:Ad Hamiltonian}) whose slopes at infinity are $D \notin Spec(\Sigma,\lambda)$. We can further assume that they have unique minima which are denoted by $p,q$ respectively. Note that any element
		$x \in O_*(W,H_W,J_W)$ has action $\mathcal{A}_{H_W}(x)<D$. As shown above, $O_*(W,H_W,J_W)=SC_*^+(W,H_W,J_W)$.

		After neck-stretching, we can assume that 
		\[ (H_W,J_W)|_{\Sigma \times [R,\infty)}\equiv(H_V,J_V)|_{\Sigma \times [R,\infty)}\]
		So we have $O_*(W,H_W,J_W)=O_*(W,H_V,J_V)$.
		Proposition~\ref{lazarev proof for differential} shows that Floer cylinders with asymptotes in  $SC_*^+(W,H_W,J_W)$ are entirely contained in $\Sigma \times [R,\infty)$. Therefore Floer differentials of 	$O_*(W,H_W,J_W)$ and $O_*(W,H_V,J_V)$ coincide.
		We will suppress $W$ and $V$ in the notation and denote them by $(O_*(H,J),\p)$($(O_*^{<K}(H,J),\p)$ if it is filtered above by action $K$). We have the pair-of-pants product  $\otimes_W$ on 
		\[SC_n^{<D/2}(W,H^D,J)=O_n^{<D/2} (H^D,J)\oplus C_n^{<D/2}
		=O_n^{<D/2} (H^D,J)\oplus \Z_2<p>
		\]
		defined as
		\begin{align}
			SC_n^{<D/2}(W,H^D,J) \otimes SC_n^{<D/2}(W,H^D,J) &\rightarrow SC_n^{<D}(W,H^D,J)\\
			(x,y)&\mapsto x\otimes_W y.
		\end{align}
		By Proposition~\ref{pair of pants product --nonberaking }, $\otimes_W$ coincides with $\otimes_V$ on components in $O_n^{<D} (H^D,J)$, that is, for $x,y \in O_n^{<D/2} (H^D,J)$, $x\otimes_W y=z+\delta_W(x,y)$, where $z \in O_n^{<D} (H^D,J)$ and $\delta_W(x,y) \in \Z_2<p>$. Note that $\delta_W(x,y)$ is closed in $SC_n^{<D}(H,J)$. Likewise, we have $x\otimes_V y=z+\delta_V(x,y)$, where $z \in O_n^{<D} (H^D,J)$ and $\delta_V(x,y) \in \Z_2<q>$.
		Now for any $\alpha \in I^{<D/2}(W,H_W,J_W) \subset SH_n^{<D/2}(W,H_W,J_W)$, we have
		\[\alpha=[x+\epsilon p]_W=[x]_W+\epsilon[p]_W=[x]_W+\epsilon e_{H_W} \] where $x\in O_n^{D/2}(H,J),\epsilon=0\, \textrm{or}\, 1$. 
		$ x\otimes_W x=z+\delta_W(x,x)$ implies 
		\[ \alpha^2-\alpha=[x]_W^2+\epsilon^2 e_{H_W}^2-[x]_W-\epsilon e_{H_W}=[z-x+\delta(x,x)]_W=[z-x]_W+[\delta(x,x)]_W\]
		So $\alpha \in I^{<D/2}(W,H_W,J_W) $ is equivalent to  
		\[ [z-x]_W+[\delta(x,x)]_W \in H^0(W).\]
		But since $[\delta(x,x)]_W \in H^0(W)$, $\alpha \in I^{<D/2}(W,H_W,J_W) $ is equivalent to $[z-x]_W \in H^0(W)$. Hence for $x\in O_n^{<D/2}(H,J), \p(x)=0$ ($\p$ is Floer differential on $ O_n^{<D/2}(H,J)$),
		\[[x]_W^+ \in I_+(W,H_W,J_W) \Longleftrightarrow [z-x]_W^+ \in SH_n^{+,<D/2}(H,J)\]
		where $[y]_W^+ $ stands for the equivalence class of $y\in O_*(H,J)$ in $SH_n^+(W,H_W,J_W)$.
		We can prove the same results for $V$ similarly. Therefore we have an isomorphism between $I_+^{<D/2}(W,H_W,J_W)$ and $I_+^{<D/2}(V,H_V,J_V)$:
		$$ [x]^+_W \mapsto [x]^+_V.$$
		Since $SH_n^{+,<D/2}(W,H_W,J_W),  SH_n^{+,<D/2}(V,H_V,J_V)$ can be defined by $(\Sigma\times[R,\infty), H, J) $ as the Floer cylinder never enters the interior. Therefore we have the identity
		\[ SH_*^{+,<D/2}(W,H_W,J_W)\cong H_*(O_*^{<D/2}(H,J),\p)\cong SH_n^{+,<D/2}(V,H_V,J_V),
		\]
		the inclusion
		map $SH_n^{+,<D/2}(W,H_W,J_W) \to SH_n^{+}(W,H_W,J_W)$ commute with the above isomorphism,
		\[\begin{tikzcd}
		I_+^{<D/2}(W,H_W,J_W) \arrow[r,"i"] \arrow[d,"\cong"]
		& SH_n^{+,<D/2}(W,H_W,J_V) \arrow[r,"i"]\arrow[d ,"\cong"] & SH_n^{+}(W,H_W,J_V) \arrow[d,"\cong"] \\
		I_+^{<D/2}(V,H_V,J_V) \arrow[r ,"i"]
		& SH_n^{+,<D/2}(V,H_V,J_V)\arrow[r,"i"]&SH_n^{+}(V,H_V,J_V)
		\end{tikzcd}
		\]
		and we can therefore take the direct limit with respect to $H_W$. Since we already know $SH_*^+(W)$ is isomorphic to $SH_*^+(V)$ by Theorem~\ref{Lazarev main}, it follows that 
		$I_+(W)\cong I_+(V)$.

	\end{proof}
	
	\begin{remark}
		We can also proceed exactly as in proof of proposition 3.8 in  \cite{lazarev2016contact}. The key point is to use the \textit{essential complex} as defined in that proof. 
	\end{remark}

		\section{Brieskorn Manifolds}

		\subsection{Definition of Brieskorn manifolds}\label{brieskorn mfld}
		Let $\mathbf{a}=(a_0,a_1,\cdots,a_n)$ be an $(n+1)$-tuple of integers $a_i>1,\mathbf{z}:=(z_0,z_1,\cdots,z_n)\in \mathbb{C}^{n+1}$, and set $f(\mathbf{z}):=z_0^{a_0}+z_1^{a_1}+\cdots+z_n^{a_n}$, we define \textit{Brieskorn Variety} as
		\begin{equation}
		V_\fa(t):=\{(z_0,z_1,\cdots,z_n)\in \mathbb{C}^{n+1}| f(\mathbf{z})=t\} \quad \text{for each}\quad  t\in \C. 
		\end{equation}
		We will often suppress $\fa$ when it causes no confusion,
		and define $X_t^s=V(t)\cap B(s)$.
		
		Further, with $S^{2n+1}$ denoting the unit sphere in $\mathbb{C}^{n+1}$, we define the \textit{Brieskorn Manifold} as the intersection of Brieskorn Variety $V_\fa(0)$with the unit sphere:
		\[\Sigma(\mathbf{a}):=V_\fa(0)\cap S^{2n+1}.\]
		
		\begin{lemma}[Lemma 96 \cite{fauck2016rabinowitz}, Lemma 7.1.1 \cite{geiges2008introduction}]\label{lemma}
			$\Sigma(\mathbf{a})$ and $V_\fa(t),t\neq 0$ are smooth manifolds.
		\end{lemma}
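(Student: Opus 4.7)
The plan is to handle the two cases separately, since the regular fibers $V_{\fa}(t)$ with $t\neq 0$ avoid the singular point at the origin, whereas the link $\Sigma(\fa)$ is the transverse intersection of the singular variety $V_{\fa}(0)$ with the unit sphere.

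For $V_{\fa}(t)$ with $t\neq 0$, I would just apply the complex regular value theorem to $f:\C^{n+1}\to\C$. Computing $\partial_{z_i}f = a_i z_i^{a_i-1}$, the (holomorphic) gradient $\nabla f$ vanishes only at the origin. Since $f(0)=0\neq t$, the value $t$ is a regular value of $f$, and $V_{\fa}(t)=f^{-1}(t)$ is a smooth complex submanifold of $\C^{n+1}$ of complex dimension $n$.

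The substantial case is $\Sigma(\fa) = V_{\fa}(0)\cap S^{2n+1}$, where one must argue transversality of the (singular) variety $V_{\fa}(0)\setminus\{0\}$ with every sphere centered at the origin. The key idea is the weighted homogeneity of $f$: it satisfies $f(\lambda^{1/a_0}z_0,\dots,\lambda^{1/a_n}z_n)=\lambda f(\mathbf z)$ for $\lambda>0$. Differentiating at $\lambda=1$ produces the Euler-type identity
\[
\sum_{i=0}^{n}\frac{z_i}{a_i}\,\partial_{z_i}f(\mathbf z) \;=\; f(\mathbf z).
\]
Define the real vector field $X=\sum_i \tfrac{1}{a_i}\bigl(z_i\partial_{z_i}+\bar z_i\partial_{\bar z_i}\bigr)$ on $\C^{n+1}$. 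Then $X(f)=f$ and $X(\bar f)=\bar f$, so along $V_{\fa}(0)$ the vector field $X$ annihilates both $\mathrm{Re}\,f$ and $\mathrm{Im}\,f$; hence $X$ is tangent to $V_{\fa}(0)\setminus\{0\}$ (which is smooth away from the origin by the previous paragraph's argument applied locally). On the other hand,
\[
X(|\mathbf z|^2)\;=\;2\sum_{i=0}^{n}\frac{|z_i|^2}{a_i}\;>\;0 \qquad\text{for }\mathbf z\neq 0.
\]
This means that at every point of $V_{\fa}(0)\setminus\{0\}$ there is a tangent vector with strictly positive radial component, so $V_{\fa}(0)\setminus\{0\}$ is transverse to every sphere $|\mathbf z|=r$. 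In particular $V_{\fa}(0)$ meets $S^{2n+1}$ transversely, and $\Sigma(\fa)$ is a smooth submanifold of $S^{2n+1}$ of real dimension $2n-1$.

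I do not expect any real obstacle here; the only care needed is that the relevant transversality statement must be phrased over $\R$ (the real codimension of $V_{\fa}(0)$ in $\C^{n+1}$ is two) so that one checks $d(\mathrm{Re}\,f),\,d(\mathrm{Im}\,f),\,d(|\mathbf z|^2)$ are $\R$-linearly independent along $\Sigma(\fa)$ — which is exactly what the vector field $X$ witnesses, since evaluating these three real $1$-forms on $X$ gives $(0,0,\text{positive})$.
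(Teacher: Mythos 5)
Your proof is correct, but it takes a recognizably different route from the paper's. The paper treats both statements at once by applying the regular value theorem to the single map $(f,\rho):\C^{n+1}\to\C\times\R$, writing out the full Wirtinger Jacobian and arguing by contradiction: if its three rows were dependent at a point of $\Sigma(\mathbf{a})$, one would have $\bar z_k=\lambda a_k z_k^{a_k-1}$ for all $k$, hence $\sum_k z_k\bar z_k/a_k=\lambda f(\mathbf{z})=0$, contradicting $\rho(\mathbf{z})=1$. You instead separate the two cases and, for the link, exploit weighted homogeneity to build the weighted Euler vector field $X$, which is tangent to $V_\fa(0)\setminus\{0\}$ (smooth there by the regular value theorem for $f$) and strictly increases $\lvert\mathbf{z}\rvert^2$; transversality to the unit sphere then follows since the sphere is a hypersurface. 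The two arguments rest on the same identity -- the paper's contradiction equation is the covector-side version of your $X(f)=f$ versus $X(\lvert\mathbf{z}\rvert^2)>0$ -- but yours is the more structural, Milnor-style argument and yields a bit more, namely transversality of $V_\fa(0)\setminus\{0\}$ to every sphere about the origin (the cone structure of the link), while the paper's is a self-contained matrix rank computation. One small caution about your closing remark: evaluating the three real $1$-forms $d(\mathrm{Re}\,f)$, $d(\mathrm{Im}\,f)$, $d(\lvert\mathbf{z}\rvert^2)$ on $X$ alone only shows that the third is not in the span of the first two; their mutual $\R$-independence comes from $\partial f\neq 0$ at nonzero points together with holomorphy (Cauchy--Riemann), which your transversality formulation in the preceding paragraph already supplies, so this is a phrasing issue rather than a gap.
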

		\begin{proof}
			We set $\rho(z):=||z||^2=\sum z_k\bar{z_k}$ and consider the maps
			\[f:\mathbb{C}^{n+1} \to \mathbb{C} \qquad and \qquad (f,\rho):\mathbb{C}^{n+1}\to \mathbb{C\times R}\]
			Since $V_\fa(t)=f^{-1}(t)$ and $\Sigma(\mathbf{a})=(f,\rho)^{-1}(0,1)$, it suffices to show that $t$ (respectively $(0,1)$) are regular values. With a little Wirtinger calculus (and using the fact that $f$ is holomorphic)
			we find the Jacobian matrix
			\[ D(f,\rho)=
			\begin{bmatrix}

			a_0z_0^{a_0-1} &\cdots & a_nz_n^{a_n-1} & 0 & \cdots & 0 \\
			0 & \cdots & 0 & a_0\bar{z_0}^{a_0-1} & \cdots &  a_n\bar{z_n}^{a_n-1}\\
			\bar{z_0}& \cdots& \bar{z_n} & z_0 & \cdots &z_n
			\end{bmatrix}
			\]
			
			For $\mathbf{z}\neq 0$ the first two rows of $D(f,\rho)$ are linearly independent, which implies that $\epsilon\neq0$ is a regular value of $f$. If $\mathbf{z}$ is a point where this matrix has rank smaller than 3, there exists a non-zero complex number $\lambda$ such that $\bar{z_k}=\lambda a_k z_k^{a_k-1}$ for all $k$  and
			hence
			\[
			\sum\limits_{k=0}^{n}\frac{z_k\bar{z_k}}{a_k}=\lambda\sum\limits_{k=0}^n z_k^{a_k}=\lambda\cdot f(\mathbf{z})
			\]
			This equality is incompatible with the conditions $\rho(\mathbf{z})=1$ and $f(\mathbf{z})=0$ for a point $\mathbf{z}\in \Sigma(\mathbf{a}) $.

		\end{proof}
	
		\subsection{Topology of Brieskorn manifolds}

		Now, we give some topological facts about Brieskorn manifolds without proof. 
		\begin{prop}[Theorem 5.2  \cite{milnor2016singular} ]\label{high connectedness of brieskorn manifold}
			A Brieskorn manifold $\Sigma(\mathbf{a})^{2n-1}$ is $(n-2)$-connected. 
		\end{prop}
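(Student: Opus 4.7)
The plan is to realize $\Sigma(\fa)$ as the boundary of the Milnor fiber of the isolated hypersurface singularity at the origin and then extract the connectivity from Milnor's wedge-of-spheres theorem via Poincar\'e--Lefschetz duality. First I would observe that $f(\fz) = z_0^{a_0} + \cdots + z_n^{a_n}$ has a critical point only at the origin, since $\p f/\p z_i = a_i z_i^{a_i-1}$ vanishes iff every $z_i = 0$. Hence for sufficiently small $\epsilon \neq 0$ the variety $V_\fa(\epsilon)$ is smooth, and the Milnor fiber $F := X_\epsilon^1 = V_\fa(\epsilon) \cap B(1)$ is a compact smooth $2n$-manifold with boundary; via the Ehresmann trivialization of $f$ on a collar of $\Sigma(\fa)$ in $S^{2n+1}$, this boundary is canonically diffeomorphic to $\Sigma(\fa)$.

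The homotopy-theoretic heart is Milnor's theorem that $F$ is homotopy equivalent to a wedge of $n$-spheres. I would prove this by Morse theory on $\operatorname{Re}(f|_F)$ (or equivalently via a Lefschetz-pencil argument on the family $\{V_\fa(t)\}_{t \in \C}$): because the Hessian of a holomorphic function at a critical point is a complex-symmetric quadratic form, the Morse indices of $\operatorname{Re}(f|_F)$ are all forced to equal $n$. Consequently $F$ is $(n-1)$-connected and $H_i(F;\Z) = 0$ for $0 < i < n$.

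Once this is granted, Poincar\'e--Lefschetz duality yields $H_i(F, \p F;\Z) \cong H^{2n-i}(F;\Z) = 0$ for $i < n$, and the long exact sequence of the pair $(F, \p F)$ forces $H_i(\p F;\Z) = 0$ for $0 < i < n-1$. For the fundamental group when $n \geq 3$, the Morse decomposition above presents $F$ as a single $0$-handle with $n$-handles attached, so $\Sigma(\fa) = \p F$ is obtained from $S^{2n-1}$ by a sequence of surgeries along $(n-1)$-spheres; each such surgery preserves simple connectedness by Van Kampen, since the attaching spheres have codimension $n \geq 3$ in $S^{2n-1}$ and the cores $D^n \times S^{n-1}$ are simply connected. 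Hurewicz then upgrades the vanishing homology to the asserted $(n-2)$-connectedness. The low-dimensional cases reduce to connectedness statements that can be checked directly. The main obstacle is the Morse-theoretic verification of the wedge-of-spheres structure of $F$; all other steps are formal consequences of duality and standard surgery theory.
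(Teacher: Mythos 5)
The paper itself gives no argument for this proposition; it is quoted as Theorem 5.2 of Milnor's book, so your proposal can only be measured against the cited source and on its own terms. Your overall architecture is the standard one and is sound: boundary of the Milnor fiber, $(n-1)$-connectivity of the fiber, Poincar\'e--Lefschetz duality plus the long exact sequence of $(F,\p F)$ for homology, a surgery/van Kampen argument for $\pi_1$ when $n\geq 3$, then Hurewicz. The genuine gap is in the central input. On $F=X^1_\epsilon\subset V_\fa(\epsilon)=f^{-1}(\epsilon)$ the function $f$ is constant, so ``Morse theory on $\operatorname{Re}(f|_F)$'' is vacuous: there are no critical points to count and no handles produced. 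The correct version of your idea requires morsifying $f$ (e.g.\ $\tilde f=\sum_i(z_i^{a_i}-\delta_i z_i)$, whose critical points are nondegenerate with distinct critical values) and then running the Picard--Lefschetz/thimble argument: the total space $B\cap\tilde f^{-1}(D)$ over a disk containing all critical values is contractible and is obtained from a regular fiber by attaching one $(n{+}1)$-cell (thimble) per critical point, and this is what forces $\pi_i(F)=0$ for $i\leq n-1$. The slogan ``all Morse indices equal $n$'' does not by itself yield $(n-1)$-connectedness -- one must say which space is built from which by those handles; the index-$\leq n$ Morse theory of the distance function gives only the complementary statement that $F$ has the homotopy type of an $n$-complex, and both halves are needed (with Hurewicz/Whitehead) to get the wedge of $n$-spheres.

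The same gap propagates into your $\pi_1$ step: the presentation of $F$ as a single $0$-handle with only $n$-handles attached -- which is exactly what exhibits $\p F$ as surgery on $(n-1)$-spheres in $S^{2n-1}$ so that van Kampen applies -- is precisely the content the flawed Morse step was supposed to deliver. It is true, but it needs the morsification/thimble argument or an explicit citation. If you instead quote the wedge-of-spheres theorem and the middle-handle decomposition, the remainder of your argument (orientability for Lefschetz duality, the exact sequence giving $H_i(\p F)=0$ for $0<i\leq n-2$, codimension-$\geq 3$ surgery preserving simple connectivity for $n\geq 3$, Hurewicz, and the trivial low-$n$ cases) is correct; but beware of circularity: in Milnor's book the link's $(n-2)$-connectedness (Theorem 5.2) is proved in \S 5, before and independently of the wedge theorem in \S 6, so if you cite the fiber statement you should either verify the logical dependence in your source or use the morsification proof, which is independent of the statement you are proving.
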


		\subsection{Trivialization and Conley-Zehnder index}
		Let us consider on $\mathbb{C}^{n+1}$ the following Hermitian form given by
		\[<\xi,\zeta>_\mathbf{a}:=\frac{1}{2}\sum_{k=0}^{n}a_k\xi_k\bar{\zeta_k}.\]
		
		It defines a symplectic 2-form
		\[\omega_\mathbf{a}:=\frac{i}{4}\sum_{k=0}^{n}a_kdz_k\wedge d\bar{z_k}.\]
		
		Notice that $Y_\lambda(\mathbf{z}):=\frac{\mathbf{z}}{2}$ is a Liouville vector field for $\omega_\mathbf{a}$, with the corresponding 1-form 
		\[\lambda_\mathbf{a}:=\omega_\mathbf{a}(Y_\lambda,\cdot)=\frac{i}{8}\sum_{k=0}^{n}a_k(z_kd\bar{z_k}-\bar{z_k}dz_k).\]

		\begin{prop}[Proposition 97 \cite{fauck2016rabinowitz}, \cite{lutz1976structures}]\label{Lutz}
			The restriction $\alpha_a:=\lambda_a|_\Sigma$ is a contact form on $\Sigma(\mathbf{a})$ with Reeb vector field $R_\mathbf{a}$ given by
			\[R_\mathbf{a}=4i(\frac{z_0}{a_0},\frac{z_1}{a_1},\cdots,\frac{z_n}{a_n}).\]
			
		\end{prop}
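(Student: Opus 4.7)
The plan is to check directly that the vector field $R_\mathbf{a} = 4i(z_0/a_0, \dots, z_n/a_n)$, interpreted as the real vector field $\sum_k \frac{4i}{a_k}(z_k \partial_{z_k} - \bar{z}_k \partial_{\bar{z}_k})$ on $\mathbb{C}^{n+1}$, satisfies all the defining properties of the Reeb vector field of $\alpha_\mathbf{a}$: tangency to $\Sigma(\mathbf{a})$, the normalization $\alpha_\mathbf{a}(R_\mathbf{a}) = 1$, and $\iota_{R_\mathbf{a}} d\alpha_\mathbf{a} = 0$. Once these are established, it only remains to argue that $\ker(d\alpha_\mathbf{a})$ is one-dimensional to conclude that $\alpha_\mathbf{a}$ is contact and that $R_\mathbf{a}$ is its Reeb field.

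First I will verify the algebraic identities by direct computation. Tangency to $V_\mathbf{a}(0)$ follows from $R_\mathbf{a}(f) = \sum_k \frac{4i}{a_k} z_k \cdot a_k z_k^{a_k-1} = 4i f(\mathbf{z})$, which vanishes on $V_\mathbf{a}(0)$, and tangency to $S^{2n+1}$ from the obvious cancellation $R_\mathbf{a}(\rho) = 0$. Pairing $\lambda_\mathbf{a}$ with $R_\mathbf{a}$ yields $\lambda_\mathbf{a}(R_\mathbf{a}) = \rho$, which equals $1$ along $\Sigma(\mathbf{a})$. Contracting $\omega_\mathbf{a} = d\lambda_\mathbf{a}$ with $R_\mathbf{a}$ yields $\iota_{R_\mathbf{a}}\omega_\mathbf{a} = -\sum_k(z_k d\bar{z}_k + \bar{z}_k dz_k) = -d\rho$; since $d\rho$ vanishes on $T\Sigma(\mathbf{a})$, this gives $\iota_{R_\mathbf{a}} d\alpha_\mathbf{a} = 0$. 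The critical feature of these calculations is that the factor $1/a_k$ in the $k$-th component of $R_\mathbf{a}$ precisely cancels the $a_k$ produced when differentiating $z_k^{a_k}$ and also the weight $a_k$ in the definitions of $\lambda_\mathbf{a}$ and $\omega_\mathbf{a}$, so that both identities close up independently of the exponents.

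The main remaining obstacle is to show that $\alpha_\mathbf{a}$ is genuinely contact, i.e., that $\omega_\mathbf{a}|_{T\Sigma(\mathbf{a})}$ has one-dimensional kernel. Since $V_\mathbf{a}(0)\setminus\{0\}$ is a complex submanifold of the K\"ahler manifold $(\mathbb{C}^{n+1}, \omega_\mathbf{a})$, the restriction $\omega_\mathbf{a}|_{TV_\mathbf{a}(0)}$ is symplectic. By Lemma~\ref{lemma}, $(f,\rho)$ has $(0,1)$ as a regular value, which means $d\rho|_{TV_\mathbf{a}(0)}$ is nonzero along $\Sigma(\mathbf{a})$ and $\Sigma(\mathbf{a})$ is a smooth hypersurface inside the symplectic manifold $(V_\mathbf{a}(0)\setminus \{0\}, \omega_\mathbf{a}|_{TV_\mathbf{a}(0)})$. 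Its characteristic line, which is exactly the kernel of $\omega_\mathbf{a}|_{T\Sigma(\mathbf{a})}$, consists of those $v \in TV_\mathbf{a}(0)$ with $\omega_\mathbf{a}(v, \cdot)$ proportional to $d\rho|_{TV_\mathbf{a}(0)}$; from the identity $\iota_{R_\mathbf{a}}\omega_\mathbf{a} = -d\rho$ this line is spanned by $R_\mathbf{a}$. Combined with $\alpha_\mathbf{a}(R_\mathbf{a}) = 1 \neq 0$, this forces $\alpha_\mathbf{a} \wedge (d\alpha_\mathbf{a})^{n-1}$ to be nowhere zero on $\Sigma(\mathbf{a})$, so $\alpha_\mathbf{a}$ is contact and $R_\mathbf{a}$ is its Reeb vector field. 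One could alternatively appeal to Lemma~\ref{formula for reeb vector} on the Liouville manifold $(V_\mathbf{a}(0)\setminus\{0\}, \lambda_\mathbf{a}|_{V_\mathbf{a}(0)})$ with Hamiltonian $H = \rho - 1$, but identifying the Liouville vector field tangent to $V_\mathbf{a}(0)$ explicitly is messier than the direct computation sketched above.
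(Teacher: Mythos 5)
Your verification of the three pointwise identities ($R_\mathbf{a}(f)=4if$, $R_\mathbf{a}(\rho)=0$, $\lambda_\mathbf{a}(R_\mathbf{a})=\rho$, $\iota_{R_\mathbf{a}}\omega_\mathbf{a}=-d\rho$) is exactly the computation in the paper, and it is correct. Where you genuinely diverge is in how contactness of $\alpha_\mathbf{a}$ is established. The paper constructs the Liouville vector field $Y_V$ of $(\lambda_\mathbf{a}|_{V_\mathbf{a}(0)},\omega_\mathbf{a}|_{V_\mathbf{a}(0)})$ explicitly, by projecting $Y_\lambda=\mathbf{z}/2$ orthogonally to $\nabla_\mathbf{a}f$ with respect to $\langle\cdot,\cdot\rangle_\mathbf{a}$, and then computes $d\rho(Y_V)=1/2>0$, so that $\Sigma(\mathbf{a})$ is a hypersurface transverse to a Liouville field and hence of contact type. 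You instead observe that $\Sigma(\mathbf{a})$ is a regular hypersurface of the symplectic manifold $(V_\mathbf{a}(0)\setminus\{0\},\omega_\mathbf{a})$ and that the identity $\iota_{R_\mathbf{a}}\omega_\mathbf{a}=-d\rho$, already needed for the Reeb condition, identifies the characteristic line $\ker(\omega_\mathbf{a}|_{T\Sigma})$ as the span of $R_\mathbf{a}$; together with $\alpha_\mathbf{a}(R_\mathbf{a})=1\neq 0$ this forces $\alpha_\mathbf{a}\wedge(d\alpha_\mathbf{a})^{n-1}\neq 0$. Both arguments rest on the same background fact (nondegeneracy of $\omega_\mathbf{a}$ on the complex submanifold $V_\mathbf{a}(0)\setminus\{0\}$, which you state and the paper uses tacitly), but yours is more economical: the single contraction identity does double duty and no auxiliary vector field has to be built. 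What the paper's route buys in exchange is the explicit field $Y_V$ itself, which is not a throwaway: it reappears as $X_2$ in Proposition~\ref{trivialization of the symplectic complement}, where the computation $\omega_\mathbf{a}(X_2,Y_2)=\lambda_\mathbf{a}(R_\mathbf{a})=1$ is quoted from this proof; your argument, while complete for the statement at hand, would not supply that trivialization data. One small point to make explicit if you polish this up: smoothness of $V_\mathbf{a}(0)\setminus\{0\}$ (the differential of $f$ vanishes only at the origin because all $a_i>1$), which you use when invoking the K\"ahler restriction, deserves a sentence alongside your appeal to Lemma~\ref{lemma}.
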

		\begin{proof}
			The gradient of $f$  with respect to $<\cdot,\cdot>_\mathbf{a}$ is given by
			\[\nabla_\mathbf{a}f:=2(\bar{z_0}^{a_0-1},\bar{z_1}^{a_1-1},\cdots,\bar{z_n}^{a_n-1}).\] 
			The Liouville vector field $Y_V$ of the restricted 1-form $\lambda_\mathbf{a}|_{V_\fa(0)}$ with respect to the restricted symplectic form $\omega_\mathbf{a}|_{V_\fa(0)} $is given by 
			\[Y_V:=Y_\lambda-\frac{<\nabla_\mathbf{a}f,Y_\lambda>_\mathbf{a}}{||\nabla_\mathbf{a}f||^2_\mathbf{a}}\cdot\nabla_\mathbf{a}f.\]
			Note that $TV_\fa(t)=\ker df=\ker<\nabla_\mathbf{a}f,\cdot>_\mathbf{a}$, which shows that $Y_V\in TV_\fa(0)$. Furthermore, we have for any $\xi \in TV_\fa(0)$,
			\[\omega_\mathbf{a}(Y_V,\xi)=\omega_\mathbf{a}(Y_\lambda,\xi)-\frac{<\nabla_\mathbf{a}f,Y_\lambda>_\mathbf{a}}{||\nabla_\mathbf{a}f||^2_\mathbf{a}}\cdot\omega_\mathbf{a}(\nabla_\mathbf{a}f,\xi)=\lambda_\mathbf{a}(\xi)+\frac{<\nabla_\mathbf{a}f,Y_\lambda>_\mathbf{a}}{||\nabla_\mathbf{a}f||^2_\mathbf{a}}\cdot \underbrace{Im<\nabla_\mathbf{a}f,\xi>)_\mathbf{a}}_{=0}=\lambda_\mathbf{a}(\xi)\]
			So this indicates that $Y_V$ is the Liouville vector field for the pair $(\omega_\mathbf{a}|_{V_\fa(0)} ,\lambda_\mathbf{a}|_{V_\fa(0)})$. Now notice that $d\rho=\sum\limits_{k=0}^{n}\bar{z_k}dz_k+z_kd\bar{z_k}$ ($\rho$ is defined in the proof of lemma \ref{lemma}) and we have
			\[d\rho(Y_V)=\sum \frac{z_k\bar{z_k}}{2}-\frac{<\nabla_\mathbf{a}f,Y_\lambda>_\mathbf{a}}{||\nabla_\mathbf{a}f||^2_\mathbf{a}}\sum 2\bar{z_k}\cdot\bar{z_k}^{a_k-1}=\frac{\rho(\mathbf{z})}{2}
			-\frac{<\nabla_\mathbf{a}f,Y_\lambda>_\mathbf{a}}{||\nabla_\mathbf{a}f||^2_\mathbf{a}}\cdot2\bar{f(\mathbf{z})}=\frac{1}{2}.\]
			since $\rho(\mathbf{z})=1$ and $f(\mathbf{z})=0$. It follows that $Y_V$ points out of the unit sphere and hence out of $\Sigma(\mathbf{a})$ in $V_\fa(0)$. 
			It follows that  $\Sigma(\mathbf{a})$ is a contact hypersurface in $V_\fa(0)$.
			Now we are going to check that $R_\mathbf{a}$ is the Reeb vector field of $\alpha_\mathbf{a}$. For any $\mathbf{z}\in \Sigma(\mathbf{a})$, we have
			\[<R_\mathbf{a},\nabla_\mathbf{a}f>_\mathbf{a}=4i\sum_{k=0}^{n}z_k^{a_k}=4if(\mathbf{z})=0,\]
			\[d\rho(R_\mathbf{a})=\sum_{k=0}^{n}z_k(-4i)\bar{z_k}+\bar{z_k}4iz_k=0
			\]
			The two equations above shows that $R_\mathbf{a}$ is a tangent vector. We also have
			\[\alpha_\mathbf{a}(R_\mathbf{a})=\lambda_\mathbf{a}(R_\mathbf{a})=\frac{i}{8}\sum_{k=0}^{n}a_k(\frac{4i}{a_k}\bar{z_k}-\bar{z_k}\frac{4i}{a_k}z_k)=\rho(\mathbf{z})=1,\]
			\[\iota_{R_\mathbf{a}}d\alpha_\mathbf{a}=\frac{i}{4}\sum_{k=0}^{n}(4i\frac{z_k}{a_k}d\bar{z_k}-(-4i)\frac{\bar{z_k}}{a_k})=-\sum_{k=0}^{n}(z_kd\bar{z_k}+\bar{z_k}dz_k)=-d\rho.\]
			The latter form is zero for vectors in $T\Sigma(\mathbf{a})$, therefore, $R_\mathbf{a}$ is the Reeb vector field.
		\end{proof}
		
		\begin{prop}[Corollary 98 \cite{fauck2016rabinowitz}]\label{trivialization of the symplectic complement}
			The symplectic complement $\xi_\mathbf{a}^{\bot}$ with respect to $\omega_\mathbf{a}$ of the contact structure $\xi_\mathbf{a}:= \ker         \alpha_\mathbf{a}$ inside $\C^{n+1}$ is symplectically trivialized by the following 4 vector fields:
			\begin{itemize}
				\item 	$X_1:=\frac{\nabla_\mathbf{a}f}{||\nabla_\mathbf{a}f||_\mathbf{a}}$
				\item   $Y_1:=i\cdot X_1$
				\item   $X_2:=Y_V$
				\item   $Y_2:=R_\mathbf{a}$.
			\end{itemize}
			
		\end{prop}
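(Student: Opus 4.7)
The plan is to verify that the four sections $X_1,Y_1,X_2,Y_2$ (i) lie pointwise in $\xi_\mathbf{a}^\bot$ and (ii) satisfy the standard symplectic pairings $\omega_\mathbf{a}(X_i,Y_j)=\delta_{ij}$, $\omega_\mathbf{a}(X_1,X_2)=\omega_\mathbf{a}(Y_1,Y_2)=0$. Property (ii) automatically forces the four sections to be pointwise linearly independent, and together with a dimension count this gives the desired symplectic trivialization: since $\Sigma(\mathbf{a})\subset\mathbb{C}^{n+1}$ has real codimension $3$, $\xi_\mathbf{a}$ has real dimension $2n-2$, so its symplectic complement inside the $(2n+2)$-dimensional $T_\mathbf{z}\mathbb{C}^{n+1}$ has real dimension $4$, and any four $\omega_\mathbf{a}$-independent vectors in $\xi_\mathbf{a}^\bot$ span it.

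The computational backbone is two identities relating $\omega_\mathbf{a}$ to the Hermitian form. A direct check in real coordinates gives
\[
\omega_\mathbf{a}(u,v)=-\mathrm{Im}<u,v>_\mathbf{a}, \qquad \omega_\mathbf{a}(u,iu)=\|u\|_\mathbf{a}^2.
\]
Second, from $df=\sum_k a_k z_k^{a_k-1}dz_k$ and the explicit form of $\nabla_\mathbf{a}f$ one verifies that $<\nabla_\mathbf{a}f,\xi>_\mathbf{a}=\overline{df(\xi)}$; in particular $\nabla_\mathbf{a}f$ and $i\nabla_\mathbf{a}f$ are both $<\cdot,\cdot>_\mathbf{a}$-orthogonal, and hence $\omega_\mathbf{a}$-orthogonal, to every vector in $TV_\fa(0)=\ker df$.

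For (i), I would treat the four sections in turn. $X_1$ and $Y_1=iX_1$ are $\omega_\mathbf{a}$-orthogonal to all of $TV_\fa(0)\supset\xi_\mathbf{a}$ by the identity above. For $X_2=Y_V$, the defining Liouville identity $\iota_{Y_V}\omega_\mathbf{a}|_{V_\fa(0)}=\lambda_\mathbf{a}|_{V_\fa(0)}$ from the proof of Proposition~\ref{Lutz} gives $\omega_\mathbf{a}(Y_V,\xi)=\lambda_\mathbf{a}(\xi)$ for $\xi\in TV_\fa(0)$, which vanishes on $\ker\alpha_\mathbf{a}=\xi_\mathbf{a}$. For $Y_2=R_\mathbf{a}\in T\Sigma(\mathbf{a})$, combining $d\alpha_\mathbf{a}=\omega_\mathbf{a}|_{\Sigma(\mathbf{a})}$ with $\iota_{R_\mathbf{a}}d\alpha_\mathbf{a}=0$ gives $\omega_\mathbf{a}(R_\mathbf{a},\xi)=0$ for every $\xi\in\xi_\mathbf{a}$.

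For (ii), all six pairings drop out of the same identities. The normalization of $X_1$ combined with $\omega_\mathbf{a}(u,iu)=\|u\|_\mathbf{a}^2$ yields $\omega_\mathbf{a}(X_1,Y_1)=\|X_1\|_\mathbf{a}^2=1$. Since $R_\mathbf{a}\in TV_\fa(0)$, the Liouville identity gives $\omega_\mathbf{a}(X_2,Y_2)=\omega_\mathbf{a}(Y_V,R_\mathbf{a})=\lambda_\mathbf{a}(R_\mathbf{a})=\alpha_\mathbf{a}(R_\mathbf{a})=1$. The four mixed pairings $\omega_\mathbf{a}(X_1,X_2)$, $\omega_\mathbf{a}(X_1,Y_2)$, $\omega_\mathbf{a}(Y_1,X_2)$, $\omega_\mathbf{a}(Y_1,Y_2)$ all vanish because $Y_V,R_\mathbf{a}\in\ker df$ while $X_1$ and $Y_1=iX_1$ are $\omega_\mathbf{a}$-orthogonal to $\ker df$. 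The main (minor) obstacle is bookkeeping rather than ideas: one must track the sign convention $\omega_\mathbf{a}=-\mathrm{Im}<\cdot,\cdot>_\mathbf{a}$ and the specific numerical factors in $\lambda_\mathbf{a}$ and $\nabla_\mathbf{a}f$ consistently, so that the normalization $\|X_1\|_\mathbf{a}=1$ yields exactly $\omega_\mathbf{a}(X_1,Y_1)=+1$ (rather than $-1$) and the prescribed $R_\mathbf{a}$ satisfies $\alpha_\mathbf{a}(R_\mathbf{a})=+1$.
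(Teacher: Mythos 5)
Your proposal is correct and follows essentially the same route as the paper: the paper likewise reduces everything to the observations that $X_1,Y_1$ are Hermitian-orthogonal to $TV_\fa(0)\supset\xi_\mathbf{a}$ while $X_2=Y_V,Y_2=R_\mathbf{a}$ pair trivially with $\xi_\mathbf{a}$ inside $TV_\fa(0)$, giving the vanishing of the mixed pairings, and to the normalizations $\omega_\mathbf{a}(X_1,Y_1)=1$ and $\omega_\mathbf{a}(X_2,Y_2)=\lambda_\mathbf{a}(R_\mathbf{a})=1$ from the proof of Proposition~\ref{Lutz}. Your version merely makes explicit the identity $\omega_\mathbf{a}=-\mathrm{Im}\langle\cdot,\cdot\rangle_\mathbf{a}$ and the rank-$4$ dimension count that the paper leaves implicit.
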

		\begin{proof}
			 $X_1,Y_1$ generate the complex complement of $TV_\fa(0)$ while $X_2,Y_2$ generate the symplectic complement of $\xi_\mathbf{a}$ in $TV_\fa(0)$, so we have
			\[\omega_\mathbf{a}(X_1,X_2)=\omega_\mathbf{a}(X_1,Y_2)=\omega_\mathbf{a}(Y_1,X_2)=\omega_\mathbf{a}(Y_1,Y_2)=0.\] 
			Meanwhile we have 
			\[\omega_\mathbf{a}(X_1,Y_1)=1,\quad\omega_\mathbf{a}(X_2,Y_2)=\lambda_\mathbf{a}(R_\mathbf{a})=1.\]
			The latter equation comes from the proof of proposition~\ref{Lutz}.
		\end{proof}

		The Reeb vector field $R_\mathbf{a}=4i(\frac{z_0}{a_0},\frac{z_1}{a_1},\cdots,\frac{z_n}{a_n})$ generates the following flow:
		\[\psi_\mathbf{a}^t(\mathbf{z})=(e^\frac{4it}{a_0}\cdot z_0,\cdots, e^\frac{4it}{a_n}\cdot z_n)
		\]
		The submanifolds $\Sigma_T$ of period $T\in \pi\mathbb{Z}/2$ are given by
		\[\Sigma_T=\Big\{\mathbf{z}\in \Sigma(\mathbf{a})\,\Big|\,z_k=0 \, \text{ if } \, \frac{T}{a_k} \in \pi\mathbb{Z}/2\,\Big\}.
		\]
		$\Sigma_T$ is not empty if and only if the relation $\frac{T}{a_k} \in \pi\mathbb{Z}/2$ is satisfied by at least $2$ different $k$, as $\mathbf{z} \in \Sigma(\mathbf{a})$ has at least $2$ non-zero entries.
		Note that $\Sigma_T$ is the intersection $\Sigma(\mathbf{a})\cap V(\mathbf{a},T)$, where $V(\mathbf{a},T)$ denotes the complex linear subspace 
		\[V(\mathbf{a},T):=\Big\{\mathbf{z}\in \mathbb{C}^{n+1}\,\Big|\,z_k=0 \text{ if } \frac{T}{a_k} \notin \frac{\pi}{2}\mathbb{Z}\,\Big\}
		\]
		whose complex dimension is given by
		\[\dim_{\mathbb{C}} V(\mathbf{a},T):=\Bigg|\Big\{k\,\Big|\, 0\leq k\leq n,\,\frac{T}{a_k} \in \frac{\pi}{2}\mathbb{Z}\, \Big\}\Bigg|,
		\]
		where $|S|$ denotes the cardinality of the set $S$.
		We notice that $\Sigma_T$ is therefore isomorphic to the Brieskorn manifold $\Sigma(\mathbf{a}(T))$, where 
		\[ \mathbf{a}(T)=(a_0,\cdots,\hat{a_i},\cdots,a_n)
		\]
		is a subset of $\mathbf{a}$. Here $\hat{a_i}$ means the term $a_i$ is omitted, when 
		$\frac{T}{a_i} \notin \frac{\pi}{2}\mathbb{Z}$.
		The differential of $\phi_\mathbf{a}$ at time $t$ is given by
		\[D\psi_\mathbf{a}^t=diag\big(e^{4it/a_0},\cdots,e^{4it/a_n}\big)
		\]
		It follows that
		\[\ker(D_\mathbf{z}\psi_\mathbf{a}^T\big|_{T_\mathbf{z}\Sigma(\mathbf{a})}-id)=T_\mathbf{z}\Sigma(\mathbf{a})\cap V(\mathbf{a},T)=T_\mathbf{z}\Sigma_T
		\]
		Therefore $\Sigma_T$ is Morse-Bott submanifold.

		The calculation of the indices of all closed Reeb orbits can be found in various literature, see  \cite{kwon2016brieskorn},  \cite{ustilovsky1999contact}. We conclude this subsection with the following proposition: 
		\begin{prop}[ \cite{kwon2016brieskorn}, \cite{fauck2016rabinowitz}]\label{index calculation}
			Let $\gamma\in \Sigma(\mathbf{a})$ be a fractional Reeb of period $t$. We have
			\[\mu_{CZ}(\gamma)=\sum_{k=0}^{n}\Bigg(\Bigg\lfloor\frac{2t}{a_k\pi}\Bigg\rfloor+\Bigg\lceil\frac{2t}{a_k\pi}\Bigg\rceil\Bigg)-\Bigg(\Bigg\lfloor\frac{2t}{\pi}\Bigg\rfloor+\Bigg\lceil\frac{2t}{\pi}\Bigg\rceil\Bigg)
			\]
		\end{prop}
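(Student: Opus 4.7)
The plan is to exploit the splitting $T\C^{n+1}|_\gamma = \xi_\mathbf{a}|_\gamma \oplus \xi_\mathbf{a}^\perp|_\gamma$ provided by Proposition~\ref{trivialization of the symplectic complement}, together with the observation that the Reeb flow extends to $\C^{n+1}$ as the linear unitary map $\psi_\mathbf{a}^t = \mathrm{diag}(e^{4it/a_0},\ldots,e^{4it/a_n})$. Since this extended flow preserves both summands of the splitting, the product property of the Robbin--Salamon index yields
\[
\mu_{CZ}(\C^{n+1}) = \mu_{CZ}(\xi_\mathbf{a}) + \mu_{CZ}(\xi_\mathbf{a}^\perp),
\]
so the strategy is to compute the ambient index and the complement index separately and subtract.

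For the ambient flow, $D\psi_\mathbf{a}^t$ is diagonal in the standard basis, decomposing as an orthogonal sum of rotations $e^{4it/a_k}$. By the product property of $\mu_{CZ}$ and the first formula of Remark~\ref{Index formula} applied with the substitution $T \mapsto 4t/a_k$, one obtains
\[
\mu_{CZ}(\C^{n+1}) = \sum_{k=0}^{n}\left(\left\lfloor\tfrac{2t}{a_k\pi}\right\rfloor + \left\lceil\tfrac{2t}{a_k\pi}\right\rceil\right).
\]
For the complement, I would work in the frame $(X_1,Y_1,X_2,Y_2)$ and check that the flow on $\langle X_1, Y_1\rangle = \C\cdot\nabla_\mathbf{a} f/\|\nabla_\mathbf{a} f\|_\mathbf{a}$ is pure rotation by $e^{4it}$, while the flow on $\langle X_2, Y_2\rangle = \langle Y_V, R_\mathbf{a}\rangle$ is the identity. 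The first claim follows by differentiating the homogeneity identity $f\circ\psi_\mathbf{a}^t = e^{4it} f$ and using that $D\psi_\mathbf{a}^t$ is unitary for $\langle\,,\,\rangle_\mathbf{a}$ (so $\|\nabla_\mathbf{a} f\|_\mathbf{a}$ is constant along $\gamma$), giving $D\psi_\mathbf{a}^t(\nabla_\mathbf{a} f|_\mathbf{z}) = e^{4it}\,\nabla_\mathbf{a} f|_{\psi_\mathbf{a}^t(\mathbf{z})}$; since $Y_1 = iX_1$, the induced $2\times 2$ block is rotation by $e^{4it}$, and Remark~\ref{Index formula} contributes $\lfloor 2t/\pi\rfloor + \lceil 2t/\pi\rceil$ from this plane.

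The Liouville--Reeb plane $\langle Y_V, R_\mathbf{a}\rangle$ is where the main subtlety lies. The Reeb field is trivially preserved by its own flow, so it remains to show $D\psi_\mathbf{a}^t(Y_V) = Y_V$. Writing $Y_V = Y_\lambda - c\,\nabla_\mathbf{a} f$ with $c$ chosen so that $df(Y_V)=0$ on $V(0)$, I would identify $c(\mathbf{z}) = \langle Y_\lambda, \nabla_\mathbf{a} f\rangle_\mathbf{a}/\|\nabla_\mathbf{a} f\|_\mathbf{a}^2$ (being careful that this is the correct conjugation order for the vanishing of $df(Y_V)$) and compute directly $c\circ\psi_\mathbf{a}^t = e^{4it} c$; combined with $D\psi_\mathbf{a}^t(Y_\lambda) = Y_\lambda\circ\psi_\mathbf{a}^t$ (since $Y_\lambda=\mathbf{z}/2$ is linear radial), the factors of $e^{4it}$ cancel and $D\psi_\mathbf{a}^t(Y_V) = Y_V$, so this plane contributes $0$ by the third part of Remark~\ref{Index formula}. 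Summing gives $\mu_{CZ}(\xi_\mathbf{a}^\perp) = \lfloor 2t/\pi\rfloor + \lceil 2t/\pi\rceil$, and subtracting from the ambient index yields the stated formula. The chief obstacle is precisely this cancellation in the $(Y_V, R_\mathbf{a})$ plane: a careless choice of conjugation convention for $c$ would produce a non-zero shear from $X_2$ into the $(X_1,Y_1)$ line, which would ruin the symplecticity of the monodromy in this frame and spoil the computation; getting the Hermitian-form bookkeeping right is what makes the formula come out cleanly.
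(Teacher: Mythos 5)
Your proposal is correct and takes essentially the same route as the paper's proof: extend the Reeb flow to the linear diagonal flow on $\C^{n+1}$, use the frame $(X_1,Y_1,X_2,Y_2)$ of Proposition~\ref{trivialization of the symplectic complement} to see that $\xi_\mathbf{a}^{\perp}$ contributes a rotation block $e^{4it}$ plus an identity block, and subtract from the ambient index via the product property and Remark~\ref{Index formula}. Your careful check of the conjugation order in the coefficient of $\nabla_\mathbf{a}f$ inside $Y_V$ is precisely the verification the paper compresses into the assertion $D\psi_\mathbf{a}^t(X_2)=X_2\circ\psi_\mathbf{a}^t$; the only point the paper states that you leave implicit is that the trivialization of $T\C^{n+1}$ assembled from those of $\xi_\mathbf{a}$ and $\xi_\mathbf{a}^{\perp}$ is homotopic to the standard one, which is what legitimizes the subtraction.
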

		
		\begin{proof}
			First we notice that the indices are canonically defined when $n\geq 4$, by Proposition~\ref{high connectedness of brieskorn manifold}.
			Recall the Reeb vector field in Proposition~\ref{Lutz}, $R_\mathbf{a}=4i(\frac{z_0}{a_0},\frac{z_1}{a_1},\cdots,\frac{z_n}{a_n})$. The associated Reeb flow is 	
			\[\psi_\mathbf{a}^t(\mathbf{z})=(e^\frac{4it}{a_0}\cdot z_0,\cdots, e^\frac{4it}{a_n}\cdot z_n).
			\]
			We regard this as a flow on $\mathbb{C}^{n+1}$ as opposed to $\Sigma(\mathbf{a})$. This perspective gives us the advantage of calculating the indices directly on $\mathbb{C}^{n+1}$. If we take the standard trivialization of $T\mathbb{C}^{n+1}$, then the linearized return map is
			\[D\psi_\mathbf{a}^t=diag\big(e^{4it/a_0},\cdots,e^{4it/a_n}\big)=:\Psi_t.
			\]
			By Proposition~\ref{trivialization of the symplectic complement}, we have the trivialization of the symplectic complement $\xi_\mathbf{a}^\perp$. The linearized return map of the flow on  $\xi_\mathbf{a}^\perp$  gives:
			\begin{itemize}
				\item $D\psi_\mathbf{a}^t(X_1(\mathbf{z}))=e^{4it}\cdot X_1(\psi_\mathbf{a}^t(\mathbf{z}))$,
				\item $D\psi_\mathbf{a}^t(Y_1(\mathbf{z}))=e^{4it}\cdot Y_1(\psi_\mathbf{a}^t(\mathbf{z}))$,
				\item $D\psi_\mathbf{a}^t(X_2(\mathbf{z}))=X_2(\psi_\mathbf{a}^t(\mathbf{z}))$,
				\item $D\psi_\mathbf{a}^t(Y_2(\mathbf{z}))=Y_2(\psi_\mathbf{a}^t(\mathbf{z}))$.
			\end{itemize}
			It follows that the linearized  map of $D\psi_\mathbf{a}^t$ on $\xi_\mathbf{a}^\perp$ under the prescribed trivialization is the diagonal matrix:
			\[
			\Psi_2^t:=
			\left[ {\begin{array}{cc}
				e^{4it} & 0 \\
				0 & 1 \\
				\end{array} } \right]
			\]
			A trivialization of $\xi_\mathbf{a}$ along the Reeb orbit gives us the linearization of $\Psi_1^t$ of $\psi_\mathbf{a}^t$ on  $\xi_\mathbf{a}$. Any trivialization of $\xi_\mathbf{a}$ and $\xi_\mathbf{a}^\perp$ combined gives rise to a trivialization of $T\mathbb{C}^{n+1}$, which is homotopic to the standard one.
			Therefore by the product property of the Conley-Zehnder index and using remark~\ref{Index formula}, we find that
			
			\begin{align*}
			\mu_{CZ}(\gamma)=\mu_{CZ}(\Psi_1)=&\mu_{CZ}(\Psi)-\mu_{CZ}(\Psi_2)\\
			=&\sum_{k=0}^{n}\Bigg(\Bigg\lfloor\frac{2t}{a_k\pi}\Bigg\rfloor+\Bigg\lceil\frac{2t}{a_k\pi}\Bigg\rceil\Bigg)-\Bigg(\Bigg\lfloor\frac{2t}{\pi}\Bigg\rfloor+\Bigg\lceil\frac{2t}{\pi}\Bigg\rceil\Bigg).
			\end{align*}
		\end{proof}
		
		\begin{lemma}\label{minimal index}
			Let $\mathbf{a}=(a_0,a_1,a_2,\cdots,a_n)$, where the $a_i's$ are positive integers, and $\sum\frac{1}{a_k}\geq 1$. Then the following function $f_\mathbf{a}:\mathbb{R}_+\to \mathbb{Z}$, 
			\[f_\mathbf{a}(x)=\sum_{k=0}^{n}\Bigg(\Bigg\lfloor\frac{x}{a_k}\Bigg\rfloor+\Bigg\lceil\frac{x}{a_k}\Bigg\rceil\Bigg)-\Big(\Big\lfloor x\Big\rfloor+\Big\lceil x\Big\rceil\Big)\]
			has a minimum, denoted by $m(\mathbf{a})$. In particular, if $\mathbf{a}=(2,2,2,a_1,\cdots,a_n)$, then $m(\mathbf{a})\geq 2$, where $a_k's$ are positive integers, $n\geq 2$.
		\end{lemma}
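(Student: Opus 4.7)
The plan is to split the statement into two parts---existence of a minimum in general, and the explicit bound $m(\mathbf{a})\geq 2$ in the distinguished case---both of which reduce to elementary properties of $g(y):=\lfloor y\rfloor+\lceil y\rceil$.

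For existence, the key identity is $g(y)=2\lfloor y\rfloor$ when $y\in\Z$ and $g(y)=2\lfloor y\rfloor+1$ otherwise, giving the two-sided estimate $2y-1<g(y)\leq 2y+1$ for $y>0$. Applied term by term, this yields
\[f_\mathbf{a}(x) > 2x\Bigl(\sum_{k=0}^{n}\tfrac{1}{a_k}-1\Bigr) - (n+2),\]
which is bounded below because $\sum 1/a_k\geq 1$ (and grows to $+\infty$ if the inequality is strict). Since $f_\mathbf{a}$ is $\Z$-valued, its image is a bounded-below subset of $\Z$ and therefore admits a minimum, which is attained by some $x>0$; this defines $m(\mathbf{a})$.

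For the explicit bound with $\mathbf{a}=(2,2,2,a_1,\ldots,a_n)$, I rewrite
\[f_\mathbf{a}(x) = \bigl(3g(x/2) - g(x)\bigr) + \sum_{k=1}^{n} g(x/a_k).\]
Writing $x=2m+s$ with $m=\lfloor x/2\rfloor\geq 0$ and $s\in[0,2)$, a direct case split according to $s=0$, $s\in(0,1)$, $s=1$, or $s\in(1,2)$ gives for $3g(x/2)-g(x)$ the respective values $2m$, $2m+2$, $2m+1$, $2m$. All four are nonnegative, and the expression vanishes exactly when $m=0$ and $s\in(1,2)$, i.e.\ on the interval $(1,2)$. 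Since each $a_k\geq 2$ forces $x/a_k>0$ and hence $\lceil x/a_k\rceil\geq 1$, we have $g(x/a_k)\geq 1$, so the final sum contributes at least $n$. Combining: $f_\mathbf{a}(x)\geq 0+n\geq 2$ on $(1,2)$, while $f_\mathbf{a}(x)\geq 1+n\geq 3$ on the complement. Hence $m(\mathbf{a})\geq 2$.

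The main obstacle is purely the bookkeeping in the case analysis. The inequality $m(\mathbf{a})\geq 2$ is tight precisely in the interval $(1,2)$, where the bracket $3g(x/2)-g(x)$ degenerates to $0$; here the hypothesis $n\geq 2$ is used essentially, and one should verify that no other interval hides a similar degeneracy that would undermine the conclusion.
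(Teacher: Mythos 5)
Your proof is correct and follows essentially the same route as the paper: the same estimate $2y-1<\lfloor y\rfloor+\lceil y\rceil<2y+1$ together with integer-valuedness for the existence of the minimum, and the same case analysis on $x$ modulo $2$ for the bound $m(\mathbf{a})\geq 2$. The only cosmetic difference is that you bound $3\bigl(\lfloor x/2\rfloor+\lceil x/2\rceil\bigr)-\bigl(\lfloor x\rfloor+\lceil x\rceil\bigr)$ globally by writing $x=2m+s$, whereas the paper first reduces to $x\in(0,2]$ via $f_{\mathbf{a}}(x+2)\geq f_{\mathbf{a}}(x)+2$ and then evaluates there; your variant needs only $\lfloor x/a_k\rfloor+\lceil x/a_k\rceil\geq 1$ rather than exact values of those terms, which incidentally sidesteps the edge case $x=2$ with some $a_k=2$.
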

		\begin{proof}
			We notice that $2x-1<\lfloor x\rfloor+\lceil x \rceil <2x+1$, we have \[
			f_\mathbf{a}(x)>2\big( \sum_{k=0}^{n}\frac{1}{a_k}-1\big)x-n-1\geq -n-1,
			\]
			which proves the first part. For the second part, 
			we have 
			\[f_\mathbf{a}(x)=3\Bigg(\Bigg\lfloor\frac{x}{2}\Bigg\rfloor+\Bigg\lceil\frac{x}{2}\Bigg\rceil\Bigg)+\sum_{k=1}^{n}\Bigg(\Bigg\lfloor\frac{x}{p_k}\Bigg\rfloor+\Bigg\lceil\frac{x}{p_k}\Bigg\rceil\Bigg)-\Big(\Big\lfloor x\Big\rfloor+\Big\lceil x\Big\rceil\Big)
			\]
			Note that $f_\mathbf{a}(x+2)\geq f_\mathbf{a}(x)+2$, so the minimum is obtained in $x\in (0,2]$. On this interval, we have $f_\mathbf{a}(x)=3\Bigg(\Bigg\lfloor\frac{x}{2}\Bigg\rfloor+\Bigg\lceil\frac{x}{2}\Bigg\rceil\Bigg)+n-\Big(\Big\lfloor x \Big\rfloor+\Big\lceil x \Big\rceil\Big)$, which is 
			\[f_\mathbf{a}(x)=
			\begin{cases}
			2+n \quad x \in (0,1),\\
			1+n \quad x=1,\\
			n \quad x\in (1,2),\\
			4+n \quad x=2.
			\end{cases}
			\]
			hence our conclusion.
		\end{proof}

		\section{Exotic contact manifolds}

		\subsection{Liouville domains admitting group actions}\label{cover}
		We need to find a Liouville domain $(W,\lambda)$ with the contact manifold
		$\Sigma(\mathbf{a})$ as its boundary. While $V_\fa(0)$ has a singularity at the origin, $V_\fa(\epsilon)$ is smooth. Therefore we will follow  Alexander Fauck's approach \cite{fauck2016rabinowitz} to overcome this by constructing an interpolation between $V_\fa(0)$ and $V_\fa(\epsilon)$. First, we choose a smooth monotone decreasing cut-off function $\beta\in C^\infty(\mathbb{R})$ with $\beta(x)=1,x\leq \frac{1}{4}$ and $\beta(x)=0, x\geq \frac{3}{4}$. Then we define (we will often omit $\fa$)
		\[U_\fa(\epsilon):=\{\mathbf{z}\in \mathbb{C}^{n+1}|z_0^{a_0}+\cdots+z_n^{a_n}=\epsilon\cdot\beta(||\mathbf{z}||^2)\}.\]
		 Let 
		\[W_\epsilon^s:=U_\epsilon\cap B(s)\]
		we have
		\begin{prop}[Proposition 99 \cite{fauck2016rabinowitz}]\label{Liouville form}
			For sufficiently small $\epsilon$,$(X_\epsilon^1,\lambda)$ is a Liouville domain with boundary $(\Sigma(\mathbf{a}),\alpha_\mathbf{a})$ and vanishing first Chern class.
		\end{prop}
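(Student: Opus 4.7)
\medskip
\noindent\textbf{Proof proposal.} The plan is to verify the four defining properties of a Liouville domain in turn: (i) $U_\fa(\epsilon)$ is a smooth submanifold of $\C^{n+1}$ for sufficiently small $\epsilon$; (ii) the ambient symplectic form $\omega_\fa$ restricts to a symplectic form on $U_\fa(\epsilon)$ with primitive $\lambda_\fa|_{U_\fa(\epsilon)}$; (iii) the resulting Liouville field is outward transverse along $\partial W_\epsilon^1$, and that boundary is exactly $(\Sigma(\fa),\alpha_\fa)$; and (iv) $c_1(W_\epsilon^1)=0$. Each step will be a small perturbation of the $\epsilon=0$ picture, where $U_\fa(0)=V_\fa(0)$ and the geometry is already understood via Lemma~\ref{lemma} and Proposition~\ref{Lutz}.

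For smoothness, I would apply the regular value theorem to $F(\fz):=f(\fz)-\epsilon\beta(\|\fz\|^2):\C^{n+1}\to\C$. Outside the transition annulus $\{1/4\le\|\fz\|^2\le 3/4\}$ one has $\beta$ locally constant and so $dF=df$, which has full complex rank away from the origin; since $F(0)=-\epsilon\neq 0$, the origin is not in $U_\fa(\epsilon)$. On the compact transition annulus, $dF=df$ when $\epsilon=0$ and $df$ is nonvanishing there (the only singular point of $V_\fa(0)$ is the origin), so by continuity $dF$ stays nonvanishing on $U_\fa(\epsilon)\cap\{1/4\le\|\fz\|^2\le 3/4\}$ for all sufficiently small $\epsilon$. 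Then I would turn to the symplectic structure: on the inner and outer regions $U_\fa(\epsilon)$ coincides with $V_\fa(\epsilon)$, respectively $V_\fa(0)\setminus\{0\}$, which are complex submanifolds of $(\C^{n+1},\omega_\fa)$ and hence symplectic. Non-degeneracy of $\omega_\fa|_{TU_\fa(\epsilon)}$ in the transition region is an open condition around the $\epsilon=0$ reference submanifold $V_\fa(0)\setminus\{0\}$, so it holds on the compact piece $W_\epsilon^1\cap\{1/4\le\|\fz\|^2\le 3/4\}$ for small $\epsilon$. Since $d\lambda_\fa=\omega_\fa$, the 1-form $\lambda_\fa|_{U_\fa(\epsilon)}$ is automatically a primitive, and it determines a Liouville vector field $Y$ on $U_\fa(\epsilon)$.

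The boundary condition is the easy part: because $\beta\equiv 0$ for $\|\fz\|^2\ge 3/4$, one has $\partial W_\epsilon^1=U_\fa(\epsilon)\cap S^{2n+1}=V_\fa(0)\cap S^{2n+1}=\Sigma(\fa)$, with induced contact form $\alpha_\fa$. On this collar $U_\fa(\epsilon)=V_\fa(0)$ and $Y$ coincides with the Liouville field $Y_V$ of Proposition~\ref{Lutz}, for which the computation $d\rho(Y_V)=\tfrac12$ gives outward transversality. The remaining issue, and the step I expect to require the most care, is the vanishing of the first Chern class. The strategy is to use the splitting $T\C^{n+1}|_{W_\epsilon^1}=TW_\epsilon^1\oplus N$, where $N$ is the $\omega_\fa$-symplectic normal bundle, a rank-2 real bundle with a canonical complex structure. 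On the holomorphic regions this splitting is a complex splitting of the trivial bundle, so $c_1(TW_\epsilon^1)=-c_1(N)$, and $N$ is trivialized as a complex line bundle by the nowhere-vanishing section $\nabla_\fa F$. The main obstacle is that on the transition annulus $U_\fa(\epsilon)$ is not a complex submanifold (since $\beta$ is not holomorphic), so this splitting is only almost-complex with respect to an $\omega_\fa|_{U_\fa(\epsilon)}$-compatible $J$ that is not the restriction of the standard complex structure on $\C^{n+1}$. I would handle this by deforming, for small $\epsilon$, $J|_{W_\epsilon^1}$ through $\omega_\fa$-compatible almost complex structures to the standard one — possible because the space of compatible $J$ is contractible once non-degeneracy holds — so that $c_1$ is computed from the standard splitting, and the trivialization of $N$ by $\nabla_\fa F$ (which is nowhere vanishing on all of $W_\epsilon^1$) yields $c_1(W_\epsilon^1)=0$.
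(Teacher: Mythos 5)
The paper itself gives no proof of this proposition: it is imported verbatim from Fauck, so there is no in-paper argument to measure you against, and your outline is in fact a reconstruction of the standard (Fauck-style) route — regular-value/compactness for smoothness of $U_\fa(\epsilon)$, openness of nondegeneracy of $\omega_\fa$ near the complex reference $V_\fa(0)$, the collar where $\beta\equiv 0$ for the boundary and transversality, and a normal-bundle splitting for $c_1$. One reading issue you handled correctly: what you actually prove concerns $W_\epsilon^1=U_\fa(\epsilon)\cap B(1)$, and that is the right object, since the boundary of $X_\epsilon^1=V_\fa(\epsilon)\cap B(1)$ is $V_\fa(\epsilon)\cap S^{2n+1}$, which is only contactomorphic to, not equal to, $(\Sigma(\fa),\alpha_\fa)$; the literal boundary statement holds only for the interpolated domain, which is also the object the rest of the paper uses.

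Two steps need repair. First, in the regular-value argument, nonvanishing of the complex-valued differential $dF$ is not the right condition once $F=f-\epsilon\beta(\|\fz\|^2)$ is no longer holomorphic: you need $d\,\mathrm{Re}\,F$ and $d\,\mathrm{Im}\,F$ to stay linearly independent, i.e.\ $dF$ surjective onto $\R^2$, and "$dF\neq 0$ by continuity" does not give that. The fix is the quantitative version of your own compactness remark: on the annulus $df$ is complex-linear and nonzero, so its smallest singular value as a real map onto $\R^2$ has a positive lower bound, while the perturbation $\epsilon\beta'\,d(\|\fz\|^2)$ is $O(\epsilon)$, so surjectivity persists for small $\epsilon$. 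Second, the $c_1$ step as written does not parse: the standard complex structure $i$ does not preserve $TW_\epsilon^1$ over the transition annulus, so there is no deformation of $J|_{W_\epsilon^1}$ "to the standard one" within the $\omega_\fa|_{W_\epsilon^1}$-compatible almost complex structures, and $\nabla_\fa F$ is not (obviously) a section of the symplectic normal bundle $N$ where $F$ fails to be holomorphic, so it cannot be invoked as a complex-line trivialization of $N$ there. What you should say instead: $c_1$ of a symplectic vector bundle is defined via any compatible complex structure, is independent of that choice, and is additive under symplectic splittings, so $0=c_1\bigl(T\C^{n+1}|_{W_\epsilon^1}\bigr)=c_1(TW_\epsilon^1)+c_1(N)$; and $N\cong T\C^{n+1}|_{W_\epsilon^1}/TW_\epsilon^1$ is trivialized as a real rank-$2$ bundle by $dF$ (regularity of the level set), hence has vanishing Euler class, hence $c_1(N)=0$ for any compatible complex structure, giving $c_1(W_\epsilon^1)=0$. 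With these two corrections your outline is complete and matches the cited argument.
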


		Moreover, we have a cyclic group 
		\[C(L):=\{\, e^\frac{2\pi ki}{L} \in \C \, |\,  k\in\Z\, \}=<\zeta>
		\]
		 acting on $(\mathbb{C}^{n+1})^*$, which is generated by :
		\begin{align*}
		\zeta_*: (\mathbb{C}^{n+1})^* & \longrightarrow  (\mathbb{C}^{n+1})^*\\
		(z_0,z_1,\cdots,z_n) & \mapsto  (z_0\zeta^{b_0},z_1\zeta^{b_1},\cdots,z_n\zeta^{b_n})
		\end{align*}
		
		where $L:=\textrm{lcm}\,a_j, b_j:=L/a_j, \zeta:=e^\frac{2\pi i}{L}$. 
		We can easily see that the 1-form $\lambda_\mathbf{a}$ is $C(L)$-invariant.

		We can restrict this group action to the subsets of $(\mathbb{C}^{n+1})^*$ mentioned above and obtain a $C(L)-$action on the manifolds $X_\epsilon^s$ and $W_\epsilon^s$.
		By definition, $X_\epsilon^{1/2}= U(\epsilon)\cap B(1/2)=V(\epsilon)\cap B(1/2)=W_\epsilon^{1/2}$.
		We have the following proposition:
		\begin{prop}
			For sufficiently small $\epsilon>0$, there is a $C(L)$-equivariant isotopy between the following pairs of Liouville domains:
			\begin{itemize}
				\item $X_\epsilon^1$ and  $X_\epsilon^{1/2}$,
				\item$W_\epsilon^1$ and  and $W_\epsilon^{1/2}$.
			\end{itemize}
		\end{prop}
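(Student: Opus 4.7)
The plan is to build the required $C(L)$-equivariant Liouville isomorphism as the time-$t$ map of the restricted Liouville flow, flowing $\partial X_\epsilon^{1/2}$ (respectively $\partial W_\epsilon^{1/2}$) outward until it reaches $\partial X_\epsilon^1$ (respectively $\partial W_\epsilon^1$). Concretely, on $V(\epsilon)$ the relevant vector field is the one defined by the same formula used in Proposition~\ref{Lutz},
\[
Y_V := Y_\lambda - \frac{\la \nabla_\mathbf{a} f, Y_\lambda \ra_\mathbf{a}}{\|\nabla_\mathbf{a} f\|_\mathbf{a}^2}\,\nabla_\mathbf{a} f,
\]
which is tangent to every level set $V(t)$ and satisfies $\omega_\mathbf{a}(Y_V,\cdot)=\lambda_\mathbf{a}|_{V(t)}$. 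On $U(\epsilon)$ the defining function is $g(\mathbf{z})=f(\mathbf{z})-\epsilon\beta(\|\mathbf{z}\|^2)$ and we define $Y_U$ by the analogous projection using $\nabla_\mathbf{a} g$ in place of $\nabla_\mathbf{a} f$. Both $Y_V$ and $Y_U$ are $C(L)$-equivariant: $f$ is $C(L)$-invariant since $\zeta^{a_kb_k}=\zeta^L=1$, the norm $\|\mathbf{z}\|^2$ is $C(L)$-invariant, hence so is $\beta(\|\mathbf{z}\|^2)$; consequently $\nabla_\mathbf{a} f$ and $\nabla_\mathbf{a} g$ are $C(L)$-equivariant, and since $Y_\lambda(\mathbf{z})=\mathbf{z}/2$ is tautologically $C(L)$-equivariant, so are $Y_V$ and $Y_U$.

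The crux is transversality of $Y_V$ (resp. $Y_U$) to $\partial B(s)$ for all $s\in[1/2,1]$. The computation from the proof of Proposition~\ref{Lutz} gives, for $\mathbf{z}\in V(\epsilon)$ with $\|\mathbf{z}\|^2=s$,
\[
d\rho(Y_V) = \tfrac{s}{2} - 2\overline{\epsilon}\cdot\frac{\la \nabla_\mathbf{a} f, Y_\lambda \ra_\mathbf{a}}{\|\nabla_\mathbf{a} f\|_\mathbf{a}^2}.
\]
Since $\nabla_\mathbf{a} f$ vanishes only at the origin and $\|\mathbf{z}\|^2\geq 1/2$ bounds it away from zero uniformly, the right hand side is of the form $s/2+O(\epsilon)$ uniformly on $\{1/2\leq \|\mathbf{z}\|^2\leq 1\}\cap V(\epsilon)$. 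For $\epsilon$ small enough this is positive, so $Y_V$ is outward transverse to $\partial B(s)\cap V(\epsilon)$ for every $s\in[1/2,1]$. The analogous estimate for $Y_U$ follows by the same argument together with the observations that on $\{\|\mathbf{z}\|^2\leq 1/4\}$ we have $U(\epsilon)=V(\epsilon)$, on $\{\|\mathbf{z}\|^2\geq 3/4\}$ we have $U(\epsilon)=V(0)$ (so $d\rho(Y_U)=s/2$ exactly there), and on the intermediate annulus $\nabla_\mathbf{a} g$ differs from $\nabla_\mathbf{a} f$ only by an $\epsilon$-small correction, so the perturbation of $s/2$ is again $O(\epsilon)$ uniformly.

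With transversality established, the flow of $Y_V$ gives a $C(L)$-equivariant diffeomorphism $X_\epsilon^{1/2}\to X_\epsilon^1$ of the form: each boundary point of $X_\epsilon^{1/2}$ follows its forward $Y_V$-orbit until it hits $\partial B(1)$, and this sweeps out the collar $X_\epsilon^1\setminus X_\epsilon^{1/2}$ diffeomorphically; extending by the identity on $X_\epsilon^{1/2}$ after precomposing with the inverse Liouville flow, one obtains a global $C(L)$-equivariant diffeomorphism of completions that identifies the two Liouville forms up to an exact 1-form, i.e.\ a $C(L)$-equivariant Liouville isomorphism in the sense of Subsection~\ref{ssec: domains}. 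The same construction with $Y_U$ in place of $Y_V$ produces the $C(L)$-equivariant Liouville isomorphism $W_\epsilon^{1/2}\to W_\epsilon^1$. Equivalently, one can view the 1-parameter family $(X_\epsilon^s,\lambda)_{s\in[1/2,1]}$ as a family of Liouville structures and invoke the $C(L)$-equivariant version of Corollary~\ref{liouville homotopy}; the main obstacle to which is precisely the transversality estimate above, which is the only nontrivial ingredient of the whole argument.
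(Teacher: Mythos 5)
Your strategy is sound, and it is a genuinely different mechanism from the paper's. The paper never touches the Liouville vector field in this proof: it shows, by the same Lagrange-multiplier/rank computation as in Lemma~\ref{lemma}, that for $\epsilon$ small the restriction of $\rho=\|\mathbf{z}\|^2$ to $U(\epsilon)$ (resp.\ $V(\epsilon)$) has no critical points in the region $\|\mathbf{z}\|\ge 1/2$, and then invokes the equivariant Morse-theoretic isotopy Lemma~\ref{morse}, flowing along an \emph{arbitrary} $C(L)$-invariant gradient-like field for $\rho$; the upgrade to a Liouville isomorphism happens only afterwards, in Remark~\ref{stein} via Corollary~\ref{liouville homotopy}. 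You instead prove the stronger pointwise statement $d\rho(Y)>0$ for the Liouville field itself on the whole collar and flow along it, which buys the Liouville isomorphism (and the fact that every intermediate $X^s_\epsilon$, $W^s_\epsilon$ is a Liouville domain) in one step, at the price of needing the explicit formula for $Y$. For the holomorphic case $V(\epsilon)$ your estimate is correct: on the compact annulus $\{1/4\le\rho\le 1\}$ the norm $\|\nabla_{\mathbf a}f\|_{\mathbf a}$ is bounded below and $\la\nabla_{\mathbf a}f,Y_\lambda\ra_{\mathbf a}$ above, so the correction to $\rho/2$ is $O(\epsilon)$ uniformly. (You conflate $\|\mathbf z\|$ and $\|\mathbf z\|^2$ in a couple of places -- $\partial B(s)$ for $s\in[1/2,1]$ is $\{\rho=s^2\}$ with $\rho\in[1/4,1]$ -- but the compactness argument is insensitive to this.)

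The one step that does not work as written is the definition of $Y_U$ on the cut-off annulus $1/4<\rho<3/4$, which is unavoidable for the $W$-case since that is exactly where the two boundaries sit. There $g=f-\epsilon\beta(\rho)$ is not holomorphic, so $dg$ is only $\R$-linear: $TU(\epsilon)=\ker dg$ is not a complex subspace, its $\omega_{\mathbf a}$-orthogonal complement is spanned by the two Hamiltonian fields $X_{\mathrm{Re}\,g}$, $X_{\mathrm{Im}\,g}$ rather than by a single complex line $\C\cdot\nabla_{\mathbf a}g$, and the Hermitian projection of $Y_\lambda$ along ``$\nabla_{\mathbf a}g$'' is in general neither tangent to $U(\epsilon)$ nor the Liouville field of $\lambda_{\mathbf a}|_{U(\epsilon)}$. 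The conclusion you want is still true: on that annulus $U(\epsilon)$ is a uniformly $C^1$-small perturbation of the regular compact piece $V(0)\cap\{1/4\le\rho\le 3/4\}$, and the induced Liouville field depends continuously on the hypersurface, so $d\rho(Y_U)>0$ persists for small $\epsilon$ -- but this must be argued with the correct linear algebra (project along $\mathrm{span}\{X_{\mathrm{Re}\,g},X_{\mathrm{Im}\,g}\}$) or by such a perturbation/compactness argument, not by the gradient formula. Alternatively, for this region you can fall back on the paper's route: any $C(L)$-invariant gradient-like field for $\rho$ gives the isotopy once one knows $\rho|_{U(\epsilon)}$ has no critical points there, which is exactly the paper's rank computation (and is \emph{not} implied by your transversality statement for $V(\epsilon)$ alone).
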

	
		\begin{proof}
			We only give a proof for the existence of a $C(L)$-equivariant isotopy between $W_\epsilon^1$ and  $W_\epsilon^{1/2}$. We can prove the same results for $X_\epsilon^1$ and $X_\epsilon^{1/2}$ verbatim.
			Consider the function $\rho (\mathbf{z})=||\mathbf{z}||^2$ on $V_\epsilon$. If  for sufficiently small $\epsilon$, the critical values of $\rho$ restricted to $W_\epsilon^1$ are less that $1/4$, then we are done, by lemma~\ref{morse}. Indeed, we have $f_\epsilon(\mathbf{z}):=f(\mathbf{z})-\epsilon\cdot\beta(||\mathbf{z}||^2)$ on $\mathbb{C}^{n+1}$ and its differential is given by
			\[Df_\epsilon=Df-\epsilon\cdot\beta^{'}(||\mathbf{z}||^2)\cdot D\rho\]
			so the map \[(f_\epsilon,\rho): \mathbb{C}^{n+1}\rightarrow \mathbb{C\times R}\] has Jacobian matrix $(Df-\epsilon\cdot\beta^{'}(||\mathbf{z}||^2)\cdot D\rho,D\rho)$, which has the same rank as $(Df,D\rho)$. So by the same argument in the proof of lemma~\ref{lemma}, if $\mathbf{z}$ is a point where the Jacobian is not full rank, then we have for some complex number  $\lambda$, $\bar{z_k}=\lambda a_k z_k^{a_k-1}$ for all $k$.
			For $||\mathbf{z}||\geq 1/2$, we have $|z_{k_0}|\geq \frac{1}{2\sqrt{n}}$ for some $k_0$, so
			\[|z_{k_0}|=|\lambda|\cdot a_{k_0}\cdot |z_{k_0}|^{a_{k_0}-1}\]
			i.e,
			\begin{equation}\label{equ for lambda}
		|\lambda|=\frac{|z_{k_0}|^{2-a_{k_0}}}{a_{k_0}} \leq \frac{(2\sqrt{n})^{a_{k_0}-2}}{a_{k_0}}\leq C(\mathbf{a})
			\end{equation}
		
			where $C(\mathbf{a}):= \underset{0\leq k \leq n}{\max} \{\frac{(2\sqrt{n})^{a_{k}-2}}{a_{k}}\} $ only depends on $\mathbf{a}$ and $n$.
			Meanwhile, we have
			\begin{equation}\label{equ}
			\sum\limits_{k=0}^{n}\frac{z_k\bar{z_k}}{a_k}=\lambda\sum\limits_{k=0}^n z_k^{a_k}=\lambda\cdot f(\mathbf{z})=\lambda\cdot\epsilon\beta(||\mathbf{z}||^2)
			\end{equation}

			Combining equations~(\ref{equ}) and (\ref{equ for lambda}), we have
			\begin{equation}\label{equ upper}
			\sum\limits_{k=0}^{n}\frac{z_k\bar{z_k}}{a_k}=\lambda\cdot\epsilon\beta(||\mathbf{z}||^2)\leq \epsilon\cdot C(\mathbf{a})
			\end{equation}

		    On the other, 
			\begin{equation}\label{equ 2}
			\sum\limits_{k=0}^{n}\frac{z_k\bar{z_k}}{a_k}\geq \frac{1}{\max\{a_j\}}\sum_{k=0}^{n}z_k\bar{z_k}\geq \frac{1}{\max\{a_j\}}\cdot ||\mathbf{z}||^2=\frac{1}{4\max\{a_j\}}
			\end{equation}
			
			Equations~(\ref{equ 2}) and (\ref{equ upper}) cannot hold for sufficiently small $\epsilon$ at the same time, and therefore the function $\rho$ has no critical points in $||\mathbf{z}||\geq 1/2$, hence all critical values are less than $1/4$.
		\end{proof}
		
		\begin{lemma}[Theorem 2.2.2 \cite{nicolaescu2011invitation}]\label{morse}
			Suppose finite group $G$ acts on a manifold $M$ and $f$ is a $G$-invariant exhausting function on $M$. Moreover, assume that no critical value of $f$ is contained in $[a,b]\subset \mathbb{R}$, then there is a $G$-equivariant isotopy $\phi_t$ between the sublevel sets $M^a:=f^{-1}((-\infty,a])$ and $M^b:=f^{-1}((-\infty,b])$, and $\phi_t$ coincides with Id outside a compact set.
		\end{lemma}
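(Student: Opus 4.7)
The plan is to imitate the standard Morse-theoretic deformation argument, but performed equivariantly. First, since $G$ is finite, averaging any Riemannian metric produces a $G$-invariant Riemannian metric $g$ on $M$; concretely, $g := \frac{1}{|G|}\sum_{h\in G} h^*g_0$ for any background metric $g_0$. Because $f$ is $G$-invariant, its gradient $\nabla_g f$ is then $G$-equivariant, i.e.\ $h_* \nabla_g f = \nabla_g f$ for every $h\in G$.

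Next, I would build the deforming vector field. On $f^{-1}([a,b])$ there are no critical points of $f$, so $\nabla_g f$ is nowhere zero there, and the field
\[
Y := \frac{\nabla_g f}{g(\nabla_g f,\nabla_g f)}
\]
satisfies $Y(f)\equiv 1$ on an open $G$-invariant neighborhood $U$ of $f^{-1}([a,b])$ (use that the critical set of $f$ is closed and $G$-invariant, and shrink $U$ if needed so that $\nabla_g f\neq 0$ on $\overline U$). Choose a smooth cutoff $\chi:\R\to[0,1]$ with $\chi\equiv 1$ on a neighborhood of $[a,b]$ and $\chi\equiv 0$ outside a slightly larger interval $[a-\epsilon,b+\epsilon]$ whose preimage is precompact and contained in $U$ (this uses that $f$ is exhausting). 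Set $X := (\chi\circ f)\cdot Y$, extended by zero outside $U$. Then $X$ is smooth, compactly supported, and $G$-equivariant (since $\chi\circ f$ is $G$-invariant and $Y$ is $G$-equivariant).

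Now let $\phi_t$ be the flow of $(b-a)\cdot X$. Because $X$ has compact support, $\phi_t$ is a globally defined diffeomorphism for all $t\in\R$; because $X$ is $G$-equivariant, each $\phi_t$ commutes with the $G$-action; and $\phi_t$ is the identity outside the compact set $\supp X$. For any $x$ with $f(x)\in[a,b]$, the function $t\mapsto f(\phi_t(x))$ has derivative $(b-a)\cdot\chi(f(\phi_t(x)))$, which equals $b-a$ as long as $f(\phi_t(x))$ stays in $[a,b]$; hence $\phi_1$ carries the level set $f^{-1}(a)$ onto $f^{-1}(b)$, and consequently $\phi_1(M^a)=M^b$. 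Since $\phi_t$ is the identity off a compact set, the sublevel set $M^a\setminus\supp X$ is fixed throughout the isotopy, so $\phi_t$ restricts to an ambient isotopy carrying $M^a$ to $M^b$ as required.

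The only real subtlety is coordinating the cutoff with $G$-equivariance and compact support simultaneously; this is handled by choosing $\chi$ as a function of $f$ (automatically $G$-invariant) and using that $f$ is exhausting to guarantee $f^{-1}([a-\epsilon,b+\epsilon])$ is compact. Everything else is essentially the classical proof of Milnor that sublevel sets are diffeomorphic across a critical-value-free interval, transplanted into the equivariant setting, which is why no deeper obstruction arises for finite $G$.
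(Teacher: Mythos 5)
Your proof is correct and follows essentially the same route as the paper's: a $G$-equivariant gradient-like field, rescaled so that $f$ increases at unit (or $b-a$) rate and cut off by a $G$-invariant function of $f$ with compact support, whose flow carries $M^a$ to $M^b$. The only cosmetic differences are that you build the invariant field by averaging a metric (the paper simply fixes a $G$-invariant gradient-like field) and you absorb the time rescaling into the vector field rather than flowing for time $b-a$.
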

		
		\begin{proof}
			
			Since there are no critical values of $f$ in [a, b] and the sublevel sets 
			are compact, we
			deduce that there exists $\epsilon > 0$ such that
			\[\{a-\epsilon<f<b+\epsilon\}\subset M\setminus Crit(f).\]
			First we fix a gradient-like $G$-invariant vector field $Y$ and construct a compactly supported $G$-equivariant smooth function
			\[g:M\to [0,\infty)\]
			such that
			\[g(x)=
			\begin{cases}
			\frac{1}{|Yf|},\quad & a\leq f(x)\leq b,\\
			0,&f(x)\notin (a-\epsilon,b+\epsilon).
			\end{cases}
			\]
			We can now construct a $G$-invariant vector field $X:=gY$ on $M$ and we denote by
			\[\phi:\mathbb{R}\times M\to M,\quad (t,x)\to \phi_t(x)
			\]
			the flow generated by $X$. Clearly the flow commutes with the group action, so $\phi_t$ is $G$-equivariant. If $u(t)$ is an integral curve of $X$, then differentiating $f$ along $u(t)$ in the region $\{a\leq f\leq  b \}$ and get
			\[ \frac{df}{dt}=Xf=
			\frac{1}{Yf}Yf=1\] 
			This implies
			\[\phi_{b-a}(M^a)=M^b\]
			and $\phi_t$ is identity outside the region $\{a-\epsilon<f<b+\epsilon\}$.
		\end{proof}
		
		\begin{remark}\label{stein}
			
				 By proposition~\ref{Liouville form},   
				$(X_\epsilon^{1/2},\Phi_{t}^*\lambda )$ is a family of $C(L)-$equivariant Liouville structures.
			Then by corollary~\ref{liouville homotopy},  we have $(X_\epsilon^{1/2},\lambda)$ is $C(L)-$ equivariant Liouville isomorphic to $(X_\epsilon^1,\lambda)$. By the same token, $(W_\epsilon^1,\lambda)$ is  $C(L)-$ equivariant Liouville isomorphic to $(W_\epsilon^{1/2},\lambda)$ and therefore to $(X_\epsilon^1,\lambda)$.
		\end{remark}
	    Let $\phi_t(\mathbf{z}):=\frac{1}{8}\sum\limits_{j=0}^{n}c_j(t)|z_j|^2$, where $c_j(t)$ is a linear interpolation such that $c_j(0)=1,c_j(1)=a_j$. It's easy to check that $\phi_t$ is plurisubharmonic on $V_\fa(\epsilon)$. Indeed, $\phi_t$ is $i$-convex on $\mathbb{C}^{n+1}$ since $\Delta \phi_t>0$ and   $V_\fa(\epsilon)$ is a smooth complex submanifold. So $(V_\fa(\epsilon),i,\phi_t)$ are $C(L)-$ equivariant Stein manifolds.
	    
	    Since $X_\epsilon^1=\phi_0^{-1}((-\infty,1/8])$,
		 $(X_\epsilon^1,i,\phi_0)$ is a $C(L)-$equivariant Stein domain. Seen as a Liouville domain, $(X_\epsilon^1,-d^\C\phi_0)$ is $C(L)-$equivariant Liouville isomorphic to $(X_\epsilon^1,\lambda)$ as follows:
		\begin{prop}\label{Liouville equivalence for stein domain}
			There is a $C(L)$-equivariant Liouville homotopy between $(X_\epsilon^1,-d^\C\phi_0)$ and $(X_\epsilon^1,\lambda)$, for sufficiently small $\epsilon$.
		\end{prop}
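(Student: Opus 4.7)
The plan is to write down an explicit $C(L)$-equivariant family $\{\lambda_t\}_{t \in [0,1]}$ of Liouville $1$-forms on $X_\epsilon^1$ interpolating between $-d^\C\phi_0$ and $\lambda$, and then invoke Corollary~\ref{liouville homotopy}. Extending the notation just introduced, set $c_j(t):=1+t(a_j-1)$, $\phi_t(\mathbf{z}):=\tfrac{1}{8}\sum_j c_j(t)|z_j|^2$, and define
\[\lambda_t \;:=\; -d^\C\phi_t \;=\; \tfrac{i}{8}\sum_{j=0}^n c_j(t)\bigl(z_j\,d\bar z_j-\bar z_j\,dz_j\bigr).\]
At $t=0$ we recover $-d^\C\phi_0$, and at $t=1$ we recover $\lambda_{\mathbf{a}}=\lambda$. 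Because the cyclic action $\zeta_*z_j=\zeta^{b_j}z_j$ preserves each $|z_j|^2$, every $\phi_t$ and every $\lambda_t$ is $C(L)$-invariant; and since $c_j(t)>0$, each $\phi_t$ is strictly plurisubharmonic on $\mathbb{C}^{n+1}$, so $d\lambda_t=-dd^\C\phi_t$ is a K\"ahler form whose restriction to the complex submanifold $V(\epsilon)$ is symplectic.

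A useful observation is that the ambient Liouville vector field of $\lambda_t$ on $\mathbb{C}^{n+1}$ is the $t$-independent vector field $Y_\lambda(\mathbf{z})=\mathbf{z}/2$: upon contracting $d\lambda_t$ with $\mathbf{z}/2$ the factors $c_j(t)$ cancel, reproducing $\lambda_t$. Hence the intrinsic Liouville vector field $Y_t$ on $V(\epsilon)$ is simply the tangential projection of $\mathbf{z}/2$ with respect to the varying $d\lambda_t$-symplectic splitting, exactly as in the formula for $Y_V$ in Proposition~\ref{Lutz}.

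The main analytic step will be to verify that $Y_t$ is outward-transverse to $\partial X_\epsilon^1=V(\epsilon)\cap S^{2n+1}$ uniformly in $t\in[0,1]$, for all sufficiently small $\epsilon>0$. At $t=0$ this is immediate, since $X_\epsilon^1=\phi_0^{-1}((-\infty,1/8])$ is a sublevel set and $Y_0$ is the $g_0$-gradient of $\phi_0$, giving $d\rho(Y_0)>0$ on $\partial X_\epsilon^1$; at $t=1$ it is the content of Proposition~\ref{Liouville form}. For intermediate $t$ I would run the argument of Proposition~\ref{Lutz} with the Hermitian form $\langle\cdot,\cdot\rangle_t$ in place of $\langle\cdot,\cdot\rangle_{\mathbf{a}}$: using the gradient $\nabla_t f=2\bigl(a_j\bar z_j^{a_j-1}/c_j(t)\bigr)_j$, write $Y_t=Y_\lambda-W_t$ with $W_t$ in the complex normal to $V(\epsilon)$, and compute $d\rho(Y_t)$ explicitly. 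In the limit $\epsilon=0$ this yields a continuous function on the compact set $[0,1]\times\Sigma(\mathbf{a})$; once strict positivity is established there, continuity in $\epsilon$ and compactness in $t$ extends the inequality to $\partial X_\epsilon^1$ for all $t\in[0,1]$ and all $\epsilon\le\epsilon_0$.

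With the outward-transverse property in hand, $\{\lambda_t\}_{t\in[0,1]}$ is a smooth family of $C(L)$-equivariant Liouville structures on $X_\epsilon^1$, and Corollary~\ref{liouville homotopy} delivers the desired equivariant Liouville homotopy between $(X_\epsilon^1,-d^\C\phi_0)$ and $(X_\epsilon^1,\lambda)$. The hard part is the middle-$t$ positivity of $d\rho(Y_t)$ on $\Sigma(\mathbf{a})$: in contrast to the equal-weight case in which $Y_\lambda$ is already tangent to $V(0)$, for generic $\mathbf{a}$ the correction $W_t$ is nonzero, so one must control this correction term explicitly. Nevertheless, the same structural features that yield $d\rho(Y_V)=1/2$ in Proposition~\ref{Lutz} remain available to give a uniform lower bound for $d\rho(Y_t)$ throughout $t\in[0,1]$.
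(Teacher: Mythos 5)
You set up exactly the same homotopy as the paper — $\lambda_t=-d^\C\phi_t$ with the linearly interpolated weights $c_j(t)$, equivariance and plurisubharmonicity checked, conclusion via Corollary~\ref{liouville homotopy} — so the only real content is the verification that each $\lambda_t$ is a Liouville structure on the \emph{fixed} domain $X_\epsilon^1$, i.e.\ that the intrinsic Liouville field $Y_t$ (the tangential projection of $\mathbf{z}/2$) satisfies $d\rho(Y_t)>0$ along $\partial X_\epsilon^1$ for all intermediate $t$. This is precisely the step your proposal does not prove: you defer it with the assurance that ``the same structural features'' as in Proposition~\ref{Lutz} are available, but that analogy breaks down. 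The computation in Proposition~\ref{Lutz} closes only because, for the weights $c_j=a_j$, the normal correction pairs with $d\rho$ to give a multiple of $\bar f$, which vanishes on the variety. For $t\in(0,1)$ one gets instead a correction of the form
\[
d\rho(Y_t)\;=\;\rho\;-\;\frac{\operatorname{Re}\Bigl[\bigl(\sum_j a_j z_j^{a_j}\bigr)\bigl(\sum_k a_k\bar z_k^{a_k}/c_k(t)\bigr)\Bigr]}{\sum_j a_j^2|z_j|^{2(a_j-1)}/c_j(t)},
\]
and the numerator does \emph{not} vanish on $V_\fa(0)$ when the $a_j$ are not all equal; moreover naive estimates (Cauchy--Schwarz in the weighted metric) bound the subtracted term from above only by a quantity that is $\geq\rho$, so positivity is genuinely delicate rather than structural. (A related small issue: even your $t=0$ case needs the statement that $\phi_0|_{V_\fa(\epsilon)}$ has no critical points on $\{\rho=1\}$, which itself requires the small-$\epsilon$ estimate.) As written, the central inequality of your plan is an open claim, so the argument is incomplete.

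For comparison, the paper avoids the boundary computation altogether: it shows, uniformly in $t$ (using only $1\le c_k(t)\le a_k$ and $f=\epsilon$ on $V_\fa(\epsilon)$), that any point where the Jacobian of $(f,\phi_t)$ drops rank — i.e.\ any critical point of $\phi_t|_{V_\fa(\epsilon)}$ — must satisfy two incompatible inequalities once $\|\mathbf{z}\|>1/3$ and $\epsilon$ is small, so all critical points lie in $\{\|\mathbf{z}\|\le 1/3\}$; from this localization it deduces transversality of the Liouville field at $\partial X_\epsilon^1$ and then applies Corollary~\ref{liouville homotopy}. If you want to salvage your route, you would need either to prove the displayed positivity directly (it is true in low-dimensional test cases, but by an algebraic coincidence rather than by the cancellation you invoke), or to replace it by a critical-point localization argument of the paper's type, which has the advantage of being an explicit estimate uniform in $t$ and $\epsilon$.
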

		\begin{proof}
		Notice that for $\lambda=-d^\C\phi_1$,
			 it suffices to prove the critical points of $\phi_t$ are contained in a compact set $\{||\mathbf{z}||\leq 1/3\}$, then  $\nabla_{\phi_t}\phi_t$ will be transversal to the boundary, and $-d^\C\phi_t$ will be a family of $C(L)-$equivariant Liouville structures on $X_\epsilon^1$, so we can conclude the result by corollary~\ref{liouville homotopy}. In the following we are going to prove that all critical points satisfy $||\mathbf{z}||\leq 1/3$.
			  Consider the map
			\[(f,\phi_t):\mathbb{C}^{n+1}\rightarrow \mathbb{C\times R} 
			\]
			Its Jacobian matrix is 
			\[ D(f,\rho)=
			\begin{bmatrix}

			a_0z_0^{a_0-1} &\cdots & a_nz_n^{a_n-1} & 0 & \cdots & 0 \\
			0 & \cdots & 0 & a_0\bar{z_0}^{a_0-1} & \cdots &  a_n\bar{z_n}^{a_n-1}\\
			\frac{1}{8}c_0(t)\bar{z_0}& \cdots& \frac{1}{8}c_n(t)\bar{z_n} & \frac{1}{8}
			c_0(t) z_0 & \cdots &\frac{1}{8}c_n(t)z_n
			\end{bmatrix}
			\]
			If $\mathbf{z}$ is a point where this matrix has rank smaller than 3, there exists a non-zero complex number $\lambda\in \C$ such that $\frac{c_k(t)}{8}\bar{z_k}=\lambda a_k z_k^{a_k-1}$ for all $k$  and 
			\begin{equation}
			\sum\limits_{k=0}^{n}\frac{c_k(t)z_k\bar{z_k}}{8a_k}=\lambda\sum\limits_{k=0}^n z_k^{a_k}=\lambda\cdot f(\mathbf{z})=\lambda\cdot\epsilon
			\end{equation}
			For $||\mathbf{z}||>\frac{1}{3}$, we have $|z_r|>\frac{1}{3(n+1)}$ for some $0\leq r\leq n$.
			So we have
			\[\frac{c_r(t)}{8}\cdot |\bar{z_r}|=|\lambda| \cdot a_r \cdot |z_r^{a_r-1}|
			\]
			i.e,
			\begin{equation*}
			|\lambda|=\frac{c_r(t)}{8a_r|z_r|^{a_r-2}}<\frac{(3(n+1))^{a_r-2}}{8}\leq C
			\end{equation*}
			where $ C=\max \limits_{0\leq i\leq n}\frac{(3(n+1))^{a_i-2}}{8}$, only depends on $\fa$.
			On one hand,we have 
			\begin{equation}\label{equation <}
			\sum\limits_{k=0}^{n}\frac{c_k(t)z_k\bar{z_k}}{8a_k}=\lambda\sum\limits_{k=0}^n z_k^{a_k}=\lambda\cdot f(\mathbf{z})=\lambda\cdot\epsilon<C\cdot \epsilon.
			\end{equation}
		
			On the other hand, we have
			\begin{equation}\label{equation >}
			\sum\limits_{k=0}^{n}\frac{c_k(t)z_k\bar{z_k}}{8a_k}\geq \sum\limits_{k=0}^{n}\frac{|z_k|^2}{8a_k}\geq \frac{1}{72 \max\limits_{0\leq i\leq n}\{a_i\}}.
			\end{equation}
			
			So for $\epsilon$ small enough, equations~(\ref{equation <}) and (\ref{equation >}) cannot both hold, which implies the critical points of $\phi_t$ is contained in $\{||\mathbf{z}||\leq 1/3\}$.
			
		\end{proof}
		
	\begin{remark}\label{Liouville completion and stein domain}
	Since we have $\phi_0(\fz)=\frac{||\fz||^2}{8}$, $\nabla_{\phi_0}\phi_0=\sum\limits_{i=0}^{n}(z_i\p\bar{z_i}+\bar{z_i}\p z_i)/2$ is complete in $\C^{n+1}$. Therefore $\phi_0$ is a completely exhausting function on $V_\fa(\epsilon)$. By the proof of Proposition~\ref{Liouville equivalence for stein domain}, all critical points of $\phi_0$ are in the interior of $X_\epsilon^1$. It follows that $V_\fa(\epsilon)$ is the completion of $X_\epsilon^1$ by matching the corresponding trajectories of the Liouville fields. 
	\end{remark}

		\subsection{Topology of manifolds $M_0$ and $M_1$}
		
		Now let's consider $C(L)$-equivariant Stein manifold
		$(\C^*,i,(\log|z|)^2/2)$ where the $C(L)-$action is multiplication given by
		\[ \R\times (\R/2\pi\Z) \longrightarrow \C^*,\quad (r,\theta)\mapsto e^{r+\theta i}\,.
		\] 
	The map	gives rise to polar coordinates form of the same Stein manifold $(\R\times S^1,j,r^2/2)$ and the Liouville vector is  
		$r\p_r$, which is complete.

		 Now we consider the product of the Stein manifolds 	$(\C^*,i,(\log|z|)^2/2)$ and $(V_\fa(\epsilon),i,\phi_0)$. It has a free $C(L)$ action as follows: 
		\begin{align*}
		\zeta_*: V_\fa(\epsilon)\times \C^* &\longrightarrow V_\fa(\epsilon)\times \C^*\\
		(z_0,z_1,\cdots,z_n,\eta)&\mapsto(z_0\zeta^{b_0},z_1\zeta^{b_1},\cdots,z_n\zeta^{b_n},\eta\zeta),
		\end{align*} where $b_i=L/a_i,\zeta \in C(L)$.
	    The product function $\phi:=(\log|z|)^2/2)+\phi_0$ is a completely exhausting $J-$convex Morse function,  and the product Stein manifold is of finite type. By abuse of the notation, we use $\phi$ to denote the function on the quotient manifold as well. Also, $M_0:=\{ \phi\leq C\}$ is a Stein domain, where $C$ is greater than all critical values of $\phi$. Hence the completion $\widehat{M_0(\fa)}=(V_\fa(\epsilon)\times \C^*)/C(L)$ since $\phi$ is complete.
	    Oftentimes we will suppress $\fa$. If we consider the Weinstein structure  instead, the Weinstein domain can be cut out in other ways, as stated in the
		following lemma:
		\begin{lemma}\label{lemma for cutout weinstein domain}
		Suppose $(W,\lambda,\phi)$ is a finite type Weinstein manifold. Let $\psi:W\to \R$ be an exhausting Morse function. Suppose $X_\lambda$ is nondegenerate and gradient-like for $\psi$ outside $\{ \psi\leq 0\}$. Then $\{ \psi\leq 0 \}$ together with $\lambda$ is Liouville homotopic to a Weinstein domain $W_1:=\{ \phi \leq K\}$, for $K$ sufficiently large.
		\end{lemma}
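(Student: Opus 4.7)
The strategy is to show that $\{\psi \leq 0\}$ and $\{\phi \leq K\}$, both viewed as Liouville subdomains of $(W, \lambda)$, share the same Liouville completion, and then to construct an explicit Liouville homotopy between them.

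Finite type of $(W, \lambda, \phi)$ gives a value $K_0$ such that every critical point of $\phi$---equivalently, every zero of $X_\lambda$---lies in $\{\phi < K_0\}$. Since $X_\lambda$ is gradient-like for $\psi$ outside $\{\psi \leq 0\}$, it is nowhere vanishing there, so in fact all zeros of $X_\lambda$ lie in $W_0 := \{\psi \leq 0\}$. Using compactness of $W_0$ (from $\psi$ exhausting) and the exhausting property of $\phi$, choose $K \geq K_0$ large enough that $W_0 \subset \mathrm{int}\,W_1$, where $W_1 := \{\phi \leq K\}$. Outward transversality of $X_\lambda$ along $\partial W_0$ and $\partial W_1$ then makes $(W_0, \lambda)$ a Liouville domain and $(W_1, \lambda, \phi|_{W_1})$ a Weinstein domain.

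Next I identify $(W, \lambda)$ with the Liouville completion of $(W_0, \lambda)$. Completeness of $X_\lambda$ (from finite type) lets us form the Liouville flow map $\Phi : \partial W_0 \times [0, \infty) \to W \setminus \mathrm{int}\,W_0$, $(p, s) \mapsto \phi^s_{X_\lambda}(p)$. Injectivity follows from uniqueness of ODE solutions. The crucial step is the escape claim: every forward $X_\lambda$-orbit starting on $\partial W_0$ leaves every compact subset of $W$. Otherwise the orbit would be contained in some compact sublevel set of $\psi$, and its $\omega$-limit set would be $X_\lambda$-invariant and lie inside $\{\psi = L\}$ for some finite $L$ (because $\psi$ is monotone increasing along the orbit), contradicting $d\psi(X_\lambda) > 0$ there. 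A symmetric backward-time argument shows every point of $W \setminus \mathrm{int}\,W_0$ is reached by a unique $X_\lambda$-orbit emanating from $\partial W_0$, so $\Phi$ is a diffeomorphism; the standard symplectization computation then gives $\Phi^* \lambda = e^s \alpha$ with $\alpha = \lambda|_{\partial W_0}$, exhibiting $(W, \lambda)$ as the completion of $(W_0, \lambda)$. The same argument with $\phi$ and $W_1$ in place of $\psi$ and $W_0$ gives the matching identification for $W_1$.

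For the Liouville homotopy, consider the family of functions $\psi_t := (1-t)\psi + t(\phi - K)$, $t \in [0, 1]$, which satisfies $\{\psi_0 \leq 0\} = W_0$ and $\{\psi_1 \leq 0\} = W_1$. Both $\psi$ and $\phi - K$ are nonpositive on $W_0$ (using $W_0 \subset \mathrm{int}\,W_1$), so $W_0 \subseteq \{\psi_t \leq 0\}$ for every $t$, and one checks $\{\psi_t = 0\} \cap \mathrm{int}\,W_0 = \emptyset$. On the region $W \setminus W_0$ (which contains $\{\psi_t = 0\}$), both $d\psi(X_\lambda) > 0$ (hypothesis) and $d\phi(X_\lambda) > 0$ (since $X_\lambda$ is gradient-like for $\phi$ and all critical points of $\phi$ already lie in $W_0$), so $d\psi_t(X_\lambda) > 0$ there. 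Hence $W_t := \{\psi_t \leq 0\}$ is a continuous family of Liouville subdomains interpolating $W_0$ and $W_1$, and Corollary~\ref{liouville homotopy} supplies the Liouville homotopy. The principal obstacle throughout is the escape-to-infinity argument of the preceding paragraph; it is precisely where the nondegeneracy and gradient-like hypothesis on $X_\lambda$, together with the exhausting property of $\psi$, are essential, and once that is secure the remaining identifications are routine.
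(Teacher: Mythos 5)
Your proof is correct in substance but follows a genuinely different route from the paper's. The paper works entirely inside a fixed large sublevel set $W_2=\{\psi\le C\}$ and splices the two Morse functions: it chooses a cutoff $\rho$ with $\rho=1$ on $W_1$, $\rho=0$ near $\partial W_2$ and $X_\lambda(\rho)\le 0$, sets $M=\max_{W_2\setminus W_1}(\phi-\psi)$, and checks that $f=\rho\phi+(1-\rho)(\psi+M)$ satisfies $X_\lambda(f)>0$ on $W_2\setminus W_1$; since $f\equiv\phi$ on $W_1$ and $f$ has no critical points outside $W_1$, interpolating Weinstein structures on the fixed compact domain $W_2$ gives the Liouville homotopy, so Corollary~\ref{liouville homotopy} applies verbatim. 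You instead interpolate the defining functions, $\psi_t=(1-t)\psi+t(\phi-K)$, to get a family of Liouville subdomains $W_t=\{\psi_t\le 0\}$ with $X_\lambda$ outward transverse to each boundary, and separately identify $(W,\lambda)$ with the completion of both $W_0$ and $W_1$ via the escape-to-infinity argument for the Liouville flow. Both arguments rest on the same two facts---all zeros of $X_\lambda$ lie in $\{\psi\le 0\}$, and $d\psi(X_\lambda),\,d\phi(X_\lambda)>0$ on the intermediate region---but your convexity observation $(1-t)\,d\psi(X_\lambda)+t\,d\phi(X_\lambda)>0$ replaces the paper's cutoff-and-constant trick. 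One caveat: Corollary~\ref{liouville homotopy} concerns a family of Liouville forms on a \emph{fixed} compact manifold, while your $W_t$ are varying subdomains carrying the same form $\lambda$; to quote it you should first transport the $W_t$ to a fixed domain (e.g.\ by an ambient isotopy along $X_\lambda$), or simply note that your completion step already shows $\widehat{W_0}$ and $\widehat{W_1}$ are both Liouville isomorphic to $(W,\lambda)$, which is the conclusion the paper actually uses downstream. In exchange, the paper's splicing yields an honest homotopy of Weinstein structures on one compact domain with no noncompact dynamics, whereas your route is more elementary and makes transparent exactly where the nonvanishing of $X_\lambda$ outside $\{\psi\le 0\}$ and the exhausting hypotheses enter.
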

	    \begin{proof}
	    Let $K$ satisfy 
	    \[\{ \psi\leq 0\}\subset W_1 \subset W_2:=\{ \psi\leq C\}
	    \]
	    for some large enough $C$ (conditions will be evident along the line of proof). Notice that $\{ \psi\leq 0\}$ is Liouville homotopic to $W_2$. Fix a smooth function $\rho$ (it can be constructed on the level sets of $\phi$) such that
	    \begin{itemize}
	    	\item $\rho=1$ in $W_1$, $\rho=0$ outside $W_2$.
	    	\item $X_\lambda(\rho)\leq 0$.
	    \end{itemize}
        Let $M:=\max\limits_{p\in W_2\setminus W_1}(\phi-\psi)$.
	    Now consider the function $f=\rho\phi+(1-\rho)(\psi+M)$. We will show that $f$ is Morse and $X_\lambda$ is gradient-like for $f$. We only need to verify $X$ is gradient-like in $W_2\setminus W_1$.
	    We have \[X_\lambda(f)=\rho X_\lambda(\phi)+(1-\rho)X_\lambda(\psi)+(\phi-\psi-M)(X_\lambda(\rho))\geq \rho X_\lambda(\phi)+(1-\rho)X_\lambda(\psi)>0
	    \]
	    So $X_\lambda$ is gradient-like for $f$ and $f$ doesn't have new critical points outside $W_1$. Because $f|_{W_1}=\phi|_{W_1}$, $f$ is Morse.
	    Hence $(_\lambda,f)$  is also a Weinstein structure on
	    $W_2$, and a linear interpolation between $f$ and $\phi$ gives rise to a family of Weinstein structures. In particular, it gives rise to a Liouville homotopy.
	    \end{proof}

	     In fact, we have an explicit form for the topology of $M_0$.  The following quotient map
		\begin{align*}
		\pi:V_\fa(\epsilon)\times \mathbb{C}^*&\rightarrow \mathbb{C}^{n+1}\setminus V_\fa(0)\\
		(z_0,z_1,\cdots,z_n,t)&\mapsto(z_0t^{b_0},z_1t^{b_1},\cdots,z_nt^{b_n})
		\end{align*}
		coincides with the $C(L)-$ action quotient.

		Therefore $\widehat{M_0(\fa)}$ (hence $M_0$) is diffeomorphic $\mathbb{C}^{n+1}\setminus V_\fa(0)$.We have the following proposition about $M_0$:
		\begin{prop}\label{fundamental group of M0}
			Let $M_0(\fa),\, n\geq 3$ be the manifold defined above. Then $\pi_1(M_0)=\mathbb{Z},H_i(M_0)=0,i\geq 2, i\neq n, n+1
			$.
		\end{prop}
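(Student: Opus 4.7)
The plan is to leverage the identification made just above the proposition: via the explicit $C(L)$-quotient map $\pi$, we have $\widehat{M_0} \cong (V_\fa(\epsilon) \times \C^*)/C(L) \cong \C^{n+1} \setminus V_\fa(0)$. Since $M_0 = \{\phi \leq C\}$ is a sublevel set containing all critical points of the complete exhausting Morse function $\phi$, it is a deformation retract of its completion $\widehat{M_0}$, so it suffices to compute the homotopy type of $(V_\fa(\epsilon) \times \C^*)/C(L)$.

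The key observation is that second-factor projection $p_2\colon V_\fa(\epsilon) \times \C^* \to \C^*$ is $C(L)$-equivariant, where $C(L)$ acts on $\C^*$ by multiplication by $\zeta$. Since this action is free and proper with quotient again $\C^*$ (via the $L$-th power map), passing to quotients yields a locally trivial fiber bundle
\[ V_\fa(\epsilon) \longrightarrow \widehat{M_0} \longrightarrow \C^*. \]
Next I would invoke Milnor's theorem on isolated hypersurface singularities applied to $f(\fz) = z_0^{a_0}+\cdots+z_n^{a_n}$: the smooth affine variety $V_\fa(\epsilon)$ deformation retracts onto the local Milnor fiber at the origin, hence is $(n-1)$-connected and has the homotopy type of a bouquet of $n$-spheres. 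In particular, its reduced homology is concentrated in degree $n$.

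With these two ingredients the conclusion follows formally. For $\pi_1$, since $n \geq 3$ so that $V_\fa(\epsilon)$ is both connected and simply connected, the relevant portion of the homotopy long exact sequence reads
\[ 0 = \pi_1(V_\fa(\epsilon)) \longrightarrow \pi_1(\widehat{M_0}) \longrightarrow \pi_1(\C^*) \longrightarrow \pi_0(V_\fa(\epsilon)) = 0, \]
whence $\pi_1(M_0) \cong \pi_1(\C^*) = \Z$. For higher homology, I would apply the Wang exact sequence associated to a fibration over $S^1 \simeq \C^*$:
\[ \cdots \to H_i(V_\fa(\epsilon)) \to H_i(\widehat{M_0}) \to H_{i-1}(V_\fa(\epsilon)) \xrightarrow{\mathrm{id} - h_*} H_{i-1}(V_\fa(\epsilon)) \to \cdots \]
where $h$ denotes the geometric monodromy. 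For every $i \geq 2$ with $i \notin \{n, n+1\}$, both $H_i$ and $H_{i-1}$ of the Milnor fiber vanish by the previous step, forcing $H_i(M_0) = 0$.

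There is no serious technical obstacle: the only point warranting care is verifying local triviality of the quotient bundle, which is immediate because $C(L)$ acts on the second factor by rotation and the projection from the product is manifestly an equivariant trivial bundle. The computation of $H_n$ and $H_{n+1}$ (not required by the proposition) would demand explicit knowledge of the monodromy action on the middle homology of the Milnor fiber, but the vanishing statement outside this range is determined purely by the sparsity of $H_*(V_\fa(\epsilon))$.
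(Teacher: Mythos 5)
Your argument is correct, and it rests on the same underlying object as the paper's proof (the Milnor fibration with fiber a bouquet of $n$-spheres), but it packages the two halves differently. For $\pi_1$ the proofs essentially coincide: the paper deformation retracts $\C^{n+1}\setminus V_\fa(0)$ onto $S^{2n+1}\setminus\Sigma(\fa)$ and uses the classical Milnor fibration over $S^1$ with simply connected fiber, while you realize the same fibration intrinsically as the quotient bundle $(V_\fa(\epsilon)\times\C^*)/C(L)\to\C^*/C(L)\cong\C^*$, a mapping-torus picture whose monodromy is exactly the $\zeta$-action; your verification of local triviality via the covering $t\mapsto t^L$ is sound. Where you genuinely diverge is the homology computation: the paper applies Alexander duality, $\tilde H_i(S^{2n+1}\setminus\Sigma(\fa))\cong\tilde H^{2n-i}(\Sigma(\fa))$, together with the $(n-2)$-connectedness of the Brieskorn manifold (Proposition~\ref{high connectedness of brieskorn manifold}), whereas you run the Wang sequence of the fibration over $S^1\simeq\C^*$, which needs only the $(n-1)$-connectedness of the Milnor fiber and no information about the link at all. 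Your route is somewhat more self-contained (a single connectivity input, no duality), at the cost of the bundle bookkeeping and of invoking the identification of the affine fiber $V_\fa(\epsilon)$ with the local Milnor fiber, which holds here because $f$ is weighted homogeneous with an isolated singularity; the paper's route avoids that by quoting the connectivity of $\Sigma(\fa)$, which it uses elsewhere anyway. Your preliminary step, that $M_0=\{\phi\leq C\}$ is a deformation retract of $\widehat{M_0}$, is also fine since $\phi$ is exhausting of finite type and $C$ exceeds all critical values.
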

		\begin{proof}
			It suffices to prove the results for $\mathbb{C}^{n+1}\setminus V_\fa(0)$. We have a deformation retraction 
		\[r:\mathbb{C}^{n+1}\setminus V_\fa(0)\to S^{2n+1}\setminus \Sigma(\mathbf{a}),
		\]
		and we have the Milnor fibration:
		\begin{align*}
		S^{2n+1}\setminus \Sigma(\mathbf{a})&\longrightarrow S^1\\
		(z_0,z_1,\cdots,z_n)&\mapsto \frac{f(\mathbf{z})}{||f(\mathbf{z})||}
		\end{align*}
		The fibers are homotopic to a bouquet of $n-$spheres, which is simply connected since $n\geq 3$, the long exact sequence gives us $\pi_1(M_0)=\mathbb{Z}$.
		 Meanwhile, $H_*(M_0)=H_*(S^{2n+1}\setminus \Sigma(\mathbf{a}))$, and for $1<i<2n$, by Alexander duality we have
		 \[ \tilde{H}_i(S^{2n+1}\setminus \Sigma(\mathbf{a}))= 
		 \tilde{H}^{2n-i}(\Sigma(\mathbf{a})).
		 \]
		 The conclusion follows Theorem~\ref{high connectedness of brieskorn manifold}.
		\end{proof}

		\begin{prop}\label{contractible after surgery}
			 Let $M_0$ be a manifold with $\pi_1(M_0)=\Z, H_i(M_0)=0,i\geq 2, i\neq n, n+1$.
			 Suppose $\gamma$ is a generator for $\pi_1(M_0)$ and $M_1$ is the result of attaching a 2-handle along $\gamma$. Then $\tilde{H}_i(M_1)=0, i\neq n, n+1$.
		\end{prop}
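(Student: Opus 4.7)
The plan is to exploit the long exact sequence of the pair $(M_1, M_0)$ together with excision, reducing everything to the fact that attaching a $2$-handle to $M_0$ along $\gamma$ amounts to coning off the generator of $\pi_1(M_0) = \Z$ (which by Hurewicz is also the generator of $H_1(M_0)=\Z$).

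First I would set up notation: write $M_1 = M_0 \cup_{\gamma} H$, where the handle is $H = D^2 \times D^{d-2}$ with $d = \dim M_0$, attached along the solid torus neighborhood $S^1 \times D^{d-2}$ of $\gamma$. By excision and a deformation retraction onto the core disk,
\[
H_i(M_1, M_0) \;\cong\; H_i(D^2 \times D^{d-2},\, S^1 \times D^{d-2}) \;\cong\; H_i(D^2, S^1),
\]
so $H_i(M_1, M_0) = \Z$ for $i=2$ and vanishes otherwise. In addition, the connecting map $\partial : H_2(M_1, M_0) \to H_1(M_0)$ sends the generator to $[\gamma] \in H_1(M_0)$, and since $\gamma$ generates $\pi_1(M_0) = \Z$, the Hurewicz map identifies $[\gamma]$ with a generator of $H_1(M_0)\cong \Z$; hence $\partial$ is an isomorphism.

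Now plug these computations into the long exact sequence of the pair. For $i \geq 3$ with $i \neq n, n+1$, the exact sequence
\[
H_i(M_0) \longrightarrow H_i(M_1) \longrightarrow H_i(M_1, M_0)
\]
has both outer terms zero by hypothesis and by the excision computation, so $H_i(M_1) = 0$. For $i = 2$ (assuming $2 \neq n, n+1$), the piece
\[
H_2(M_0) \longrightarrow H_2(M_1) \longrightarrow H_2(M_1, M_0) \stackrel{\partial}{\longrightarrow} H_1(M_0)
\]
has $H_2(M_0) = 0$ on the left and $\partial$ injective (in fact an isomorphism) on the right, forcing $H_2(M_1) = 0$. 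For $i = 1$ the sequence
\[
H_2(M_1, M_0) \stackrel{\partial}{\longrightarrow} H_1(M_0) \longrightarrow H_1(M_1) \longrightarrow H_1(M_1, M_0) = 0
\]
together with the surjectivity of $\partial$ gives $H_1(M_1) = 0$ (consistent with van Kampen, which kills the only generator of $\pi_1$). Finally $\tilde H_0(M_1) = 0$ since $M_1$ remains connected after attaching a handle.

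This approach is essentially mechanical, so I do not anticipate a significant obstacle: the only small point to verify is that $\partial$ indeed sends the relative generator to the class of the attaching circle, which is the standard description of the connecting homomorphism for a handle attachment, combined with the observation that this class generates $H_1(M_0)$ by the Hurewicz theorem.
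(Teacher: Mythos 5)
Your proof is correct and takes essentially the same route as the paper: the paper runs the Mayer--Vietoris sequence for $M_1 = M_0 \cup H$ while you use excision and the long exact sequence of the pair $(M_1,M_0)$, but both arguments hinge on the identical key point that $[\gamma]$ generates $H_1(M_0)\cong\Z$ (Hurewicz/abelianization), forcing the relevant map to be an isomorphism so that $H_2(M_1)=0$, with higher homology unaffected by the $2$-handle. The standard fact you flag, that $\partial\colon H_2(M_1,M_0)\to H_1(M_0)$ sends the relative generator to the class of the attaching circle, is exactly the content the paper packages into the Mayer--Vietoris map $i_*$, so there is no gap.
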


		\begin{proof}
		The attaching 2-handle kills the generator $[\gamma]$ so $\pi_1(M_1)=0$. Meanwhile, $H_i(M_0)=0,i\geq 2,i\neq n,n+1$ implies $H_k(M_1)=0,k\geq 3, k\neq n, n+1$ since attaching a 2-handle does not change higher homology. Let's denote the 2-handle by $H$. We have the Mayer-Vietoris sequence:
			\[\cdots\to H_2(H)\oplus H_2(M_0)\to H_2(M_1)\to H_1(M_0\cap H)\xrightarrow{i_*} H_1(H)\oplus H_1(M_0)\to H_1(M_1)\to \cdots
			\]
			Here $[\gamma]$ is the generator of both $H_1(M_0\cap H) $ and $H_1(M_0)$, so $i_*$ is isomorphism. Hence we have
			\[ \cdots \to 0\to H_2(M_1) \to \mathbb{Z}\xrightarrow{\cong } \mathbb{Z}\to 0 \to \cdots
			\]
			So $H_2(M_1)=0$. The conclusion follows.
		\end{proof}

		\subsubsection{Handle attachment and trivialization}\label{specific trivialization}
		
		Now we need to fix a trivialization of the canonical bundle $\kappa_{\widehat{M_0}}$ of $(T\widehat{M_0},J)$. Since we have the $C(L)-$ equivariant quotient map $V_\fa(\epsilon)\times\C^* \to \widehat{M_0}$, it suffices to fix $C(L)-$ trivializations on both $V_\fa(\epsilon)$ and $\C^*$ since
		\[T(V_\fa(\epsilon)\times\C^*)=TV_\fa(\epsilon)\times T\mathbb{C}^* 
		\]
		Notice that the trivialization of the symplectic complement in Proposition~\ref{trivialization of the symplectic complement} is $C(L)-$equivariant, and the standard trivialization of $T\C^{n+1}$ is also $C(L)-$equivariant, 
		as long as $\sum_{i=0}^{n}\frac{1}{a_i}\in \Z$.
		Indeed, if we take $\Omega=dz_0\wedge dz_2\wedge\cdots \wedge dz_n$, then the $C(L)-$actions on $\Omega$ is \[\eta^*(\Omega)=e^{\frac{2\pi i}{a_0}}dz_0\wedge\cdots\wedge e^{\frac{2\pi i}{a_n}}dz_n=e^{2\pi i\sum\frac{1}{a_i} }\Omega=\Omega.\]
		 Therefore a $C(L)-$equivariant trivialization of $TV_\fa(\epsilon)$ exists.
		Since $V_\fa(\epsilon)$ is simply connected, the trivialization of $TV_\fa(\epsilon)$ is homotopically unique. We will take the natural trivialization of $T\mathbb{C}^*\to \mathbb{C}^*\times \mathbb{C}$, which determines the trivialization $\Phi$ of $T(V_\fa(\epsilon)\times\C^*)$ . We will also fix $\Phi$ for the rest of this paper, which will be crucial in two places:
		\begin{itemize}
			\item  Determining the framing for the Weinstein 2-handle attachment in Proposition~\ref{handle attachment for construction}.
			\item  Determining the trivialization for the calculation of Conley-Zehnder index in Proposition~\ref{main proposition}.
		\end{itemize}
		
		\begin{prop}\label{handle attachment for construction}
			There is a  contractible Weinstein domain $(M_1,\omega_1,X_1,\psi_1)$ obtained from
			the Weinstein domain $(M_0,-d d^\C\phi,\nabla_\phi\phi,\phi)$ by attaching a 2-handle such that the canonical saturation (see Subsection~\ref{weinstein handlebody}) coincides with the trivialization $\Phi$.
		\end{prop}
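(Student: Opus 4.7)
The plan is to attach a Weinstein $2$-handle to $M_0$ along a loop representing the generator of $\pi_1(M_0)\cong\Z$ (see Proposition~\ref{fundamental group of M0}), choosing its framing so that the trivialization $\Phi$ of the canonical bundle extends across the handle. The topological claims will then follow from Proposition~\ref{contractible after surgery}, while the framing statement reduces to a short obstruction-theoretic calculation.

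First I will produce the attaching loop. From the quotient description $\widehat{M_0}=(V_\fa(\epsilon)\times\C^*)/C(L)$, a generator of $\pi_1(\widehat{M_0})$ is represented by the image in $\widehat{M_0}$ of any arc of the form $t\mapsto (z_*,t_0 e^{2\pi it/L})$, $t\in[0,1]$, for a fixed $z_*\in V_\fa(\epsilon)$ and $t_0\in\C^*$. Using the complete Liouville flow on $\widehat{M_0}$ (see Remark~\ref{Liouville completion and stein domain}), I isotope this loop onto the boundary $\partial M_0$. Since $\dim\partial M_0=2n+1\geq 5$, the standard $h$-principle for isotropic loops in contact manifolds (or an explicit perturbation in Darboux charts) allows me to approximate the result, through embedded loops, by a smoothly embedded isotropic circle $h:\Lambda=S^1\hookrightarrow(\partial M_0,\xi)$ in the same free homotopy class as the generator.

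Next I fix the framing. By Lemma~\ref{spliting of the contact structure}, $\xi|_{h(\Lambda)}=T\Lambda\oplus J(T\Lambda)\oplus CSN_{\partial M_0}(\Lambda)$, so once the first two summands are trivialized canonically along $\Lambda$, a saturation covering $dh$ is equivalent to a homotopy class of trivialization of the rank-$(n-1)$ complex bundle $CSN_{\partial M_0}(\Lambda)$ over $S^1$. The set of such homotopy classes is a torsor over $\pi_1(U(n-1))\cong\Z$. After the handle is attached, the trivialization $\Phi$ of $\kappa_{M_0}$ extends across the cocore $2$-disc if and only if the induced trivialization of $\kappa|_{\Lambda}$ is null-homotopic in $\pi_1(U(1))\cong\Z$, and varying the saturation by $k\in\pi_1(U(n-1))$ changes this obstruction by $\det_*(k)\in\pi_1(U(1))$. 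Because $\det_*:\pi_1(U(n-1))\to\pi_1(U(1))$ is an isomorphism (both are $\Z$, detected by the first Chern class), there is a unique homotopy class of saturation making the obstruction vanish. With this choice of saturation $\Phi_\Lambda$, Theorem~\ref{weinstein handle attaching} produces the Weinstein domain $(M_1,\omega_1,X_1,\psi_1)$ and ensures that the canonical saturation coincides with $\Phi$.

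Finally, $\pi_1(M_1)=0$ because the attached $2$-handle kills the generator of $\pi_1(M_0)$, and the homological information is supplied by Proposition~\ref{contractible after surgery}. The main obstacle is the third step, namely choosing the framing so that the canonical saturation matches $\Phi$; this is where the obstruction-theoretic reduction to the isomorphism $\det_*:\pi_1(U(n-1))\to\pi_1(U(1))$ is essential.
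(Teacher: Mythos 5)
Your route is essentially the paper's: represent the generator of $\pi_1(M_0)$ by an embedded isotropic circle in $\partial M_0$ (the paper gets this from Proposition~\ref{fundamental group of M0} together with the $h$-principle of Lemma~\ref{h-principle}), attach a Weinstein $2$-handle by Theorem~\ref{weinstein handle attaching} with the framing chosen so that $\Phi$ extends over the handle, and read off the topology from Proposition~\ref{contractible after surgery}. The one place you go beyond the paper is the framing step: the paper merely asserts that the trivialization extends, whereas you justify it by noting that homotopy classes of saturations along $\Lambda=S^1$ form a torsor over $\pi_1(U(n-1))$, that the obstruction to extending $\Phi$ lies in $\pi_1(U(1))$, and that $\det_*\colon\pi_1(U(n-1))\to\pi_1(U(1))$ is an isomorphism. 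That argument is correct (over a circle the complex bundle $CSN$ is automatically trivial, and $T\Lambda\oplus J(T\Lambda)$ is canonically trivialized), and it is a genuine improvement in precision; note only that the extension question is over the \emph{core} $2$-disc bounded by $\Lambda$, not the cocore.

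One concrete slip in your first step: the arc $t\mapsto(z_*,t_0e^{2\pi it/L})$ does \emph{not} descend to a closed loop in $\widehat{M_0}=(V_\fa(\epsilon)\times\C^*)/C(L)$. Its endpoints $(z_*,t_0)$ and $(z_*,\zeta t_0)$ lie in the same $C(L)$-orbit only if $\zeta_*z_*=z_*$, and $\zeta_*$ has no fixed points on $V_\fa(\epsilon)$ (every $a_j>1$ and $0\notin V_\fa(\epsilon)$). The fix is easy: take $t\mapsto(z(t),t_0e^{2\pi it/L})$ for any path $z(t)$ in $V_\fa(\epsilon)$ from $z_*$ to $\zeta_*z_*$; under the identification of $\widehat{M_0}$ with $\C^{n+1}\setminus V_\fa(0)$ the function $f$ then winds exactly once around $0$ along this loop, so it generates $\pi_1(M_0)$ (or simply cite $\pi_1(\partial M_0)\cong\pi_1(M_0)=\Z$ and the $h$-principle, as the paper does, without an explicit formula). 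Finally, be aware that, exactly as in the paper's own proof, what is actually established is $\pi_1(M_1)=0$ together with $\tilde H_i(M_1)=0$ for $i\neq n,n+1$ from Proposition~\ref{contractible after surgery}, not literal contractibility; the word ``contractible'' in the statement is a leftover and is not what is used later, so your proposal is not deficient relative to the paper on this point.
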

		\begin{proof}
			If we can find an isotropic circle in $M_0$ which generates the fundamental group, then by Theorem~\ref{weinstein handle attaching}, we can attach a Weinstein handle in such a way that the trivialization of the contact structure extends to the Weinstein handle body. The existence of such an isotropic circle is guaranteed by the $h-$principle in lemma~\ref{h-principle}, which states a subcritical embedding can be perturbed into an isotropic embedding.
		\end{proof}

		Let $M$ be a contact manifold of dimension $2n+1$ and $V$ a smooth manifold of subcritical dimension, i.e. $\dim V \leq n$. Let $Mono^{emb}$ be the space of monomorphisms $TV\to TM$ which cover embeddings $V\to M$, and $Mono^{emb}_{isot}$ its subspace which consists of isotropic monomorphisms $F: TV\to TM$.  Let $Mono_{isot}^{emb}$ be the space of homotopies
		\[Mono^{emb}_{isot}=\{ F_t,t\in [0,1]| F_t \in Mono^{emb}, F_0=df_0, F_1 \in Mono_{isot}^{emb}\}.
		\]
		The space $Emb_{isot}$ of isotropic embeddings $V\to M$ can be viewed as a subspace of $Mono_{isot}^{emb}$. Indeed, we can associate to $f\in Emb_{isot}$ the homotopy $F_t\equiv df, t\in [0,1]$, in $Mono^{emb}_{isot}$.
		\begin{lemma}[Proposition 12.4.1 \cite{eliashberg2002introduction}]\label{h-principle}
			The inclusion
			\[Emb_{isot} \hookrightarrow Mono^{emb}_{isot}
			\]
			is a homotopy equivalence.
		\end{lemma}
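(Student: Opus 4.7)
The plan is to follow the standard Gromov--Eliashberg h-principle strategy for isotropic embeddings of subcritical dimension. Since both spaces are fibrant over suitable universal bases, it suffices to verify the h-principle in parametric form: for any compact family $F_{t,s}\in Mono^{emb}_{isot}$, $s\in D^k$, with $F_{t,s}=df_{t,s}$ holonomic for $s\in\partial D^k$, one must produce a homotopy through $Mono^{emb}_{isot}$ to a family which is holonomic (i.e.\ genuinely isotropic) for all $s$, fixed on $\partial D^k$. This decomposes into two independent steps: an h-principle for isotropic \emph{immersions}, followed by a general-position argument upgrading immersions to embeddings.

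For the first step, I would invoke Gromov's h-principle for isotropic immersions. The differential relation $\mathcal{R}_{iso}\subset J^1(V,M)$ cutting out $1$-jets $j^1f$ with $df(TV)\subset\xi$ and $f^*d\alpha=0$ is microflexible (in the sense of~\cite{eliashberg2002introduction}, Chapter~12); this is the technical heart of the argument and relies on being able to correct small failures of the isotropic condition by $\mathcal{C}^0$-small homotopies supported in coordinate charts modelled on $J^1(\R^k,\R^{2n+1}-k)$. Combined with Gromov's theorem that microflexible invariant relations satisfy the parametric h-principle over open manifolds, and a standard skeleton-by-skeleton induction using a triangulation of $V$ together with the Holonomic Approximation Theorem, one obtains the inclusion $Imm_{isot}\hookrightarrow Mono_{isot}$ is a weak homotopy equivalence (both as spaces of not-necessarily-embedded objects).

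For the second step, I would use general position. Since $\dim V\le n$, we have $2\dim V\le 2n<2n+1=\dim M$, so a generic perturbation of an isotropic immersion is an embedding; the isotropic condition is preserved under $\mathcal{C}^0$-small perturbations after a further $\mathcal{C}^0$-small correction (again using microflexibility of $\mathcal{R}_{iso}$). For homotopies, the relevant inequality is $2\dim V+1\le 2n+1=\dim M$, which likewise allows generic isotopies of immersions to be made through embeddings. Applying this parametrically to the family produced in step one yields a family in $Emb_{isot}$ and shows $\pi_k(Mono^{emb}_{isot},Emb_{isot})=0$ for all $k\ge 0$, giving the desired weak equivalence.

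The main obstacle is the microflexibility of the isotropic differential relation, which is the nontrivial input; everything else (holonomic approximation, general position, skeleton induction) is standard. In practice one simply cites \cite{eliashberg2002introduction}, Chapter~12, where the full argument for isotropic embeddings of subcritical dimension into contact manifolds is carried out in detail; the statement of the lemma is precisely Proposition~12.4.1 there.
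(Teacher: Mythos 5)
The paper offers no argument of its own here: the lemma is quoted verbatim as Proposition 12.4.1 of \cite{eliashberg2002introduction} and used as a black box (its only role is to supply an isotropic circle generating $\pi_1(M_0)$, so $\dim V=1$). Your sketch --- h-principle for isotropic immersions via microflexibility of the isotropic relation plus holonomic approximation, then an upgrade from immersions to embeddings by general position in subcritical dimension --- is exactly the route taken in Chapter 12 of that book, so in substance you and the paper are doing the same thing: deferring to Eliashberg--Mishachev.

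One point in your write-up needs tightening. For the parametric/isotopy part of the general-position step you invoke the non-strict inequality $2\dim V+1\le \dim M=2n+1$; but to make a generic homotopy of immersions double-point free one needs the strict inequality $2\dim V+1<\dim M$, i.e.\ $\dim V<n$. At the borderline $\dim V=n$ the submanifold is Legendrian, double points appear at isolated times in generic families, and the h-principle genuinely fails (formal Legendrian embeddings are not classified by genuine ones --- this is the whole loose/non-loose phenomenon), so an argument that covers $\dim V=n$ proves something false. The correct hypothesis, as in the cited Proposition 12.4.1, is strict subcriticality; the paper's phrase ``subcritical dimension, i.e.\ $\dim V\le n$'' is itself loose on this point. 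None of this affects the application, since the lemma is only used for an isotropic $S^1$ in a manifold of dimension $2n\ge 18$, but your proof text should replace ``$\le$'' by ``$<$'' (equivalently $\dim V\le n-1$) in the family step.
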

		The above $h-$principle also holds in the relative and $\mathcal{C}^0-$dense forms.
		\begin{remark}\label{it is actually stein}
			By Theorem~\ref{from weinstein to stein}, $(M_1,\omega_1,X_1,\psi_1)$ is homotopic to a Stein domain through Weinstein structures. We denote the Stein structure by the same notation $(M_1,J_1,\phi_1)$.
		\end{remark}

		\subsection{The Weinstein domain $M_0$}
		 We notice $(V_\fa(\epsilon),-d^\C\phi_0)=(\widehat{X_\epsilon^1},-d^\C\phi_0)$ while $(X_\epsilon^1,-d^\C\phi_0)$ is $C(L)-$equivariant Liouville homotopic to $(W_\epsilon^1,\lambda)$, 
		 and in light of lemma~\ref{lemma for cutout weinstein domain}, we can define different Weinstein domains in $(\widehat{W_\epsilon^1}\times \C^*,\lambda_0:=\lambda+rd\theta)$ by different functions.
		
		 First of all, we need the following technical proposition.

		\begin{prop}\label{functions for the smoothing}
		Let $(M,\lambda)$ be a $G$-equivariant Liouville domain,and $R$ be the coordinate for its cylindrical end. Assume $\phi$ is a $G$-equivariant Morse function on $M$ such that $X(\phi)<0$ near the boundary of $M$. Then for any $\epsilon>0$, there exists $\delta_1\gg \delta_2>0$ and a $G$-equivariant Morse function $f$(see Figure~\ref{Fig:Morse function f}) such that:
		\begin{itemize}
			\item $||1-f||_{\mathcal{C}^2}<\epsilon$ in the region $M\setminus \{R>1-\delta_1+2 \delta_2\}$.
			\item f and $\phi$ have same set of critical points, and the Morse indices are the same.
			
			\item f satisfies the equation 
			\begin{equation}\label{equ: def of f}
			\Big(\frac{f}{a}\Big)^2+\Big(\frac{R-(1-\delta_1)}{\delta_1}\Big)^6=1
			\end{equation}
			on the region $1-\delta_1+\delta_2<R\leq 1$, for some $0<a<1$.
			
		\end{itemize}
		\begin{figure}
			\centering
			\begin{tikzpicture}
		\draw[-latex](0,0)--(0,6)node[left]{$r$};
		\draw  plot[smooth, tension=.47] coordinates {(0,0)(0.3,0)(0.6,-0.2)(0.9,0.15)
			(1.4,-0.05)(1.7,0.08)(1.9,-0.09)(2.2,0)(7,0)};
		\draw[|-|](7,0)node[below]{$1-\delta_1$}--(7.8,0);
		\draw[-|](7.8,0)--(8.1,0);
		\draw [dashed][-latex] (8.1,-0.5)node [below]{$1-\delta_1+2\delta_2$}--(8.1,0);
		\draw[dashed](7.8,1.3)--(7.8,5.3);
		\draw[dashed](8.1,0)--(8.1,4.3);
		\draw [dashed][-latex] (7.8,0.5)node [above]{$1-\delta_1+\delta_2$}--(7.8,0);
		\draw[-|](8.1,0)--(9,0);
		\draw[-latex](9,0)node[below]{$1$}--(10,0);
		\draw node at (1.5,-0.4){interior of $W_\epsilon^1$};
		\draw node at (7.5,-1.35){cylindrical coordinate $R$};
		\draw[dashed](0,5)node[left]{$1$}--(9,5);
		\draw  plot[smooth, tension=.47] coordinates {(0.1,5.1)
			(0.3,4.98)(0.4,5)(0.45,5.04)(0.7,5)(0.8,5.1)(0.9,4.95)
			(1.4,5.05)(1.89,4.93)(2.5,5.08)(3.43,4.90)(4,5.07)(4.1,5)
			(4.15,4.92)(4.7,5)(5.0,4.92)(5.2,5.05)(5.3,5.08)(5.6,5.05)(6,4.95)
			(6.6,4.85)(7,5)};
		\draw[scale=1,domain=0:1.57,smooth,variable=\t] plot ({7+2*sin(\t r)^0.5},{5*cos(\t r)});
		
		\draw[|<-](7.8,5.3)--(8.2,5.3);
		\draw[- >| ](8.6,5.3)--(9,5.3);
		\draw [-latex](7,5.9)node[above]{ $f$ only depends on $R$, $f'(R)<0,(f(R)^2)''<0$}--(8.5,5.3);   
		\draw [-latex] [dashed](7.5,2)node [left]{$f(R)=(1-\epsilon_2)\sqrt{1-\Big(\frac{R-(1-\delta_1)}{\delta_1}\Big)^6}$}--(8.8,2);
		\draw[|<-](0,4.3)--(3.5,4.3);
		\draw[- >| ](4.6,4.3)--(8.1,4.3);
		\draw [-latex] (3.9,3.7)node [below]{$||1-f||_{\mathcal{C}^2}<\epsilon$}--(4.1,4.3);
		\end{tikzpicture}
			\caption{$G-$equivariant Morse function $f$.}\label{Fig:Morse function f}
		\end{figure}
	\end{prop}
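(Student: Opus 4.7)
The plan is to assemble $f$ from three ingredients: the explicit collar formula for $R$ near $\partial M$, a $C^2$-small Morse perturbation of the constant $1$ in the compact interior, and a smooth interpolation on a thin transition strip. $G$-equivariance will be automatic throughout because the collar expression depends only on $R$ (which is $G$-invariant, since $G$ preserves $\lambda$ and $\partial M$) and the interior expression is an affine combination of $1$ and the $G$-invariant Morse function $\phi$.

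Concretely, on $\{1-\delta_1+\delta_2<R\leq 1\}$ I would set
\[
f(R):=a\sqrt{1-\big((R-(1-\delta_1))/\delta_1\big)^6}
\]
with $a$ chosen slightly less than $1$. This has $f'(R)<0$ with no critical points, vanishes on $\partial M$, and is $C^2$-close to the constant $a$ throughout the overlap window $(1-\delta_1+\delta_2,\,1-\delta_1+2\delta_2)$ provided $\delta_2/\delta_1$ is small, because the polynomial $((R-(1-\delta_1))/\delta_1)^6$ is flat of order six at $R=1-\delta_1$. On $\{R\leq 1-\delta_1+\delta_2/2\}$ I would set $f:=1+\eta\phi$ with $\eta>0$ small; this has the same critical set and Morse indices as $\phi$ since its Hessian is $\eta\,\mathrm{Hess}(\phi)$, and the hypothesis $X(\phi)<0$ near $\partial M$ keeps all critical points of $\phi$ well inside the interior, away from the collar. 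Finally, on the transition strip $\{1-\delta_1+\delta_2/2<R\leq 1-\delta_1+\delta_2\}$, glue by a convex combination $f:=\rho(R)(1+\eta\phi)+(1-\rho(R))f_{\mathrm{coll}}(R)$ using a smooth $G$-invariant cutoff $\rho(R)$.

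The main obstacle is to verify that no spurious critical points are introduced in the gluing strip. The tangential component of $df$ there is $\rho(R)\eta\,d^{\mathrm{tan}}\phi$, which can vanish where $d^{\mathrm{tan}}\phi=0$, so one must force the $R$-component of $df$ to be strictly negative throughout the strip. A direct estimate shows this $R$-component is dominated in absolute value by $(1-\rho(R))f'_{\mathrm{coll}}(R)\sim -a\delta_2^5/\delta_1^6$ against perturbations of order $\epsilon/\delta_2+\eta$, so the parameters should be fixed in the order $a=1-O(\epsilon)$, then $\delta_1$, then $\delta_2/\delta_1$ small enough that both $|1-f_{\mathrm{coll}}|<\epsilon$ on the overlap and $\delta_2^6/\delta_1^6\gg\epsilon$ hold (e.g.\ $\delta_2/\delta_1\sim\epsilon^{1/6}$), and finally $\eta\ll\delta_2^5/\delta_1^6$. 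With these choices, the three conditions in the statement are satisfied simultaneously and the resulting $f$ is a globally Morse, $G$-equivariant function whose critical set and Morse indices coincide with those of $\phi$.
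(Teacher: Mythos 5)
Your construction is the same as the paper's (explicit collar profile $a\sqrt{1-\bigl((R-(1-\delta_1))/\delta_1\bigr)^6}$, an interior function that is a $\mathcal{C}^2$-small perturbation of a constant with the same critical data as $\phi$, and a cutoff interpolation on a thin strip inside the collar), but the step where you certify that the gluing creates no new critical points does not work as stated. On the strip, writing $f=\rho\,(1+\eta\phi)+(1-\rho)f_{\mathrm{coll}}$, one has $\partial_R f=\rho'\bigl[(1+\eta\phi)-f_{\mathrm{coll}}\bigr]+\rho\,\eta\,\partial_R\phi+(1-\rho)f_{\mathrm{coll}}'$. You propose to dominate the cutoff term by $(1-\rho)\,|f_{\mathrm{coll}}'|\sim\delta_2^5/\delta_1^6$, which forces $\delta_2^6/\delta_1^6\gg\epsilon$ (your $\delta_2/\delta_1\sim\epsilon^{1/6}$). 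This is incompatible with your other requirement and with the first bullet of the proposition: on the overlap window, which lies in $\{R\le 1-\delta_1+2\delta_2\}$, one has $f=f_{\mathrm{coll}}$ and $1-f_{\mathrm{coll}}\ge(1-a)+\tfrac12(\delta_2/\delta_1)^6(1+o(1))$, so already the $\mathcal{C}^0$ bound $|1-f|<\epsilon$ fails when $(\delta_2/\delta_1)^6\gg\epsilon$; the $\mathcal{C}^2$ bound in fact forces roughly $\delta_2^4\lesssim\epsilon\,\delta_1^6$ (the second derivative of the profile at $R\sim 1-\delta_1+2\delta_2$ is of order $\delta_2^4/\delta_1^6$), under which $(1-\rho)|f_{\mathrm{coll}}'|\lesssim\epsilon\,\delta_2$ can never dominate the cutoff term, of size $\bigl((1-a)+\eta\,O(1)\bigr)/\delta_2\sim\epsilon/\delta_2$ with your $a=1-O(\epsilon)$. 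Moreover near the inner end of the strip $(1-\rho)\approx 0$, so the would-be dominating term disappears entirely. As written, the verification that $\partial_R f<0$ on the strip, hence that $f$ is Morse with the same critical set as $\phi$, is not established.

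The gap is closed by replacing the size comparison with a sign argument, which is exactly what the paper does. Choose $\eta\sup_M|\phi|<1-a$, so that $(1+\eta\phi)-f_{\mathrm{coll}}>0$ on the strip; since $\rho'\le 0$ the first term is $\le 0$. The hypothesis $X(\phi)<0$ near $\partial M$, with $X=R\partial_R$ on the collar, gives $\partial_R\phi<0$ there, so the second term is strictly negative wherever $\rho>0$; and $f_{\mathrm{coll}}'<0$ for $R>1-\delta_1$, so the third term is strictly negative wherever $\rho<1$. Hence $\partial_R f<0$ at every point of the strip with no lower bound on $\delta_2/\delta_1$ in terms of $\epsilon$; the remaining constraints are pure smallness conditions ($\delta_1$ small enough that the collar hypothesis holds and all critical points of $\phi$ lie in $\{R<1-\delta_1\}$; then $\delta_2\ll\delta_1$ with $\delta_2^4\lesssim\epsilon\,\delta_1^6$; then $1-a$ and $\eta$ of order $\epsilon\,\delta_2^2$ to absorb the $1/\delta_2^2$ loss from the cutoff, which the order-six flatness of the profile at $R=1-\delta_1$ makes possible), fixed in the order $\delta_1,\delta_2,a,\eta$. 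This is the paper's proof, with your pair $(1+\eta\phi,\,a)$ playing the role of its $\phi$ normalized so that $\|1-\phi\|_{\mathcal{C}^2}<\epsilon_2$ and of $a=1-\epsilon_2$.
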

	\begin{proof}
		First, we can fix the canonical collar of the boundary using the negative Liouville flow 
		\begin{align*}
		&\iota:(1-\epsilon_1,1]\times \partial M  \longrightarrow M\\
		&\iota^{*}\lambda=R\lambda,\quad \iota^{*} X=R\partial_R
		\end{align*}
		where $\epsilon_1>0$ is sufficiently small, so that $X(\phi)<0$ in the canonical collar and $R$ is the cylindrical coordinate. Notice that $R$ is $G$-equivariant and so is any function in $R$.
		
		Now let's fix a sufficiently small $\epsilon_1>\delta_1\gg \delta_2\gg \epsilon_2>0$ ( the exact constraints on $\delta_1,\delta_2,\epsilon_2$ will be clear along the proof),
		and an increasing bump function $\rho$ such that $\rho(R)=1$ for $R\geq 1$ and $\rho(R)=0$ for $R\leq 0$. 
		Let $\hat{\rho}(R):=\rho(\frac{R-(1-\delta_1)}{\delta_2})$,
		then we have
		\begin{equation}\label{equ:estimation}
		||\hat{\rho}||_\ccc\leq \frac{1}{\delta_2^2}||\rho||_\ccc
		\end{equation}

		Define a bump function $\hat{\rho}$ on $M$  to be $\hat{\rho(R)}$ on its canonical collar and extended by 0. Apparently $\hat{\rho}$ is $G$-equivariant.
		Let $h>0$ be a function of radial coordinate on $[1-\delta_1,1]\times \p M$ satisfying the conditions:
		\begin{equation*}
		\Big(\frac{h}{1-\epsilon_2}\Big)^2+\Big(\frac{R-(1-\delta_1)}{\delta_1}\Big)^6=1.
		\end{equation*}
		
		Then $h$ can be extended to a smooth function on $M$. 
		Without loss of generality, we can assume  $||1-\phi||_\ccc<\epsilon_2$. Otherwise we can simply replace $\phi$ by $1+c\phi$ for $c>0$ sufficiently small.
		We claim  the function 
		\[f=\phi\cdot(1-\hat{\rho})+h\hat{\rho}
		\]
		satisfies all conditions in this proposition.
		Firstly, $h$ is well-defined and $G$-equivariant, and since $f$ coincides with $h$ on the region $\{R\geq 1-\delta_1+\delta_2\}$, equation~\ref{equ: def of f} is satisfied.
		
		Secondly,  we only need to show that $f$ has no critical points in the region $\{1-\delta_1\leq R\leq 1-\delta_1+\delta_2\}$, for which we have
		\[\partial_R(f)=h'\hat{\rho}+h\hat{\rho}'+(1-\hat{\rho})\partial_R(\phi)-\phi\hat{\rho}'
		=(h-\phi)\hat{\rho}+h'\hat{\rho}+(1-\hat{\rho})\partial_R(\phi)<0
		\]
		since $R\partial_R(\phi)=X(\phi)<0$ and $h\leq 1-\epsilon_2\leq \phi$. 
		Therefore $f$ has no critical point in the canonical collar. Since outside the canonical collar $f\equiv\phi$, the second condition follows.
		
		Now we show that $f$ also satisfies the first condition.
		In the region $M\setminus \{R>1-\delta_1\}$, we have
		$f\equiv \phi$, so we only need to check the region $\{ 1-\delta_1\leq R\leq 1-\delta_1+2\delta_2\}$, where
		
		\begin{align*}
		||f-1||_{\mathcal{C}^2}&=||(\phi-1)+(h-\phi)\hat{\rho}||_{\mathcal{C}^2}\\
		&\leq ||\phi-1||_{\mathcal{C}^2}+||(\phi-h)\hat{\rho}||_{\mathcal{C}^2}\\
		&\leq \epsilon_2+2||(\phi-1)+(1-h)||_\ccc\cdot||\hat{\rho}||_\ccc\\
		&\leq\epsilon_2+\frac{1}{\delta_2^2}(||\phi-1||_\ccc+||1-h||_\ccc)||\rho||_\ccc\\
		&\leq \epsilon_2 +\frac{1}{\delta_2^2}(\epsilon_2+||1-h||_\ccc)||\rho||_\ccc 
		\end{align*}
		
		The Taylor expansion of $1-h$ at $R=1-\delta_1$ is:
		\[1- h((1-\delta_1)+t)=\epsilon_2+Ct^6+o(t^{11})\, ,\quad C=\frac{1-\epsilon_2}{2\delta_1^6}
		\]
		Therefore $||1-h||_\ccc\leq \epsilon_2+ C_1 \delta_2^6 $, for $t<2\delta_2\ll \delta_1$, where $C_1=C_1(\delta_1)$. Thus we have 
		\[||1-f||_\ccc\leq \epsilon_2+\frac{2\epsilon_2+C_1\delta_2^4}{\delta_2^2}<\epsilon
		\]
		The last inequality holds as long as $\epsilon_2\leq \delta_2^4$ and $\delta_2\ll \delta_1$.
	\end{proof}
	\begin{lemma}\label{remark: c1 small vector field}
     Let $f,\rho$ be defined as above, $g:=\hat{\rho}(\delta_2-R)$. Then
     $g\cdot X_f$ is $\mathcal{C}^1$ small, where $X_f$ is the Hamiltonian vector field of $f$ with respect to $d\lambda_0$.
	\end{lemma}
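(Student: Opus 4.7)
The plan is to show that $g\cdot X_f$ is supported in the region where $f$ is already $\mathcal{C}^2$-close to a constant, and then use the pointwise bound $\|X_f\|_{\mathcal{C}^1}\lesssim \|f\|_{\mathcal{C}^2}$ to convert that into $\mathcal{C}^1$-smallness via the Leibniz rule. Concretely, I would first identify $\mathrm{supp}(g)$. Unwinding $\hat\rho$, the cutoff $g$ vanishes on the portion of the collar that lies outside a controlled neighborhood of $\{R\le 1-\delta_1+2\delta_2\}$, i.e.\ on the part of the cylindrical end where $f$ is designed to be the steep function $h(R)$. In particular $\mathrm{supp}(g)$ is contained in the region where the first bullet of Proposition~\ref{functions for the smoothing} applies, so $\|1-f\|_{\mathcal{C}^2(\mathrm{supp}\, g)}<\epsilon$.

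Next I would relate the $\mathcal{C}^1$ norm of $X_f$ (with respect to the metric $g_J$ induced by a fixed $d\lambda_0$-compatible $J$) to the $\mathcal{C}^2$ norm of $f$. Since $X_f$ is characterized by $\iota_{X_f}d\lambda_0=-df$ and $d\lambda_0$ is nondegenerate with bounded inverse on the compact region under consideration, one has pointwise estimates
\[
|X_f|_{g_J}\;\lesssim\;|df|_{g_J},\qquad
|\nabla X_f|_{g_J}\;\lesssim\;|df|_{g_J}+|\nabla^2 f|_{g_J},
\]
with constants depending only on the fixed background geometry. Combined with the previous step, this gives $\|X_f\|_{\mathcal{C}^1(\mathrm{supp}\,g)}\lesssim \epsilon$.

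Finally I would apply the Leibniz rule:
\[
\|g\cdot X_f\|_{\mathcal{C}^0}\;\leq\;\|g\|_{\mathcal{C}^0}\cdot\|X_f\|_{\mathcal{C}^0(\mathrm{supp}\,g)},
\]
\[
\|\nabla(g\cdot X_f)\|_{\mathcal{C}^0}\;\leq\;\|g\|_{\mathcal{C}^0}\cdot\|\nabla X_f\|_{\mathcal{C}^0(\mathrm{supp}\,g)}+\|\nabla g\|_{\mathcal{C}^0}\cdot\|X_f\|_{\mathcal{C}^0(\mathrm{supp}\,g)}.
\]
Here $\|g\|_{\mathcal{C}^0}\le 1$ and, by the estimate \eqref{equ:estimation} type bound used in the Proposition, $\|\nabla g\|_{\mathcal{C}^0}\lesssim 1/\delta_2$. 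Both terms are then bounded by a constant multiple of $\epsilon/\delta_2$. Since the Proposition permits $\epsilon$ (in practice $\epsilon_2\leq \delta_2^4$) to be chosen arbitrarily small relative to $\delta_2$, we conclude that $g\cdot X_f$ can be made $\mathcal{C}^1$-small, as claimed.

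The main obstacle I expect is the bookkeeping in the first step: carefully checking that the particular cutoff $g=\hat\rho(\delta_2-R)$ really does land inside the $\mathcal{C}^2$-close region of the Proposition (and, if the product structure with $\mathbb{C}^*$ is in play, that $g$ is extended by the obvious pullback so that neither $f$ nor $X_f$ picks up a $\C^*$-contribution on $\mathrm{supp}(g)$). Once that containment is verified, the $\mathcal{C}^1$ estimate is a routine Leibniz-rule argument leveraging the freedom to send $\epsilon_2\to 0$ while keeping $\delta_1,\delta_2$ fixed.
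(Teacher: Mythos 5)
Your overall strategy is the same as the paper's: restrict attention to the support of $g$, bound $\|X_f\|_{\mathcal{C}^1}$ by $\|1-f\|_{\mathcal{C}^2}$ there, and finish with the Leibniz rule using $\|dg\|\lesssim 1/\delta_2$. The gap is in the very last step, where the smallness actually has to come from. Your final bound is of size $\epsilon/\delta_2$, and you justify its smallness by saying the Proposition lets you take ``$\epsilon$ arbitrarily small relative to $\delta_2$'', citing $\epsilon_2\le\delta_2^4$. Two problems: (i) in Proposition~\ref{functions for the smoothing} the quantifiers go the other way --- $\epsilon$ is given first and $\delta_1\gg\delta_2$ are then produced, with no lower bound on $\delta_2$ in terms of $\epsilon$ --- so from the statement alone you cannot arrange $\epsilon\ll\delta_2$; and (ii) the parenthetical identification of $\epsilon$ with $\epsilon_2$ is incorrect: on $\mathrm{supp}\, g$, which contains the transition zone of $\hat\rho$ (the region $\{1-\delta_1<R<1-\delta_1+\delta_2\}$), the dominant contribution to $\|1-f\|_{\mathcal{C}^2}$ is the cutoff term of size roughly $\|\rho\|_{\mathcal{C}^2}\bigl(\epsilon_2+\|1-h\|_{\mathcal{C}^2}\bigr)/\delta_2^2\sim\delta_2^2$, which is far larger than $\epsilon_2$.

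The fix is precisely the quantitative estimate inside the proof of Proposition~\ref{functions for the smoothing}, and it is what the paper's own proof invokes: with $\epsilon_2\le\delta_2^4$ one gets $\|1-f\|_{\mathcal{C}^2}\le K\delta_2^2$ on $\{R\le 1-\delta_1+2\delta_2\}$ with $K$ independent of $\delta_2$, hence $\|X_f\|_{\mathcal{C}^1}\le K\delta_2^2$, and the Leibniz bound becomes $\|g\cdot X_f\|_{\mathcal{C}^1}\le\|g\|_{\mathcal{C}^1}\cdot\|X_f\|_{\mathcal{C}^1}\le(\|\rho\|_{\mathcal{C}^1}/\delta_2)\cdot K\delta_2^2\le K'\delta_2$, which is small because $\delta_2$ (not an independently chosen $\epsilon$) is the parameter being sent to $0$. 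So your estimate $\epsilon/\delta_2$ is structurally right, but its smallness only follows after replacing the abstract $\epsilon$ by the concrete $O(\delta_2^2)$ bound; as written, the justification in your last step does not go through.
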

	\begin{proof}
	We only need to prove this in $\{ 1-\delta_1+\delta_2<R<1-\delta_1+2\delta_2\}$.
	Notice that \[||X_f||_\cc\leq ||1-f||_\ccc<K\delta_2^2,\]
	where $K$ is independent of $\delta_2$.
	Meanwhile, we have 
	\begin{align*}
	||g\cdot X_f||_{\mathcal{C}^{1}}&
	\leq |g|\cdot||X_f||+||dg||\cdot||X_f||+|g|\cdot||dX_f||\\
	&\leq (|g|+||dg||)\cdot(||X_f||+||dX_f||)\\
	&\leq ||g||_{\mathcal{C}^{1}}\cdot||X_f||_{\mathcal{C}^{1}}\\
	&\leq \frac{||\rho||_\cc}{\delta_2}\cdot K\delta_2^2\\
	&\leq K'\delta_2
	\end{align*}
	
	\end{proof}
		
		Suppose $G$ is a finite group and $M$ is a  $G-$manifold. Let $\mathcal{M}^G(M,\R)$ denote the set of $G-$equivariant Morse functions on $M$ and ${C}(M,\R)$ the set of smooth functions.
		\begin{lemma}[Density Lemma 4.8 \cite{wasserman1969equivariant}]\label{density lemma}
		 $\mathcal{M}^G(M,\R)$ is dense in ${C}(M,\R)$ with respect to the $C^k$ topology.
		\end{lemma}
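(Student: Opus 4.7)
The plan is to reduce the problem to a local question about representations of stabilizer subgroups via the slice theorem, and then globalize using a $G$-invariant partition of unity. Throughout I read the statement as asserting density within the space $C^\infty(M,\R)^G$ of $G$-invariant smooth functions (equivariant Morse functions are necessarily $G$-invariant, so the unqualified $C(M,\R)$ must be understood this way).

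First, I would set up the local models. Since $G$ is finite, the slice theorem gives that every orbit $G\cdot p$ has a $G$-invariant tubular neighborhood equivariantly diffeomorphic to $G\times_{G_p}V_p$, where $G_p$ is the stabilizer and $V_p$ is a real $G_p$-representation (obtained as a $G_p$-invariant complement to the tangent space of the orbit). Under this identification, $G$-invariant smooth functions on the neighborhood correspond bijectively to $G_p$-invariant smooth functions on a neighborhood of $0\in V_p$. A $G$-invariant function $\phi$ is then $G$-equivariantly Morse at the orbit $G\cdot p$ precisely when, viewed as a $G_p$-invariant function on $V_p$, its Hessian at $0$ is non-degenerate on $V_p$ (the kernel along the orbit direction is automatic). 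So the problem reduces to showing that, for each finite group $H$ and real $H$-representation $V$, the $H$-invariant Morse functions are $\mathcal{C}^k$-dense in $C^\infty(V,\R)^H$.

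Second, I would establish local density. Given $f\in C^\infty(V,\R)^H$ and a compact $K\subset V$, consider perturbations $f+\sum_i t_i h_i$ where $\{h_i\}$ is a (finite, for a fixed compact set) collection of $H$-invariant smooth bump functions spanning enough of $C^\infty(V,\R)^H$ to separate jets along $H$-orbits. A standard Sard/transversality argument applied to the parametrized evaluation map $(x,t)\mapsto d(f+\sum t_i h_i)(x)$ shows that for generic small $t=(t_i)$ the resulting $H$-invariant function has only non-degenerate critical orbits. The key algebraic input is that the space of $H$-invariant quadratic forms on $V$ contains non-degenerate elements: averaging any positive-definite form gives one, so the non-degenerate $H$-invariant quadratic forms form an open dense subset of the $H$-invariant ones, and therefore the Morse-type condition is generic in the space of $H$-invariant perturbations.

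Finally, I would globalize. Using lemma~\ref{density lemma}-style averaging on a partition of unity subordinate to a locally finite cover of $M$ by $G$-invariant tubular neighborhoods $\{U_\alpha\}$ (as in step one), one obtains a $G$-invariant partition of unity $\{\rho_\alpha\}$. Order the cover on each compact piece of $M$; inductively, having arranged $\phi_{k-1}$ to be $G$-equivariantly Morse on $\overline{U_1\cup\cdots\cup U_{k-1}}$, add a small $G$-invariant perturbation supported in $U_k$, produced by step two applied on $V_{p_k}$ and transported back via the slice diffeomorphism and multiplied by $\rho_k$, to make the result Morse on $\overline{U_1\cup\cdots\cup U_k}$ as well. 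The openness of the Morse condition on compact sets ensures that sufficiently small perturbations do not destroy the Morse property already achieved on $\overline{U_1\cup\cdots\cup U_{k-1}}$. Choosing the perturbation norms to form a summable sequence bounded by the given $\epsilon$ produces the desired $\mathcal{C}^k$-approximation.

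The main obstacle is the inductive step at points of maximal isotropy, where the space of allowed $G_p$-invariant perturbations is most constrained: one must confirm that even within this restricted space there is enough freedom to achieve non-degeneracy of the Hessian. The averaging observation for positive-definite quadratic forms resolves this in principle, but the technical care lies in carrying out the partition-of-unity induction without disturbing the Morse condition already established on earlier patches, which is handled by an openness-of-Morse argument and choice of sufficiently small successive perturbations.
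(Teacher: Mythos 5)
The paper does not actually prove this lemma: it is quoted verbatim from Wasserman's Density Lemma 4.8 \cite{wasserman1969equivariant}, so there is no internal proof to compare against. Your outline (slice theorem, local genericity, invariant partition of unity, $C^2$-openness of the Morse condition on compacta) is the standard route and essentially the shape of Wasserman's argument, and your reading of the statement as density inside the space of $G$-invariant functions is the right one, since literal density in all of $C(M,\R)$ is false for a nontrivial action.

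The genuine gap is in your local genericity step. You invoke ``a standard Sard/transversality argument applied to the parametrized evaluation map $(x,t)\mapsto d\bigl(f+\sum_i t_i h_i\bigr)(x)$'', but parametric transversality needs the universal map to be transverse to the zero section of $T^*V$, and at a point $x$ whose isotropy $H_x$ acts nontrivially this cannot be arranged by invariant perturbations alone: every $H$-invariant $h$ satisfies $dh(x)\in (T_x^*V)^{H_x}$, a proper subspace, so the $t$-derivatives can never supply the non-invariant covector directions; those would have to come from the Hessian of $f_t$, which is exactly what you are trying to make nondegenerate. The correct argument stratifies by orbit type: along a fixed locus the invariant Hessian is block diagonal with respect to the splitting of $V$ into the trivial and nontrivial isotypic pieces (Schur: there is no invariant pairing between them), so one first makes the restriction to each stratum Morse by ordinary transversality within the stratum, and then adds invariant quadratic terms in the normal directions (norms squared of the nontrivial isotypic components with generic small coefficients, cut off invariantly) to make the normal Hessians nondegenerate at the finitely many critical points on the stratum, checking that no new critical points are created nearby. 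Your closing observation that nondegenerate invariant quadratic forms exist only shows the target condition is achievable pointwise; it does not substitute for this stratified genericity argument, which is the real content of the lemma. The globalization step via a $G$-invariant partition of unity and openness of the Morse property on compact sets is fine as you describe it.
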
 
		\begin{remark}\label{morse function index at least n }
		Note that $(W_\epsilon^1,\lambda)$ is $G-$equivariantly Liouville isomorphic to $(X_\epsilon^1,-d^\C\phi_0)$. Since $\phi_0$ is $i-$convex on $X_\epsilon^1$ (and we can perturb it into a $G-$equivariant Morse function if necessary), the index of each critical point of $-\phi_0$ is at least $n$ (half of the dimension of a Stein Manifold). Therefore we can find such function $\phi'$ on $W_\epsilon^1$ as well.
		\end{remark}

		Apply proposition~\ref{functions for the smoothing} to $(W_\epsilon^1,\lambda)$, with $\phi'$ as in remark~\ref{morse function index at least n }. Then consider the function $F$ on the product Liouville manifold $(\widehat{W_\epsilon^1}\times \mathbb{R}\times S^1,\lambda_0:=\lambda+rd\theta) $ defined as:
		
		\begin{align}\label{equ:the defining equation}
		F: \widehat{W_\epsilon^1} \times \mathbb{R}\times S^1 &\rightarrow \mathbb{R},\\
		(p,(r,\theta))&\mapsto r^2-f(p)^2  \quad \text{for}\quad p \in W_\epsilon^1\\
		((q,R),(r,\theta)&\mapsto r^2-a^2\Bigg(1-\Big(\frac{R-(1-\delta_1)}{\delta_1}\Big)^6\Bigg) \quad \text{for}\quad (q,R)\in \partial W_\epsilon^1 \times (1-\delta_1+2\delta_2,\infty).
		\end{align}
		where $a= 1-\epsilon_2$.
		It is easy to check that $F$ is a smooth $C(L)$-equivariant function on $\widehat{W_\epsilon^1}\times \mathbb{R}\times S^1$, and $0$ is a regular value. Furthermore, the following lemma shows that the Liouville vector filed $Y:=Y_\lambda+r\p_r$( where $Y_\lambda$ is the Liouville field on $(W_\epsilon^1,\lambda)$) is gradient-like for $F$ on $\{ F\geq 0\}$. 
		
		\begin{lemma}\label{lemma for transversality}
			The Liouville vector field $Y$ of $(\widehat{W_\epsilon^1}\times \mathbb{R}\times S^1, \lambda+rd\theta )$ is gradient-like for $F$ outside
			 $W_0:=\{F\leq 0\}$.
		\end{lemma}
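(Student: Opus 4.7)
The plan is to verify directly that $dF(Y) > 0$ on $\{F > 0\}$, where $Y = Y_\lambda + r\p_r$ is the Liouville vector field of $\lambda_0 = \lambda + r\,d\theta$. Since $F$ has no $\theta$-dependence and $\p_r$ contributes $2r^2$ to $dF(\p_r)\cdot r$, the computation reduces to
\[
dF(Y) \;=\; dF(Y_\lambda) + 2r^2.
\]
I would split the manifold into three regions according to how $f$ is built in Proposition~\ref{functions for the smoothing} and check each in turn, afterwards observing that all critical points of $F$ lie in $W_0$.

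First, on $\{R \geq 1-\delta_1 + \delta_2\}$ (the collar and cylindrical end past the start of the transition), $\hat\rho \equiv 1$ forces the two defining formulas for $F$ to coincide with the explicit polynomial $F = r^2 - a^2\bigl(1 - \bigl(\tfrac{R-(1-\delta_1)}{\delta_1}\bigr)^6\bigr)$, which extends smoothly across $R=1$ and all the way out the cylindrical end. Here $Y_\lambda = R\p_R$, so
\[
dF(Y) \;=\; \frac{6 a^2 R}{\delta_1}\!\left(\frac{R-(1-\delta_1)}{\delta_1}\right)^{\!5} + 2r^2,
\]
which is strictly positive whenever $R > 1-\delta_1$ or $r \neq 0$; both summands vanish only at $(R,r) = (1-\delta_1, 0)$, where $F = -a^2 < 0$, so the bad locus sits inside $W_0$.

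Second, in the transition strip $\{1-\delta_1 \leq R \leq 1-\delta_1+\delta_2\}$ the proof of Proposition~\ref{functions for the smoothing} already establishes $\p_R f < 0$, so $Y_\lambda(f) = R \p_R f < 0$ and $f > 0$ throughout. Hence $-2f\cdot Y_\lambda(f) > 0$, giving $dF(Y) > 0$ regardless of the sign of $F$. Third, in the deep interior $\{R \leq 1-\delta_1\}$ we have $f \equiv \phi'$ with $\|1-\phi'\|_{\mathcal{C}^2} < \epsilon$ (first bullet of Proposition~\ref{functions for the smoothing}). Compactness of $W_\epsilon^1$ bounds $\|X_\lambda\|_{C^0}$ by a constant $K$ independent of $\epsilon$, so
\[
|\phi'\, X_\lambda(\phi')| \;\leq\; (1+\epsilon)\,\epsilon\, K.
\]
On $\{F > 0\}$ we have $r^2 > (\phi')^2 \geq (1-\epsilon)^2$, so
\[
dF(Y) \;=\; 2r^2 - 2\phi'\,X_\lambda(\phi') \;\geq\; 2(1-\epsilon)^2 - 2(1+\epsilon)\epsilon K,
\]
which is positive provided $\epsilon$ was chosen small enough at the outset (say, $\epsilon < 1/(4K+4)$). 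Finally, a quick check shows every critical point of $F$ satisfies $r=0$ and either $df=0$ (forcing $F = -f^2 \leq 0$ at a critical point of $\phi'$ in $W_\epsilon^1$) or lies at $(R,r) = (1-\delta_1, 0)$ where $F = -a^2$; in both cases the critical locus lies in $W_0$.

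The only step requiring any care is the deep interior, since there $X_\lambda(\phi')$ has no a priori sign. The main (mild) obstacle is therefore the quantitative dependence: one must fix the $\mathcal{C}^2$-smallness constant in Proposition~\ref{functions for the smoothing} in terms of the $C^0$-bound on the Liouville field over the compact domain $W_\epsilon^1$. Since both parameters are free in the construction, this is a resolvable constraint rather than a genuine analytic obstruction, and no further input is needed.
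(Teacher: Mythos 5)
Your argument is correct and follows essentially the same route as the paper: one computes $Y(F)=2\bigl(r^2-fY_\lambda(f)\bigr)$ and checks positivity region by region, using the explicit formula $F=r^2-a^2\bigl(1-\bigl(\tfrac{R-(1-\delta_1)}{\delta_1}\bigr)^6\bigr)$ where $R>1-\delta_1+\delta_2$ and, in the interior, the $\mathcal{C}^2$-smallness of $1-f$ together with compactness (so $r^2>f^2\approx 1$ dominates the small term $fY_\lambda(f)$). Your only departures are cosmetic refinements --- a three-region split with the transition strip handled via $\partial_R f<0$, and making explicit that the smallness parameter is chosen after the bound on $\|Y_\lambda\|$ --- which the paper's two-region argument subsumes.
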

		
		\begin{proof}
			We will verify the statement on the regions $\{R>1-\delta_1+\delta_2 \}$ and $\{ R>1-\delta_1+2\delta_2\}^c$ separately.
			In the region $\{R>1-\delta_1+\delta_2 \}$, $Y=r\p_r+R\p_R$ with $F=r^2-a^2\Bigg(1-\Big(\frac{R-(1-\delta_1)}{\delta_1}\Big)^6\Bigg)$, the claim is trivial.
			In the region $\{ R>1-\delta_1+2\delta_2\}^c$, we have $Y=r\p_r+Y_\lambda$. Notice that this region is a product $W'\times (\R\times S^1)$, where $W'=\widehat{W_\epsilon^1}\setminus\{ R>1-\delta_1+2\delta_2\}$ is $W_\epsilon^1$ attached with a cylindrical cobordism. Now,
			\begin{equation}
			Y(F)=(r\p_r+Y_\lambda)(r^2-f^2)=2(r^2-fY_\lambda(f))
			\end{equation}
			Since $1-f$ is $\mathcal{C}^2$ small, the coordinate $r$ is nonzero in the region $W_0^c\cap \{ R>1-\delta\}^c$, and $W'$ is compact, we have $2(r^2-fY_\lambda(f))>0$.
			The conclusion follows.
		\end{proof}
		
		\begin{remark}\label{def of key block}
			Note that $(\{F\leq 0\},\lambda+rd\theta)$ is a $C(L)$-equivariant Liouville domain, and $C(L)$ acts freely on it. The quotient domain is Liouville homotopic to the Stein domain $(M_0,J,\phi)$, by Lemma~\ref{lemma for cutout weinstein domain}. Since the properties of interest are invariant under Liouville isomorphism, we will also denote the quotient domain$(\{F\leq 0\},\lambda_0)/C(L)$  by $(M_0,\lambda_0)$.
		\end{remark}

		\begin{remark}\label{containment}
			The region $(U:=\{R>1/2\}^c\cap \{|r|\leq 1/2\},\lambda)$ is a Liouville domain with corners. We can smooth out the corner with a $\mathcal{C}^\infty$-small perturbation. By abuse of notation, the boundary of this Liouville domain is denoted by $M=\{R=1/2\}\times\{|r|\leq1/2\}\cup \{R>1/2\}^c\times\{|r|=1/2\}$, with Liouville vector field $Y=R\partial_R+r\partial_r$. The time 1 flow of $Y$ sends $M$ to a new boundary $ \{R=e/2\}\times\{|r|\leq e/2\}\cup \{R>e/2\}^c\times\{|r|=e/2\}$, that is, $U\cup M\times[0,1]=\{R>e/2\}^c\cap \{|r|\leq e/2\}$. It's easy to check that
			$U\subset \{F\leq 0\}\subset U\cup M\times[0,1]$.
		\end{remark}
		\subsection{Strongly ADC property of $M_0$}\label{ADC property after smoothing}
		
		In this subsection, we will prove that the contact boundary of $(M_0,\lambda_0)$(as in remark~\ref{def of key block}) with respect to the trivialization $\Phi$ is strongly asymptotically dynamically convex.
		Let us first state what the framing is. Since $(\widehat{W_\epsilon^1}\times \mathbb{R}\times S^1, \lambda_0 )$ is a product, it suffices to choose the $G$-equivariant trivialization on both components, since it descends naturally to the quotient $(W_0,\lambda_0)$ (see subSection~\ref{specific trivialization}).
		We denote the boundary of $(M_0,\lambda_0)$ by $(\Sigma_0,\lambda_0)$.

		\begin{theorem}\label{main theorem}
			Let $F$ be the function of Lemma~\ref{lemma for transversality}. Then $(\Sigma_0,\lambda_0)$ satisfies the strongly ADC property with respect to a  trivialization $\Phi$, provided $\mathbf{a}$ satisfies the conditions $n\geq 3$ and $m(\mathbf{a})\geq 2$.
		\end{theorem}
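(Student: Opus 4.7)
My plan is to work on the $C(L)$-cover $(\{F\leq 0\},\lambda_0)\subset (\widehat{W_\epsilon^1}\times \R\times S^1,\lambda+r\,d\theta)$, identify all fractional Reeb orbits on its boundary with action bounded by a prescribed $D$, and then descend to $\Sigma_0$ using the fact that the Conley-Zehnder index is invariant under the covering (with respect to the $C(L)$-equivariant trivialization $\Phi$ fixed in Subsection~\ref{specific trivialization}). By Lemma~\ref{formula for reeb vector}, the Reeb vector field on $\{F=0\}$ equals $X_F/Y(F)$, where $Y=Y_\lambda+r\partial_r$. In the cylindrical region $\{R>1-\delta_1+\delta_2\}$, the Hamiltonian $F$ depends only on $R$ and $r$, so the Reeb dynamics split as a product: orbits are tuples of a fractional Reeb orbit of $(\Sigma(\mathbf a),\alpha_\mathbf a)$ (at a level $\{R=\mathrm{const}\}$) together with rotation in the $S^1$ direction at a fixed $r$. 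In the interior $\{R\leq 1-\delta_1+2\delta_2\}^c$, one has $X_F/Y(F) = (rX_\theta - fX_f)/(r^2 - fY_\lambda(f))$, and by Lemma~\ref{remark: c1 small vector field} the $X_f$-contribution is $\mathcal C^1$-small, so up to any prescribed action $D$ the only Reeb orbits are iterates of the $S^1$-rotation sitting over critical points of $f$ (equivalently critical points of the plurisubharmonic $\phi'$ with Morse index $\geq n$, cf.\ Remark~\ref{morse function index at least n }).

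Next I will compute indices of these Morse-Bott families. For orbits over Brieskorn-type Reeb orbits of $(\Sigma(\mathbf a),\alpha_\mathbf a)$ combined with $S^1$-rotation: using the product property of $\mu_{CZ}$, the index in the $\Sigma(\mathbf a)$-direction is given by Proposition~\ref{index calculation}, while the cylindrical $S^1$-factor contributes $0$ by Lemma~\ref{index for the cylinder component} (and Remark~\ref{remark: index of orbits on cylinder}). For orbits sitting over interior critical points of $f$, the $\widehat W_\epsilon^1$-component contributes $n-\mathrm{Ind}(p)\geq 0$ by Corollary~\ref{index of critical pt} (since $\mathrm{Ind}(p)\leq n$), and the $S^1$-factor contributes again from Lemma~\ref{index for the cylinder component}. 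The lower SFT index $\mathrm{lSFT}(\gamma)=\mu_{CZ}(\gamma)-\tfrac12\dim\ker(D\psi_T|_\xi-\mathrm{id})+(n-3)$ of an orbit in class (a) is then bounded below by $m(\mathbf a)+(n-3)-\tfrac12\dim\ker\geq 2+(n-3)-(n-2)=1>0$, using $m(\mathbf a)\geq 2$ (Lemma~\ref{minimal index}), $n\geq 3$, and the codimension estimate from the Morse-Bott splitting; class (b) yields analogously positive indices for orbits of bounded action, since $\mathrm{Ind}(p)\leq n$ implies $\mu_{CZ}\geq 0$ and the kernel dimension is $\leq \dim$ of the $\Sigma(\mathbf a)$-factor minus $1$.

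Finally, I will realize the ADC sequence $(\alpha_k,D_k)$. Choose a sequence $\delta_2^{(k)}\to 0$ with corresponding $f_k$ as in Proposition~\ref{functions for the smoothing}; this yields contact forms $\alpha_k$ on $\Sigma_0$ (the restrictions of $\lambda_0$ to the level sets $\{F_k=0\}$, rescaled so $\alpha_{k+1}\leq \alpha_k$) and action bounds $D_k\to\infty$, since the Morse-Bott families of small action collect in the region where $f_k\equiv 1$ and their minimal action in the interior escapes to $+\infty$ as $\delta_2^{(k)}\to 0$ by Lemma~\ref{remark: c1 small vector field}. Then a $\mathcal C^0$-small generic perturbation of each $\alpha_k$ makes all orbits of action below $D_k$ nondegenerate; by Lemma~\ref{mark's perturbation} their Conley-Zehnder indices differ from those of the Morse-Bott centers by at most half the kernel dimension, so the perturbed lower SFT indices remain positive. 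The main technical obstacle, as I see it, will be verifying the uniform lower bound on the action of non-$S^1$-invariant Reeb orbits in the interior (so that these are pushed past $D_k$ as $\delta_2^{(k)}\to 0$) and carefully tracking the trivialization $\Phi$ through the covering and the product decomposition, because $\Phi$ must extend $C(L)$-equivariantly for the indices computed upstairs to descend; this is where $\sum_i 1/a_i\in\Z$ (Subsection~\ref{specific trivialization}) is used.
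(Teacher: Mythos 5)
Your overall strategy is the same as the paper's: work on the $C(L)$-cover, split the Reeb orbits on $\{F=0\}$ into those in the cylindrical neck (Brieskorn orbits combined with $S^1$-rotation at fixed $r,R$) and those in the interior (where $\mathcal C^1$-smallness of the relevant vector field, via Corollary~\ref{small norm vector field}, leaves only orbits over critical points of $f$ below a given action), bound lower SFT indices using Proposition~\ref{index calculation}, Lemma~\ref{minimal index} and Corollary~\ref{index of critical pt}, descend indices through the covering using the $C(L)$-equivariant trivialization $\Phi$, and then produce the ADC data $(\alpha_k,D_k)$ from a sequence of functions $F_k$ with growing action thresholds. However, there are two genuine gaps. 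First, in the interior region your index count has the wrong direction: the Hamiltonian governing the constant orbits on the $W_\epsilon^1$-factor is $-f^2$ (since $F=r^2-f^2$), so Corollary~\ref{index of critical pt} gives a contribution $\mathrm{Ind}_p(f)-n$, not $n-\mathrm{Ind}(p)$, and positivity requires $\mathrm{Ind}_p(f)\geq n$ — which holds precisely because $f$ is modeled on the \emph{negative} of the plurisubharmonic exhaustion (Remark~\ref{morse function index at least n }) — rather than $\mathrm{Ind}(p)\leq n$ as you assert; with your hypothesis the bound would fail (e.g.\ a minimum would give a negative contribution), so this step as written does not establish positivity of the lower SFT index in region (c). Relatedly, in the neck region the Reeb field is the twisted combination $(2r\partial_\theta-2ff'J\partial_R)/(2r^2-2Rff')$, so the index is not literally a sum of the two component Reeb indices with the cylinder contributing $0$; one needs the Hamiltonian--Reeb Morse--Bott comparison (Lemma~\ref{Reeb--Hamiltonian index relation}, whose hypotheses the paper verifies in Lemma~\ref{lemma: Hamiltonian and Reeb case relation verified}) together with Remark~\ref{remark: index of orbits on cylinder}, which is where the half-integer shifts and the bounds $m(\mathbf a)-3/2$, $n-5/2$ come from.

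Second, your construction of the ADC sequence is incomplete. Since the functions $F_k$ (hence the hypersurfaces $\Sigma_k=\{F_k=0\}/C(L)$) change with $k$, you must identify them all with $\Sigma_0$ and rescale so that the forms are non-increasing while $D_k\to\infty$; merely "rescaling so $\alpha_{k+1}\leq\alpha_k$" is not enough, because an uncontrolled rescaling factor could make $D_k$ bounded. The paper handles this with a uniform conformal constant: all domains $\{F_k\leq 0\}$ are squeezed between $U$ and $U\cup M\times[0,1]$ (Remark~\ref{containment}), so the Liouville-flow contactomorphisms of Lemma~\ref{Liouville embedding}/Corollary~\ref{corollary for non-increasing form } satisfy $\frac1C\lambda_0<f_k^*\lambda_0<C\lambda_0$ with $C$ independent of $k$; one then sets $\alpha_k=C^{-k}f_k^*(\lambda_0|_{\Sigma_k})$ and $D_k=K^k/C^k$ with $K>C$, using Lemma~\ref{prop: easy} to transport the index bounds. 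Also note the mechanism for large action thresholds is the $\mathcal C^2$-smallness of $1-f_k$ (so non-constant interior orbits have period $>LK_k$), not that the action of interior orbits "escapes to infinity" — the constant-over-critical-point orbits persist at bounded action and are handled purely by their index; and the final nondegeneracy perturbation you propose is unnecessary here, since the strongly ADC definition used in the paper is stated via lower SFT index and tolerates degenerate forms.
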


		\begin{prop}\label{main proposition}
			For any $K>0$, there exits a $C(L)-$equivariant function $F$ as defined in \ref{equ:the defining equation} on the Liouville domain $(\widehat{W_\epsilon^1}\times \mathbb{R}\times S^1, \lambda_0 )$ with a chosen trivialization $\Phi$ such that
			\begin{itemize}
				\item[(1)] $\Sigma:=\{F=0\}$ is a regular level set and the Liouville vector field $Y$ points outwards along  $\Sigma$.
				\item[(2)] The quotient $\Sigma_0:=\Sigma/G$ has the property  that all elements of $\mathcal{P}^{<K}_\phi(\Sigma_0,\lambda_0)$ have lower SFT index at least $\min \{m(\mathbf{a})-3/2,n-5/2\}$.
			\end{itemize}
		\begin{figure}
		\centering
		\tikzstyle{myedgestyle} = [-latex]
		\begin{tikzpicture}
	\draw[-latex](0,0)--(0,6)node[left]{$r$};
	\draw  plot[smooth, tension=.47] coordinates {(0,0)(0.3,0)(0.6,-0.2)(0.9,0.15)
		(1.4,-0.05)(1.7,0.08)(1.9,-0.09)(2.2,0)(7,0)};
	\draw[|-|](7,0)node[below]{$1-\delta_1$}--(7.8,0);
	\draw[-|](7.8,0)--(8.1,0);
	\draw [dashed][-latex] (8.1,-0.5)node [below]{$1-\delta_1+2\delta_2$}--(8.1,0);
	\draw [dashed][-latex] (7,0.5)node [above]{$1-\delta_1+\delta_2$}--(7.8,0);
	\draw[-|](8.1,0)--(9,0);
	\draw[-latex](9,0)node[below]{$1$}--(10,0);
	\draw node at (1.5,-0.4){interior of $W_\epsilon^1$};
	\draw node at (7.5,-1.35){cylindrical coordinate $R$};
	\draw[dashed](0,5)node[left]{$1$}--(9,5);
	\draw  plot[smooth, tension=.47] coordinates {(0.1,5.1)
		(0.3,4.98)(0.4,5)(0.45,5.04)(0.7,5)(0.8,5.1)(0.9,4.95)
		(1.4,5.05)(1.89,4.93)(2.5,5.08)(3.43,4.90)(4,5.07)(4.1,5)
		(4.15,4.92)(4.7,5)(5.0,4.92)(5.2,5.05)(5.3,5.08)(5.6,5.05)(6,4.95)
		(6.6,4.85)(7,5)};
	\draw[scale=1,domain=0:1.57,smooth,variable=\t] plot ({7+2*sin(\t r)^0.5},{5*cos(\t r)});
	\draw[dashed](7.8,0.3)--(7.8,5.3);
	\draw[dashed](8.1,0)--(8.1,4.3);
	\draw[|<-](7.8,5.3)--(8.2,5.3);
	\draw[- >| ](8.6,5.3)--(9,5.3);
	\draw [-latex][dashed](10.9,4.2)node[below]{ (a) Fractional Reeb orbits $\widetilde{\gamma}$ }--(8.4,5.3);
	\draw node at(11,3.4){ with $lSFT\geq m(\mathbf{a})-3/2$};
	\draw node at(11,2){ (b) $\gamma$ is contractible};	
	\draw [-latex][dashed] (11.2,1.2)node [above]{ with $lSFT\geq m(\mathbf{a})-3/2$}--(9,0);  
	\draw [-latex] [dashed](3.9,3.5)node [below]{(c) Fractional Reeb orbits $\widetilde{\gamma}$ correspond to }--(3.9,4.3);
	\draw node at (3.9,2.6){critical  points of $f$, $lSFT\geq n-5/2$};
	\draw[|<-](0,4.3)--(3.5,4.3);
	\draw[- >| ](4.6,4.3)--(8.1,4.3);
	
	\draw node at (4.5,1.5) {$W_0$};
	\draw node at (3.5,5.3) {$\Sigma=F^{-1}(0)$};
	\end{tikzpicture}
		\caption{Lower SFT index of Fractional Reeb orbits in $\Sigma$.}\label{Fig:index of Reeb orbits}
     	\end{figure}
		\end{prop}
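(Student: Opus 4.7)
The plan is to lift the analysis to the $C(L)$-cover $(\widehat{W_\epsilon^1}\times \R\times S^1, \lambda_0)$ and decompose $\Sigma = \{F = 0\}$ into the cylindrical region (where $R > 1-\delta_1+\delta_2$ and $F$ depends only on $(r, R)$) and the interior region (where $F = r^2 - f^2$ with $f$ a $\mathcal{C}^2$-small perturbation of $1$). Applying Lemma~\ref{formula for reeb vector} with $Y = Y_\lambda + r\partial_r$, a direct computation will give $X_F = b'(R) R_\alpha + 2r\partial_\theta$ in the cylindrical region, where $b(R) = -a^2(1-((R-(1-\delta_1))/\delta_1)^6)$ and $R_\alpha$ is the Brieskorn Reeb field of Proposition~\ref{Lutz}, and $X_F = 2f X_f + 2r\partial_\theta$ in the interior. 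Consequently, the closed Reeb orbits on $\Sigma$ split into two families: orbits in the cylindrical region are products of (fractional) closed Reeb orbits of $(\Sigma(\mathbf{a}), \alpha_\mathbf{a})$ at constant $(r, R)$ coupled with $\theta$-rotations, while orbits in the interior region sit over critical points $p_0$ of $f$ as Morse-Bott circles $\{p_0\}\times\{r=f(p_0)\}\times S^1$, since $X_f$ vanishes only at such points.

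Next I will compute the lower SFT indices using the $C(L)$-equivariant trivialization $\Phi$ from Subsection~\ref{specific trivialization}, which is the product of the trivialization of Proposition~\ref{trivialization of the symplectic complement} on the Brieskorn factor with the standard trivialization on $T\C^*$. For orbits in the cylindrical region (types (a) and (b) in the figure, with (b) being orbits that become contractible only after passing to the quotient), the product and naturality properties of $\mu_{CZ}$ combined with the Hamiltonian-Reeb comparison of Lemma~\ref{lemma:hamiltonianconleyzehnderindexcomparisonstandard} reduce the computation to a Brieskorn contribution (bounded below by $m(\mathbf{a})$ via Proposition~\ref{index calculation} and Lemma~\ref{minimal index}), a vanishing contribution from the $\theta$-direction by Lemma~\ref{index for the cylinder component}, and a $+\tfrac{1}{2}$ shift from the normal $(r, R)$-direction. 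Combined with the lSFT correction for the Brieskorn Morse-Bott family $\Sigma_T$, this will yield $lSFT \geq m(\mathbf{a}) - 3/2$. For interior orbits (type (c)), each critical point $p_0$ of $f$ has Morse index at most $n$ by Remark~\ref{morse function index at least n }, so $\mu_{CZ}(p_0) \geq 0$ by Corollary~\ref{index of critical pt}; combining this with the Hamiltonian-Reeb shift and the $\tfrac{1}{2}$-correction for the one-dimensional $S^1$ Morse-Bott family produces $lSFT \geq n - 5/2$.

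Finally I will verify the action constraint: given $K$, the action of a cylindrical orbit is proportional to the corresponding Brieskorn period, so only finitely many Morse-Bott families appear, and by Lemma~\ref{minimal index} the minimum $m(\mathbf{a})$ gives a uniform lower bound; interior orbits have action approximately $2\pi$ times the iteration count, which is controlled similarly. The main obstacle I anticipate is the careful bookkeeping of Morse-Bott dimensions: the Brieskorn families $\Sigma_T$ of Subsection~\ref{brieskorn mfld} have varying complex dimensions determined by which ratios $T/a_k$ lie in $\tfrac{\pi}{2}\mathbb{Z}$, and one must verify that the combined $\dim\ker(D\psi_T|_\xi - \mathrm{id})$ for orbits in $\Sigma$, together with the shift from the Hamiltonian-Reeb comparison, produces precisely the bound $\min\{m(\mathbf{a})-3/2, n-5/2\}$ rather than something weaker, especially for high iterations and at the transition region $\{1-\delta_1 < R < 1-\delta_1+\delta_2\}$ where $f$ interpolates between its bulk value and the shape function.
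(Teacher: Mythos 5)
Your overall architecture matches the paper's proof: decompose $\Sigma$ into the cylindrical region and the interior, use Lemma~\ref{formula for reeb vector} to see that the Reeb flow preserves $R$ where $F$ depends only on $(r,R)$, and then convert Reeb indices to Hamiltonian indices via the McLean-type comparison (Lemma~\ref{Reeb--Hamiltonian index relation}), the product property, Proposition~\ref{index calculation}, Lemma~\ref{minimal index}, and Corollary~\ref{index of critical pt}. However, there is a genuine gap in your treatment of the interior region (type (c)). You assert that interior closed orbits ``sit over critical points $p_0$ of $f$ \ldots since $X_f$ vanishes only at such points.'' That reasoning only shows that orbits which are \emph{constant} in the $W_\epsilon^1$ factor lie over critical points. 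But in the interior the Reeb field is $X_{Reeb}=\frac{2r\partial_\theta-2fX_f}{2r^2-2fY_\lambda(f)}$, whose $W_\epsilon^1$-component is a nonzero rescaling of $X_f$; a priori the Hamiltonian flow of $f$ can have nonconstant periodic orbits, and these could close up (together with a $\theta$-rotation) to Reeb orbits that are not located over critical points, for which your index bound $\mu_{CZ}(p_0)\geq 0$ says nothing. The paper excludes these only \emph{below the action threshold}: it cuts off the projected vector field (Lemma~\ref{remark: c1 small vector field}), observes it is $\mathcal{C}^1$-small when $\|1-f\|_{\mathcal{C}^2}$ is small, and invokes the Laudenbach-type statement (Corollary~\ref{small norm vector field}) to conclude that every periodic orbit of period less than $LK$ is constant in the $W_\epsilon^1$ factor. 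This is precisely why the proposition is quantified as ``for any $K>0$ there exists $F$'': the smallness of $f-1$ (hence the choice of $F$) must be made \emph{after} $K$ is fixed. Your proposal never uses this freedom — your final paragraph about the action constraint only addresses finiteness of the Brieskorn Morse-Bott families, not the exclusion of nonconstant interior orbits — so as written the type (c) bound does not follow.

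A secondary, smaller point: the transition zone $\{1-\delta_1+\delta_2<R<1-\delta_1+2\delta_2\}$ that you flag as a difficulty is handled in the paper simply by the overlap of the two regions together with the fact that the Reeb flow has constant $R$-coordinate for $R>1-\delta_1+\delta_2$, so every orbit lies entirely in region (a) or entirely in region (c); and your bookkeeping of the half-integer shifts should follow the paper's chain $\mu_{CZ}(\gamma,F)=\mu_{CZ}(\gamma,\lambda_0)+\tfrac12$, $\mu_{CZ}(\gamma_1,-f^2)=\mu_{CZ}(\gamma_1,\lambda)+\tfrac12$, $\mu_{CZ}(\gamma_2,r^2)=\tfrac12$, since attributing a vanishing contribution to the $\theta$-direction and a single $+\tfrac12$ to the normal direction does not reproduce the stated constants without the Morse-Bott correction $-\tfrac12\dim B$ being bounded by $n$.
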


		\begin{proof}
			We will show that by choosing a proper $\mathcal{C}^2$-small function $f$ as in Proposition~\ref{functions for the smoothing}, the corresponding function $F$ satisfies the required conditions. 
			The first condition is satisfied by the construction of $F$, as proved in Lemma~\ref{lemma for transversality}, we only need to show the second condition is also satisfied. Recall the quotient map 
			\[ \pi: \Sigma \to \Sigma_0
			\]
			is an $L-$sheeted covering map. Therefore, the Reeb orbits in $\Sigma_0$ lift to fractional Reeb orbits in $\Sigma$. To be precise, if $\gamma(t), t\in [0,T]$ is a Reeb orbit in $\Sigma_0$, then the $L-$ fold Reeb orbit $\gamma(t), t\in [0,qT]$ can be lifted to a Reeb orbit $\widetilde{\gamma(t)}, t \in [0,LT]$ in $\Sigma$. It follows that the index of $\gamma$ in $\Sigma_0$ can be calculated through the index of $\widetilde{\gamma}$ in $\Sigma_0$. 
			We will proceed by investigating the Reeb orbits in three regions: 
			\begin{itemize}
				\item[(a)] $\Sigma\cap \{1>R>1-\delta_1+\delta_2\}$, where $\widetilde{\gamma}$ has constant $r,R$ coordinates.
				\item[(b)] $\Sigma\cap \{1=R\}$, where $\gamma$ is contractible, and $\gamma$ lifts to closed Reeb orbit $\widetilde{\gamma}$ in $\Sigma$.
				\item[(c)] $\Sigma\cap \{R>1-\delta_1+2\delta_2\}^c$, where $\widetilde{\gamma}$ has constant coordinate in the $W_\epsilon^1$ component.
			\end{itemize}
			
			We will show that all elements of $\mathcal{P}^{<K}_\phi(\Sigma_0,\lambda_0)$(see Figure~\ref{Fig:index of Reeb orbits}) can be lifted to fractional Reeb orbits either entirely contained in part (a), (b) or (c) and 
			\begin{itemize}
				\item[(a)] orbits in part(a) have lower SFT index at least $m(\mathbf{a})-3/2$ ;
				\item[(b)] orbits in part (b) have lower SFT index at least $m(\mathbf{a})-3/2$ ;
				\item[(c)] orbits in part (c) have lower SFT index at least $n-5/2$.
			\end{itemize}
			First, in region (a), by lemma~\ref{formula for reeb vector}, we have 
		\[X_{Reeb}=\frac{X_F}{Y(F)}=\frac{2r\partial_\theta-2fX_f}{2r^2-2fY_\lambda(f)}=\frac{2r\p_\theta-2ff'J\p_R}{2r^2-2Rff'}.
		\]
		
		So $X_{Reeb}$ has no $\p_R$ component, and therefore the Reeb flow in the region (a) has constant $R$ coordinate. So any Reeb orbits $\gamma$ intersecting $ \{R>1-\delta_1+\delta_2\} $ remains entirely in region (a).
			Let us begin the proof with a lemma:
		\begin{lemma}\label{lemma: Hamiltonian and Reeb case relation verified}
			With $W_0$ and $F$ defined as in Lemma~\ref{lemma for transversality}, the conditions in Lemma~\ref{Reeb--Hamiltonian index relation} are satisfied.
		\end{lemma}
		\begin{proof}
			
			We will  verify the conditions in three cases:
			\begin{itemize}
				\item[a.] in the region $W_0\setminus \{ R>1-\delta_1+2\delta_2\}$, where $||1-f||_\ccc<\epsilon$;
				\item[b.] in the region $\{ 1-\delta_1+\delta_2<R< 1\}$, where $Y=r\p_r+R\p_R$, and $f=f(R)$.
				\item [c.] in the region $R=1, r=0$.
			\end{itemize}
			First of all, note that $b=dF(Y)=2r^2-2fY_\lambda(f)>0$ by Lemma~\ref{lemma for transversality}.
			
			Case (a): Let $p$ be a critical point of $f$ 
			and define \[A:=\{(p,\sqrt{f(p)^2+t},\theta)\in W_\epsilon^1\times \mathbb{R}\times S^1\,\big|\, t\in (-\epsilon_0,\epsilon_0)\}.
			\]
			Define $C_t:=F^{-1}(t)$, which is transverse to $A$ as $\partial_r$ is transverse to it. Let
			\[A_t:=C_t\cap A=\{(p,\sqrt{f(p)^2+t},\theta)\in W_\epsilon^1\times \mathbb{R}\times S^1\}
			\]
		 and $L_t=\sqrt{f(p)^2+t}$, $b/L_0=2f(p)$. We can rescale $L_t$ by $2f(p)$,
			and with $V=\frac{\p_r}{2r}$,
			we have
			\[db(V)=2>2f(p)\frac{dL_t}{dt}\Big|_{t=0}=1
			\]
			
			Case(b): Let $B$ be a Morse-Bott manifold of the Brieskorn manifold $(\Sigma(\fa),\lambda)$,
			and $g(R)=-f^2(R)$. Then $F(t)=r(t)^2+g(R(t)), t\in (-\epsilon_0,\epsilon_0)$
			. We have the following:
			\[ dF=2rdr+g'(R)dR,\quad b=dF(Y)=2r^2+Rg'(R),\]
			\[X_{Reeb}=X_F/Y(F)=(2r\p_\theta+g'(R)J\p_R)/b\]
		    
			Now, define for any constant $a>0$ ($-1/a$ is the slope of tangent line of $F$ at $(r,R)$),
			\[A(a):=\{(q,R(t),\theta,r(t))\in B\times(1-\delta_1+\delta_2,1)\times S^1\times\R|r^2-f(R)^2=t, r=ag'(R), t\in (-\epsilon_0,\epsilon_0)\}
			\]
			Again let $C_t:=F^{-1}(t)$, which is transverse to $A(a)$, and 
			\[A_t:=C_t\cap A=\{(q,R(t),\theta,r(t))\, \}
			\]
			Then $A(t)$ is a Morse-Bott manifold in $C_t$.
			Since
			\begin{equation}\label{equ:period}
			L_t=b/2r(t)=r+\frac{Rg'(R)}{2r}=r+\frac{R}{2a},\quad b/L_0=2r(0)
			\end{equation}
			\begin{equation}\label{equ: derivative of period}
			2r\frac{dL_t}{dt}\Big|_{t=0}=2rr'+\frac{rR'}{a}
			\end{equation}
		  and on the other hand,  $V=r'\p_r+R'\p_R$,
		  $db=4rdr+(g'(R)+Rg''(R))dR$,
			we have that
			\[db(V)= 4rr'+R'g'(R)+RR'g''(R)\geq 2rr'+\frac{rR'}{a}=2r\frac{dL_t}{dt}\Big|_{t=0}
			\]
			since $r'>0, R'>0, g''(R)>0$.
			
			Case (c): Let $g=-f^2(R), B$ defined as above, define 	\[A:=\{(q,R(t),\theta,0)\in B\times\R\times S^1\times\R\,|\,g(R)=t, t\in (-\epsilon_0,\epsilon_0)\}
			\]
			Once more, $C_t:=F^{-1}(t)$, which is transverse to $A$ as $\partial_R$ is transverse to it, and 
			\[A_t:=C_t\cap A=\{(q,R(t),\theta,0)\in B\times\R\times S^1\times\R\,\}
			\]
			are pseudo Morse-Bott manifolds. Moreover,
			\[dF=g'(R)dR, \, b=dF(Y)=Rg'(R),\,
			L_t=R(t),\, b/L_0=g'(1)\] Here,
			\[ g'(1)\frac{dL_t}{dt}\Big|_{t=0}=g'(R)R'|_{R=1}=\frac{d}{dt}(g(R(t)))|_{t=0}=1
			\] 
			and with $V=\frac{\p_R}{g'(R)}$,
			we have
			\[db(V)=1+\frac{Rg''(R)}{g'(R)}>1=g'(1)\frac{dL_t}{dt}\Big|_{t=0}.
			\]
		\end{proof}

			Now let us compute the index of the Reeb orbits in region (a).
			Any Reeb orbit $\gamma$ can be lifted to a fractional Reeb orbit $\widetilde{\gamma}$ in the region  $\Sigma\cap \{R>1-\delta_1+\delta_2\}$, where $F=r^2-f(R)^2$. The Reeb orbit can be written as $\widetilde{\gamma}=(\gamma_1,\gamma_2)$, where $\gamma_1,\gamma_2$ are fractional Reeb orbits of $(\Sigma(\mathbf{a}),R\lambda)$ and $(S^1,r\theta)$, for fixed $r,R$,
			so by Lemma~\ref{lemma: Hamiltonian and Reeb case relation verified}, $$\mu_{CZ}(\gamma,F)=\mu_{CZ}(\gamma,\lambda_0)+\frac{1}{2}.$$
			Meanwhile, 
			\[ \mu_{CZ}(\gamma,F)= \mu_{CZ}(\gamma_1,-f(R)^2)+\mu_{CZ}(\gamma_2,r^2)
			\]
			follows the product property of Conley-Zehnder index. Note that
			 $$(-f(R)^2)'>0, (-f(R)^2)''>0,$$
			 therefore we have
			 \[\mu_{CZ}(\gamma_1,-f(R)^2)=\mu_{CZ}(\gamma_1,c_1\lambda)+\frac{1}{2}.
			 \]
			Moreover, by Remark~\ref{remark: index of orbits on cylinder},  \[
			\mu_{CZ}(\gamma_2,r^2)=\frac{1}{2}.
			\]
		 Notice $\gamma_1$ is a fractional Reeb orbits on the Brieskorn manifold $(\Sigma(\mathbf{a}),R\lambda)$, which has the same index as 
			$(\Sigma(\mathbf{a}),\lambda)$. By Lemma~\ref{index calculation} and Lemma~\ref{minimal index}, we then have $\mu_{CZ}(\gamma_1)\geq m(\mathbf{a}).$ 
			Putting all equations together:
			 \begin{align*}
			 lSFT(\gamma)&=\mu_{CZ}(\gamma,\lambda_0)-\frac{1}{2}\dim B+(n+1)-3\\
			 &\geq \big(\mu_{CZ}(\gamma,F)-\frac{1}{2}\big)-n+(n+1)-3\\
			 &\geq \big(\mu_{CZ}(\gamma_1,\lambda)+\frac{1}{2}\big)+\mu_{CZ}(\gamma_2,r^2)-\frac{5}{2}\\
			 &\geq m(\fa)+\frac{1}{2}+\frac{1}{2}-\frac{5}{2}\\
			&=m(\fa)-3/2.
			 \end{align*}

			For the region (b), the claim will be proved in Lemma~\ref{lemma for contractible orbits}.
			
			Now suppose $\gamma_0(t),t\in [0,T],T<K$ is a Reeb orbit in $\Sigma_0$ and can be lifted to a fractional Reeb orbit in region (c). Then the $L-$ fold Reeb orbit $\gamma(t):= \gamma_0(t), t\in [0,LT]$ can be lifted to a closed Reeb orbit $\widetilde{\gamma(t)}$ in this region. Let $g(R)$ be a smooth function defined in Lemma~\ref{remark: c1 small vector field}. So $g(R)=1$ for $R<1-\delta_1+\delta_2$ and $g(R)=0$ for $R>1-\delta_1+2\delta_2$. By abuse of notation, $g$ can be regarded as a function on $\Sigma\cap \{1-\delta_1+\delta_2<R<1-\delta_1+2\delta_2\}$. We extend $g$ to $\Sigma$ by a constant. Now define a new vector field $X=g\cdot X_{Reeb}$. Let $X_W$ be the projection of $X$ to $W_\epsilon^1$, i.e. \[X_W=g\cdot \frac{-X_f}{f-Y_\lambda(f)}=\frac{-1}{f-Y_\lambda(f)}\cdot gX_f.\] Since $(1-f)$ is $\mathcal{C}^2$-small, $||\frac{1}{f-Y_\lambda(f)}||_\cc<2$.
			By Lemma~\ref{remark: c1 small vector field}, $X_W$ is $\mathcal{C}^1$-small. Then by Corollary~\ref{small norm vector field}, for $f$ sufficiently $\mathcal{C}^2$-small, any periodic orbit of period less than $LK$ is a constant orbit, and therefore corresponds to a critical point of $f$. We claim that any such Reeb orbit $\gamma_0$ has lower SFT index at least $n-5/2$, which will be proved in Proposition~\ref{proof of index of critical point}. 
		\end{proof}

		\begin{remark}\label{noncontactible remark}
			Reeb orbits in $W_0$ can be graded by their $H_1/Tors$ class. Let's have a closer look at the Reeb orbits with $H_1/Tors$ grading $0$. In the proof of Proposition~\ref{main proposition}, the Reeb orbits in the regions (a) and (c) are never null-homologous.
		\end{remark}
		\begin{lemma}\label{lemma for contractible orbits}
			Any Reeb orbit $\gamma$ in region (b) is contractible in $\Sigma_0$, and its lower SFT index is at least $m(\mathbf{a})-3/2$.
		\end{lemma}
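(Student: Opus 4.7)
The plan is to identify the Reeb dynamics in region (b) with the Reeb flow on $\Sigma(\mathbf{a})$ sitting at $R=1$, $r=0$, use the high connectivity of the Brieskorn manifold to conclude contractibility, and then apply Lemma~\ref{Reeb--Hamiltonian index relation} together with the product property of the Conley--Zehnder index to bound $\mu_{CZ}(\gamma)$ from below.

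First I would identify the Reeb vector field at the ``tip'' $\{R=1,\,r=0\}$. On the cylindrical end $R>1-\delta_1+2\delta_2$, one has $F=r^{2}+h(R)$ with $h(R)=-a^{2}\bigl(1-((R-(1-\delta_1))/\delta_1)^{6}\bigr)$, so $F$ is the sum of a Hamiltonian on $\widehat{W_\epsilon^{1}}$ and one on $\R\times S^{1}$. A direct computation using $d\lambda_0=d\lambda+dr\wedge d\theta$ gives $X_F=2r\,\partial_\theta+h'(R)R_\alpha$, where $R_\alpha$ is the Reeb field of $(\Sigma(\mathbf{a}),\alpha_\mathbf{a})$. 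At $r=0$, $R=1$ this reduces to $X_F=h'(1)R_\alpha$ with $h'(1)>0$, so after dividing by $Y(F)=Rh'(R)+2r^{2}$ the Reeb vector field $X_{Reeb}$ in region (b) is precisely $R_\alpha$. Hence every Reeb orbit in region (b) has the form $\widetilde\gamma(t)=(\gamma_{Br}(t),0,\theta_0)$ for a closed Reeb orbit $\gamma_{Br}$ on $\Sigma(\mathbf{a})$, and since the flow preserves $\theta$, any orbit $\gamma\subset\Sigma_0$ must actually lift to a \emph{closed} orbit $\widetilde\gamma\subset\Sigma$ (a fractional lift would force a nontrivial shift of $\theta$).

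For contractibility I invoke Proposition~\ref{high connectedness of brieskorn manifold}: since $n\ge 3$, the Brieskorn manifold $\Sigma(\mathbf{a})$ is $(n-2)$-connected, hence simply connected. Thus $\widetilde\gamma$ is null-homotopic inside $\Sigma(\mathbf{a})\subset\Sigma$, and pushing the null-homotopy forward via the quotient $\pi\colon\Sigma\to\Sigma_0$ produces a null-homotopy for $\gamma$ in $\Sigma_0$.

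For the lower SFT index, the Morse--Bott family through $\widetilde\gamma$ is $B=\Sigma_T\times S^{1}_\theta$ (Morse--Bott by case (c) of Lemma~\ref{lemma: Hamiltonian and Reeb case relation verified}) and has dimension at most $2n$. Applying Lemma~\ref{Reeb--Hamiltonian index relation} to $F$ on $\Sigma\subset\widehat{W_\epsilon^{1}}\times\R\times S^{1}$ gives $\mu_{CZ}(B,\lambda_0)=\mu_{CZ}(B,F)-\tfrac12$. The product property of the Conley--Zehnder index splits $\mu_{CZ}(\widetilde\gamma,F)=\mu_{CZ}(\widetilde\gamma_1,h)+\mu_{CZ}(\widetilde\gamma_2,r^{2})$, where $\mu_{CZ}(\widetilde\gamma_2,r^{2})=\tfrac12$ by Remark~\ref{remark: index of orbits on cylinder}. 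A second application of Lemma~\ref{Reeb--Hamiltonian index relation}, now to the Brieskorn factor with Hamiltonian $h$, yields $\mu_{CZ}(\widetilde\gamma_1,h)=\mu_{CZ}(\gamma_{Br},\alpha_\mathbf{a})+\tfrac12$. Combining these identities with $\mu_{CZ}(\gamma_{Br},\alpha_\mathbf{a})\ge m(\mathbf{a})$ (Proposition~\ref{index calculation} and Lemma~\ref{minimal index}) gives $\mu_{CZ}(\gamma,\lambda_0)\ge m(\mathbf{a})+\tfrac12$, and therefore
\[
lSFT(\gamma)=\mu_{CZ}(\gamma)-\tfrac12\dim B+(n+1)-3\ge m(\mathbf{a})+\tfrac12-n+n-2=m(\mathbf{a})-\tfrac32.
\]
The main obstacle I foresee is the careful bookkeeping of the three independent ``$+\tfrac12$'' shifts, together with verifying that no extra kernel directions appear at the singular tip $R=1$, $r=0$ where $\Sigma$ meets the endpoint of the cylindrical end, so that the bound $\dim B\le 2n$ is really tight.
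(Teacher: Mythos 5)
Your proposal is correct and follows essentially the same route as the paper: identify the Reeb flow on $\{R=1,\,r=0\}$ with the Brieskorn Reeb flow (stationary in $\theta$, which forces closed lifts and gives contractibility via the simple connectivity of $\Sigma(\mathbf{a})$), then stack the three half-integer shifts from Lemma~\ref{lemma: Hamiltonian and Reeb case relation verified}, the product property, Remark~\ref{remark: index of orbits on cylinder}, and $\mu_{CZ}\geq m(\mathbf{a})$ to bound the lower SFT index. Your explicit argument that a fractional lift is impossible because every nontrivial deck transformation rotates the $\theta$-circle is a step the paper leaves implicit, but the computation and the final estimate coincide with the paper's proof.
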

		\begin{proof}
			In region (b), $X_{Reeb}=J\partial_R$. In fact, $\Sigma\cap \{1=R\}=\Sigma(\mathbf{a})\times S^1$. The Reeb flow is stationary on $S^1$ and coincides with the Reeb flow on the Brieskorn manifold $\Sigma(\mathbf{a})$. Therefore, any Reeb orbit $\gamma$ is contractible. Suppose $\widetilde{\gamma}$ is a lift of $\gamma$. 
			Let $B\subset \Sigma(\fa)$ be a Morse-Bott manifold for $(\Sigma(\fa),\lambda)$.
			In light of Lemma~\ref{lemma: Hamiltonian and Reeb case relation verified}, we have
			\[\mu_{CZ}(B\times S^1, F)=\mu_{CZ}(B\times S^1, \lambda_0)+\frac{1}{2}.
			\]
			By the product property of Conley-Zehnder index, 
			\[\mu_{CZ}(B\times S^1, F)=\mu_{CZ}(B,-f^2(R))+\mu_{CZ}(S^1,r^2)=\mu_{CZ}(B,-f^2(R))+\frac{1}{2}.
			\]
		    On the other hand,
		    \[ \mu_{CZ}(B,-f^2(R))=\mu_{CZ}(B,\lambda)+\frac{1}{2}\geq m(\fa)+\frac{1}{2}.\]
		    So we conclude that 
		    \[ \mu_{CZ}(B\times S^1, \lambda_0)= \mu_{CZ}(B\times S^1,F)-\frac{1}{2}\geq m(\fa)+\frac{1}{2}
		    \]
		    and
		    \begin{align*}
		   lSFT(\gamma)&=\mu_{CZ}(\gamma)-\frac{1}{2}\dim\ker(D_{\gamma(0)}\psi_T-\textrm{id})+(n+1-3)\\
		     &=\mu_{CZ}(B\times S^1, \lambda_0)-\frac{1}{2}(\dim B+1)+(n+1-3)\\
		   &\geq m(\fa)+1/2-n+n-2=m(\fa)-3/2.
		    \end{align*}
		\end{proof}
		\begin{remark}\label{contractible generator of spectral sequence}
			
			Let $MB(p), p\in \Z $ be the Morse-Bott manifold of return time $\frac{p\pi}{2}$ in the Brieskorn manifold, then $MB(p)\times S^1/C(L)$ is a Morse-Bott manifold in $\Sigma_0$. Conversely, any Morse-Bott manifold of contractible Reeb orbits in $\Sigma_0$ can be lifted to $\Sigma$. By Lemma~\ref{lemma for contractible orbits} and Remark~\ref{noncontactible remark}, the contractible Morse-Bott manifolds  in $\Sigma_0$ can be lifted to $\Sigma(\fa)\times S^1 \subset \Sigma$. Indeed, each Reeb orbit in $\Sigma_0$ has $L$ different lifts in $\Sigma$. 
			In terms of Morse-Bott manifolds of contractible Reeb orbits, we have
			 a one-to-one correspondence:
			\begin{align*}
			\pi: \Sigma(\fa)\times S^1&\rightarrow (\Sigma(\fa)\times S^1)/C(L)\subset \Sigma_0\\
			MB(p)\times S^1 &\mapsto (MB(P)\times S^1)/C(L).
			\end{align*}
			 The group action is trivial on the first factor, therefore \[(MB(p)\times S^1)/C(L)= MB(p)\times (S^1/C(L)) \cong MB(p)\times S^1\] 
			 and \[ \mu_{CZ}(MB(p)\times S^1,\lambda_0)=\mu_{CZ}(MB(p),\lambda)+\frac{1}{2}.
			 \]
			
		\end{remark}

		\begin{prop}\label{proof of index of critical point}
			As defined in the proof of part (c) of Proposition~\ref{main proposition}, the Reeb orbit $\gamma_0$ has lower SFT index at least $n-5/2$.
		\end{prop}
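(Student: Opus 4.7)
The plan is to reduce the computation to the product structure of $F = -f^2 + r^2$ on the symplectization $\widehat{W_\epsilon^1} \times (\R \times S^1)$, using that the chosen $C(L)$-equivariant trivialization $\Phi$ of Subsection~\ref{specific trivialization} is compatible with this splitting and descends to the quotient, so that $\mu_{CZ}(\gamma_0)$ equals the index of any lift. Because $X_f(p) = 0$ at a critical point, the Hamiltonian vector field $X_F = 2r\p_\theta - 2f X_f$ on $\{F=0\}$ only rotates in $\theta$ at $(p, f(p), \cdot)$, so the lift $\widetilde{\gamma}_0$ lies entirely in the one-dimensional Morse--Bott manifold
\[
A \;:=\; \{p\}\times S^1_\theta \;\subset\; \Sigma.
\]

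Next I will compute $\mu_{CZ}(A, \lambda_0)$ through the Hamiltonian picture. Case~(a) of Lemma~\ref{lemma: Hamiltonian and Reeb case relation verified} already verifies the hypotheses of Lemma~\ref{Reeb--Hamiltonian index relation} at $A$, giving
\[
\mu_{CZ}(A,\lambda_0) \;=\; \mu_{CZ}(A, F) - \tfrac{1}{2}.
\]
The product property of the Conley--Zehnder index, together with the compatibility of $\Phi$ with the product decomposition, yields
\[
\mu_{CZ}(A, F) \;=\; \mu_{CZ}(\{p\}, -f^2) \;+\; \mu_{CZ}(S^1_\theta, r^2),
\]
where the second summand equals $\tfrac{1}{2}$ by Remark~\ref{remark: index of orbits on cylinder}. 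The Hamiltonian $-f^2$ is $\mathcal{C}^2$-small, since $f$ is $\mathcal{C}^2$-close to $1$ by Proposition~\ref{functions for the smoothing}, and $\mathrm{Hess}_p(-f^2) = -2f(p)\,\mathrm{Hess}_p(f)$ with $f(p) > 0$; hence Corollary~\ref{index of critical pt} applies on the complex $n$-dimensional manifold $W_\epsilon^1$ and gives
\[
\mu_{CZ}(\{p\}, -f^2) \;=\; n - \mathrm{Ind}_{-f^2}(p) \;=\; \mathrm{Ind}_f(p) - n.
\]

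Finally, the positivity $\mathrm{Ind}_f(p) \ge n$ follows from Proposition~\ref{functions for the smoothing} (the critical points and Morse indices of $f$ agree with those of $\phi'$) together with Remark~\ref{morse function index at least n } (which uses the plurisubharmonicity of $-\phi_0$ on the Stein manifold $X_\epsilon^1$). Combining the displays gives $\mu_{CZ}(A, \lambda_0) \ge 0$, and substituting into the lower SFT formula in the form used in Lemma~\ref{lemma for contractible orbits}, with $\dim A = 1$, yields
\[
lSFT(\gamma_0) \;=\; \mu_{CZ}(A,\lambda_0) - \tfrac{1}{2}\dim A + (n+1-3) \;\ge\; 0 - \tfrac{1}{2} + n - 2 \;=\; n - \tfrac{5}{2}.
\]
The most delicate point is arranging the $\mathcal{C}^2$-smallness required by Corollary~\ref{index of critical pt} simultaneously with the other constraints of Proposition~\ref{functions for the smoothing}; this is handled by first rescaling $\phi'$ by a small constant (which preserves the Morse indices) and then applying the construction of $f$ with a correspondingly small $\epsilon$.
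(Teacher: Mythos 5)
Your proposal is correct and follows essentially the same route as the paper: lift the orbit to region (c) where its $W_\epsilon^1$-component is a critical point $p$ of $f$, apply Lemma~\ref{Reeb--Hamiltonian index relation} (verified in case (a) of Lemma~\ref{lemma: Hamiltonian and Reeb case relation verified}), split the index via the product property and Remark~\ref{remark: index of orbits on cylinder}, compute $\mu_{CZ}(p,-f^2)=\mathrm{Ind}_f(p)-n\geq 0$ from Corollary~\ref{index of critical pt} and Remark~\ref{morse function index at least n }, and plug into the lower SFT formula with $\dim B_0=1$. Your extra remarks on the invariance of $\{p\}\times S^1$ and on arranging the $\mathcal{C}^2$-smallness needed for Corollary~\ref{index of critical pt} are consistent refinements of the paper's argument, not a different method.
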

		\begin{proof}
			The $L-$fold iterate $\gamma(t)$ can be lifted to a Reeb orbit $\widetilde{\gamma(t)}$ in the Region (c). Its $W_\epsilon^1$ component is a critical point $p$ of $f$. Since the conditions of Lemma~\ref{Reeb--Hamiltonian index relation} are satisfied,
			\[\mu_{CZ}(B_0,\lambda_0)+\frac{1}{2}=\mu_{CZ}(B_0,F).
			\]
			Everything descends down to the quotient $M_0$. We will use the same notations for the quotient.
			We have the Hamiltonian orbit $\gamma_0=(p,\gamma_2)$, where $p$ is a constant orbit in $W_\epsilon^1$ while $\gamma_2$ is an orbit in $\mathbb{R}\times S^1$. The index is
			\[\mu_{CZ}(B_0,F)=\mu_{CZ}(p,-f^2)+\mu_{CZ}(\gamma_2,r^2)=\mu_{CZ}(p,-f^2)+\frac{1}{2}
			\]

			Since $f(p)\neq 0$, $Ind_p(f^2)=Ind_p(f)$, hence \[\mu_{CZ}(p,-f^2)=Ind_p(f^2)-n=Ind_p(f)-n\] by Corollary~\ref{index of critical pt}.
		 Since indices of critical points of $f$ is at least n,	so $\mu_{CZ}(p)\geq 0$ (see remark~\ref{morse function index at least n }). Thus lower SFT index
			\begin{align*}
			lSFT(\gamma)&=\mu_{CZ}(B_0,\lambda_0)-\frac{1}{2}\dim B_0 +(n+1) -3\\
			&=\mu_{CZ}(B_0,F)-\frac{1}{2}-\frac{1}{2}+(n+1)-3\\
			&\geq 0+\frac{1}{2}+n-3= n-5/2
			\end{align*}
			
			where the Morse-Bott manifold $B_0=S^1$.
		\end{proof}
		
		\begin{proof}[Proof of Theorem~\ref{main theorem}]
			Recall the definition of a strongly ADC contact manifold: there exists a sequence of non-increasing contact forms $\alpha_i$ and increasing positive numbers $D_i$ going to infinity such that all elements of $\mathcal{P}_\Phi^{<D_i}(\Sigma,\alpha_i)$ have positive lower SFT index.

			In light of Proposition~\ref{main proposition}, let $ K_i=K^i$($K$ is a fixed large number, the explicit conditions will be clear later in this proof),
			there exists a $C(L)-$ equivariant function $F_i$ such that  
			all elements of $\mathcal{P}_\Phi^{<K_i}(\Sigma_i,\lambda_0|_{\Sigma_i})$ have positive lower SFT index (since $\min\{m(\mathbf{a})-3/2,n-5/2\}>0$), where $\Sigma_i:=F_i^{-1}(0)/C(L)$ is the boundary of the quotient manifold. 
			
			By Remark~\ref{containment}, we notice that conditions of Corollary~\ref{corollary for non-increasing form } are satisfied, so there exists a contactomorphism $f_i:\Sigma_0\to \Sigma_{i+1}$ and a constant $C$ independent of $F_i$, such that
			\[\frac{1}{C}\cdot\lambda_0|_{\Sigma_0}<f_i^{*}(\lambda_0|_{\Sigma_{i}})<C\cdot\lambda_0|_{\Sigma}.
			\]
			So the non-increasing contact forms $\alpha_i$ can be defined as  $\alpha_{i}=\frac{1}{C^i}f_i^*(\lambda_0|_{\Sigma_{i}})<\alpha_{i-1}$, and $D_i:=K_i/C^i$, which goes to infinity as long as $K>C$. Then $\mathcal{P}_\Phi^{<D_i}(\Sigma_0,\alpha_i)=\mathcal{P}_\Phi^{<L_i}(\Sigma_i,\lambda_0|_{\Sigma_i})$, which shows that all elements have positive lower SFT index.
		\end{proof}

		We follow the idea of F.laudenbach in the proof of the following lemma.
		\begin{lemma}[Proposition 6.1.5 \cite{audin2014morse}, \cite{laudenbach2004symplectic}]
			Let $X$ be a vector field on $\mathbb{R}^{2n}$. If $||dX||_{L^2} < \frac{2 \pi}{L}$, the only periodic orbits with period less than L are constant orbits. 
		\end{lemma}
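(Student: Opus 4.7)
The plan is to assume a non-constant $T$-periodic orbit $\gamma : \mathbb{R}/T\mathbb{Z} \to \mathbb{R}^{2n}$ with $T < L$ exists, and then derive a contradiction by Fourier/Wirtinger-type analysis on the circle $S^1_T = \mathbb{R}/T\mathbb{Z}$.

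The key observation is to promote $X\circ\gamma$ from an object we only know pointwise to a function on $S^1_T$ which itself has mean zero. Indeed, set
\[
Y(t) := X(\gamma(t)) = \dot\gamma(t).
\]
Since $\gamma$ is $T$-periodic, $\int_0^T Y(t)\,dt = \gamma(T) - \gamma(0) = 0$, so $Y$ is a mean-zero periodic function. By Wirtinger's inequality (or equivalently the Fourier expansion $Y = \sum_{k\neq 0} c_k e^{2\pi i k t/T}$, comparing $\|Y\|_{L^2}^2 = \sum |c_k|^2$ with $\|\dot Y\|_{L^2}^2 = \sum (2\pi k/T)^2 |c_k|^2$), one has
\[
\|Y\|_{L^2(S^1_T)} \;\leq\; \frac{T}{2\pi}\,\|\dot Y\|_{L^2(S^1_T)}.
\]

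Next, the chain rule gives $\dot Y(t) = dX_{\gamma(t)}\cdot \dot\gamma(t) = dX_{\gamma(t)}\cdot Y(t)$, whence
\[
\|\dot Y\|_{L^2(S^1_T)} \;\leq\; \|dX\|_{\infty,\gamma}\,\|Y\|_{L^2(S^1_T)},
\]
where $\|dX\|_{\infty,\gamma}$ is the pointwise operator norm of $dX$ along $\gamma$. Combining the two estimates,
\[
\|Y\|_{L^2(S^1_T)} \;\leq\; \frac{T}{2\pi}\,\|dX\|_{\infty,\gamma}\,\|Y\|_{L^2(S^1_T)}.
\]
Under the hypothesis $\|dX\| < 2\pi/L$ and the assumption $T<L$, the prefactor $\tfrac{T}{2\pi}\|dX\|$ is strictly less than $1$, forcing $\|Y\|_{L^2}=0$, i.e.\ $\dot\gamma \equiv 0$, so $\gamma$ is a constant orbit — contradicting our standing assumption. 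Hence every periodic orbit of period less than $L$ must be constant.

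There is no real obstacle here: the proof is essentially a two-line calculation, and the main points to verify carefully are (i) the mean-zero property of $Y$, which is exactly the content of periodicity, and (ii) the Wirtinger constant $T/2\pi$, which is sharp and matches the eigenvalue $(2\pi/T)^2$ of $-d^2/dt^2$ on mean-zero functions on $S^1_T$. The only subtle point to flag is the interpretation of $\|dX\|_{L^2}$ in the hypothesis: the argument actually uses the pointwise operator norm of $dX$ along $\gamma$ bounded in $L^\infty$, which is implied by (and consistent with) the $C^1$-smallness hypothesis under which this lemma is invoked in the proof of Proposition~\ref{main proposition} (via Lemma~\ref{remark: c1 small vector field}).
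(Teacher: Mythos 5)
Your proof is correct and is essentially the paper's own argument: the paper also expands $\dot u$ (your $Y$) in Fourier series, uses $c_0(\dot u)=0$ to get the Wirtinger-type bound $\|\ddot u\|_{L^2}\geq \tfrac{2\pi}{T}\|\dot u\|_{L^2}$, and contradicts it via $\ddot u = dX(\dot u)$ and the smallness of $dX$. Your closing remark about the norm is apt — the paper's hypothesis is written as an $L^2$ norm but its proof (like yours) really uses a pointwise operator-norm bound on $dX$ along the orbit, consistent with the $\mathcal{C}^1$-smallness in Corollary~\ref{small norm vector field}.
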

		\begin{proof}
			Consider the solution $u(t)$ of period $T\leq L$ and take its Fourier expansion as well as $\dot{u},\ddot{u}$.
			\[u(t)=\sum_{k}c_k(u)e^{2k\pi i t/T},\quad \dot{u}(t)=\sum_{k}\frac{2k\pi i}{T}c_k(u)e^{2k\pi i/T}
			\]
			So by Parseval's identity, we have 
			\[||\ddot{u}||^2_{L^2}=\sum \frac{4k^2\pi ^2}{T^2}|c_k(\dot{u})|^2\geq \sum_{k\ne 0}\frac{4\pi^2}{T^2}|c_k(\dot{u})|^2=\frac{4\pi^2}{T^2}||\dot{u}(t)||^2_{L^2}
			\]
			since $c_0(\dot{u})=0$. Hence,
			\[||\ddot{u}||_{L^2}\geq \frac{2\pi}{T}||\dot{u}(t)||_{L^2}.
			\]
			On the other hand, since  $\ddot{u}=(dX)(\dot{u})$,  $||dX||_{L^2} < \frac{2 \pi}{L}$, so
			\[||\ddot{u}||_{L^2}< \frac{2\pi}{L}||\dot{u}(t)||_{L^2} \leq \frac{2\pi}{T}||\dot{u}(t)||_{L^2}\]
			if $\dot{u}\ne 0$.
			Therefore $u(t)$ is a constant orbit.
		\end{proof}
		\begin{corollary}[ \cite{laudenbach2004symplectic}]\label{small norm vector field}
			If $M$ is a compact manifold with boundary and $X$ is a  vector field which vanishes in the neighborhood of the boundary. Then for any $L>0$, the flow generated by $X$ has no non-constant periodic orbit with period less than $L$ for sufficiently $\mathcal{C}^1$-small $X$.  
		\end{corollary}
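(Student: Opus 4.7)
The plan is to reduce the statement on the manifold to the preceding Euclidean lemma by working in local charts and using the smallness of $\|X\|_{\mathcal{C}^0}$ to confine any short-period orbit to a single chart. First I would note that since $X$ vanishes in a neighborhood $U$ of $\partial M$, every periodic orbit of its flow is contained in the compact set $K := M \setminus U$, which lies in the interior of $M$. Fix once and for all a finite collection of coordinate charts $(V_i,\phi_i)$ with $\phi_i : V_i \to B(0,2) \subset \R^{\dim M}$ diffeomorphisms, together with smaller sub-balls $V_i' = \phi_i^{-1}(B(0,1))$ that still cover $K$. Let $d > 0$ be a lower bound (in the Euclidean metric of each $B(0,2)$) for the distance from $B(0,1)$ to $\partial B(0,2)$, and let $c > 1$ be a constant comparing a fixed Riemannian metric on $M$ to the pulled-back Euclidean metrics on each $V_i$.

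Next I would quantify the required smallness of $X$. Suppose $\|X\|_{\mathcal{C}^0} < d/(cL)$ (measured in the fixed Riemannian metric on $M$). Then any integral curve of $X$ of duration at most $L$ has length at most $d/c$, so in any chart $V_i$ its Euclidean length is at most $d$. Consequently, a periodic orbit of period $T \le L$ whose initial point lies in $V_i'$ stays inside $V_i$ for its entire duration. Pushing this orbit forward by $\phi_i$ yields a periodic orbit in $B(0,2)$ for the vector field $Y_i := (\phi_i)_* X$.

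Now I would apply the preceding lemma (whose proof is dimension-independent, using only Fourier series and the pointwise bound $\|\ddot{u}\|_{L^2[0,T]} \le \|dY_i\|_{L^\infty} \|\dot{u}\|_{L^2[0,T]}$) in the chart. Since there are only finitely many charts and the pushforward operation $X \mapsto Y_i$ is continuous in the $\mathcal{C}^1$-topology, the condition $\|dY_i\|_{L^\infty(B(0,2))} < 2\pi/L$ can be ensured uniformly by demanding $\|dX\|_{\mathcal{C}^0} < \eta$ for some $\eta > 0$ depending on $L$ and on the finite chart data. Combining both smallness conditions gives a single $\mathcal{C}^1$-threshold below which every periodic orbit of period at most $L$ is constant.

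The only subtlety (and the main obstacle) is the careful bookkeeping of norms: the preceding lemma controls periodic orbits via an $L^\infty$-type bound on the linearization along the orbit, while the corollary hypothesis is that $X$ is globally $\mathcal{C}^1$-small on $M$. The passage between these two, across a finite atlas, is routine once the chart data is fixed in advance — the compactness of $K$ guarantees that the finite collection of charts and the constants $d, c$ can be chosen once and for all, independently of $X$, so that the required threshold on $\|X\|_{\mathcal{C}^1}$ depends only on $L$ and on $M$.
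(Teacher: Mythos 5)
Your proof is correct and follows essentially the same strategy as the paper: use the $\mathcal{C}^0$-smallness to confine any orbit of period at most $L$ to a single chart of a fixed finite atlas, then invoke the Euclidean Fourier-series lemma there via comparison of norms. The only cosmetic difference is that the paper removes the boundary by doubling $M$, whereas you observe directly that all non-constant orbits lie in the interior compact set $M\setminus U$; both devices rest on the vanishing of $X$ near $\partial M$ and are interchangeable.
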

		\begin{proof}
			First we get rid of the boundary by doubling $M$ (glue $M$ with itself along the boundary). Now that $X$ can be smoothly extended since it vanishes in a neighborhood of the boundary. Now consider the new closed manifold $\tilde{M}$. Let us fix a finite collection of compact charts $K_i$. Since $X$ is $\mathcal{C}^1$-small, every closed orbit with bounded period $T$ of the flow of $X$ has a small diameter($D\leq ||X||_{uniform}\cdot L)$, which implies the entire orbit remains in one of the charts $K_i$. The $\mathcal{C}^1$ norm is equivalent to the Euclidean norm so the lemma above applies.
		\end{proof}

		\begin{lemma}\label{Liouville embedding}
			Let $(U,\lambda)$ be a Liouville domain,
			$(\widehat{U},\hat{\lambda})$ its completion, and $\Sigma_1:=\partial U$ be the contact boundary.
			Suppose we have a Liouville domain $(V,\hat{\lambda})$ such that $ U\subset V\subset U\cup\Sigma_1\times [0,M] $.Then there is a contactomorphism $\Psi$ 
			\[\Psi:(\Sigma_1,\lambda_2=\hat{\lambda}|_{\Sigma_1})\to 
			(\Sigma_2:=\partial V,\lambda_2=\hat{\lambda}|_ {\Sigma_2}).
			\]
			such that  $ \lambda_1\leq \Psi^{*}\lambda_2\leq e^M\lambda_1$.
		\end{lemma}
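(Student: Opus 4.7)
The plan is to exploit the fact that $\Sigma_2 = \partial V$ must sit in the cylindrical end $\Sigma_1 \times [0,M]$ where the Liouville form is $\hat\lambda = e^t \pi^* \lambda_1$ (with $\pi$ the projection onto $\Sigma_1$ and $t$ the cylindrical coordinate), and to show that $\Sigma_2$ is a graph over $\Sigma_1$. Once this is established, $\Psi$ is defined by following the Liouville flow from $\Sigma_1$ to $\Sigma_2$.

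First, I will show that $\Sigma_2 \subset \Sigma_1 \times [0,M]$: since $U \subset V$, the boundary $\Sigma_2$ cannot meet the interior of $U$, and by hypothesis $V \subset U \cup \Sigma_1\times[0,M]$. Next, I will show that $\Sigma_2$ is a smooth graph $\Sigma_2 = \{(x,h(x)) : x\in \Sigma_1\}$ for some smooth $h : \Sigma_1 \to [0,M]$. For each $x \in \Sigma_1$, the Liouville flow line $\gamma_x(t) = (x,t)$, $t\in[0,M]$, starts at the point $(x,0)\in \Sigma_1 \subset U\subset V$. Because $V$ is a Liouville domain with Liouville form $\hat\lambda|_V$, the Liouville vector field $\hat X = \partial_t$ is outward transverse to $\Sigma_2$; for any local defining function $g$ of $\Sigma_2$ (with $V=\{g\le 0\}$), this means $\partial_t g > 0$ at every crossing. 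Hence once the flow line exits $V$ it stays outside locally, and a subsequent re-entry would force an inward crossing, contradicting outward transversality. So each flow line crosses $\Sigma_2$ at most once. Existence of a crossing is automatic: either the flow line exits $V$ strictly before $t=M$, or else $(x,M)\in V$, in which case $(x,M)\in\partial V = \Sigma_2$ (since any $\widehat U$-neighborhood of $(x,M)$ extends into $\Sigma_1\times(M,\infty)$, which lies outside $V$). Smoothness of $h$ then follows from the implicit function theorem applied to $g$, using $\partial_t g \neq 0$ along $\Sigma_2$.

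With the graph established, set
\[
\Psi : \Sigma_1 \longrightarrow \Sigma_2, \qquad \Psi(x)=(x,h(x)).
\]
For $v\in T_x\Sigma_1$ we have $d\Psi(v)=(v,dh(v))$. Since $\hat\lambda = e^t \pi^*\lambda_1$ has no $dt$-component,
\[
(\Psi^*\lambda_2)_x(v) \;=\; \hat\lambda_{(x,h(x))}\bigl(v,dh(v)\bigr) \;=\; e^{h(x)}\,\lambda_1(v),
\]
so $\Psi^*\lambda_2 = e^{h(x)}\lambda_1$. In particular $\Psi$ is a contactomorphism, and the inequality $0\le h(x)\le M$ yields $\lambda_1 \leq \Psi^*\lambda_2 \leq e^M\lambda_1$.

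The main obstacle is step two, verifying that $\Sigma_2$ is a graph: this requires correctly handling the degenerate case $(x,M)\in V$ (so that $h(x)=M$ makes sense), and ruling out multiple crossings by carefully invoking the outward transversality of the Liouville vector field, which is the defining property of $V$ as a Liouville domain inside $\widehat U$. The remaining algebra, including the formula $\Psi^*\lambda_2 = e^{h(x)}\lambda_1$, is immediate from the cylindrical normal form of $\hat\lambda$.
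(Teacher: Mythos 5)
Your proof is correct and takes essentially the same route as the paper: the paper defines the hitting time $t(p)$ of the Liouville flow from $\Sigma_1$ to $\Sigma_2$ and realizes the map $p \mapsto \psi_{t(p)}(p)$ as the time-one flow of a cut-off multiple $\rho\,\partial_r$ of the Liouville field, obtaining $\Psi^*\lambda_2 = e^{t(p)}\lambda_1$, which is exactly your graph map $x \mapsto (x,h(x))$ with $\Psi^*\lambda_2 = e^{h}\lambda_1$ and the bound $0\le h\le M$. The only difference is that you spell out the existence, uniqueness, and smoothness of the hitting time (i.e.\ that $\Sigma_2$ is a graph over $\Sigma_1$, via outward transversality of the Liouville field and the implicit function theorem), details the paper's proof takes for granted.
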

		\begin{proof}
			Since  \[U\subset V \subset \Sigma_1\times [0,M]  \]
			let $\psi$ be the flow generated by the Liouville vector field and 
			$t(p)$ be the time when the flow starting at $p \in \Sigma_1$ reaches $\Sigma_2$, i.e, $\psi_{t(p)}(p)\in \Sigma_2$. Then $M\geq t(p)\geq 0$.
			Let $\rho$ be a function on $\widehat{U}$  supported on $(-\epsilon,M+1)\times\Sigma_1$, such that $\rho((r,p))\equiv t(p)$ on the region $[-0,M]\times \Sigma_1$.
			Now consider the vector field $Y:=\rho\cdot\partial_r$ and we denote by
			\[\Psi:\mathbb{R}\times \widehat{U} \to \widehat{U},\quad (t,p)\mapsto \Psi_t(p)
			\]
			the flow generated by $Y$. Clearly we have $\Psi_1(\Sigma_1)=\Sigma_2$ and $\Psi^{*}\lambda_1=e^{t(p)}\lambda_2$. Now the  conclusion follows.
		\end{proof}
		\begin{remark}
			If the Liouville domains in the Lemma above are $G$-equivariant, then there is  $G$-equivariant contactomorphism satisfying the above statement.
		\end{remark}
		
		\begin{corollary}\label{corollary for non-increasing form }
			Let $(U,\lambda)$ be a Liouville domain, suppose we have two Liouville domains $V_1,V_2$ with $\Sigma_1=\partial V_1,\Sigma_2=\partial V_2$ such that
			$U\subset V_i\subset U\cup \partial U\times [0,M]$. Then there exists a contactomorphism $f$ and a constant $C$ independent of $V_i$, such that
			\[\frac{1}{C}\cdot\lambda|_{\Sigma_1}<f^{*}\lambda|_{\Sigma_2}<C\cdot\lambda|_{\Sigma_1}.
			\]
		\end{corollary}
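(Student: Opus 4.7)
The corollary follows by applying the preceding Lemma (Liouville embedding) twice and composing the resulting contactomorphisms. The plan is as follows.

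First, apply the Liouville embedding lemma to the pair $(U, V_1)$. Since $U \subset V_1 \subset U \cup \partial U \times [0, M]$, the lemma produces a contactomorphism
\[
\Psi_1 \colon (\partial U, \lambda|_{\partial U}) \longrightarrow (\Sigma_1, \hat\lambda|_{\Sigma_1})
\]
satisfying $\lambda|_{\partial U} \leq \Psi_1^{*}(\hat\lambda|_{\Sigma_1}) \leq e^{M}\, \lambda|_{\partial U}$. Applying the lemma again to the pair $(U, V_2)$ yields a contactomorphism
\[
\Psi_2 \colon (\partial U, \lambda|_{\partial U}) \longrightarrow (\Sigma_2, \hat\lambda|_{\Sigma_2})
\]
with $\lambda|_{\partial U} \leq \Psi_2^{*}(\hat\lambda|_{\Sigma_2}) \leq e^{M}\, \lambda|_{\partial U}$. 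Crucially, the constant $e^{M}$ depends only on $U$ and $M$, not on the particular $V_i$.

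Then I define the desired contactomorphism
\[
f := \Psi_2 \circ \Psi_1^{-1} \colon \Sigma_1 \longrightarrow \Sigma_2,
\]
which is a composition of contactomorphisms and hence a contactomorphism. To estimate $f^{*}(\hat\lambda|_{\Sigma_2})$, invert the bound on $\Psi_1$ to obtain $e^{-M}\, \hat\lambda|_{\Sigma_1} \leq (\Psi_1^{-1})^{*}\lambda|_{\partial U} \leq \hat\lambda|_{\Sigma_1}$ on $\Sigma_1$, and then pull back the bound on $\Psi_2$ by $\Psi_1^{-1}$ to get
\[
(\Psi_1^{-1})^{*}\lambda|_{\partial U} \;\leq\; f^{*}(\hat\lambda|_{\Sigma_2}) \;\leq\; e^{M}\, (\Psi_1^{-1})^{*}\lambda|_{\partial U}.
\]
Chaining the two double inequalities gives $e^{-M}\, \hat\lambda|_{\Sigma_1} \leq f^{*}(\hat\lambda|_{\Sigma_2}) \leq e^{2M}\, \hat\lambda|_{\Sigma_1}$, so taking $C := e^{2M}$ (a constant depending only on $U$ and $M$) finishes the proof.

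There is essentially no obstacle: everything is routine pullback bookkeeping once the embedding lemma is in hand. The only subtle point to check is that the strict inequalities in the statement can be arranged, which is harmless since one may simply replace $C$ by any slightly larger constant (e.g. $C := 2 e^{2M}$) to convert $\leq$ into $<$. If needed, the $G$-equivariant version mentioned in the remark after the Liouville embedding lemma ensures that $f$ can be taken $C(L)$-equivariant as well, which is the form in which this corollary is applied in the proof of Theorem \ref{main theorem}.
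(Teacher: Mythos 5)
Your proof is correct and is exactly the intended argument: the paper states this corollary without further proof as an immediate consequence of Lemma~\ref{Liouville embedding}, namely applying that lemma to each $V_i$, composing $f=\Psi_2\circ\Psi_1^{-1}$, and chaining the pulled-back estimates, with the constant (e.g.\ $C=2e^{2M}$) depending only on $M$ and not on the $V_i$.
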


		\section{Finiteness of positive idempotent group  }\label{Finite}
		We are going to show that the positive idempotent group $I_+(\Sigma_0)$ is finite. Let's recall the definitions: for any filling $W$ of $\Sigma_0$ such that $SH_*(W)\neq 0$, we have
	
		\[I(W)=\{\, \alpha \in SH_n^0(W)\,\big| \, \alpha^2-\alpha \in H^0(W)\,\}
		\]
		and $I_+(W)=I(W)/H^0(W)$, hence it suffices to prove $I(W)$ is a finite group. Indeed, for the Liouville filling $(M_0,\lambda_0)$ as in remark~\ref{def of key block}, $SH_k^0(M_0,\Z_2)$ is finite. We begin by introducing a spectral sequence which converges to $SH_*^0(M_0,\Z_2)$:
	
		\begin{theorem}[Theorem 5.4  \cite{kwon2016brieskorn}]\label{spectral sequence} Let$(W,\omega=d\lambda)$ be a Liouville domain satisfying the assumptions:
			\begin{itemize}
				\item[1] The Reeb flow on $\partial W$ is periodic with minimal periods $T_1\cdot \frac{\pi}{2},T_2\cdot \frac{\pi}{2},\cdots,T_k\cdot \frac{\pi}{2}$, where $T_k\cdot \frac{\pi}{2}$ is the common period, i.e. the period of a principal orbit. We assume that all $T_k$ are integers.
				\item[2] The restriction of the tangent bundle to the symplectization of $\partial W$,$T(\mathbb{R}\times \partial W)|_{\partial W}$, is trivial as a symplectic vector bundle, $c_1(W)=0$ and we have a choice of the trivialization of the canonical bundle.
				\item[3] There is a compatible complex structure $J$ for $(\xi:=\ker \lambda_{\partial W},d\lambda_{\partial W})$ such that for every periodic Reeb orbit $\gamma$ the linearized Reeb flow is complex linear with respect to some unitary trivialization of $(\xi,J,d\alpha)$ along $\gamma$.
			\end{itemize}
			For each positive integer $p$ define $C(p)$ to be the set of Morse-Bott manifolds with return time $p$, and for each Morse-Bott manifold $\Sigma\in C(p)$ put
			\[ \Delta(\Sigma)=\mu_{CZ}(\Sigma)-\frac{1}{2}\dim \Sigma/S^1,
			\]
			where the Robbin-Salamon index is computed for a symplectic path defined on $[0,p]$. Then there is a spectral sequence converging to $SH(W;R)$, whose $E^1-$page is given by
			\[E_{pq}^1=\begin{cases}
			\bigoplus\limits_{\Sigma\in C(p)}
			 H_{p+q-\Delta(\Sigma)}  (\Sigma;R)  & p>0\\
			H_{q+n}(W,\partial W;R)   & p=0\\
			0 & p<0.
			\end{cases}
			\]
		\end{theorem}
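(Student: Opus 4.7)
The plan is to construct this spectral sequence as the action filtration spectral sequence associated to a sequence of Morse--Bott admissible Hamiltonians whose slopes interpolate between consecutive periods of the Reeb flow. Since the Reeb flow on $\partial W$ is periodic with all period in $\tfrac{\pi}{2}\Z$, the non-constant $1$-periodic Hamiltonian orbits of an admissible Hamiltonian $H$ (with slope $s$ not in the period spectrum) organize into Morse--Bott families, one for each integer $p$ with $p\cdot\tfrac{\pi}{2}<s$; each such family is exactly (a copy shifted into the cylindrical end of) a Morse--Bott submanifold $\Sigma \in C(p)$. The assumption that the linearized Reeb flow is complex linear with respect to a fixed unitary trivialization of $\xi$ forces these families to be honest Morse--Bott (i.e.\ the kernel of $D\psi_T-\mathrm{id}$ restricted to $\xi$ equals the tangent space of $\Sigma/S^1$), which is the input we need to apply the standard Morse--Bott machinery of Bourgeois--Oancea / Kwon--van Koert.

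The main steps I would carry out are: (1) perturb $H$ near each Morse--Bott family using an auxiliary Morse function $f_\Sigma$ on $\Sigma/S^1$ pulled up to $\Sigma$; after this perturbation each family is replaced by a finite collection of non-degenerate orbits indexed by $\mathrm{Crit}(f_\Sigma)$, whose Conley--Zehnder indices are governed by the formula
\begin{equation*}
\mu_{CZ}=\mu_{RS}(\Sigma)+\mathrm{ind}_{f_\Sigma}(p)-\tfrac{1}{2}\dim(\Sigma/S^1),
\end{equation*}
so the degrees line up to give $p+q-\Delta(\Sigma)$ as the homological degree on the $p$-th column. (2) Filter the Floer chain complex $SC_*(H)$ by the period $p$ (equivalently by action, since the action of orbits in the $p$-th family is controlled by $p\cdot\tfrac{\pi}{2}$ plus small corrections), declaring $F_p SC_*$ to be generated by orbits of period $\le p$ together with the constant orbits from the interior; the column $p=0$ picks up the Morse complex of the $\mathcal{C}^2$-small Morse function on $W$, which via the usual degree shift contributes $H_{q+n}(W,\partial W;R)$ (Poincar\'e--Lefschetz duality in the relative form is what converts $H^{n-(q+n)}(W)$ to $H_{q+n}(W,\partial W)$, consistent with the shift used throughout the paper).  (3) Identify the $E^1$-page by showing that the differential on $F_pSC_*/F_{p-1}SC_*$ only counts Floer cylinders whose asymptotes lie in the same Morse--Bott family; a standard Gromov--Floer compactness/stretching argument combined with the action filtration shows that such cylinders reduce, after neck-stretching, to negative gradient trajectories of $f_\Sigma$ on $\Sigma/S^1$, giving the Morse homology of $\Sigma/S^1$ with coefficients $R$. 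Finally (4) take the direct limit over the slope $s\to\infty$; the trivialization of the canonical bundle together with $c_1(W)=0$ guarantees that the $\Z$-grading is well-defined and compatible with the continuation maps.

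The hard part will be step~(3), the identification of the $E^1$ differential: I need to show that no rigid Floer cylinder connecting two generators in the same column contributes beyond the gradient flow of $f_\Sigma$, and that the cascade corrections coming from the Morse--Bott analysis collapse onto ordinary Morse homology of $\Sigma/S^1$. This requires a transversality argument for the linearized Floer operator at the Morse--Bott orbits and a careful analysis of the local model near each family; the complex linearity hypothesis (assumption~3) is precisely what makes the linearized operator diagonalize correctly, so the regularity of the Morse--Bott moduli spaces follows from an application of the Bourgeois--Oancea framework. Once this is in place, convergence of the spectral sequence to $SH_*(W;R)$ follows from the fact that the filtration is bounded below (by the constant orbits) and exhaustive in the direct limit, and the $E^\infty$-page assembles back into $SH_*(W;R)$ via the standard argument that the filtration is compatible with continuation maps.
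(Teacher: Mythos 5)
The paper does not prove this statement at all: it is imported verbatim as Theorem 5.4 of Kwon--van Koert \cite{kwon2016brieskorn} and used as a black box, so there is no internal proof to compare against. Measured against the actual proof in the literature, your outline does follow the same overall strategy (filter the Floer complex of admissible Hamiltonians by the period $p\cdot\tfrac{\pi}{2}$ of the underlying Reeb orbits, compute each column by Morse--Bott perturbation, and pass to the direct limit over slopes), and your treatment of the $p=0$ column via Lefschetz duality $H^{n-q}(W)\cong H_{n+q}(W,\partial W)$ is consistent with the stated $E^1_{0q}$.

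However, there is a genuine error in your identification of the columns for $p>0$. The theorem asserts $E^1_{pq}=\bigoplus_{\Sigma\in C(p)}H_{p+q-\Delta(\Sigma)}(\Sigma;R)$, i.e.\ the homology of the Morse--Bott submanifold $\Sigma$ itself, whereas your steps (1) and (3) produce the Morse homology of the orbit space $\Sigma/S^1$. The discrepancy comes from conflating the Reeb picture with the Hamiltonian Floer picture: for an autonomous admissible Hamiltonian, every point of $\Sigma$ is the starting point of a distinct $1$-periodic Hamiltonian orbit, so the Morse--Bott family of Floer generators is (a copy of) $\Sigma$, not $\Sigma/S^1$; the quotient $\Sigma/S^1$ enters only through the index shift $\Delta(\Sigma)=\mu_{CZ}(\Sigma)-\tfrac12\dim\Sigma/S^1$. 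Moreover, the perturbing function you propose --- a Morse function on $\Sigma/S^1$ pulled back to $\Sigma$ --- is $S^1$-invariant, hence Morse--Bott but not Morse on $\Sigma$, so it does not achieve the nondegeneracy you need; you must either perturb by a genuine Morse function on $\Sigma$ (one generator per critical point, index $\mu_{RS}(\Sigma)$ shifted by the Morse index and half the dimension) or run the Bourgeois--Oancea cascade formalism with two generators per circle fiber, and in either case the column computes $H_*(\Sigma;R)$, not $H_*(\Sigma/S^1;R)$. As written, step (3) would yield an $E^1$-page different from the one in the statement (e.g.\ in the application in Section 7 of this paper the relevant families are $MB(p)\times S^1$ with $MB(2l)\cong\R\mathbb{P}^3$, and it is $H_*(\R\mathbb{P}^3\times S^1)$, not the homology of its Reeb-orbit space, that populates the columns), so this step must be repaired before the rest of the outline goes through.
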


	\begin{remark}
	The above spectral sequence respects the $H_1$ grading. Therefore, to compute $SH_n^0(M_0)$, we only need to focus on the Morse-Bott manifolds of null-homologous Reeb orbits.
	\end{remark}
		
		 \begin{lemma}\label{finiteness of group}
			$SH_k^0(M_0,\Z_2)$ is finite for all $k$.
		\end{lemma}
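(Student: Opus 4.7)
The plan is to compute $SH_*^0(M_0,\Z_2)$ via the Morse--Bott spectral sequence of Theorem~\ref{spectral sequence}, and then to estimate degree shifts using the index formula for the underlying Brieskorn manifold. Because the spectral sequence is compatible with the $H_1$-grading, the null-homologous summand $SH_*^0(M_0,\Z_2)$ is the abutment of the sub-spectral sequence whose $E^1$-page has a single $p=0$ term $H_{*+n}(M_0,\p M_0;\Z_2)$ (which is finite-dimensional since $M_0$ is a compact manifold), together with contributions from those Morse--Bott families of Reeb orbits on $\Sigma_0$ that are null-homologous. By Remark~\ref{noncontactible remark} combined with Remark~\ref{contractible generator of spectral sequence}, every null-homologous Morse--Bott family has the form $MB(p)\times S^1$, where $MB(p)\subset\Sigma(\fa)$ is a Brieskorn submanifold; in particular each such family is a compact manifold of bounded real dimension, so its $\Z_2$-homology is finite-dimensional, and there is essentially one family for each return time $p$.

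The decisive step is to show that for each fixed total degree $k$, only finitely many values of $p$ give a non-trivial contribution to $E^1$. By Remark~\ref{contractible generator of spectral sequence}, the grading shift for the family $MB(p)\times S^1$ is
\[
\Delta_p \;=\; \mu_{CZ}(MB(p)\times S^1,\lambda_0)-\tfrac12\dim MB(p)\;=\;\mu_{CZ}(MB(p),\lambda)+\tfrac12-\tfrac12\dim MB(p),
\]
and this family contributes $H_{k-p+\Delta_p}(MB(p)\times S^1;\Z_2)$ in total degree $k$, which vanishes whenever $k-p+\Delta_p\notin[0,\dim MB(p)+1]$. Plugging the Brieskorn index formula of Proposition~\ref{index calculation} at return time $T=p\pi/2$ into the elementary estimate $\lfloor x\rfloor+\lceil x\rceil\geq 2x-1$ yields
\[
\mu_{CZ}(MB(p),\lambda)-p \;\geq\; p\Bigl(2\sum_{j}\tfrac{1}{a_j}-3\Bigr)-C_\fa
\]
for a constant $C_\fa$ depending only on $\fa$. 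Under the numerical hypothesis $\sum_j 1/a_j>3/2$, which is precisely the input that makes Lemma~\ref{minimal index} yield $m(\fa)\geq 2$, this lower bound grows linearly in $p$. Since $\dim MB(p)$ is uniformly bounded (by twice the number of entries of $\fa$), $k-p+\Delta_p$ eventually exceeds $\dim MB(p)+1$, and therefore all $p$ beyond a finite threshold contribute zero.

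Combining these two observations, the $E^1$-page underlying $SH_*^0(M_0,\Z_2)$ is finite-dimensional in every total degree, hence the same is true of $E^\infty$. Working over the finite field $\Z_2$, finite dimensionality is equivalent to finiteness as an abelian group, which proves the lemma. The main technical obstacle is justifying that Theorem~\ref{spectral sequence} applies to $(M_0,\lambda_0)$ even though the Reeb flow on $\Sigma_0$ is not globally periodic: by Remark~\ref{noncontactible remark}, all contractible Reeb orbits are confined to the region $(b)$ of Proposition~\ref{main proposition}, namely $(\Sigma(\fa)\times S^1)/C(L)$, where the Reeb flow inherited from $\Sigma(\fa)$ \emph{is} periodic, and one applies the spectral sequence to this periodic sub-system, which is all that the null-homologous summand sees.
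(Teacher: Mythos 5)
Your proposal follows the paper's own proof in all essentials: the same Morse--Bott spectral sequence of Theorem~\ref{spectral sequence}, the same identification (via Remark~\ref{contractible generator of spectral sequence}) of the null-homologous Morse--Bott families as $MB(p)\times S^1$, one family per return time, and the same mechanism --- linear growth of the Brieskorn index in the period, exactly as in the proof of Lemma~\ref{minimal index} --- to show that only finitely many columns contribute in each total degree, the $p=0$ column being the finite-dimensional $H_{q+n}(M_0,\p M_0;\Z_2)$. Your closing remark about the Reeb flow on $\Sigma_0$ not being globally periodic is handled in the paper exactly as you suggest, through the lift to $\Sigma(\fa)\times S^1$ in Remark~\ref{contractible generator of spectral sequence}.

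One bookkeeping error should be fixed. With $E^1_{pq}=H_{p+q-\Delta(\Sigma)}(\Sigma;\Z_2)$, the contribution of the column $p$ in total degree $k$ is $H_{k-\Delta_p}(MB(p)\times S^1;\Z_2)$, not $H_{k-p+\Delta_p}$; your vanishing criterion ``$k-p+\Delta_p\notin[0,\dim MB(p)+1]$'' is therefore not the right one. With the correct formula the needed vanishing for large $p$ is governed by $\Delta_p\to\infty$ alone, i.e.\ by $f_\fa(p)\to\infty$, and the estimate $\lfloor x\rfloor+\lceil x\rceil>2x-1$ shows this already holds whenever $\sum_j 1/a_j>1$. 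The stronger condition $\sum_j 1/a_j>3/2$ that your spurious ``$-p$'' forces does happen to hold for $\fa=(2,2,2,p_1,\dots,p_k)$ (the sum equals $2$), so your conclusion is unaffected for the $M_0$ at hand; but note that this inequality is not ``precisely the input'' of Lemma~\ref{minimal index} --- that lemma's conclusion $m(\fa)\geq 2$ rests on the three entries equal to $2$, not on the size of $\sum_j 1/a_j$. Once the degree formula is corrected, your argument coincides with the paper's: finitely many $p$ satisfy $\Delta_p\in[k-\dim MB(p)-1,\,k]$, each $MB(p)\times S^1$ is a closed manifold so its $\Z_2$-homology is finite dimensional, hence $\bigoplus_{p+q=k}E^1_{pq}$, and therefore $SH^0_k(M_0,\Z_2)$, is finite.
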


		\begin{proof}[proof of lemma~\ref{finiteness of group}]
			Note that it suffices to find all the Morse-Bott manifolds.
			By Remark~\ref{contractible generator of spectral sequence}, the first page of the spectral sequence which converges to  $SH_*^0(M_0,\mathbb{Z}_2)$  is
			\[E_{pq}^1=\begin{cases}
			\bigoplus H_{p+q-\Delta(MB(p))}  (MB(p)\times S^1 ;\mathbb{Z}_2)  & p>0\\
			H_{q+n}(M_0,\partial M_0;\mathbb{Z}_2)   & p=0\\
			0 & p<0.
			\end{cases}
			\]
			The finiteness of $SH_k^0(M_0,\mathbb{Z}_2)$ follows from the following two facts: 
			first, there are only finitely many Morse-Bott manifolds $MB(p)$ satisfying $\Delta(MB(p))=k$, i.e.
			\[k=\mu_{CZ}(MB(p)\times S^1)-\frac{1}{2}(\dim(MB(p)\times S^1)/S^1)=f_\mathbf{a}(p)-\frac{1}{2}(\dim MB(p)-1).
			\]
			The above equation can only be satisfied by
			finitely many $p\in \frac{1}{2L}\Z$, and
			for any $p$ there is at most one Morse-Bott manifold with return time $p\pi/2$ in the Brieskorn manifold $\Sigma(\mathbf{a})$.
			Secondly, \[H_* (MB(p)\times S^1 ;\mathbb{Z}_2) =0\quad *<0\, \text{or}\, *>2n .\] and $H_* (MB(p)\times S^1 ;\mathbb{Z}_2)$ is finite dimensional for $0\leq * \leq 2n$. Therefore $SH_k^0(M_0,\Z_2)$ is finite for each $k$, since the dimension of $\bigoplus\limits_{p+q= k}E_{pq}^1$ is finite for each $k$.

		\end{proof}

		Now we are going to prove that $SH_*^0(M_0(\fa),\Z_2)\neq 0$, where $\fa$ is defined as in Remark~\ref{remark for sphere}.
		\begin{lemma}\label{lemma: nonvanishing}
		For $\fa=(2,2,2,\cdots,p_k)$, 
		we have $SH_*^0(M_0(\fa),\Z_2)\neq 0$, where $k+3=n,n>8, p_i's$ are sufficiently large integers.
		\end{lemma}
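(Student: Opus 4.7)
The plan is to apply the Morse--Bott spectral sequence of Theorem~\ref{spectral sequence} to $(M_0(\fa),\lambda_0)$, restricted to the null-homologous subcomplex, and to exhibit one $E^1$-class that survives to $E^\infty$. We fix $\fa=(2,2,2,p_1,\ldots,p_k)$ with $n=k+3$, where the $p_i$ are large distinct odd primes (any $p_i>n$ will suffice for the index bound below). By Remark~\ref{contractible generator of spectral sequence}, the null-homologous Morse--Bott families in $\Sigma_0$ are precisely the products $MB(p)\times S^1$, where $MB(p)\subset\Sigma(\fa)$ is the Brieskorn Morse--Bott family of return time $p\pi/2$, and $\mu_{CZ}(MB(p)\times S^1,\lambda_0)=\mu_{CZ}(MB(p),\lambda)+\tfrac12$.

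The target class is $p=2$. The only $a_i\in\fa$ dividing $2$ are the three copies of $a_i=2$, so $MB(2)=\Sigma((2,2,2))\cong \R P^3$, a $3$-dimensional manifold on which the Reeb flow is free. Proposition~\ref{index calculation} then gives $\mu_{CZ}(MB(2))=k+2$, hence $\Delta(MB(2)\times S^1)=k+1$. Consequently
\[
E^1_{2,k-1}=H_0(\R P^3\times S^1;\Z_2)=\Z_2,
\]
sitting in total degree $n-2$; the remainder of the proof reduces to showing that no differential of the spectral sequence touches $E^r_{2,k-1}$.

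For outgoing differentials $d_r\colon E^r_{2,k-1}\to E^r_{2-r,k-2+r}$, only $r=1,2$ land in nonnegative filtration. The $r=1$ target $E^1_{1,k-1}$ vanishes because $MB(1)$ is empty; the $r=2$ target is a subquotient of
\[
E^1_{0,k}=H_{k+n}(M_0,\partial M_0;\Z_2)\cong H^{n-k}(M_0;\Z_2)=H^{3}(M_0;\Z_2),
\]
which is zero since Proposition~\ref{fundamental group of M0} together with $n>8$ yields $H_2(M_0)=H_3(M_0)=0$. For incoming differentials $d_r\colon E^r_{r+2,k-r}\to E^r_{2,k-1}$, the crucial input will be the uniform lower bound
\[
\Delta(MB(p)\times S^1)\ \ge\ p+k-1\qquad\text{for every $p\ge 2$ with $MB(p)\ne\emptyset$,}
\]
which will follow directly from Proposition~\ref{index calculation}: writing $S=\{j:p_j\mid p\}$, the three coordinates with $a_i=2$ each contribute $p$ to $\mu_{CZ}(MB(p))$, each $j\in S$ contributes at least $2$, and each $j\notin S$ contributes at least $1$, while $\dim MB(p)=3+2|S|$, so the $|S|$-dependence cancels. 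Granting this, the source $E^1_{r+2,k-r}$ equals $H_{k+2-\Delta(MB(r+2))}(MB(r+2)\times S^1;\Z_2)$ in a degree at most $1-r$, which is negative for $r\ge 2$; for $r=1$ the family $MB(3)$ is empty. Therefore $E^\infty_{2,k-1}=\Z_2$, and $SH^0_{n-2}(M_0;\Z_2)\ne 0$. The main technical step is the uniform lower bound on $\Delta$; everything else is bookkeeping, and this is precisely where the assumption that the $p_i$ are large (so that the divisibility patterns entering $\mu_{CZ}$ and $\dim MB$ stay in the regime handled above) enters.
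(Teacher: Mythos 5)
Your proposal is correct and follows essentially the same route as the paper: the Kwon--van Koert Morse--Bott spectral sequence restricted to the null-homologous part, the identification of the low-period families as $MB(2l)\times S^1$ with $MB(2l)=\Sigma(2,2,2)\cong\R P^3$, and the computation $\Delta(MB(2l)\times S^1)=2l+n-4$; the only difference is that you extract the class in total degree $n-2$ (the $H_0$ of $\R P^3\times S^1$ at $p=2$) and kill all differentials via the uniform bound $\Delta\ge p+k-1$, whereas the paper uses the degree-$(n-1)$ entry $H_1(\R P^3\times S^1)\cong\Z_2^2$ and a rank count against the single possible $d_2$. Two negligible caveats: your dimension formula $\dim MB(p)=3+2|S|$ fails for odd $p$ (there it is $2|S|-3$, which only strengthens the bound), and the restriction to distinct odd primes is unnecessary --- as your parenthetical concedes, only largeness is used, which matters since the paper's later construction needs $\sum 1/p_i=\tfrac12$, incompatible with distinct odd primes.
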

		\begin{proof}
			 It suffices to prove that $SH_{n-1}^0(M_0,\Z_2)\neq 0$.
			To that end we will focus on the total degree $p+q=n-2,n-1,n$ in the spectral sequence above.
			First of all, for $p=0$, we have 
			\[E_{0q}^1=H_{n+q}(M_0,\p M_0;\Z_2)=\begin{cases*}
			\Z_2, & $q=n$\\
			0,     & $q\neq n$.
			\end{cases*}
			\] 
			On the other hand,
		\begin{align*}
		\Delta(MB(p)\times S^1) &= \mu_{CZ}(MB(p)\times S^1)-\frac{1}{2}(\dim(MB(p)\times S^1)/S^1)\\
		&= f_\fa(p)-\frac{1}{2}(\dim MB(p)-1)
		\end{align*}
		where $p\pi/2, p\in \Z$ is the period.
		Meanwhile,
	\begin{align*}
	f_\mathbf{a}(p)&=3\Bigg(\Bigg\lfloor\frac{p}{2}\Bigg\rfloor+\Bigg\lceil\frac{p}{2}\Bigg\rceil\Bigg)+\sum\Bigg(\Bigg\lfloor\frac{p}{p_i}\Bigg\rfloor+\Bigg\lceil\frac{p}{p_i}\Bigg\rceil\Bigg)-\Big(\Big\lfloor p\Big\rfloor+\Big\lceil p\Big\rceil\Big)
	&\geq 3p+k-2p=p+n-3
	\end{align*}
	so $\Delta(MB(p)\times S^1)\geq p-4>n+1$ for any $p>n+5$, that is, for any Morse-Bott manifold to contribute to the homology of degree at most $n$, the period of such manifold is at most $n+5$.
	Thus, if we require $p_i>n+5$, then the only Morse-Bott manifolds could possibly contribute to total degree $p+q\leq n$ is $MB(p)\times S^1,\, p=2l,2l<n+5$ for some $0<l\in \Z$ (see  Subsection 5.5 \cite{kwon2016brieskorn}).
	Now that $p=2l,\, l<n$, we have $MB(p)=\Sigma(2,2,2)\cong \R\mathbb{P}^3$. 
	\[ H_i(\R\mathbb{P}^3\times S^1)=\begin{cases*}
	\Z_2, \quad &$i=0,4$\\
	\Z_2\oplus\Z_2, & $i=1,2,3$\\
	0, & otherwise
	\end{cases*}	
	\]
	In this case,
	\[\Delta(MB(2l)\times S^1)= 6l+n-3-4l-1=2l+n-4=p+n-4.
	\]
	So for $l>2$, $\Delta(MB(2l)\times S^1)>n$.
	For $l=1,2$, we have (see Figure~\ref{fig:Epq})
	\[E_{pq}^1=H_{q-(n-4)}(\R\mathbb{P}^3\times S^1;\Z_2)=\begin{cases*}
	\Z_2, & $q=n-4, n$\\
	\Z_2\oplus\Z_2, & $q=n-3, \, n-2,\, n-1$\\
	0,     &\text{otherwise}.
	\end{cases*}
	\] 
	Hence, $E_{2,n-3}^k(M_0,\Z_2)\neq 0$ stabilizes at the second page, so $SH_{n-1}^0(M_0,\Z_2)\neq 0$. It follows that $SH_*^0(M_0,\Z_2)\neq 0$. In particular, $SH_n^0(M_0,\Z_2)\neq 0$, since the unit lives in degree $n$.

	\begin{figure}
		\centering
		\begin{sseq}
			[ylabelstep=2,xlabelstep=2,
			ylabels={n-6;;n-4; ;n-2;;n; ;n+2;;n+4;;n+6},entrysize=0.6cm]
			{0...9}
			{-6...2}
			\ssmoveto 0 0
			\ssdropbull\ssname{a}
			
			\ssmoveto 2 {-4}
	\ssdropbull
	\ssmove 0 1
	\ssdropbull\ssname{b}
	\ssdropextension
	\ssdropbull 
     \ssdropextension
	 
	\ssmove 0 1
	\ssdropbull
	\ssdropbull
	\ssmove 0 1
	\ssdropbull
	\ssdropbull
	\ssmove 0 1
	\ssdropbull

		\ssmoveto 4 {-4}
	\ssdropbull\ssname{c}
	\ssdropextension
	\ssmove 0 1
	\ssdropbull
	\ssdropbull
	\ssmove 0 1
	\ssdropbull
	\ssdropbull
	\ssmove 0 1
	\ssdropbull
	\ssdropbull
	\ssmove 0 1
	\ssdropbull
\ssgoto c \ssgoto b \ssstroke[ color=red]\ssarrowhead

				\ssmoveto 8 {-4}
			\ssdropbull
			\ssmove 0 1
			\ssdropbull
			\ssdropbull
			\ssmove 0 1
			\ssdropbull
			\ssdropbull
			\ssmove 0 1
			\ssdropbull
			\ssdropbull
			\ssmove 0 1
			\ssdropbull
			
						\ssmoveto 6 {-4}
			\ssdropbull
			\ssmove 0 1
			\ssdropbull
			\ssdropbull
			\ssmove 0 1
			\ssdropbull
			\ssdropbull
			\ssmove 0 1
			\ssdropbull
			\ssdropbull
			\ssmove 0 1
			\ssdropbull
		\ssmoveto 6 {-6}
		\ssdrop{\phantom{d}}\ssname{d}
		\ssgoto a \ssgoto d\ssstroke[dashed]
		\end{sseq}
		\caption{$E_{pq}^2(M_0,\Z_2)=E_{pq}^1(M_0,\Z_2)$: $\dim E_{2,n-3}^2(M_0,\Z_2)=2$, so it can not be killed by $d_2$ (red arrow) since $\dim E_{4,n-4}^2(M_0,\Z_2)=1$ on the second page.}
		\label{fig:Epq}
	\end{figure}

	\end{proof}

			\begin{remark}
			Lemma~\ref{lemma: nonvanishing} shows that $I_+(\Sigma_0(\fa))$ is well-defined since $SH_*(M_0)\neq 0$. Furthermore, $I_+(\Sigma_0(\fa))$ is a finite group.
	     	\end{remark}

		Now we are ready to prove Theorem~\ref{main technical theorem}:
		first, we will take $\mathbf{a}=(2,2,2,p_1,\cdots,p_k)$ satisfying 
		\begin{itemize}
			\item $p_i> k+8,$
			\item $\sum\frac{1}{p_k}=\frac{1}{2}$.
		\end{itemize}
		Recall 
		\[U_\fa(\epsilon)=\{\mathbf{z}\in \mathbb{C}^{n+1}|z_0^{a_0}+\cdots+z_n^{a_n}=\epsilon\cdot\beta(||\mathbf{z}||^2)\},\]
		and  
		\[W_\epsilon^1=U_\epsilon\cap B(1),\quad \p W_\epsilon^1=\Sigma(\fa).\]
		Let $(M_0(\fa),\lambda_0)$ be defined as in Remark~\ref{def of key block}.
		Then we have the following facts:

	\begin{enumerate}
		\item $(M_0,\lambda_0)$ is strongly ADC;
	    \item $SH_n^0(M_0)\neq 0$ and is finitely dimensional. 
	    \item $H_i(M_0)=0, i>1,i\neq n,n+1$.
	\end{enumerate}
		The first claim is true due to Proposition~\ref{main theorem}. We only need to check the condition that $m(\fa)\geq 3$, which in turn is the result of Lemma~\ref{minimal index}. The second claim is proved in Lemma~\ref{finiteness of group}. 
		
		On the other hand, the Liouville vector field $Y_\lambda$ is gradient-like (Lemma~\ref{lemma for transversality})   for the function $F$ which we used to define the Weinstein domain.
		  Therefore, 
		it is Liouville homotopic to Stein domain $(M_0,J,\phi)$(Remark~\ref{def of key block}). $\pi_1(M_0)=\Z$ since $M_0$ is diffeomorphic to $\mathbb{C}^{n+1}\setminus V_\fa(0)$ (Proposition~\ref{fundamental group of M0}). Let $\gamma$ be an isotropic circle generating $\pi_1(M_0)$. Such $\gamma$ exists by the $h-$principle(Lemma~\ref{h-principle}).
		Let $M_1$ be Weinstein manifold obtained from $M_0$ by attaching a Weinstein 2-handle with respect to the trivialization $\Phi$ (Proposition~\ref{handle attachment for construction}). $M_1$ is of finite type because $M_0$ is. Furthermore, attaching 2-handle along $\gamma$ kills the fundamental group.  
		Now we are going to prove that$(M_1,\lambda_1,\psi_1)$ satisfies all conditions in Theorem~\ref{main technical theorem}
		\begin{proof}[proof of  Theorem~\ref{main technical theorem}]\label{proof 1.5}
		Indeed,we have the following facts about $(M_1,\lambda_1,\psi_1)$:
		\begin{enumerate}
			\item $(\p M_1, \lambda_1)$ is asymptotically dynamically convex;
			\item $SH_*(M_1)\cong SH_*(M_0)$ as rings.
			\item $\tilde{H}_i(M_1)=0, i\neq n,n+1$
		\end{enumerate}

		 The first statement is true because subcritical surgery preserves the ADC property, by Theorem~\ref{theorem:contact surgery}. The second statement is due to the fact that subcritical surgery doesn't change the ring structure of symplectic homology, see Theorem~\ref{invariant of SH}. The last statement on homology follows Proposition~\ref{contractible after surgery}.
		\end{proof}

\bibliographystyle{alpha}
\bibliography{reference1}

\newcommand{\etalchar}[1]{$^{#1}$}
\begin{thebibliography}{Lbdm04}

\bibitem[AD14]{audin2014morse}
Mich{\`e}le Audin and Mihai Damian.
\newblock {\em Morse theory and Floer homology}.
\newblock Springer, 2014.

\bibitem[AS10]{abouzaid2010open}
Mohammed Abouzaid and Paul Seidel.
\newblock An open string analogue of viterbo functoriality.
\newblock {\em Geometry \& Topology}, 14(2):627--718, 2010.

\bibitem[BEE12]{bourgeois2012effect}
Fr{\'e}d{\'e}ric Bourgeois, Tobias Ekholm, and Yakov Eliashberg.
\newblock Effect of legendrian surgery.
\newblock {\em Geometry \& topology}, 16(1):301--389, 2012.

\bibitem[Bri66]{brieskorn1966beispiele}
Egbert Brieskorn.
\newblock Beispiele zur differentialtopologie von singularit{\"a}ten.
\newblock {\em Inventiones mathematicae}, 2(1):1--14, 1966.

\bibitem[CE06]{cieliebak2006symplectic}
K~Cieliebak and Ya~Eliashberg.
\newblock Symplectic geometry of stein manifolds.
\newblock {\em Book manuscript}, 2006.

\bibitem[CE12]{cieliebak2012stein}
Kai Cieliebak and Yakov Eliashberg.
\newblock {\em From Stein to Weinstein and back: Symplectic geometry of affine
  complex manifolds}, volume~59.
\newblock American Mathematical Soc., 2012.

\bibitem[CO18]{cieliebak2018symplectic}
Kai Cieliebak and Alexandru Oancea.
\newblock Symplectic homology and the eilenberg--steenrod axioms.
\newblock {\em Algebraic \& Geometric Topology}, 18(4):1953--2130, 2018.

\bibitem[E{\etalchar{+}}91]{eliashberg1991symplectic}
Yakov Eliashberg et~al.
\newblock On symplectic manifolds with some contact properties.
\newblock {\em Journal of Differential Geometry}, 33(1):233--238, 1991.

\bibitem[EM02]{eliashberg2002introduction}
Yakov Eliashberg and Nikolai~M Mishachev.
\newblock {\em Introduction to the h-principle}.
\newblock Number~48. American Mathematical Soc., 2002.

\bibitem[Fau16]{fauck2016rabinowitz}
Alexander Fauck.
\newblock Rabinowitz-floer homology on brieskorn manifolds.
\newblock {\em arXiv preprint arXiv:1605.07892}, 2016.

\bibitem[Gei08]{geiges2008introduction}
Hansj{\"o}rg Geiges.
\newblock {\em An introduction to contact topology}, volume 109.
\newblock Cambridge University Press, 2008.

\bibitem[Gut15]{gutt2015positive}
Jean Gutt.
\newblock The positive equivariant symplectic homology as an invariant for some
  contact manifolds.
\newblock {\em arXiv preprint arXiv:1503.01443}, 2015.

\bibitem[KvK16]{kwon2016brieskorn}
Myeonggi Kwon and Otto van Koert.
\newblock Brieskorn manifolds in contact topology.
\newblock {\em Bulletin of the London Mathematical Society}, 48(2):173--241,
  2016.

\bibitem[Laz16]{lazarev2016contact}
Oleg Lazarev.
\newblock Contact manifolds with flexible fillings.
\newblock {\em arXiv preprint arXiv:1610.04837}, 2016.

\bibitem[Lbdm04]{laudenbach2004symplectic}
Fran{\c{c}}ois Laudenbach and Sociedade brasileira~de matem{\'a}tica.
\newblock {\em Symplectic geometry and Floer homology}.
\newblock Sociedade brasileira de matem{\'a}tica, 2004.

\bibitem[LM76]{lutz1976structures}
Robert Lutz and Christiane Meckert.
\newblock Structures de contact sur certaines spheres exotiques.
\newblock {\em CR Acad. Sci. Paris S{\'e}r. AB}, 282(11):A591--A593, 1976.

\bibitem[McL07]{mclean2007lefschetz}
Mark McLean.
\newblock Lefschetz fibrations and symplectic homology.
\newblock {\em arXiv preprint arXiv:0709.1639}, 2007.

\bibitem[McL16]{mclean2016reeb}
Mark McLean.
\newblock Reeb orbits and the minimal discrepancy of an isolated singularity.
\newblock {\em Inventiones mathematicae}, 204(2):505--594, 2016.

\bibitem[Mil16]{milnor2016singular}
John Milnor.
\newblock {\em Singular Points of Complex Hypersurfaces.(AM-61)}, volume~61.
\newblock Princeton University Press, 2016.

\bibitem[Nic11]{nicolaescu2011invitation}
Liviu Nicolaescu.
\newblock {\em An invitation to Morse theory}.
\newblock Springer Science \& Business Media, 2011.

\bibitem[Rit13]{ritter2013topological}
Alexander~F Ritter.
\newblock Topological quantum field theory structure on symplectic cohomology.
\newblock {\em Journal of Topology}, 6(2):391--489, 2013.

\bibitem[Sei06]{seidel2006biased}
Paul Seidel.
\newblock A biased view of symplectic cohomology.
\newblock {\em Current developments in mathematics}, 2006(1):211--254, 2006.

\bibitem[Ueb15]{uebele2015periodic}
Peter Uebele.
\newblock Periodic reeb flows and products in symplectic homology.
\newblock {\em arXiv preprint arXiv:1512.06208}, 2015.

\bibitem[Ust99]{ustilovsky1999contact}
Ilya Ustilovsky.
\newblock {\em Contact Homology and Contact Structures on $S^{4m+1}$}.
\newblock PhD thesis, Stanford University, 1999.

\bibitem[Vit99]{viterbo1999functors}
Claude Viterbo.
\newblock Functors and computations in floer homology with applications, i.
\newblock {\em Geometric \& Functional Analysis GAFA}, 9(5):985--1033, 1999.

\bibitem[W{\etalchar{+}}91]{weinstein1991contact}
Alan Weinstein et~al.
\newblock Contact surgery and symplectic handlebodies.
\newblock {\em Hokkaido Mathematical Journal}, 20(2):241--251, 1991.

\bibitem[Was69]{wasserman1969equivariant}
Arthur~G Wasserman.
\newblock Equivariant differential topology.
\newblock {\em Topology}, 8(2):127--150, 1969.

\bibitem[Wen]{wendlbeginner}
Chris Wendl.
\newblock A beginner’s overview of symplectic homology.

\bibitem[Yau04]{yau2004cylindrical}
Mei-Lin Yau.
\newblock Cylindrical contact homology of subcritical stein-fillable contact
  manifolds.
\newblock {\em Geometry \& Topology}, 8(3):1243--1280, 2004.

\end{thebibliography}
\end{document}